\documentclass[10pt]{article}
\usepackage[utf8]{inputenc}
\usepackage[T1]{fontenc}
\usepackage{amsmath, amssymb, amsthm}
\usepackage{geometry}
\usepackage{enumitem}
\usepackage{mathrsfs}
\usepackage{fontawesome5}
\usepackage{xcolor}
\usepackage{esint}
\usepackage{verbatim}
\usepackage[colorlinks=true, linkcolor=blue, citecolor=blue, urlcolor=blue]{hyperref}
\usepackage{shuffle}
\usepackage{enumitem}

\numberwithin{equation}{section} 

\newtheorem{theorem}{Theorem}[section]
\newtheorem{proposition}[theorem]{Proposition}
\newtheorem{lemma}[theorem]{Lemma}
\newtheorem{corollary}[theorem]{Corollary}

\theoremstyle{definition}
\newtheorem{definition}[theorem]{Definition}
\newtheorem{remark}[theorem]{Remark}
\newtheorem{example}[theorem]{Example}
\newtheorem*{remark*}{Remark}

\newcommand{\R}{\mathbb{R}}
\newcommand{\N}{\mathbb{N}}
\newcommand{\infin}{\infty}
\newcommand{\rarr}{\rightarrow}
\newcommand{\vertiii}[1]{{\vert\kern-0.25ex\vert\kern-0.25ex\vert #1 
    \vert\kern-0.25ex\vert\kern-0.25ex\vert}}
\renewcommand{\bold}[1]{\textup{\textbf{#1}}}

\usepackage[T1]{fontenc}
\usepackage{lmodern}

\title{A Direct Approach to Rough Paths in Time-Weighted Besov Spaces}
\author{Atiksh Gupta}
\date{23 September 2026}

\begin{document}

\maketitle

\begin{abstract}
    We extend the Besov rough path framework of Friz and Seeger to paths lying in Besov spaces that are Muckenhoupt-weighted in time. We establish a weighted Besov sewing lemma, construct Young and rough integrals, and prove local Lipschitz continuity of the Itô-Lyons map in weighted Besov norms. The resulting estimates deliver ``location-sensitive'' control of errors in RDE solution increments. We illustrate the framework with a diffusion whose coefficient is singular at a specified time.
\end{abstract}

\tableofcontents

\section{Introduction}
Rough path theory studies differential equations driven by signals whose irregularity prevents the direct use of classical integration. Its basic construction augments the driving signal with iterated integrals, providing the information needed to define the equation and control its solutions. A central feature of the theory is stability: when the enhanced driving signal is perturbed, the solution changes continuously in the ``rough path topology''.

Friz and Seeger \cite{FrizSeeger2022} developed such a theory directly in the Besov setting. Their construction combines a Besov sewing lemma with embeddings and controlled rough paths, leading to integration, differential equations, and stability estimates in Besov norms. Friz, Kern, and Zorin-Kranich \cite{FrizKernZorinKranich2025} subsequently established another route. They characterise Besov regularity by taking Besov norms of the variation accumulated on each subinterval, and use suitable group structures to extend this principle to rough path differences and controlled remainders. Precisely localised variation estimates can then be transferred to the Besov setting. These approaches provide different ways of preserving the regularity of the data throughout the analysis.

The present paper extends the direct approach to Besov spaces with a Muckenhoupt weight in time. The motivation is that the size and distribution of a signal's fluctuations may depend strongly on where an increment occurs. For example, a driving process may have much larger fluctuations near a specified time than elsewhere. A time weight allows the norm to account for this behaviour explicitly. Regions where the weight is small contribute less to the averaged increment norm, while regions where it is large contribute more. The aim is to carry this choice of measurement through integration and the solution map.

Our spaces weight an increment by the average of the weight over its interval. More precisely, for an increment from \(s\) to \(s+h\), we use
\[
w_h(s) = \frac{1}{h}\int_s^{s+h} w(u)\,du.
\]
This choice associates the weight with the whole increment interval. It is also compatible with subdivision: the average over an interval is the corresponding weighted combination of the averages over its children. The Muckenhoupt condition provides quantitative comparisons between these averages, including comparisons between neighbouring intervals. Together, these properties allow us to control the subdivisions and changes of basepoint that occur in the direct construction. When \(w\equiv 1\), the definitions recover the unweighted Besov-type spaces of \cite{FrizSeeger2022}.

We establish weighted Young and rough integration, together with existence, uniqueness, and local Lipschitz continuity of the solution map for the resulting differential equations. The estimates control the integrals, controlled remainders, and differences of solutions in the original weighted Besov norms. In particular, on bounded sets of admissible data, a small change in the enhanced driver produces a proportionally small change in the solution in the corresponding weighted topology. We develop the level-two theory in detail and then extend the controlled-path construction to arbitrary finite levels in the weakly geometric setting.

The main analytic tool is a weighted Besov sewing lemma. Sewing starts with a proposed increment that is approximately additive and constructs a genuine additive increment by showing that the errors introduced by successive subdivisions are summable. Our interval-weight estimates provide the bounds needed to carry out this procedure in the weighted spaces. We treat both the regime in which summability follows from a strict smoothness inequality and critical cases in which it comes from summability across scales. Pointwise embeddings and interpolation then supply the additional control needed for the integration and fixed-point arguments.

Deweighting is a useful part of this construction. Weighted Besov control implies unweighted Besov control at a reduced time-integrability exponent, allowing us to use existing unweighted results. Applied on individual intervals, deweighting also gives pointwise bounds involving the weighted mass of the interval. It does not, however, recover the original weighted norm from the weaker unweighted information. The weighted sewing and stability estimates supply this further control, retaining the chosen time weight in the bounds for integrals and solutions.

We illustrate the framework with a diffusion whose deterministic coefficient behaves like \(t^{-\gamma}\) near the initial time, where \(0<\gamma<1/2\). Its fluctuations are concentrated near zero, and this concentration can reduce its unweighted Besov regularity. A suitable power weight compensates for the coefficient's growth in the averaged increment estimates. We prove almost-sure weighted Besov regularity at the Brownian smoothness exponent, together with corresponding results for the iterated integrals. Applying the deterministic solution theory then gives bounds for solution-increment differences that depend explicitly on both the length and the location of the interval. These estimates distinguish increments near the singular time from increments farther away.

\subsection{Organisation}

In \S 2 we define our weighted Besov spaces and establish the interval-weight estimates, deweighting, and elementary properties used throughout the paper. \S 3 develops the embeddings and interpolation estimates that connect averaged regularity with pointwise control. \S 4 contains the weighted sewing theorems and integrability regain, and \S 5 applies them to Young integration and Young differential equations.

In \S 6 we develop weighted rough paths, the Lyons extension theorem, controlled rough integration, and rough differential equations, including stability and the Davie characterisation. The arbitrary-level construction is given in \S 6.5. \S 7 treats stochastic processes and the singular diffusion example. Appendix A compares alternative choices of increment weight and explains the role of interval averaging.

\subsection{Notation}

Throughout, $T$ denotes a finite positive real. For $a<b$, we write
$$
\bigtriangleup_n(a,b)
=\{(t_1,\ldots,t_n):a\leq t_1\leq\cdots\leq t_n\leq b\}.
$$
In particular,
$$
\bigtriangleup_2(0,T)=\{(s,t):0\leq s\leq t\leq T\},
\qquad
\bigtriangleup_3(0,T)=\{(s,u,t):0\leq s\leq u\leq t\leq T\}.
$$
For a vector-valued path $f$, we denote its increment by
$$
\delta f_{s,t}=f_t-f_s,
$$
and, for a two-parameter map $A$, we denote its additive defect by
$$
\delta A_{s,u,t}=A_{s,t}-A_{s,u}-A_{u,t}.
$$
For metric-space-valued paths, increments are measured by $d(f_s,f_t)$. Additive two-parameter maps have zero diagonal values; multiplicative tensor-valued maps have the tensor unit on the diagonal.

Given a nondegenerate interval $I\subset[0,T]$, we denote its length by $|I|$ and set
$$
w(I)=\int_Iw(u)\hspace{.2em}du,
\qquad
\langle w\rangle_I=\frac{w(I)}{|I|}.
$$
The Muckenhoupt classes and their characteristics $[w]_{A_r}$ are defined in \S 2. We use
$$
B_h(x)=[x-h,x+h]\cap[0,T].
$$
All statements involving restrictions to an interval use the same weight restricted to that interval.

We write $a\land b=\min\{a,b\}$ and $a\lor b=\max\{a,b\}$. We adopt the conventions
$$
\frac1\infin=0,
\qquad
\frac\infin c=\infin\quad(0<c<\infin),
\qquad
w(I)^{-1/\infin}=1.
$$
At an infinite integrability or scale index, the corresponding norm is an essential supremum or supremum, as specified in the definitions. In particular, the increment weight has no effect when $p=\infin$.

The notation $B^\alpha_{p,q}$ is used for path spaces, $\mathbb B^\alpha_{p,q}$ for two-parameter spaces, and $\overline{\mathbb B}^\alpha_{p,q}$ for three-parameter spaces. A weight is displayed as an argument, and a restriction to $J$ is indicated by a semicolon, as in $B^\alpha_{p,q}(w;J)$. We use square brackets for path seminorms. The precise definitions, including general moduli and indices below one, are given in \S 2. Once a continuous representative has been obtained, subsequent pointwise expressions use that representative.

For a partition
$$
\pi=\{s=\tau_0<\tau_1<\cdots<\tau_N=t\}
$$
of $[s,t]$, we set
$$
\|\pi\|=\max_{1\leq i\leq N}(\tau_i-\tau_{i-1}).
$$
When $\pi=\{0=u_0<\cdots<u_N=1\}$ is a partition of $[0,1]$, its rescaling to $[s,t]$ has points $s+u_i(t-s)$. For a two-parameter map $\Xi$, write
$$
(I_\pi\Xi)_{s,t}
=\sum_{i=1}^N
\Xi_{s+u_{i-1}(t-s),\,s+u_i(t-s)},
\qquad
R_\pi\Xi=I_\pi\Xi-\Xi.
$$
The dyadic partition of $[0,1]$ at level $n$ is
$$
\pi_n=\{k2^{-n}:0\leq k\leq2^n\}.
$$

For normed vector spaces $V,E$, we denote by $\mathcal L(V,E)$ the space of bounded linear maps with its operator norm. Finite-dimensional tensor spaces are equipped with fixed compatible norms. We write $A\lesssim B$ if $A\leq CB$ for a constant independent of the variables being estimated, and $A\asymp B$ if both inequalities hold. Subscripts indicate relevant dependencies. Constants may change from line to line; dependence on weight normalisation or on random data is stated when it occurs.

\begin{definition}[Bounded Hölder Classes]
\label{def: Bounded Holder Classes}
    Let $V,E$ be finite-dimensional normed vector spaces and let $k\geq0$ be an integer. We write $C_b^k(V;E)$ for the space of $k$ times continuously differentiable maps $F:V\to E$ for which
    $$
    \|F\|_{C_b^k}
    =\sum_{j=0}^k\|D^jF\|_\infin<\infin,
    $$
    with $D^0F=F$. For $0<\eta\leq1$, we write $C_b^{k,\eta}(V;E)$ for the subspace satisfying
    $$
    [D^kF]_\eta
    =\sup_{x\ne y}
    \frac{\|D^kF(x)-D^kF(y)\|}{|x-y|^\eta}
    <\infin,
    $$
    equipped with
    $$
    \|F\|_{C_b^{k,\eta}}
    =\|F\|_{C_b^k}+[D^kF]_\eta.
    $$
    In particular, $C_b^{2,1}$ consists of bounded twice continuously differentiable maps with bounded first and second derivatives and globally Lipschitz second derivative.
\end{definition}

\section{Weighted Besov Path Spaces}
In this section we define the weighted Besov spaces that our framework is ``native'' to. We begin by recalling some standard definitions and properties of Muckenhoupt weights. We then introduce weighted analogues of the Besov-type spaces of \cite[\S 2]{FrizSeeger2022}. Immediately after, we present a \textit{deweighting embedding}, which shows that all paths in a weighted Besov space also lie in an unweighted Besov space of identical smoothness but reduced integrability. We conclude with certain properties of weighted Besov paths which will be used in later sections.

\subsection{Muckenhoupt Weights}
\begin{definition}[Weights]
\label{def: Muckenhoupt Weights}
    A weight $w: [0, T] \rarr [0, \infin]$ is a locally integrable function taking values in $(0, \infin)$ Lebesgue-a.e. For $1 < r < \infin$ we define the $A_r$ Muckenhoupt constant $[w]_{A_r}$ with
    $$
    [w]_{A_r} = \sup_I \langle w \rangle_I \langle w^{1-r'} \rangle_I^{r - 1}
    $$
    where $I$ ranges over all subintervals of $[0, T]$ and $r'$ is the conjugate exponent to $r$, meaning $\frac{1}{r} + \frac{1}{r'} = 1$. We say $w \in A_r$ if $[w]_{A_r} < \infin$. In the case $r = 1$ we define
    $$
    [w]_{A_1} = \sup_I \langle w \rangle_I \| w^{-1}\|_{L^\infin(I)}
    $$
    and we say $w \in A_1$ if $[w]_{A_1} < \infin$. For $r > 1$, we denote by $\sigma$ the dual weight $w^{1-r'}$ to $w \in A_r$.
\end{definition}

We'll use the following standard results constantly in the sequel.

\begin{lemma}
\label{lem: Standard Muckenhoupt Weights Properties}
    Let $1 \leq r < \infin$ and let $w \in A_r$. Then
    \begin{enumerate}[label=(\roman*)]
        \item (Small-Set Condition) If $E \subset I$ is measurable then
        $$
        \left(
        \frac{|E|}{|I|}
        \right)^r \leq [w]_{A_r} \frac{w(E)}{w(I)}.
        $$

        \item ($w$ induces a Doubling Measure) For any $\lambda \geq 1$ and interval $I$,
        $$
        w(\lambda I) \leq (2\lambda)^r [w]_{A_r} w(I),
        $$
        where $\lambda I$ is the concentric dilation of $I$ (intersected with $[0, T]$ if necessary).

        \item (Openness and Reverse Hölder Inequality) If $r > 1$ then there exists an $\epsilon_0 \in (0, r - 1)$ such that $w \in A_{r - \epsilon}$ for all $0 < \epsilon \leq \epsilon_0$. Moreover, there exist $0 < C, \gamma < \infin$ with
        $$
        \langle w^{1 + \gamma} \rangle_{I}^{1/(1 + \gamma)} \leq C \langle w \rangle_I
        $$ for every interval $I \subset [0, T]$. (All constants here depend only on $r, [w]_{A_r}$).

        \item (Duality) If $r > 1$ then $\sigma \in A_{r'}$ and
        $$
        [
        \sigma
        ]_{A_{r'}} = [w]_{A_r}^{\frac{1}{r-1}}
        $$

        \item (Weight Classes are Increasing) Let $1 \leq r < s < \infin$ and $w \in A_r$. Then $w \in A_s$ with $[w]_{A_s} \leq [w]_{A_r}$.
    \end{enumerate}
\end{lemma}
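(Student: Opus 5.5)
The plan is to prove the five items in the order (i), (ii), (v), (iv), (iii), since each later item rests on the earlier ones. Items (i), (ii), (iv), (v) are short consequences of Hölder's inequality and the definition of $[w]_{A_r}$. Item (iii) is the genuine content, and for it I would follow the classical Coifman--Fefferman route, adapted to the interval $[0,T]$.

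For (i) with $r>1$, I would write $|E|=\int_E w^{1/r}w^{-1/r}$ and apply Hölder with exponents $r,r'$:
$$
|E|\le w(E)^{1/r}\Big(\int_I w^{1-r'}\Big)^{1/r'}.
$$
Raising to the power $r$ and dividing by $|I|^r$ gives
$$
\Big(\frac{|E|}{|I|}\Big)^r\le \frac{w(E)}{|I|}\,\langle\sigma\rangle_I^{r-1}\le [w]_{A_r}\frac{w(E)}{w(I)}.
$$
For $r=1$, I would instead use $w\ge \|w^{-1}\|_{L^\infty(I)}^{-1}$ on $E$. Item (ii) then follows by applying (i) with the interval $\lambda I\cap[0,T]$ in place of $I$ and $E=I$. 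This uses $|\lambda I\cap[0,T]|\le\lambda|I|$, which gives $w(\lambda I)\le \lambda^r[w]_{A_r}w(I)$; the factor $2^r$ leaves room for the truncation.

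For (v), Jensen's (or Hölder's) inequality gives $\langle w^{1-s'}\rangle_I^{s-1}\le\langle w^{1-r'}\rangle_I^{r-1}$ because $(s-1)^{-1}<(r-1)^{-1}$ is a comparison of $L^p$ means of $w^{-1}$. When $r=1$, the $L^p$ means of $w^{-1}$ are bounded by $\|w^{-1}\|_{L^\infty(I)}$. Item (iv) is the algebraic identity $(1-r')(1-r)=1$: the product $\langle\sigma\rangle_I\langle\sigma^{1-r}\rangle_I^{r'-1}$ equals $\langle\sigma\rangle_I\langle w\rangle_I^{1/(r-1)}$, which is exactly $\big(\langle w\rangle_I\langle\sigma\rangle_I^{r-1}\big)^{1/(r-1)}$.

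The main obstacle is (iii), the reverse Hölder inequality. I would prove it in three steps.

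\begin{enumerate}[label=(\alph*)]
\item Deduce from (i), applied to the complement $I\setminus E$, an $A_\infty$-type property: there exist $\alpha,\beta\in(0,1)$ such that $|E|\le\alpha|I|$ implies $w(E)\le\beta w(I)$.
\item Fix an interval $I$ and run a Calderón--Zygmund decomposition of $w$ on the dyadic subintervals of $I$ at heights $\lambda_k=\langle w\rangle_I(2[w]_{A_r})^{k}$, with a suitable constant in place of $2[w]_{A_r}$ if needed. Step (a) then gives a geometric decay of the $w$-measure of the nested level sets, $w(\{M_Iw>\lambda_k\})\le\beta^k w(I)$, where $M_I$ is the dyadic maximal function localised to $I$.
\item Sum the layer-cake representation of $\int_I w^{1+\gamma}$ over these levels. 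With $\gamma$ chosen so that $(2[w]_{A_r})^{\gamma}\beta<1$, this yields $\langle w^{1+\gamma}\rangle_I^{1/(1+\gamma)}\le C\langle w\rangle_I$.
\end{enumerate}

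Working dyadically inside the fixed interval $I$ avoids any boundary issues at $0$ and $T$. The delicate point is step (b): it needs the stopping intervals to have averages comparable to $\lambda_k$ (via doubling from (ii)) and the small-set property to pass from Lebesgue measure to $w$-measure.

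Openness then follows by applying this reverse Hölder inequality to the dual weight $\sigma\in A_{r'}$ from (iv). Given $\langle\sigma^{1+\gamma}\rangle_I^{1/(1+\gamma)}\le C\langle\sigma\rangle_I$, define $r-\epsilon$ by $(r-\epsilon)'-1=(r'-1)(1+\gamma)$. Hölder's inequality then gives $[w]_{A_{r-\epsilon}}\le C^{r-1}[w]_{A_r}$, and the same bound holds for all smaller $\epsilon$ by (v). All constants depend only on $r$ and $[w]_{A_r}$.
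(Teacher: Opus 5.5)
Your proposal is correct and follows the standard arguments that the paper defers to in its citation of Grafakos, \S 7: H\"older's inequality and the definition of $[w]_{A_r}$ for (i), (ii), (iv) and (v); the $A_\infty$-type property together with a dyadic Calder\'on--Zygmund decomposition for the reverse H\"older inequality; and reverse H\"older applied to $\sigma$ for openness. Two small points in step (b) should be tidied. First, the bound $\langle w\rangle_J\le 2\lambda_k$ on a stopping interval $J$ comes from maximality, since its dyadic parent has average at most $\lambda_k$, and not from the doubling property (ii). Second, the height ratio should be $2/\alpha$ for a fixed $\alpha\in(0,1)$ taken from step (a). Your ratio $2[w]_{A_r}$ corresponds to $\alpha=1/[w]_{A_r}$, which gives $\beta=1$ and hence no decay when $[w]_{A_r}=1$.
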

\begin{proof}
    Proofs of these properties may be found in \cite[\S 7]{Grafakos2014Classical}.
\end{proof}

Our weighted Besov setting will rely on ``interval-based averages'': given some $w \in A_r$, interval size $0 < h \leq T$, and basepoint $s \in [0, T - h]$, we set $w_h(s) = \frac{1}{h} \int^{s + h}_{s} w(t) \hspace{.2em} dt$. These objects obey the following comparison bounds.

\begin{lemma}
\label{lem: Shift and Scale Bounds for Muckenhoupt Weights}
    Let $1 \leq r < \infin$ and $w \in A_r$.
    \begin{enumerate}[label=(\roman*)]
        \item If $J \subset I$ then
        $$
        \langle w \rangle_I \leq [w]_{A_r} \left( \frac{|I|}{|J|} \right)^{r - 1} \langle w \rangle_J, \qquad \langle w \rangle_J \leq \frac{|I|}{|J|} \langle w \rangle_I.
        $$
        In particular, for every $\lambda \geq 1$,
        $$
        w_h(s) \leq [w]_{A_r} \lambda^{r - 1} w_{h/\lambda}(s), \qquad w_{h/\lambda}(s) \leq \lambda w_h(s).
        $$

        \item Let $\lambda \geq 0$ and $|b| \leq \lambda h$. Whenever both $[s, s + h]$ and $[s - b, s - b + h]$ are contained in $[0, T]$, we have
        $$
        w_h(s - b) \leq [w]_{A_r} (1 + \lambda)^r w_h(s).
        $$

        \item We have
        $$
        \sup_{0 < h \leq T} \int^{T - h}_{0} w_h(s) \hspace{.2em} ds \leq w([0, T]). 
        $$

        \item Let $I_1, ..., I_N$ partition an interval $I$ and set $\lambda_i = |I_i|/|I|$. Then
        $$
        \langle w \rangle_I = \sum_{i = 1}^{N} \lambda_i \langle w \rangle_{I_i}.
        $$
        In particular, for $0 < \theta < 1$,
        $$
        w_h(s) = \theta w_{\theta h}(s) + (1 - \theta) w_{(1 - \theta)h}(s + \theta h).
        $$

        \item Given the partition of (iv), let $0 < p < \infin$ and set $a = \max\{p, r\}$. Then for every normed vector space $V$ and $z_1, ..., z_N \in V$,
        $$
        \langle w \rangle_I \left\|\sum_{i = 1}^{N} z_i \right\|_V^p \leq [w]_{A_r} \sum_{i = 1}^{N} \lambda_i^{1 - a} \langle w \rangle_{I_i} \|z_i\|_V^p.
        $$
        For a partition into $N$ equal intervals, this becomes
        $$
        \langle w \rangle_I \left\|\sum_{i = 1}^{N} z_i \right\|_V^p \leq [w]_{A_r} N^{a - 1} \sum_{i = 1}^{N} \langle w \rangle_{I_i} \|z_i\|_V^p.
        $$
    \end{enumerate}
\end{lemma}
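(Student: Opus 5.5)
Each part reduces to the small-set condition of Lemma \ref{lem: Standard Muckenhoupt Weights Properties}(i), Fubini, or convexity of $x\mapsto x^p$. I take the items in order, since (ii) and (v) build on (i) and (iv).

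For (i), apply the small-set condition with $E=J\subset I$:
$$
\left(\frac{|J|}{|I|}\right)^r\le [w]_{A_r}\frac{w(J)}{w(I)} .
$$
Writing $w(J)=|J|\langle w\rangle_J$ and $w(I)=|I|\langle w\rangle_I$ and rearranging gives $\langle w\rangle_I\le [w]_{A_r}(|I|/|J|)^{r-1}\langle w\rangle_J$. The reverse inequality is just $w(J)\le w(I)$ divided by $|J|$. Specialising to $I=[s,s+h]$ and $J=[s,s+h/\lambda]$ gives the statements for $w_h$.

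For (ii), both $[s,s+h]$ and $[s-b,s-b+h]$ lie in the interval $K=[\min(s,s-b),\max(s,s-b)+h]$, and $|K|\le(1+\lambda)h$. Applying (i) twice, first to $[s-b,s-b+h]\subset K$ via the trivial direction and then to $[s,s+h]\subset K$ via the $A_r$ direction, gives
$$
w_h(s-b)\le \frac{|K|}{h}\langle w\rangle_K\le \frac{|K|}{h}[w]_{A_r}\left(\frac{|K|}{h}\right)^{r-1}w_h(s)\le [w]_{A_r}(1+\lambda)^r w_h(s).
$$

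For (iii), use Fubini: $\int_0^{T-h}w_h(s)\,ds=\int_0^T w(u)\,\frac{|\{s\in[0,T-h]:s\le u\le s+h\}|}{h}\,du$. The measure of that $s$-set is at most $h$, so the integral is at most $w([0,T])$.

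Part (iv) is additivity of the integral: $w(I)=\sum_i w(I_i)$, divided by $|I|$. The $\theta$-splitting formula is the case of two children $[s,s+\theta h]$ and $[s+\theta h,s+h]$.

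For (v), I expect the main obstacle to be handling both regimes $p\ge r$ and $p<r$ with a single exponent $a$. The plan has two steps.

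First, bound $\|\sum z_i\|^p$ using weights $\lambda_i$. For $p\ge1$, convexity gives $\|\sum z_i\|^p=\|\sum\lambda_i(z_i/\lambda_i)\|^p\le\sum_i\lambda_i^{1-p}\|z_i\|^p$. For $p<1$, subadditivity gives $\|\sum z_i\|^p\le\sum_i\|z_i\|^p$. In both cases
$$
\Big\|\sum z_i\Big\|^p\le \sum_i\lambda_i^{1-\max(p,1)}\|z_i\|^p .
$$

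Second, multiply by $\langle w\rangle_I$ and apply (i) to each $I_i\subset I$, which gives $\langle w\rangle_I\le[w]_{A_r}\lambda_i^{1-r}\langle w\rangle_{I_i}$. The result is the bound with exponent $\lambda_i^{2-\max(p,1)-r}$. Since $\lambda_i\le1$ and $2-\max(p,1)-r\ge 1-\max(p,r)=1-a$ (because $\min(p,1)+\max(p,1)=p+1$, and one checks $\max(p,1)+r-1\le\max(p,r)+\ldots$), any excess is handled by $\lambda_i\le1$. Concretely, the inequality needed is $\max(p,1)+r-1\le a$.

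This fails when $p,r>1$: there $\max(p,1)+r-1=p+r-1>a$. So for $p>1$ I would not use the crude convexity step. Instead, apply Hölder against the measure $w$ directly: write $\langle w\rangle_I\|\sum z_i\|^p$ and insert weights $\mu_i=w(I_i)/w(I)$, which sum to one. Convexity then gives $\|\sum z_i\|^p\le\sum_i\mu_i^{1-p}\|z_i\|^p$. Multiplying by $\langle w\rangle_I$ yields
$$
\langle w\rangle_I\,\mu_i^{1-p}=\langle w\rangle_{I_i}\,\lambda_i\,\mu_i^{-p}\|z_i\|^p\cdot(\text{rearranged}).
$$
Using $\mu_i\ge[w]_{A_r}^{-1}\lambda_i^r$ from the small-set condition then produces a power of $\lambda_i$ whose exponent I will check against $1-a$, together with a constant that is a power of $[w]_{A_r}$. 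If that constant exceeds the first power of $[w]_{A_r}$, I will interpolate between the two weightings ($\lambda_i$ and $\mu_i$) to recover exactly $[w]_{A_r}\lambda_i^{1-a}$.

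The equal-partition case follows by setting $\lambda_i=1/N$.
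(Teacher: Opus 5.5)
Your proofs of (i)--(iv) are correct and follow the paper's argument. The gap is in (v), in the main case $p,r>1$.

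\textbf{Where the proposal fails.} Your first route is convexity in the weights $\lambda_i$ (or $p$-subadditivity), followed by the child-by-child bound $\langle w\rangle_I\le[w]_{A_r}\lambda_i^{1-r}\langle w\rangle_{I_i}$. This does prove (v) when $\min\{p,r\}\le1$, and you correctly see that it loses otherwise. The fallback does not repair this. Since $\langle w\rangle_I\mu_i^{1-p}=\lambda_i\langle w\rangle_{I_i}\mu_i^{-p}$, the small-set bound $\mu_i\ge[w]_{A_r}^{-1}\lambda_i^r$ gives $[w]_{A_r}^p\lambda_i^{1-rp}\langle w\rangle_{I_i}$. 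This is worse than the target in both the constant and the exponent.

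More to the point, no choice or interpolation of weights can work if the only information about $w$ you use is the parent--child comparison (i) for each child together with the identity (iv). To see this, take $N$ equal children with $\langle w\rangle_{I_i}=[w]_{A_r}^{-1}N^{1-r}\langle w\rangle_I$ for $i\geq2$, and $\langle w\rangle_{I_1}$ determined by (iv). These numbers satisfy both inequalities of (i) for every child. Now choose $z_1=0$ and $z_i=z\neq0$ for $i\geq2$. The claimed bound would then read $(N-1)^{p-1}\leq N^{a-r}$, which fails for large $N$ when $p,r>1$.

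\textbf{The missing idea.} You need to use the $A_r$ condition once, jointly over the parent interval, through the dual weight. Since $a\geq r$, we have $w\in A_a$ with $[w]_{A_a}\leq[w]_{A_r}$. H\"older against $w^{-1/(a-1)}$ on $I$ then gives $\langle w\rangle_I\langle f\rangle_I^a\leq[w]_{A_r}\langle f^aw\rangle_I$ for nonnegative $f$; for $a=1$, use the $A_1$ lower bound instead. The paper applies this to the step function $f=\|z_i\|_V^{p/a}/\lambda_i$ on $I_i$. Then $\langle f^aw\rangle_I=\sum_i\lambda_i^{1-a}\langle w\rangle_{I_i}\|z_i\|_V^p$, while $p/a$-subadditivity gives $\|\sum_iz_i\|_V^p\leq(\sum_i\|z_i\|_V^{p/a})^a=\langle f\rangle_I^a$. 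Together these give (v).

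If you prefer to keep your weight-insertion scheme, the weights that work for $p>1$ are $\nu_i\propto c_i^{-1/(p-1)}$ with $c_i=\lambda_i^{1-a}\langle w\rangle_{I_i}$. Bounding $\langle w\rangle_I(\sum_ic_i^{-1/(p-1)})^{p-1}$ by $[w]_{A_r}$ then requires three ingredients:
\begin{itemize}
\item Jensen's inequality $\langle w\rangle_{I_i}^{-1/(a-1)}\leq\langle w^{-1/(a-1)}\rangle_{I_i}$;
\item the contractive inclusion $\ell^{1/(a-1)}\subset\ell^{1/(p-1)}$ when $a>p$;
\item the $A_a$ condition on all of $I$.
\end{itemize}
This is again a joint use of the weight condition, not a child-by-child one.
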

\begin{proof}
    For (i), the small-set condition, Lemma~\ref{lem: Standard Muckenhoupt Weights Properties}(i), applied to $J \subset I$ (along with some rearranging), gives
    $$
    \langle w \rangle_I = \frac{w(I)}{|I|} \leq [w]_{A_r} \left( \frac{|I|}{|J|} \right)^{r - 1} \langle w \rangle_{J}.
    $$
    Conversely, $w(J) \leq w(I)$, so
    $$
    \langle w \rangle_J \leq \frac{|I|}{|J|} \langle w \rangle_I.
    $$
    Taking $I = [s, s + h], J = [s, s + h/\lambda]$ gives (i).

    For (ii), set $I = [s, s + h]$ and $J = [s - b, s - b + h]$, and let $H$ be the smallest interval containing both $I, J$. Then $|H| \leq h + |b| \leq (1 + \lambda)h$. Applying the second inequality of (i) to $J \subset H$, followed by the first inequality to $I \subset H$, gives
    $$
    w_h(s - b) = \langle w \rangle_J \leq \frac{|H|}{h} \langle w \rangle_H \leq [w]_{A_r} (1 + \lambda)^{r} w_h(s).
    $$

    For (iii), for fixed $0 < h \leq T$, Tonelli's theorem gives
    \begin{align*}
        \int^{T - h}_{0} w_h(s) \hspace{.2em} ds &= \frac{1}{h} \int^{T - h}_{0} \int^{s + h}_{s} w(t) \hspace{.2em} dt \hspace{.2em} ds
        \\ &=
        \frac{1}{h} \int^{T}_{0} w(t) |\{s \in [0, T - h] : s \leq t \leq s + h\}| \hspace{.2em} dt. 
    \end{align*}
    Taking the supremum over $h$ gives (iii).

    For (iv), by additivity of the integral,
    $$
    \langle w \rangle_I = \frac{1}{|I|} \sum_{i = 1}^{N} \int_{I_i} w(t) \hspace{.2em} dt = \sum_{i = 1}^{N} \frac{|I_i|}{|I|} \langle w \rangle_{I_i} = \sum_{i = 1}^{N} \lambda_i \langle w \rangle_{I_i}.
    $$
    Applying this to the partition of $[s, s + h]$ at $s + \theta h$ gives
    $$
    w_h(s) = \theta w_{\theta h}(s) + (1 - \theta) w_{(1 - \theta)h}(s + \theta h).
    $$

    For (v), we first prove the inequality
    \begin{equation}
        \langle w \rangle_I \langle f \rangle_I^a \leq [w]_{A_r} \langle f^a w \rangle_I. \label{eq: Subdivision Bound Intermediate 1}
    \end{equation}
    Suppose first that $a > 1$, then since $a \geq r$, Lemma~\ref{lem: Standard Muckenhoupt Weights Properties}(v) gives $w \in A_a$ with $[w]_{A_a} \leq [w]_{A_r}$. By Hölder's inequality, then,
    $$
    \langle f \rangle_I \leq \langle f^a w \rangle_I^{1/a} \langle w^{-1/(a-1)} \rangle_I^{(a-1)/a},
    $$ whence we get \eqref{eq: Subdivision Bound Intermediate 1} by raising to $a$th powers and multiplying by $\langle w \rangle_I$. If $a = 1$, then $r = 1$ too, so $w \in A_1$ means
    $$
    \langle w \rangle_I \leq [w]_{A_r} w(t)
    $$ for a.e $t \in I$. Multiplying by $f(t)$ and averaging over $I$ gives us \eqref{eq: Subdivision Bound Intermediate 1} here as well.
    Now, write $x_i = \|z_i\|_V$ and define
    $$
    f(t) = \frac{x_i^{p/a}}{\lambda_i}
    $$ for $t \in I_i$. Then a simple calculation shows
    $$
    \langle f \rangle_I = \sum_{i = 1}^{N} x_i^{p/a}, \qquad \langle f^a w \rangle_I = \sum_{i = 1}^{N} \lambda_i^{1 - a} \langle w \rangle_{I_i} x^p.
    $$
    Using $p/a$-subadditivity gives
    $$
    \left( \sum_{i = 1}^{N} x_i \right)^{p/a} \leq \sum_{i = 1}^{N} x_i^{p/a},
    $$
    after which the triangle inequality and \eqref{eq: Subdivision Bound Intermediate 1} give
    $$
    \langle w \rangle_I \left\| \sum_{i = 1}^{N} z_i \right\|_V^p \leq \langle w \rangle_I \left( \sum_{i = 1}^{N} x_i \right)^{p} \leq \langle w \rangle_I \left( \sum_{i = 1}^{N} x_i^{p/a} \right)^a \leq [w]_{A_r} \sum_{i = 1}^{N} \lambda_i^{1 - a} \langle w \rangle_{I_i} \|z_i\|_V^p.
    $$
    For a partition into $N$ equal intervals, we have $\lambda_i = 1/N$ and so $\lambda_i^{1 - a} = N^{a - 1}$, giving us the final assertion.
\end{proof}

Part (v) estimates all child increments jointly and is the key ingredient in obtaining the sewing threshold $1 \lor \frac{r}{p}$. Comparing the parent and child weights separately before summing the increments instead gives the potentially larger threshold $(1 \lor \frac{1}{p}) + \frac{r-1}{p}$.

\begin{remark}
    There exist multiple natural alternatives to the $w_h$ we've defined. We discuss some of these candidates in Appendix A, namely weights whose actions depend on endpoints of an interval, and weights that produce unnormalised masses of intervals. Appendix A compares how suited these options are to the sewing arguments we need later. The present choice delivers the parent-child comparisons we need for sewing, as well as (v) of the preceding lemma.
\end{remark}

\subsection{Weighted Spaces of Besov Type}
The following definitions precisely mirror those of \cite[\S 2]{FrizSeeger2022}.

\begin{definition}
    Let $(E, d)$ be a complete metric space and $f: [0, T] \rarr E$ a measurable path. For $0 < p < \infin$ we define the modulus of smoothness
    $$
    \omega_p^w(f, \tau) = \sup_{0 < h \leq \tau} \left(
    \int^{T - h}_0 d(f_s, f_{s + h})^p w_h(s) \hspace{.2em} ds
    \right)^{1/p}.
    $$
    For $p = \infin$ we adopt the usual essential-supremum convention and set
    $$
    \omega^w_\infin(f, \tau) = \sup_{0 < h \leq \tau} \underset{0 \leq s \leq T - h}{\textup{esssup}} d(f_s, f_{s + h}).
    $$
    In the case $p = \infin$, then, the action of the weight is irrelevant. 
    For $\alpha \in (0, 1)$ and $0 < q \leq \infin$ we set
    $$
    [f]_{B^\alpha_{p, q}(w)} = \left\| \tau^{-\alpha} \omega^w_p(f, \tau) \right\|_{L^q((0, T], d\tau/\tau)}.
    $$
    Explicitly, when $q < \infin$ and $q = \infin$ we have
    $$
    [f]_{B^\alpha_{p, q}(w)} = \left[ \int^T_0 \left( \frac{\omega^w_p(f, \tau)}{\tau^{\alpha}} \right)^q \frac{d\tau}{\tau} \right]^{1/q}, \qquad [f]_{B^\alpha_{p, q}(w)} = \sup_{0 < \tau \leq T} \tau^{-\alpha} \omega^w_p(f, \tau).
    $$
    We write $f \in B^\alpha_{p, q}([0, T]; E; w)$ if $d(f, x_0) \in L^p(w)$ for some (and thus all) $x_0 \in E$ and if $[f]_{B^\alpha_{p, q}(w)} < \infin$.
\end{definition}

\begin{definition}
    If $\widetilde{\omega}: (0, T] \rarr (0, \infin)$ is a nondecreasing function we set
    $$
    [f]_{B^{\tilde{\omega}}_{p, q}(w)} = \left\| \frac{\omega^w_p(f, \tau)}{\widetilde{\omega}(\tau)} \right\|_{L^q((0, T], d \tau / \tau)}.
    $$
    This generalised quantity comes in handy when dealing with \textit{critical sewing} in $\S 4$. In particular, if $\widetilde{\omega}$ is taken to contain some factor that is logarithmic in $\tau$, then functions $f$ whose moduli $\omega^w_p(f, \tau)$ shrink too slowly as $\tau \rarr 0^+$ can also be considered. We write $f \in B^{\tilde{\omega}}_{p, q}(w)$ if $d(f, x_0) \in L^p(w)$ for some (and thus all) $x_0 \in E$ and if $[f]_{B^{\tilde{\omega}}_{p, q}(w)} < \infin$.
\end{definition}
    
\begin{definition}
    For measurable two and three-parameter maps $A: \bigtriangleup_2(0, T) \rarr V$ and $B: \bigtriangleup_3(0, T) \rarr V$, where $V$ is a finite-dimensional normed vector space, we define the moduli
    \begin{align*}
    &\Omega^w_p(A, \tau) = \sup_{0 < h \leq \tau} \left(
    \int^{T - h}_0 |A_{s, s + h}|^p w_h(s) \hspace{.2em} ds
    \right)^{1/p},
    \\
    &\overline{\Omega}^w_p(B, \tau) = \sup_{0 \leq \theta \leq 1} \sup_{0 < h \leq \tau} \left(
    \int^{T - h}_0 |B_{s, s + \theta h, s + h}|^p w_h(s) \hspace{.2em} ds
    \right)^{1/p}
    \end{align*}
    where in the case $p = \infin$ we use the standard essential-supremum convention.
    We set
    $$
    \| A \|_{\mathbb{B}^\alpha_{p, q}(w)} = \| \tau^{-\alpha} \Omega_p^w(A, \tau) \|_{L^q((0, T], d\tau/\tau)}, \qquad \|B\|_{\overline{\mathbb{B}}_{p, q}^\alpha(w)} = \|  \tau^{-\alpha} \overline{\Omega}_p^w(B, \tau) \|_{L^q((0, T], d\tau/\tau)}
    $$
    and if $\widetilde{\omega}: (0, T] \rarr (0, \infin)$ is a nondecreasing function we set
    $$
    \| A \|_{\mathbb{B}^{\tilde{\omega}}_{p, q}(w)} = \| \widetilde{\omega}(\tau)^{-1} \Omega_p^w(A, \tau) \|_{L^q((0, T], d\tau/\tau)}, \qquad \|B\|_{\overline{\mathbb{B}}_{p, q}^{\tilde{\omega}}(w)} = \|  \widetilde{\omega}(\tau)^{-1} \overline{\Omega}_p^w(B, \tau) \|_{L^q((0, T], d\tau/\tau)}.
    $$
    We define these norms not just for $0 < \alpha < 1$, but for all $\alpha > 0$, since later sewing defects have smoothness exponents greater than $1$. In the following path space definitions, however, we restrict $0 < \alpha < 1$.
    We write $A \in \mathbb{B}^\alpha_{p, q}(w)$ if $\| A \|_{\mathbb{B}^\alpha_{p, q}(w)} < \infin$, and we write $B \in \overline{\mathbb{B}}^\alpha_{p, q}(w)$ if $\| B \|_{\overline{\mathbb{B}}^\alpha_{p, q}(w)} < \infin$.
    Lastly, we define
    $$
    \mathbb{B}^{\tilde{\omega}}_{p, \infin; \circ}(w) = \left\{ A \in \mathbb{B}^{\tilde{\omega}}_{p, \infin}(w) : \lim_{\tau \rarr 0^+} \frac{\Omega^w_p(A, \tau)}{\widetilde{\omega}(\tau)} = 0 \right\}.
    $$
    This subspace of $\mathbb{B}^{\tilde{\omega}}_{p, \infin}(w)$ appears when we consider critical sewing.
\end{definition}

\begin{remark}[Quasi-Banach Structure]
    If the target spaces $E, V$ are Banach spaces then all the preceding path spaces are complete. In particular, when $p, q \geq 1$ then, after equipping them with the natural norm, these spaces are themselves Banach spaces, and if either $p, q < 1$, then the natural ``norm'' becomes a quasi-norm, and the spaces become quasi-Banach spaces. Either way, we use completeness freely throughout the rest of the framework.
\end{remark}

\begin{remark}[Parent Weight Used in Three-Point Norm]
    In the three-point norm $\overline{\Omega}^w_p$ we integrate against the parent-weight $w_h(s)$, instead of children weights $w_{\theta h}(s)$ or $w_{(1 - \theta)h}(s + \theta h)$ since the additive defect $\delta A_{s, u, t}$ relevant to sewing measures the ``error when splitting'' across the parent interval $[s, t]$.
\end{remark}

\subsection{Deweighting}
Friz and Seeger prove that Besov spaces embed into Hölder spaces at a small cost in the ``smoothness'' exponent. We prove an analogous \textit{deweighting} result: weighted Besov spaces embed into Besov spaces at a cost in the ``integrability'' exopnent.

\begin{theorem}[Deweighting]
\label{thm: Deweighting}
    Let $1 \leq r < \infin$, $w \in A_r$, and $0 < p < \infin$. Set $u = p/r$ and let $J = [a, b] \subset [0, T]$. Define
    $$
    c_J = \begin{cases}
        \sigma(J)^{(r-1)/p}, & r > 1, \\
        \left( \underset{J}{\textup{esssinf }} w \right)^{-1/p}, & r = 1.
    \end{cases}
    $$
    Then
    $$
    c_J \leq [w]_{A_r}^{1/p} \frac{|J|^{r/p}}{w(J)^{1/p}}.
    $$
    Let $(E, d)$ be a complete metric space and $V$ a finite-dimensional normed vector space. For every measurable path $f: J \rarr E$, two-parameter map $A: \bigtriangleup_2(J) \rarr V$, and three-parameter map $B: \bigtriangleup_3(J) \rarr V$, and every $0 < \tau \leq |J|$, we have
    \begin{align*}
        &\omega_u(f, \tau; J) \leq c_J \omega^w_p(f, \tau; J), \\
        &\Omega_u(A, \tau; J) \leq c_J \Omega^w_p(A, \tau; J), \\
        &\overline{\Omega}_u(B, \tau; J) \leq c_J \overline{\Omega}^w_p(B, \tau; J).
    \end{align*}
    Consequently, for every $0 < q \leq \infin$ and nondecreasing modulus $\widetilde{\omega}: (0, |J|] \rarr (0, \infin)$,
    \begin{align*}
        &[f]_{B^{\tilde{\omega}}_{u, q}(J)} \leq c_J [f]_{B^{\tilde{\omega}}_{p, q}(w; J)}, \\
        &\|A\|_{\mathbb{B}^{\tilde{\omega}}_{u, q}(J)} \leq c_J \|A\|_{\mathbb{B}^{\tilde{\omega}}_{p, q}(w; J)}, \\
        &\|B\|_{\overline{\mathbb{B}}^{\tilde{\omega}}_{u, q}(J)} \leq c_J \|B\|_{\overline{\mathbb{B}}^{\tilde{\omega}}_{p, q}(w; J)}.
    \end{align*}
    The same $c_J$ gives
    $$
    \|g\|_{L^u(J)} \leq c_J \|g\|_{L^p(w; J)}
    $$
    for every measurable map $g$ on $J$. When $p = \infin$, all preceding inequalities hold with $u = \infin$ and $c_J = 1$. 
\end{theorem}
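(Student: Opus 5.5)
The whole statement should follow from a single Hölder inequality, which I will apply uniformly to all the objects in the theorem.

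\emph{Step 1: the bound on $c_J$.} For $r>1$ I will apply the $A_r$ condition on $I=J$:
$$
\langle w\rangle_J\langle \sigma\rangle_J^{r-1}\le [w]_{A_r}.
$$
This gives $\sigma(J)^{r-1}\le [w]_{A_r}|J|^{r}/w(J)$, and raising to the power $1/p$ yields the claimed bound. For $r=1$, the $A_1$ definition gives $\langle w\rangle_J(\operatorname{essinf}_J w)^{-1}\le [w]_{A_1}$, so $c_J^p\le [w]_{A_1}|J|/w(J)$, which is the $r=1$ case of the same bound.

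\emph{Step 2: the core Hölder estimate.} For a nonnegative measurable $F$ on a measurable set $S$ and any positive weight $v$ on $S$, I claim
$$
\int_S F^{u}\,ds\le \Big(\int_S F^p v\,ds\Big)^{1/r}\Big(\int_S v^{1-r'}\,ds\Big)^{1/r'} .
$$
This follows from writing $F^u=(F^uv^{1/r})\,v^{-1/r}$ and applying Hölder with exponents $r,r'$. Taking $1/u$-th powers, and using $u/r'\cdot(1/u)=(r-1)/p$, gives
$$
\|F\|_{L^u(S)}\le \Big(\int_S v^{1-r'}\Big)^{(r-1)/p}\|F\|_{L^p(v;S)} .
$$
For $r=1$ this is simply $u=p$ together with $v\ge \operatorname{essinf} v$.

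Taking $S=J$ and $v=w$ gives the $L^u$ statement immediately.

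\emph{Step 3: the moduli.} Here $F(s)=d(f_s,f_{s+h})$, $|A_{s,s+h}|$ or $|B_{s,s+\theta h,s+h}|$, on $S=[a,b-h]$, and the weight is $v=w_h$ rather than $w$. The key observation is that $t\mapsto t^{1-r'}$ is convex, since $1-r'<0$. Jensen's inequality on the average defining $w_h(s)$ therefore gives
$$
w_h(s)^{1-r'}\le \frac1h\int_s^{s+h}\sigma .
$$
By Tonelli, exactly as in Lemma~\ref{lem: Shift and Scale Bounds for Muckenhoupt Weights}(iii),
$$
\int_a^{b-h} w_h^{1-r'}\,ds\le \sigma(J).
$$
For $r=1$ I use instead $w_h(s)\ge \operatorname{essinf}_J w$. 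Hence for each fixed $h$ (and each $\theta$) the unweighted $L^u$ integral is at most $c_J$ times the weighted $L^p$ integral. Taking suprema over $h\le\tau$ and over $\theta$ gives the three modulus inequalities.

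\emph{Step 4: the Besov seminorms.} These follow by dividing by $\widetilde\omega(\tau)$ and taking the $L^q(d\tau/\tau)$ quasi-norm, which is monotone. The membership condition $d(f,x_0)\in L^u$ follows from the $L^u$ bound in Step 2. The case $p=\infty$ is trivial, since the weight plays no role and $c_J=1$.

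\textbf{Main obstacle.} The main obstacle is Step 3: the moduli carry the averaged weight $w_h$ rather than $w$, so Step 2 cannot be applied directly. The Jensen and Tonelli argument is what transfers the dual-weight mass back to $\sigma(J)$. One must also check that the convexity exponent has the right sign, and that the intervals $[s,s+h]$ stay inside $J$.
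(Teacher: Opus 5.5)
Your proposal is correct and follows the paper's proof essentially step for step. The shared steps are: the $A_r$ (or $A_1$) condition on $J$ for the bound on $c_J$; Jensen for the convex map $x\mapsto x^{-1/(r-1)}$ together with Tonelli to get $\int_{J_h} w_h^{-1/(r-1)}\,ds\le\sigma(J)$; and H\"older applied to $g^u=(g^p w_h)^{1/r}w_h^{-1/r}$, followed by suprema over $h,\theta$ and the $L^q(d\tau/\tau)$ quasi-norm.

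The only blemish is cosmetic. After taking $1/u$-th powers, the exponent of $\int_S v^{1-r'}$ is $\frac{1}{r'}\cdot\frac{1}{u}=\frac{r-1}{p}$, not $\frac{u}{r'}\cdot\frac{1}{u}$ as you wrote.
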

\begin{proof}
    This result may be proven with Hölder's inequality followed by Jensen and Fubini. First, suppose $r > 1$ and $0 < p < \infin$. The $A_r$ condition gives
    $$
    \frac{w(J)}{|J|} \left( \frac{\sigma(J)}{|J|} \right)^{r - 1} \leq [w]_{A_r},
    $$
    after which a simple rearrangement gives
    $$
    c_J \leq [w]_{A_r}^{1/p} \frac{|J|^{r/p}}{w(J)^{1/p}}.
    $$

    For $0 < h < |J|$, set $J_h = [a, b - h]$ and
    $$
    \sigma_h(s) = \frac{1}{h} \int^{s + h}_{s} \sigma(t) \hspace{.2em} dt
    $$ for $s \in J_h$. Since $x \mapsto x^{-1/(r-1)}$ is convex on $(0, \infin)$, Jensen's inequality gives
    $$
    w_h(s)^{-1/(r-1)} = \left( \frac{1}{h} \int^{s + h}_{s} w(t) \hspace{.2em} dt \right)^{-1/(r-1)} \leq \sigma_h(s).
    $$
    Tonelli's theorem then yields
    $$
    \int^{b - h}_{b} \sigma_h(s) \hspace{.2em} ds = \frac{1}{h} \int^{b}_{a} \sigma(t) |s \in [a, b - h] : s \leq t \leq s + h| \hspace{.2em} dt \leq \sigma(J).
    $$
    Hence
    \begin{equation}
        \int_{J_h} w_h(s)^{-1/(r - 1)} \hspace{.2em} ds \leq \sigma(J). \label{eq: Deweighting Intermediate 1}
    \end{equation}
    Now, let $g: J_h \rarr [0, \infin]$ be measurable. Then Hölder's inequality with $r, \frac{r}{r-1}$ gives
    \begin{align*}
        \int_{J_h} g(s)^u \hspace{.2em} ds &= \int_{J_h} (g(s)^p w_h(s))^{1/r} w_h(s)^{-1/r} \hspace{.2em} ds
        \\ &\leq
        \left( \int_{J_h} g(s)^p w_h(s) \hspace{.2em} ds \right)^{1/r} \left( \int_{J_h} w_h(s)^{-1/(r-1)} \hspace{.2em} ds \right)^{(r-1)/r}
    \end{align*}
    Using \eqref{eq: Deweighting Intermediate 1} and raising to the power $1/u$ gives the intermediate inequality.
    \begin{equation}
        \|g\|_{L^u(J_h)} \leq c_J \|g\|_{L^p(w_h; J_h)}. \label{eq: Deweighting Intermediate 2}
    \end{equation}

    For a measurable path $f: J \rarr E$, applying \eqref{eq: Deweighting Intermediate 2} to $g(s) = d(f_s, f_{s + h})$ and taking the supremum over all $0 < h \leq \tau$ gives
    $$
    \omega_u(f, \tau; J) \leq c_J \omega^w_p(f, \tau; J).
    $$
    Similarly, applying \eqref{eq: Deweighting Intermediate 2} to $g(s) = \| A_{s, s + h} \|_V$ and taking the supremum over all $0 < h \leq \tau$ gives
    $$
    \Omega_u(A, \tau; J) \leq c_J \Omega^w_p(A, \tau; J).
    $$
    Finally, for $\theta \in [0, 1]$, applying \eqref{eq: Deweighting Intermediate 2} to $g(s) = \| B_{s, s + \theta h, s + h} \|_V$ and taking suprema over $0 < h \leq \tau$ and $0 \leq \theta \leq 1$ gives
    $$
    \overline{\Omega}_u(B, \tau; J) \leq c_J \overline{\Omega}^w_p(B, \tau; J).
    $$
    Dividing the preceding three inequalities by $\widetilde{\omega}$, which is valid since this function is strictly greater than $0$, then taking the $L^q((0, |J|], d\tau/\tau)$ quasi-norm gives the desired norm inequalities.

    The full form of \eqref{eq: Deweighting Intermediate 2}, that is, with $J$ instead of $J_h$, follows simply from Hölder's inequality and the $w^{1/r} \cdot w^{-1/r}$ trick.

    Now, suppose $r = 1$ and set $m_J = \text{esssinf }_{t \in J} w(t)$. The inequality
    $$
    c_J = m_J^{-1/p} \leq [w]_{A_r}^{1/p} \frac{|J|^{1/p}}{w(J)^{1/p}}
    $$
    follows from the $A_1$ condition. Since $u = p$ here, we directly get \eqref{eq: Deweighting Intermediate 1} for any measurable $g$, whence we get the desired estimates by the same substitutions as above.

    Finally, when $p = \infin$ the weight is ignored entirely anyways, so the weighted-space inequalities are trivial. Since the null sets of $w(t) \hspace{.2em} dt$ and $dt$ are the same (as $w > 0$ a.e), we get $\|g\|_{L^\infin(w; J)} = \|g\|_{L^\infin(J)}$, so all inequalities hold in this case too.
\end{proof}

\begin{corollary}[Deweighted Embeddings]
    Let $1 \leq r < \infin$, $w \in A_r$, and $0 < p < \infin$, $0 < q \leq \infin$. Let $\tilde{\omega}: (0, T] \rarr (0, \infin)$ be a nondecreasing function. Then for every interval $J \subset [0, T]$,
    $$
    B^{\tilde{\omega}}_{p, q}(w; J) \hookrightarrow B^{\tilde{\omega}}_{p/r, q}(J)
    $$
    continuously, with constant at most $c_J$. Additionally, for each integer $k \geq 1$,
    \begin{align*}
        &\mathbb{B}^{\tilde{\omega}^k}_{p/k, q/k}(w; J) \hookrightarrow \mathbb{B}^{\tilde{\omega}^k}_{p/(kr), q/k}(J), \\
        &\overline{\mathbb{B}}^{\tilde{\omega}^k}_{p/k, q/k}(w; J) \hookrightarrow \overline{\mathbb{B}}^{\tilde{\omega}^k}_{p/(kr), q/k}(J).
    \end{align*}
    continuously.
\end{corollary}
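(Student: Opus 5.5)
The corollary follows by specialising Theorem~\ref{thm: Deweighting}, so the plan is mostly bookkeeping of exponents. For the path statement, take $f\in B^{\tilde\omega}_{p,q}(w;J)$, apply the theorem on $J$ with $u=p/r$, and get the seminorm bound $[f]_{B^{\tilde\omega}_{p/r,q}(J)}\le c_J[f]_{B^{\tilde\omega}_{p,q}(w;J)}$. For the $L^p$ part of the norm, I would use the final assertion of the theorem with $g=d(f,x_0)$, which gives $\|d(f,x_0)\|_{L^{p/r}(J)}\le c_J\|d(f,x_0)\|_{L^p(w;J)}$. Combining the two bounds shows that $f$ lies in the unweighted space, with constant at most $c_J$, and that the inclusion is continuous. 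The restriction of $\tilde\omega$ to $(0,|J|]$ is still a positive nondecreasing function, so the theorem applies as stated.

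For the two- and three-parameter embeddings, fix $k\ge1$ and apply Theorem~\ref{thm: Deweighting} with integrability exponent $p/k$ in place of $p$, with $\tilde\omega^k$ in place of $\tilde\omega$, and with $q/k$ in place of $q$. The deweighted exponent is then $u=(p/k)/r=p/(kr)$, as required. The theorem places no restriction on $p$ beyond $0<p<\infty$ and on $q$ beyond $0<q\le\infty$, so quasi-norm ranges are allowed. Also, $\tilde\omega^k$ is positive and nondecreasing because $\tilde\omega$ is. Hence the norm inequalities for $\mathbb B$ and $\overline{\mathbb B}$ transfer directly, with constant
$$
c_J^{(p/k)}=\sigma(J)^{k(r-1)/p}\ \ (r>1),\qquad (\operatorname{essinf}_J w)^{-k/p}\ \ (r=1).
$$
This constant is $c_J^k$, and the theorem bounds it by $[w]_{A_r}^{k/p}|J|^{kr/p}w(J)^{-k/p}$. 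When $q=\infty$ we have $q/k=\infty$, and the supremum version of the theorem is the one used.

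The only step needing care is the constant: it is $c_J$ for the path space but $c_J^k$ (computed at exponent $p/k$) for the $k$-th level spaces. The statement only asserts continuity there, so it suffices to note that this constant is finite. Finiteness holds because $\sigma$ is locally integrable when $w\in A_r$ with $r>1$, and because $\operatorname{essinf}_J w>0$ for $w\in A_1$ by the $A_1$ condition. Nothing beyond the theorem is required.
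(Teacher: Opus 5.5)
Your proposal is correct and matches the paper, which simply says the corollary follows immediately from Theorem~\ref{thm: Deweighting}. Your argument supplies that bookkeeping: apply the theorem at exponent $p/k$, scale index $q/k$ and modulus $\tilde\omega^k$, and use its $L^p$ assertion for the zeroth-order term. Your observation that the level-$k$ constant is $c_J^k$ rather than $c_J$ is accurate and consistent with the statement, which claims only continuity at those levels.
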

\begin{proof}
    This follows immediately from Theorem~\ref{thm: Deweighting}.
\end{proof}

\begin{remark}
    Under the hypotheses of Theorem~\ref{thm: Deweighting}, the same deweighting holds if our choice of $w_h(s)$ is replaced by
    $$
    \rho_h(s) = w(s), \qquad \rho_h(s) = w(s + h), \qquad \rho_h(s) = \frac{w(s + h) + w(s)}{2}.
    $$
    This is addressed in Appendix A.
\end{remark}

The deweighting theorem allows us to apply the unweighted results of \cite{FrizSeeger2022} whenever the transferred indices satisfy their hypotheses. Applied on individual intervals, it also yields pointwise estimates that retain dependence on the local weighted mass. These conclusions do not, however, by themselves provide sewing and stability estimates in the original weighted Besov norms. Establishing such estimates, including control of the relevant remainder norms, is the purpose of the weighted theory developed in the sequel.

\subsection{Vacuity Criterion}
\begin{proposition}[Vacuity]
\label{prop: Vacuity Criterion}
    Let $1 \leq r < \infin$, $w \in A_r$, and $0 < p \leq \infin$, and define
    $$
    \nu(p, r) = 1 \lor \frac{r}{p},
    $$
    with $r/\infin = 0$. Suppose $f: [0, T] \rarr E$ is a measurable metric space--valued satisfying
    $$
    \lim_{\tau \rarr 0^+} \frac{\omega^w_p(f, \tau)}{\tau^{\nu(p, r)}} = 0
    $$
    Then $f$ has a version that is constant on $[0, T]$. In particular, this conclusion holds if
    $$
    \left\| \tau^{-\gamma} \omega^w_p(f, \tau) \right\|_{L^q((0, T], d\tau/\tau)} < \infin
    $$ and either
    $$
    \gamma > \nu(p, r) \hspace{.5em} \textup{ and } \hspace{.5em} 0 < q \leq \infin, \qquad \textup{ or } \qquad \gamma = \nu(p, r) \hspace{.5em} \textup{ and } \hspace{.5em} 0 < q < \infin.
    $$
\end{proposition}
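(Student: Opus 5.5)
We use the subdivision estimate of Lemma~\ref{lem: Shift and Scale Bounds for Muckenhoupt Weights}(v) to show that the modulus at scale $h$ is controlled by the modulus at scale $h/N$, with a gain that beats $N^{\nu(p,r)}$. Letting $N\to\infty$ then forces $\omega^w_p(f,h)=0$ for every $h$, and constancy follows.

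\textbf{Case $p<\infty$.} Fix $0<h\le T$ and an integer $N\ge 2$. For each $s\in[0,T-h]$, split $[s,s+h]$ into $N$ equal children $I_i=[s+(i-1)h/N,\,s+ih/N]$. For a metric-space-valued path we cannot sum vectors, so we use the triangle inequality:
$$
d(f_s,f_{s+h})\le \sum_{i=1}^N x_i, \qquad x_i=d\bigl(f_{s+(i-1)h/N},f_{s+ih/N}\bigr).
$$
The proof of Lemma~\ref{lem: Shift and Scale Bounds for Muckenhoupt Weights}(v) only used the scalars $x_i$, so it applies with $a=p\lor r$ and gives
$$
w_h(s)\,d(f_s,f_{s+h})^p\le [w]_{A_r}N^{a-1}\sum_{i=1}^N w_{h/N}\bigl(s+(i-1)h/N\bigr)\,x_i^p .
$$
Integrate in $s$ over $[0,T-h]$. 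For each $i$, substitute $s'=s+(i-1)h/N$; the range of $s'$ lies inside $[0,T-h/N]$. Each of the $N$ terms is therefore at most $\omega^w_p(f,h/N)^p$, and
$$
\omega^w_p(f,h)^p\le [w]_{A_r}N^{a}\,\omega^w_p(f,h/N)^p .
$$
Taking $p$-th roots and using $a/p=1\lor r/p=\nu$, we get
$$
\omega^w_p(f,h)\le [w]_{A_r}^{1/p}N^{\nu}\omega^w_p(f,h/N)=[w]_{A_r}^{1/p}\,h^{\nu}\,\frac{\omega^w_p(f,h/N)}{(h/N)^{\nu}}.
$$
By hypothesis the last ratio tends to $0$ as $N\to\infty$. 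Hence $\omega^w_p(f,h)=0$ for every $h$, and so $d(f_s,f_{s+h})=0$ for a.e.\ $s$, since $w_h>0$.

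\textbf{Case $p=\infty$.} Here $\nu=1$, and the same chaining without any weight gives $\omega_\infty(f,h)\le N\,\omega_\infty(f,h/N)$, which reaches the same conclusion.

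\textbf{Passing to a constant version.} We know that for every $h$, $f_{s+h}=f_s$ for a.e.\ $s\in[0,T-h]$. Apply Fubini to the set $\{(s,t):f_s\ne f_t\}$, which is measurable since $f$ is measurable. Its sections in $h=t-s$ are null, so the set itself is null. Consequently, for a.e.\ $s$ we have $f_t=f_s$ for a.e.\ $t$. Fix such an $s_0$; then $f=f_{s_0}$ a.e., which is the desired version. This step needs a mild separability assumption on the range of $f$, or we can argue via $d(f_s,x_0)$; I expect this to be the main technical nuisance.

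\textbf{The ``in particular'' statement.} It remains to show $\tau^{-\nu}\omega^w_p(f,\tau)\to0$ under either condition. Since $\tau\mapsto\omega^w_p(f,\tau)$ is nondecreasing, for $\tau\in[\epsilon/2,\epsilon]$ we have $\omega^w_p(f,\epsilon/2)\le \omega^w_p(f,\tau)$, which gives
$$
\omega^w_p(f,\epsilon/2)\,\epsilon^{-\gamma}\lesssim \Bigl\|\tau^{-\gamma}\omega^w_p(f,\tau)\Bigr\|_{L^q([\epsilon/2,\epsilon],d\tau/\tau)} .
$$
\begin{itemize}
\item If $\gamma>\nu$, the right side is bounded by the full norm, so $\omega^w_p(f,\epsilon/2)\lesssim\epsilon^{\gamma}=o(\epsilon^\nu)$.
\item If $\gamma=\nu$ and $q<\infty$, the right side is the tail of a convergent integral over a shrinking interval, so it tends to $0$.
\end{itemize}
Either way the limit hypothesis of the first part holds.
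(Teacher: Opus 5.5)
Your argument is correct, but it takes a different route from the paper.

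\textbf{The paper's route.} The paper deweights. Theorem~\ref{thm: Deweighting} gives $\omega_u(f,\tau)\le c_{[0,T]}\,\omega^w_p(f,\tau)$ with $u=p/r$. Since $1\lor u^{-1}=\nu(p,r)$, the hypothesis transfers verbatim to the unweighted modulus, and \cite[Lemma 2.1]{FrizSeeger2022} then supplies the constant version. The case $p=\infty$ is already unweighted. The ``in particular'' part is handled by the same monotonicity argument you give, on $[\tau,2\tau]$ instead of $[\epsilon/2,\epsilon]$.

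\textbf{Your route.} You prove the weighted statement directly. The joint subdivision estimate of Lemma~\ref{lem: Shift and Scale Bounds for Muckenhoupt Weights}(v) is proved using only the scalars $x_i=\|z_i\|$, so it applies to metric increments after the triangle inequality. This gives the self-improving bound $\omega^w_p(f,h)\le[w]_{A_r}^{1/p}N^{\nu}\omega^w_p(f,h/N)$. You then carry out the Fubini passage to a constant version yourself.

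\textbf{Comparison.} The paper's proof is shorter and reuses the unweighted result, but it delegates the constancy step to the citation. Yours is self-contained and avoids deweighting altogether. It also shows that the vacuity threshold $1\lor r/p$ is exactly the exponent produced by the joint child estimate that later drives weighted sewing. Using part (v) rather than part (i) is essential here: comparing parent and child weights separately would only give the larger exponent $(1\lor\frac1p)+\frac{r-1}{p}$, which does not reach the stated threshold. The separability caveat you flag for the Fubini step is not a defect of your route. It is equally implicit in the paper, since it is already needed for $s\mapsto d(f_s,f_{s+h})$ to be measurable in the definition of $\omega^w_p$.
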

\begin{proof}
    We deweight and use the vacuity result of \cite{FrizSeeger2022}.
    Suppose first that $p < \infin$ and set $u = p/r$. Theorem~\ref{thm: Deweighting} gives
    $$
    \omega_u(f, \tau) \leq c_{[0, T]} \omega^w_p(f, \tau).
    $$
    Since $1 \lor u^{-1} = \nu(p, r)$, our hypothesis gives
    $$
    \lim_{\tau \rarr 0^+} \frac{\omega_u(f, \tau)}{\tau^{1 \lor u^{-1}}} = 0.
    $$
    Then \cite[Lemma 2.1]{FrizSeeger2022} gives a constant version of $f$. When $p = \infin$ the weighted modulus is already unweighted, so that lemma applies directly.
    For the final assertions, suppose $q < \infin$. Monotonicity of $\omega^w_p(f, \cdot)$ gives, for $0 < \tau \leq T/2$,
    $$
    \left( \frac{\omega^w_p(f, \tau)}{\tau^\gamma} \right)^q \lesssim \int^{2\tau}_{\tau} \left( \frac{\omega^w_p(f, u)}{u^\gamma} \right)^q \hspace{.2em} \frac{du}{u}.
    $$
    The right-hand side goes to $0$. Multiplying by $\tau^{\gamma - \nu(p, r)}$ then gives the desired limit whenever $\gamma \geq \nu(p, r)$. If $q = \infin$ and $\gamma > \nu(p, r)$ then
    $$
    \frac{\omega^w_p(f, \tau)}{\tau^{\nu(p, r)}} \leq \tau^{\gamma - \nu(p, r)} \sup_{0 < u \leq T} \frac{\omega^w_p(f, u)}{u^\gamma} \rarr 0.
    $$
\end{proof}

\begin{remark}
    The threshold $\nu = 1 \lor \frac{r}{p}$ also governs the subdivision estimate later used in weighted sewing. Estimating the child-intervals collectively through Hölder's inequality gives a coefficient of $n^{\nu}$. Estimating them separately instead gives us the potentially larger exponent $(1 \lor \frac{1}{p}) + \frac{r-1}{p}$, which is in some sense the ``naive'' threshold obtained through direct weighted analogues of the proofs of \cite{FrizSeeger2022}.
\end{remark}

\begin{proposition}
\label{prop: Two-Parameter Cocycle Result}
    Let $V$ be a finite-dimensional vector space and $A: \bigtriangleup_2(0, T) \rarr V$ a measurable map. If
    $$
    \delta A_{s, u, t} = 0
    $$ for almost every $(s, u, t) \in \bigtriangleup_3(0, T)$ then there exists a measurable map $F: [0, T] \rarr V$ such that $A_{s, t} = F_t - F_s$ for almost every $(s, t) \in \bigtriangleup_2(0, T)$.
    Moreover, if for some $0 < p \leq \infin$,
    $$
    \overline{\Omega}^w_p(\delta A, \tau) = 0 \qquad \textup{ for all } 0 < \tau \leq T,
    $$
    then $F$ satisfies
    $$
    \Omega^w_p(A - \delta F, \tau) = 0 \qquad \textup{ for all } 0 < \tau \leq T.
    $$
\end{proposition}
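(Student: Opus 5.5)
The plan is a Fubini argument to choose a good basepoint, as in the unweighted cocycle lemma of \cite{FrizSeeger2022}, followed by a translation of the a.e.\ identity into the weighted moduli. Let $N\subset\bigtriangleup_3(0,T)$ be the null set off which $\delta A_{s,u,t}=0$. By Tonelli, for almost every $s_0\in[0,T]$ the section $N_{s_0}$ is null. To have a single basepoint that works for all pairs $(s,t)$, we regard $N$ as a set in the coordinates $(x,s,t)$ with $x$ free relative to $s<t$. Concretely, we set
$$
N'=\{(x,s,t): \text{the ordered triple from }\{x,s,t\}\text{ violates additivity}\},
$$
which is null because it is a finite union of permuted copies of $N$. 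Choose $x_0$ with $N'_{x_0}$ null, and define
$$
F_t=\begin{cases} A_{x_0,t}, & t\ge x_0,\\ -A_{t,x_0}, & t<x_0.\end{cases}
$$
This $F$ is measurable, since sections of measurable maps are measurable. For a.e.\ $(s,t)$ with $s\le t$, the additivity relation for the triple ordered from $\{x_0,s,t\}$ holds. In the case $x_0\le s\le t$ it reads $A_{x_0,t}=A_{x_0,s}+A_{s,t}$. In the case $s\le x_0\le t$ it reads $A_{s,t}=A_{s,x_0}+A_{x_0,t}$. In the case $s\le t\le x_0$ it reads $A_{s,x_0}=A_{s,t}+A_{t,x_0}$. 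In each case rearranging gives $A_{s,t}=F_t-F_s$.

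For the second assertion, first treat $p<\infty$. For fixed $h$ and $\theta$, the hypothesis $\overline\Omega^w_p(\delta A,\tau)=0$ gives $\int|\delta A_{s,s+\theta h,s+h}|^p w_h(s)\,ds=0$. Since $w_h(s)>0$ for every $s$ ($w>0$ a.e.), this forces $\delta A_{s,s+\theta h,s+h}=0$ for a.e.\ $s$, for every $h,\theta$. The change of variables $(s,h,\theta)\mapsto(s,s+\theta h,s+h)$ has Jacobian $h>0$. Hence $\delta A=0$ a.e.\ on $\bigtriangleup_3$, and the first part yields $F$ with $A=\delta F$ a.e.\ on $\bigtriangleup_2$. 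The case $p=\infty$ is identical, using the esssup convention.

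The main obstacle is passing from ``a.e.\ in $(s,t)$'' to ``for every $h$, a.e.\ in $s$''. We need this because $\Omega^w_p(A-\delta F,\tau)$ takes a supremum over every single $h$, and a null set in the plane can contain whole diagonals $\{t-s=h\}$. My first attempt is to exploit the freedom in the basepoint. Fix $h$; for a.e.\ $x_0$, apply Fubini to $N'$ restricted to triples $(x_0,s,s+h)$. This needs $N'$ to be null along each diagonal slice $\{(x,s,s+h)\}$, which is two-dimensional.

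This is exactly what the full hypothesis supplies, since it holds for \emph{every} $h$ and $\theta$, not just almost every. For fixed $h$ and $u\in[s,s+h]$ we write $u=s+\theta h$, and the hypothesis gives $\delta A_{s,u,s+h}=0$ for a.e.\ $(s,\theta)$, hence for a.e.\ $(s,u)$ with $s\le u\le s+h$.

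So the construction is as follows. Take a countable dense set of $h$'s and pick $x_0$ good for all of them simultaneously (a countable intersection of full-measure sets). Then $A_{s,s+h}=F_{s+h}-F_s$ for a.e.\ $s$ and each $h$ in the dense set. To extend to all $h$, we fix $h$ and use the three-point identity with the triple $(s,x_0,s+h)$ for a.e.\ $s$. This directly gives $A_{s,s+h}=A_{s,x_0}+A_{x_0,s+h}$ for $s<x_0<s+h$, and the other orderings are handled similarly using $\theta$ outside the middle. A cleaner variant is to redefine $F$ by averaging $F^{(x_0)}$ over $x_0$, using the fact that the differences $F^{(x_0)}-F^{(x_1)}$ are a.e.\ constant.

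Once $A_{s,s+h}=\delta F_{s,s+h}$ for a.e.\ $s$ and every $h$, each integral in $\Omega^w_p(A-\delta F,\tau)$ vanishes, giving the claim.
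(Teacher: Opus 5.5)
Your first part (a Fubini-chosen basepoint, the piecewise definition of $F$, and the check of the three orderings) is essentially the paper's argument. One small correction: for Lebesgue-measurable $A$ only almost every section is measurable, so $x_0$ must also be chosen from that set, as the paper does. Deriving $\delta A=0$ a.e.\ on $\bigtriangleup_3$ from the slice hypothesis via the Jacobian $h$ is fine, and you identify the real difficulty correctly.

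The step you use to overcome that difficulty does not work. With $F=F^{(x_0)}$ built from the basepoint $x_0$, one has identically, for $s<x_0<s+h$,
$\delta A_{s,x_0,s+h}=A_{s,s+h}+F_s-F_{s+h}=(A-\delta F)_{s,s+h}$, and the same holds up to sign for the other two orderings. So ``using the three-point identity with the triple $(s,x_0,s+h)$ for a.e.\ $s$'' restates the conclusion; it is not a consequence of the hypothesis. Along this family the splitting parameter $\theta=(x_0-s)/h$ moves with $s$. The hypothesis only controls $\delta A_{s,s+\theta h,s+h}$ for a.e.\ $s$ at \emph{fixed} $(h,\theta)$, and your curve meets each such line in at most one point. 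Equivalently, for each $h$ the good basepoints form a full-measure set that depends on $h$. With uncountably many $h$, nothing guarantees that your fixed $x_0$ lies in all of them. The countable dense set of lengths does not help, because a merely measurable $A$ has no continuity in $h$. The averaging variant is not a repair as stated either: $x\mapsto F^{(x)}_t$ need not be integrable, and you do not say why the averaged path should be good for every $h$.

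The missing idea is that you never need a basepoint adapted to $h$. The paper applies the slice-wise cocycle identity to $R=A-\delta F$ itself, which vanishes a.e.\ on $\bigtriangleup_2$ and satisfies $\delta R=\delta A$. For fixed $h$, $R_{s,s+h}=R_{s,s+\theta h}+R_{s+\theta h,s+h}$ for a.e.\ $(s,\theta)$. The maps $(s,\theta)\mapsto(s,s+\theta h)$ and $(s,\theta)\mapsto(s+\theta h,s+h)$ have Jacobian of modulus $h$, so both child terms vanish for a.e.\ $(s,\theta)$. Fubini then gives $R_{s,s+h}=0$ for a.e.\ $s$, for every $h$, with the single $F$ from the first part.

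Your own framework can also be completed, at greater length. For generic $x_0$ and a.e.\ $x_1$, the difference $F^{(x_0)}-F^{(x_1)}$ is constant off a null set $Z$ of times, so the two paths have the same increments over $[s,s+h]$ for $s\notin Z\cup(Z-h)$. For each $h$, pick an $x_1$, depending on $h$, that is good both for $h$ and for this comparison.
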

\begin{proof}
    We follow here the argument of \cite[Lemma 2.9]{FrizSeeger2022}.
    We extend $A$ to $[0, T]^2$ by defining a map $\hat{A}$ with
    $$
    \hat{A}_{s, t} = \begin{cases}
        A_{s, t}, & s < t,
        \\
        -A_{t, s}, & s > t,
        \\
        0, & s = t.
    \end{cases}
    $$
    The assumption $\delta A = 0$ a.e on $\bigtriangleup_3(0, T)$ paired with antisymmetry gives
    $$
    \hat{A}_{s, t} = \hat{A}_{s, a} + \hat{A}_{a, t}
    $$ for a.e $(s, a, t) \in [0, T]^3$.
    Fubini then allows us to choose an $a \in (0, T)$ such that $t \mapsto \hat{A}_{a, t}$ is measurable and $\hat{A}_{s, t} = \hat{A}_{s, a} + \hat{A}_{a, t}$ for a.e $(s, t) \in [0, T]^2$. Set $F_t = \hat{A}_{a, t}$, then antisymmetry gives
    $$
    \hat{A}_{s, t} = -\hat{A}_{a, s} + \hat{A}_{a, t} = F_t - F_s
    $$ for a.e $(s, t) \in [0, T]^2$. Restricting to $s < t$ gives $A_{s, t} = F_t - F_s$ for a.e $(s, t) \in \bigtriangleup_2(0, T)$.
    For the additional assertion, set $R = A - \delta F$. We know $R = 0$ a.e on $\bigtriangleup_2(0, T)$. Since $\delta R = \delta A$, the stronger hypothesis gives, for each $0 < h < T$ and $0 < \theta < 1$,
    $$
    R_{s, s + h} = R_{s, s + \theta h} + R_{s + \theta h, s + h}
    $$ for a.e $s \in [0, T - h]$. For fixed $h$, by performing the changes of variable
    $$
    (s, \theta) \mapsto (s, s + \theta h), \qquad (s, \theta) \mapsto (s + \theta h, s + h),
    $$
    the terms $R_{s, s + \theta h}, R_{s + \theta h, s + h}$ vanish for a.e $(s, \theta)$. Fubini's theorem then gives $R_{s, s + h} = 0$ for a.e $s$, for each fixed $h$, as desired.
\end{proof}

\subsection{A Dyadic Estimate}
\begin{proposition}
\label{prop: A Dyadic Estimate for Weighted Besov Paths}
    Let $0 < \alpha < 1$, $0 < p < \infin$, $0 < q \leq \infin$, and $w \in A_r$. If
    $$
    \alpha > \frac{r-1}{p}
    $$ then
    $$
    [f]_{B^\alpha_{p, q}(w)} \asymp \| \{h_n^{-\alpha} D_n f\}_{n \geq 1} \|_{\ell^q},
    $$ with constants depending on $\alpha, p, q, [w]_{A_r}$, where
    $$
    h_n = 2^{-n} T, \qquad D_n f = \left( \int^{T - h_n}_0 d(f_s, f_{s + h_n})^p w_{h_n}(s) \hspace{.2em} ds \right)^{1/p}
    $$
\end{proposition}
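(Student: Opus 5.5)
The plan is to prove the two inequalities separately. The lower bound $\|\{h_n^{-\alpha}D_nf\}\|_{\ell^q}\lesssim [f]_{B^\alpha_{p,q}(w)}$ is elementary. Since $D_nf\le \omega^w_p(f,\tau)$ for every $\tau\in[h_n,h_{n-1}]$, and $\tau\asymp h_n$ on that interval,
$$
h_n^{-\alpha}D_nf\lesssim \Big(\int_{h_n}^{h_{n-1}}\big(\tau^{-\alpha}\omega^w_p(f,\tau)\big)^q\,\frac{d\tau}{\tau}\Big)^{1/q}.
$$
Summing over $n\ge1$ gives the claim, with the obvious modification when $q=\infty$. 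This direction uses neither the weight nor the restriction on $\alpha$.

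For the upper bound, the key step is to control $\omega^w_p(f,\tau)$ by dyadic data. Fix $h\in(h_{m+1},h_m]$. I would write $h=\sum_{j>m}\epsilon_j h_j$ with binary digits $\epsilon_j\in\{0,1\}$. In practice it is cleaner to write $h=2^{-m-1}T\cdot\lambda$ with $\lambda\in(1,2]$ and decompose $[s,s+h]$ greedily into consecutive dyadic-length pieces: at most one piece of each length $h_j$, $j\ge m+1$, plus a remainder tending to $0$. Continuity in measure (a.e. $s$) lets the remainder vanish in the $L^p$ limit. The triangle inequality then gives
$$
d(f_s,f_{s+h})\le\sum_{j>m}\epsilon_j\, d\big(f_{s+\beta_j},f_{s+\beta_j+h_j}\big),
$$
with shifts $\beta_j\in[0,h)$.

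Next I would raise this to the $p$-th power and integrate against $w_h(s)$. To avoid the lossy piece-by-piece comparison, I would use Lemma~\ref{lem: Shift and Scale Bounds for Muckenhoupt Weights}(v). It does not apply verbatim, since the pieces have unequal lengths $\lambda_j=h_j/h\asymp 2^{m-j}$, but its general form does. It gives
$$
w_h(s)\,d(f_s,f_{s+h})^p\le [w]_{A_r}\sum_j \lambda_j^{1-a}\,\langle w\rangle_{I_j}\,d(\cdots)^p,
$$
where $a=\max\{p,r\}$ and $\langle w\rangle_{I_j}=w_{h_j}(s+\beta_j)$ exactly. After the change of variables $s\mapsto s+\beta_j$, each term is at most $D_jf^p$, up to the boundary range, which is harmless because the integrals only shrink. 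Hence
$$
\omega^w_p(f,h)^p\lesssim \sum_{j>m}2^{(j-m)(a-1)}D_jf^p.
$$
This exponent is too crude when $a=p>1$. For $p\ge1$ I would instead take $p$-th roots and use Minkowski, weighting each piece via the first inequality of Lemma~\ref{lem: Shift and Scale Bounds for Muckenhoupt Weights}(i) for the comparison $w_h\le [w]_{A_r}(h/h_j)^{r-1}w_{h_j}$ after shifting, with Lemma~\ref{lem: Shift and Scale Bounds for Muckenhoupt Weights}(ii). That yields
$$
\omega^w_p(f,h_m)\lesssim\sum_{j>m}2^{(j-m)(r-1)/p}D_jf
$$
(for $p<1$, the same with $p$-th powers). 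The standing hypothesis $\alpha>(r-1)/p$ is exactly what makes this kernel summable against $h_j^{-\alpha}$. Indeed,
$$
h_m^{-\alpha}\omega^w_p(f,h_m)\lesssim\sum_{j>m}2^{-(j-m)(\alpha-(r-1)/p)}\,h_j^{-\alpha}D_jf,
$$
a discrete convolution with a geometrically decaying kernel. Young's inequality for $\ell^q$ (or the $\min(1,p,q)$-triangle inequality in the quasi-Banach range) then bounds the $\ell^q$ norm of the left side by $\|\{h_j^{-\alpha}D_jf\}\|_{\ell^q}$. Finally, monotonicity of $\omega^w_p(f,\cdot)$ converts the $L^q(d\tau/\tau)$ norm into the dyadic sum over $m\ge0$. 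The $m=0$ term, i.e. scales near $T$, is covered by the same bound, and the constant depends only on $\alpha,p,q,[w]_{A_r}$.

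The main obstacle is the weight comparison in the upper bound. The pieces sit at shifted basepoints with lengths much smaller than $h$, so one must compare $w_h(s)$ with $w_{h_j}(s+\beta_j)$. The available bound, Lemma~\ref{lem: Shift and Scale Bounds for Muckenhoupt Weights}(i) combined with (ii), costs exactly $(h/h_j)^{r-1}$; this is where the threshold $(r-1)/p$ enters, and one must check that (ii) does not add a further unbounded factor. It does not, because the shift satisfies $\beta_j\le h$ and the comparison is done inside the single parent interval $[s,s+h]$, so $\lambda$ in (ii) stays bounded. A secondary technical point is justifying the a.e. telescoping with infinitely many digits. I would avoid this by truncating at a finite level $N$ and controlling the leftover increment $d(f_{s+h'},f_{s+h})$, with $h-h'\le h_N$, through the same estimate, letting $N\to\infty$ via monotone convergence of the partial dyadic sums.
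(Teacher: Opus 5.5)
Your argument is the paper's proof. The lower bound comes from monotonicity of the modulus on $[h_n,2h_n]$. The upper bound comes from a binary expansion of $h\le h_m$, the comparison $w_h(s)\le[w]_{A_r}(h/h_j)^{r-1}w_{h_j}(t)$ for each dyadic piece $[t,t+h_j]\subset[s,s+h]$, Minkowski (or $p$-subadditivity when $p<1$), and a discrete Young inequality. Its kernel $2^{-(j-m)(\alpha-(r-1)/p)}$ is summable exactly because $\alpha>(r-1)/p$. Your choice of the piece-by-piece comparison over Lemma~\ref{lem: Shift and Scale Bounds for Muckenhoupt Weights}(v) matches the paper. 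You also do not need part (ii): part (i) is stated for arbitrary $J\subset I$, so it compares $w_h(s)$ directly with $w_{h_j}(s+\beta_j)$.

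The one step that fails as written is your preferred justification of the infinite telescoping. Truncating at level $N$ leaves the increment $d(f_{s+h'_N},f_{s+h})$ over an interval of non-dyadic length $h-h'_N<h_N$. Bounding it ``through the same estimate'' either presupposes control of the weighted modulus at small non-dyadic scales, which is what you are proving. Or, via the weight comparison, it costs a factor $(h/(h-h'_N))^{(r-1)/p}$ that blows up as $N\to\infty$. Monotone convergence handles the dyadic partial sums but says nothing about this leftover term.

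You also do not need the leftover to vanish in $L^p(w_h)$. Instead, as in the paper, use continuity of translation in measure: $s\mapsto d(f_{s+h'_N},f_{s+h})$ tends to zero in measure. Pass to an almost everywhere convergent subsequence to obtain
$$
d(f_s,f_{s+h})\le\sum_{j>m}\epsilon_j\,d(f_{s+\beta_j},f_{s+\beta_j+h_j})
\qquad\text{for a.e. } s,
$$
and then apply Fatou's lemma, or monotone convergence with Minkowski, to the weighted integral.
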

\begin{proof}
    This is precisely a weighted adaptation of \cite[Lemma 2.2]{FrizSeeger2022}.
    Set
    $$
    h_n=2^{-n}T,\qquad D_nf=\left(\int_0^{T-h_n} d(f_s,f_{s+h_n})^p w_{h_n}(s)\,ds\right)^{1/p}, \qquad n\ge1,
    $$
    and write
    $$
    \rho=\min\{1,p\}, \qquad \beta=\frac{r-1}{p}.
    $$

    For the easier direction, monotonicity of the modulus gives
    $$
    D_nf\leq \omega_p^w(f,\tau) \qquad\text{whenever }h_n\leq\tau\le2h_n.
    $$
    Integrating over these scale intervals, or taking a supremum when
    $q=\infty$, yields
    $$
    \left\|\{h_n^{-\alpha}D_nf\}_{n\ge1}\right\|_{\ell^q}
    \lesssim [f]_{B^\alpha_{p,q}(w)}.
    $$

    For the converse, fix $0<h\leq h_n$ and expand $h$ in binary:
    $$
    h=\sum_{j\ge n}\varepsilon_jh_j,
    \qquad \varepsilon_j\in\{0,1\}.
    $$
    Telescope the increment over $[s,s+h]$ along the corresponding
    consecutive dyadic-length subintervals. Whenever
    $[t,t+h_j]\subset[s,s+h]$, Lemma~\ref{lem: Standard Muckenhoupt Weights Properties}(i) gives
    $$
    w_h(s)
    \leq [w]_{A_r}\left(\frac{h}{h_j}\right)^{r-1}w_{h_j}(t)
    \leq [w]_{A_r}2^{(j-n)(r-1)}w_{h_j}(t).
    $$
    Thus every child increment can be estimated using its own interval
    weight. Applying Minkowski's inequality when $p\geq 1$, or
    $p$-subadditivity when $p<1$, and then changing variables in each
    child integral, gives
    $$
    \omega_p^w(f,h_n)^\rho
    \lesssim
    \sum_{j \geq n}2^{(j-n)\rho\beta}(D_jf)^\rho.
    $$
    For merely measurable $f$, the infinite telescoping step is justified
    by convergence in measure of translated paths, followed by an
    almost-everywhere convergent subsequence and Fatou's lemma.

    Multiplying by $h_n^{-\alpha\rho}$, we obtain
    $$
    \bigl(h_n^{-\alpha}\omega_p^w(f,h_n)\bigr)^\rho
    \lesssim
    \sum_{k\ge0}
    2^{-k\rho(\alpha-\beta)}
    \bigl(h_{n+k}^{-\alpha}D_{n+k}f\bigr)^\rho.
    $$
    Since $\alpha>\beta$, the geometric kernel is summable.
    Young's inequality in $\ell^{q/\rho}$ when $q/\rho\ge1$,
    and subadditivity when $q/\rho<1$, therefore yield
    $$
    \left\|\{h_n^{-\alpha}\omega_p^w(f,h_n)\}_{n\ge1}\right\|_{\ell^q}
    \lesssim
    \left\|\{h_n^{-\alpha}D_nf\}_{n\ge1}\right\|_{\ell^q}.
    $$
    The case $q=\infty$ follows by taking suprema.

    Finally, split the scale integral into dyadic intervals and use
    monotonicity of the modulus. The largest scales
    $\tau\in[T/2,T]$ are controlled by the same binary-expansion argument,
    starting with $h_1=T/2$. This gives
    $$
    [f]_{B^\alpha_{p,q}(w)}
    \lesssim
    \left\|\{h_n^{-\alpha}D_nf\}_{n\ge1}\right\|_{\ell^q}.
    $$
\end{proof}

\subsection{Composition}
We conclude \S 2 with the basic composition estimate needed for later fixed-point arguments.
\begin{proposition}[Hölder Composition Estimate]
\label{prop: Hölder Composition Estimate}
    Let $0 < \delta \leq 1$, $0 < \alpha < 1$, $0 < p, q \leq \infin$, and let $F: E \rarr E'$ be a $\delta$-Hölder map of metric spaces. Then for every $Y \in B^\alpha_{p, q}(w)$ we have
    $$
    [F(Y)]_{B^{\delta \alpha}_{p/\delta, q/\delta}(w)} \leq [F]_{C^\delta} [Y]^\delta_{B^\alpha_{p, q}(w)},
    $$ with the usual convention when either $p, q = \infin$. Moreover,
    $$
    [F(Y)]_{B^{\delta \alpha}_{p, q/\delta}(w)} \leq [F]_{C^\delta} w([0, T])^{(1 - \delta)/p} [Y]^\delta_{B^\alpha_{p, q}(w)}.
    $$
    The latter form is the direct weighted analogue of \textup{\cite[Equation (2.2)]{FrizSeeger2022}} and is the form used in the arguments of \S5, 6.
\end{proposition}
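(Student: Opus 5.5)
The estimate is pointwise in the increments, so I will work directly at the level of the moduli and then pass to the Besov quasi-norms. For fixed $0<h\le\tau$ and $s\in[0,T-h]$, the $\delta$-Hölder property gives
$$
d'(F(Y_s),F(Y_{s+h}))\le [F]_{C^\delta}\, d(Y_s,Y_{s+h})^\delta .
$$
\emph{First estimate, $p<\infty$.} Raise this to the power $p/\delta$, multiply by $w_h(s)$, and integrate over $[0,T-h]$:
$$
\int_0^{T-h} d'(F(Y_s),F(Y_{s+h}))^{p/\delta} w_h(s)\,ds \le [F]_{C^\delta}^{p/\delta}\int_0^{T-h} d(Y_s,Y_{s+h})^p w_h(s)\,ds .
$$
Taking the $\delta/p$-th root and the supremum over $h\le\tau$ gives
$$
\omega^w_{p/\delta}(F(Y),\tau)\le [F]_{C^\delta}\,\omega^w_p(Y,\tau)^\delta .
$$
Dividing by $\tau^{\delta\alpha}$, the integrand becomes $\bigl(\tau^{-\alpha}\omega^w_p(Y,\tau)\bigr)^\delta$. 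Its $L^{q/\delta}(d\tau/\tau)$ quasi-norm is exactly $[Y]_{B^\alpha_{p,q}(w)}^\delta$, since $\|g^\delta\|_{L^{q/\delta}}=\|g\|_{L^q}^\delta$. This gives the first inequality. The case $p=\infty$ is immediate from the essential-supremum definition, and $q=\infty$ follows by taking suprema instead.

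\emph{Second estimate.} Here I keep the integrability exponent at $p$ and pay with the weighted mass. Start from
$$
\int_0^{T-h} d'(F(Y_s),F(Y_{s+h}))^{p} w_h(s)\,ds\le [F]_{C^\delta}^{p}\int_0^{T-h} d(Y_s,Y_{s+h})^{\delta p} w_h(s)\,ds .
$$
Apply Hölder's inequality with respect to the measure $w_h(s)\,ds$ on $[0,T-h]$, using exponents $1/\delta$ and $1/(1-\delta)$. The right-hand integral is then bounded by
$$
\Bigl(\int_0^{T-h} d(Y_s,Y_{s+h})^{p}w_h(s)\,ds\Bigr)^{\delta}\Bigl(\int_0^{T-h}w_h(s)\,ds\Bigr)^{1-\delta}.
$$
By Lemma~\ref{lem: Shift and Scale Bounds for Muckenhoupt Weights}(iii), the last factor is at most $w([0,T])^{1-\delta}$. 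Taking $p$-th roots and suprema gives
$$
\omega^w_p(F(Y),\tau)\le [F]_{C^\delta}\, w([0,T])^{(1-\delta)/p}\,\omega^w_p(Y,\tau)^\delta ,
$$
and the same rescaling of the $L^{q/\delta}$ quasi-norm as before concludes. For $\delta=1$ the weight factor is $1$ and nothing changes. For $p=\infty$ the convention $w(I)^{-1/\infty}=1$ makes the factor trivial.

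There is no real obstacle. The one point to check is that Hölder's inequality is applied against $w_h(s)\,ds$ and not $w(s)\,ds$. This is exactly why the mass bound of Lemma~\ref{lem: Shift and Scale Bounds for Muckenhoupt Weights}(iii) is needed, and it holds uniformly in $h$. The membership condition $d(F(Y),x_0')\in L^{p/\delta}(w)$, respectively $L^p(w)$, is not part of the seminorm inequality, so I will not address it beyond noting that only seminorms are claimed.
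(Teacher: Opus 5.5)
Your proof is correct and follows the paper's argument. The pointwise H\"older bound gives $\omega^w_{p/\delta}(F(Y),\tau)\le[F]_{C^\delta}\,\omega^w_p(Y,\tau)^\delta$, and the second estimate comes from the finite-measure embedding $L^{p/\delta}(w_h)\hookrightarrow L^p(w_h)$ together with Lemma~\ref{lem: Shift and Scale Bounds for Muckenhoupt Weights}(iii), which is exactly your H\"older step with exponents $1/\delta$ and $1/(1-\delta)$; the paper applies it to the $F(Y)$ increments before the pointwise bound and you apply it after, which makes no difference.
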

\begin{proof}
    Write $L=[F]_{C^\delta}$ and $M=w([0,T])$. Since
    $$
    d_{E'}(F(Y_s),F(Y_{s+h})) \leq L\,d_E(Y_s,Y_{s+h})^\delta,
    $$
    we have
    $$
    \omega_{p/\delta}^w(F(Y), \tau) \leq L\,\omega_p^w(Y, \tau)^\delta.
    $$
    Taking the scale norm gives
    \begin{align*}
    \relax[F(Y)]_{B^{\delta\alpha}_{p/\delta,q/\delta}(w)} &\leq L\left\| \bigl(\tau^{-\alpha}\omega_p^w(Y,\tau)\bigr)^\delta \right\|_{L^{q/\delta}((0,T],d\tau/\tau)}
    \\
    &= L[Y]_{B^\alpha_{p,q}(w)}^\delta.
    \end{align*}

    For $p < \infin$, the finite-measure embedding and Lemma~\ref{lem: Shift and Scale Bounds for Muckenhoupt Weights} give
    $$
    \|g\|_{L^p(w_h)} \leq \left(\int_0^{T-h}w_h(s)\,ds\right)^{(1-\delta)/p} \|g\|_{L^{p/\delta}(w_h)} \leq M^{(1-\delta)/p}\|g\|_{L^{p/\delta}(w_h)}.
    $$
    Consequently,
    $$
    \omega_p^w(F(Y),\tau) \leq L M^{(1-\delta)/p}\omega_p^w(Y,\tau)^\delta,
    $$
    and hence
    $$
    [F(Y)]_{B^{\delta\alpha}_{p,q/\delta}(w)} \leq L M^{(1-\delta)/p}[Y]_{B^\alpha_{p,q}(w)}^\delta.
    $$
    When $p=\infty$, both modulus estimates follow directly from
    the pointwise inequality. When $q=\infty$, replace the scale
    norms by suprema.
\end{proof}
\section{Embeddings and Pointwise Control}
\label{sec: Embeddings and Pointwise Control}

The weighted Besov norms of \S 2 control increments through averages. In this section we obtain the pointwise estimates needed for sewing and rough integration, and then combine those estimates with interpolation to regain integrability. The arguments follow the embeddings and interpolation estimates of \cite[\S 2]{FrizSeeger2022}, with local deweighting retaining the dependence on the position of each increment.

For the power and logarithmic moduli used later, we apply the unweighted embeddings on a subinterval $J$, keeping the local deweighting constant. Taking $J=[s,t]$ produces the factor $|t-s|^\alpha/w([s,t])^{1/p}$. We also, following the strategy of Friz and Seeger, give a localised Campanato criterion for more general controls depending on both position and scale.

\subsection{Localisation and Deweighting}

The following lemma localises \cite[Proposition 2.8]{FrizSeeger2022}. Its role is to permit a control that depends on the basepoint as well as the scale.

\begin{lemma}[Localised Campanato Criterion]
\label{lem: Localised Campanato Criterion}
    Let $\zeta:[0,T]\times(0,T]\to(0,\infin)$ satisfy, with constants $C_\zeta,K_\zeta\geq1$,
    \begin{align}
        \zeta(x,h)&\leq C_\zeta\zeta(x,R),
        &&0<h\leq R\leq T,
        \label{eq: Control Almost Monotonicity}\\
        \zeta(x,2h)&\leq C_\zeta\zeta(x,h),
        &&0<2h\leq T,
        \label{eq: Control Doubling}\\
        \zeta(x,h)&\leq C_\zeta\zeta(y,h),
        &&|x-y|\leq h,
        \label{eq: Control Spatial Comparability}\\
        \int_0^R\zeta(x,h)\hspace{.2em}\frac{dh}{h}
        &\leq K_\zeta\zeta(x,R),
        &&0<R\leq T.
        \label{eq: Control Dini Bound}
    \end{align}
    Let $\varrho:[0,T]^2\to[0,\infin)$ be measurable. Suppose that, for some $C_0,C_1\geq0$,
    $$
    \frac1{|B_R(x)|^2}\int_{B_R(x)}\int_{B_R(x)}
    \varrho(s,t)\hspace{.2em}ds\hspace{.2em}dt
    \leq C_0\zeta(x,R),
    \qquad x\in[0,T],\quad 0<R\leq T/8,
    $$
    and that, for some $0<\vartheta\leq1/2$ and every triple of distinct points $a,b,c$,
    $$
    \varrho(a,c)\leq\varrho(a,b)+\varrho(b,c)
    +C_1\zeta\left(a\land b\land c,
    (|a-b|\land|b-c|)^\vartheta
    (|a-b|\lor|b-c|)^{1-\vartheta}\right).
    $$
    Then there is a full-measure set $\mathcal X\subset[0,T]$ such that
    $$
    \varrho(s,t)\leq C(C_0+C_1)\zeta(s,|t-s|),
    \qquad s,t\in\mathcal X,\quad s\ne t,
    $$
    where $C$ depends only on $C_\zeta,K_\zeta,\vartheta$.
\end{lemma}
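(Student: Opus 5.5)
We will follow the Campanato/Lebesgue-point scheme of \cite[Proposition 2.8]{FrizSeeger2022}, carrying the basepoint of $\zeta$ along. The idea is to compare $\varrho(s,t)$ with averages of $\varrho$ over products of small balls around $s$ and $t$, at a geometric sequence of scales. The discrepancies between consecutive scales are controlled by the three-point inequality, and the resulting series is summed by the Dini bound \eqref{eq: Control Dini Bound}.

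\textbf{Step 1: Lebesgue points.} Define $\mathcal X$ as the set of points $x$ such that, for every $y$ with $(x,y)$ outside a null set of pairs, the averages
\[
\frac{1}{|B_\rho(x)||B_\rho(y)|}\int_{B_\rho(x)}\int_{B_\rho(y)}\varrho
\]
converge to $\varrho(x,y)$ as $\rho\to0$. Since $\varrho$ may be unbounded, it is cleaner to work with a truncation $\varrho\wedge M$, or with the set of points where the iterated averages make sense by Fubini. We then show that the limiting inequality holds off a null set and redefine $\mathcal X$ accordingly. This measure-theoretic bookkeeping is standard and mirrors the unweighted case.

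\textbf{Step 2: telescoping over scales.} Fix $s<t$ in $\mathcal X$ and set $h=t-s$ and $R_k=2^{-k}h/8$. For $a\in B_{R_{k+1}}(s)$, $a'\in B_{R_k}(s)$ and $b$ near $t$, apply the three-point inequality with $(a,a',b)$ ordered appropriately. This gives
\[
\varrho(a,b)\le\varrho(a,a')+\varrho(a',b)+C_1\zeta\bigl(\min,\,R_k^\vartheta h^{1-\vartheta}\bigr).
\]
Averaging in $a,a'$ bounds the difference of the $(k+1)$-th and $k$-th product averages by two contributions:
\begin{itemize}
\item the local average of $\varrho$ on $B_{R_k}(s)^2$, which is at most $C_0\zeta(s,R_k)$ by hypothesis;
\item the $\zeta$-error term. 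Using \eqref{eq: Control Spatial Comparability} to move the basepoint to $s$, together with almost-monotonicity, this is at most $C_1C_\zeta^{2}\,\zeta(s,R_k^\vartheta h^{1-\vartheta})$.
\end{itemize}
We do the same on the $t$ side.

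The $t$-side needs care, because the hypothesis has basepoint $a\wedge b\wedge c$ and the local average is $C_0\zeta(t,R_k)$. We transfer this to $s$ only at a cost, since $|t-s|=h>R_k$. We use doubling \eqref{eq: Control Doubling} and \eqref{eq: Control Spatial Comparability} along a chain of at most $O(1)$ steps of size $h$, which gives $\zeta(t,R_k)\lesssim\zeta(t,h)\lesssim\zeta(s,h)$. This is not summable on its own. So instead we keep $\zeta(t,R_k)$, sum over $k$ with \eqref{eq: Control Dini Bound} to get $K_\zeta\zeta(t,h)$, and only then compare with $\zeta(s,h)$.

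\textbf{Step 3: the base scale and summation.} At $k=0$, both balls have radius $\sim h$ and lie inside $B_{2h}(s)$. The hypothesis therefore bounds the base average by $C_0\zeta(s,2h)\le C_0C_\zeta\zeta(s,h)$; for $h$ comparable to $T$ we use a bounded number of doublings. The error series is
\[
\sum_k\zeta(s,2^{-k\vartheta}h).
\]
We group indices $k$ according to the dyadic block containing $2^{-k\vartheta}$; each block contains about $1/\vartheta$ indices. By \eqref{eq: Control Dini Bound} this is $\lesssim_{\vartheta,C_\zeta}K_\zeta\zeta(s,h)$. Letting $k\to\infty$ and using the Lebesgue point property yields $\varrho(s,t)\le C(C_0+C_1)\zeta(s,h)$.

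\textbf{Main obstacle.} I expect the main obstacle to be the basepoint bookkeeping. Each error term must be expressed at basepoint $s$ (or $t$) with constants depending only on $C_\zeta,K_\zeta,\vartheta$. The difficulty is that the three-point hypothesis uses the minimum of the points as basepoint, which can sit up to a distance $R_k^\vartheta h^{1-\vartheta}$ from $s$ or $t$. Spatial comparability \eqref{eq: Control Spatial Comparability} handles exactly this displacement, since the displacement is at most the scale argument. Boundary truncation of $B_R(x)$ at $0$ and $T$ only changes radii by factors of $2$, which doubling absorbs.
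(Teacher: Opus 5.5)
Your telescoping estimate in Steps 2--3 is sound, and it is a genuinely different route from the paper's. The paper never reruns the Campanato argument. It derives uniform power decay of $\zeta$ from almost monotonicity and the Dini bound. It then forms the basepoint-free envelope $\Phi_J(h)=\sup_{x\in J}\sup_{u\le h}\zeta(x,u)$, applies \cite[Proposition 2.8]{FrizSeeger2022} on each rational interval $J$, and intersects the countably many full-measure sets. In your scheme the analytic core works: the three-point inequality, the mean bound, spatial comparability and the Dini summation give $\varrho(s,t)\lesssim (C_0+C_1)\zeta(s,t-s)$. This holds for any pair $s<t$ with $t-s\le T/5$ at which the product averages over $B_{R_k}(s)\times B_{R_k}(t)$ converge to $\varrho(s,t)$.

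\textbf{The measure-theoretic step is a genuine gap.} Two-dimensional Lebesgue differentiation gives that convergence only off a null set $N\subset[0,T]^2$ of pairs. Your $\mathcal X$ is defined through almost-every partners $y$ of each $x$, so two points $s,t\in\mathcal X$ need not form a good pair. Hence ``fix $s<t$ in $\mathcal X$ \ldots\ letting $k\to\infty$'' is unjustified. The lemma asserts the bound for \emph{all} pairs in $\mathcal X\times\mathcal X$. A full-measure subset of the square need not contain any such product; for instance $\{(s,t):s-t\notin\mathbb Q\}$ contains none. The later proofs rely on this product structure: they use that $s,s+h\in\mathcal X$ for a.e.\ $s$ at each fixed $h$, and they extend continuously from $(\mathcal X\times\mathcal X)\cap\triangle_2(0,T)$. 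This is not the bookkeeping of the unweighted function case, where $\varrho(s,t)=|f(s)-f(t)|$ and one-dimensional Lebesgue points of $f$ suffice.

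\emph{A repair inside your scheme.} First establish the bound off $N$ for both orderings of the pair; your truncation $\varrho\wedge M$ is harmless, since it preserves both hypotheses. Then let $\mathcal X$ be the set of $x$ for which both slices $\{y:(x,y)\in N\}$ and $\{y:(y,x)\in N\}$ are null; by Fubini this set has full measure. For $s<t$ in $\mathcal X$, choose $b$ in the middle third of $[s,t]$ with $(s,b),(b,t)\notin N$. Apply the three-point inequality once more, and move every basepoint to $s$ by almost monotonicity followed by spatial comparability. Argue likewise for the reversed pair.

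\textbf{A second, smaller gap concerns large increments.} The mean hypothesis holds only for $R\le T/8$. Your base average over $B_{h/8}(s)\times B_{h/8}(t)$ is therefore controlled only for $h\le T/5$, and with your enclosing ball $B_{2h}(s)$ only for $h\le T/16$. Doubling of $\zeta$ cannot extend the hypothesis to larger balls. You need the chaining the paper uses. Join $s$ to $t$ through boundedly many points of $\mathcal X$ with gaps at most $T/16$, and iterate the three-point inequality. Each term is then bounded by $\zeta(s,|t-s|)$, because every basepoint lies in $[s,t]$ and every scale is at most $|t-s|$.
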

\begin{proof}
    We reduce to \cite[Proposition 2.8]{FrizSeeger2022}. First, \eqref{eq: Control Almost Monotonicity} and \eqref{eq: Control Dini Bound} imply uniform power decay. Indeed, with $\lambda=\exp(-2C_\zeta K_\zeta)$,
    $$
    \frac{\log(1/\lambda)}{C_\zeta}\zeta(x,\lambda R)
    \leq\int_{\lambda R}^R\zeta(x,h)\hspace{.2em}\frac{dh}{h}
    \leq K_\zeta\zeta(x,R),
    $$
    so $\zeta(x,\lambda R)\leq\zeta(x,R)/2$. Iteration gives constants $D,\varepsilon>0$, depending only on $C_\zeta,K_\zeta$, such that
    \begin{equation}
        \zeta(x,h)\leq D(h/R)^\varepsilon\zeta(x,R),
        \qquad 0<h\leq R\leq T.
        \label{eq: Control Power Decay}
    \end{equation}

    Fix an interval $J$ with $|J|\leq T/4$ and set
    $$
    \Phi_J(h)=\sup_{x\in J}\sup_{0<u\leq h}\zeta(x,u).
    $$
    This is finite by \eqref{eq: Control Almost Monotonicity} and \eqref{eq: Control Spatial Comparability} at scale $T$. It is nondecreasing and doubling, and \eqref{eq: Control Power Decay} gives
    $$
    \Phi_J(h)\leq D(h/R)^\varepsilon\Phi_J(R),
    \qquad
    \int_0^R\Phi_J(h)\hspace{.2em}\frac{dh}{h}
    \leq\frac D\varepsilon\Phi_J(R).
    $$
    Extend it beyond $T$ by $\Phi_J(h)=\Phi_J(T)(h/T)^\varepsilon$ if necessary. The hypotheses on $\varrho$ give the mean bound and approximate triangle inequality of the cited proposition on $J$, with control $\Phi_J$. Hence there is a full-measure set $\mathcal X_J\subset J$ on which
    $$
    \varrho(s,t)\lesssim(C_0+C_1)\Phi_J(|t-s|).
    $$
    All constants are independent of $J$.

    Intersect over intervals with rational endpoints and length at most $T/4$. If $s,t$ belong to the resulting full-measure subset of $(0,T)$ and $h=|t-s|\leq T/16$, choose such a $J$ containing them with $|J|\leq4h$. The control comparisons give
    $$
    \Phi_J(h)\lesssim\zeta(s,4h)\lesssim\zeta(s,h).
    $$
    This proves the conclusion for small increments. For larger increments, join $s$ to $t$ by at most $64$ successive gaps of length at most $T/16$, with endpoints in the same full-measure set. Iterate the approximate triangle inequality. Every additional scale is at most $|t-s|$, and every basepoint lies between $s$ and $t$, so \eqref{eq: Control Almost Monotonicity} and \eqref{eq: Control Spatial Comparability} bound all terms by a constant multiple of $(C_0+C_1)\zeta(s,|t-s|)$. The same argument applies in the reverse order.
\end{proof}

We next record the weight controls and local constants used in the embeddings. The deweighting assertion is the local form of Theorem~2.10.

\begin{lemma}[Weight Controls and Local Deweighting]
\label{lem: Weight Controls and Local Deweighting}
    Let $1\leq r<\infin$, $w\in A_r$, and $0<p<\infin$.
    \begin{enumerate}[label=(\alph*)]
        \item For $\gamma>r/p$, the control
        $$
        \zeta_{\gamma,p,w}(x,h)=\frac{h^\gamma}{w(B_h(x))^{1/p}}
        $$
        satisfies \eqref{eq: Control Almost Monotonicity}--\eqref{eq: Control Dini Bound}. More precisely,
        $$
        \zeta_{\gamma,p,w}(x,h)
        \lesssim(h/R)^{\gamma-r/p}\zeta_{\gamma,p,w}(x,R),
        \qquad 0<h\leq R\leq T.
        $$

        \item For a nondegenerate interval $J\subset[0,T]$, put
        $$
        c_J=[w]_{A_r}^{1/p}\frac{|J|^{r/p}}{w(J)^{1/p}},
        \qquad u=p/r.
        $$
        For every admissible modulus $\omega$, $0<q\leq\infin$, and measurable two-parameter map $A$,
        $$
        \|A\|_{\mathbb B^\omega_{u,q}(J)}
        \leq c_J\|A\|_{\mathbb B^\omega_{p,q}(w;J)}.
        $$
        The same estimate holds for path seminorms.

        \item For $x\in J$ and $0<h\leq|J|$,
        $$
        w(B_h(x))\geq[w]_{A_r}^{-1}(h/|J|)^r w(J).
        $$
        Consequently, for every positive function $\omega$,
        $$
        \frac{\omega(h)}{w(B_h(x))^{1/p}}
        \leq c_J\omega(h)h^{-r/p}.
        $$
    \end{enumerate}
    For $p=\infin$, use $\zeta_{\gamma,\infin,w}(x,h)=h^\gamma$, $c_J=1$, and $u=\infin$.
\end{lemma}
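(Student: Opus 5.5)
The plan is to prove (c) first, since it feeds into (a), and to treat (b) as a localisation of Theorem~\ref{thm: Deweighting}.

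\textbf{Part (c).} Fix $x\in J$ and $0<h\leq|J|$, and put $I=B_h(x)\cap J$. The key geometric fact is $|I|\geq h/2$: the ball has radius $h\leq|J|$ and is centred in $J$, so it covers at least $h/2$ of $J$ on one side of $x$.

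Apply the small-set condition, Lemma~\ref{lem: Standard Muckenhoupt Weights Properties}(i), to $I\subset J$:
$$
w(I)\geq[w]_{A_r}^{-1}(|I|/|J|)^r w(J).
$$
Since $w(B_h(x))\geq w(I)$, this gives the stated lower bound up to the factor $2^{-r}$. I expect that factor either to be absorbed into the constant or removed by a more careful statement; if the exact form is needed, I will check whether $[w]_{A_r}$ already accounts for the halving.

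The consequence then follows by taking $-1/p$ powers:
$$
w(B_h(x))^{-1/p}\leq[w]_{A_r}^{1/p}|J|^{r/p}w(J)^{-1/p}h^{-r/p}=c_J h^{-r/p}.
$$
Multiplying by $\omega(h)$ gives the claim.

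\textbf{Part (b).} Apply Theorem~\ref{thm: Deweighting} on the interval $J$ with the restricted weight. That theorem bounds its own constant by $[w]_{A_r}^{1/p}|J|^{r/p}w(J)^{-1/p}$, which is exactly the $c_J$ here. The inequality $\|A\|_{\mathbb B^\omega_{u,q}(J)}\leq c_J\|A\|_{\mathbb B^\omega_{p,q}(w;J)}$ and its path-seminorm analogue are then read off directly. Nothing further is needed.

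\textbf{Part (a).} The main inequality is the power decay. For $0<h\leq R\leq T$, note $B_h(x)\subset B_R(x)$. Apply the small-set condition with $E=B_h(x)$ and $I=B_R(x)$, using $|B_h(x)|\geq h$ and $|B_R(x)|\leq 2R$, to get
$$
w(B_R(x))\leq[w]_{A_r}(2R/h)^r w(B_h(x)).
$$
Hence $w(B_h(x))^{-1/p}\lesssim(R/h)^{r/p}w(B_R(x))^{-1/p}$. Multiplying by $h^\gamma$ gives
$$
\zeta(x,h)\lesssim(h/R)^{\gamma-r/p}\zeta(x,R).
$$

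The four conditions follow from this:
\begin{enumerate}[label=(\roman*)]
\item Almost monotonicity \eqref{eq: Control Almost Monotonicity} holds because $\gamma>r/p$ makes $(h/R)^{\gamma-r/p}\leq1$.
\item The Dini bound \eqref{eq: Control Dini Bound} follows by integrating the power decay, $\int_0^R(h/R)^{\gamma-r/p}\,dh/h=(\gamma-r/p)^{-1}$, which again uses $\gamma>r/p$.
\item Doubling \eqref{eq: Control Doubling} uses $(2h)^\gamma=2^\gamma h^\gamma$ together with $w(B_{2h}(x))\geq w(B_h(x))$.
\item Spatial comparability \eqref{eq: Control Spatial Comparability} holds for $|x-y|\leq h$ because $B_h(y)\subset B_{2h}(x)$. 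Doubling of the measure, Lemma~\ref{lem: Standard Muckenhoupt Weights Properties}(ii), then gives $w(B_h(y))\leq w(B_{2h}(x))\lesssim w(B_h(x))$, and the same with $x,y$ swapped.
\end{enumerate}

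The only subtlety I anticipate is the truncation of balls at the endpoints $0$ and $T$. It is handled by the lower bound $|B_h(x)|\geq h$ (valid since $h\leq T$) and by applying doubling to intervals intersected with $[0,T]$, as Lemma~\ref{lem: Standard Muckenhoupt Weights Properties}(ii) allows.

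The case $p=\infin$ is trivial, since there $\zeta(x,h)=h^\gamma$ and $c_J=1$.
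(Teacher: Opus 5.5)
\textbf{Parts (a) and (b).} These follow the paper's argument. In (a), the small-set condition applied to $B_h(x)\subset B_R(x)$ gives the power decay, which yields almost monotonicity and the Dini bound. Doubling comes from $w(B_{2h}(x))\geq w(B_h(x))$, and spatial comparability from $B_h(y)\subset B_{2h}(x)$. Part (b) is Theorem~\ref{thm: Deweighting} together with its bound on the local constant.

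One small tidy-up in (a): Lemma~\ref{lem: Standard Muckenhoupt Weights Properties}(ii) is phrased for concentric dilations, and $B_{2h}(x)$ need not be the concentric double of the truncated ball $B_h(x)$ (take $x=0$). It is cleaner to apply the small-set condition directly, using $|B_h(x)|\geq h$ and $|B_{2h}(x)|\leq 4h$. This gives $w(B_{2h}(x))\leq 4^r[w]_{A_r}w(B_h(x))$.

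\textbf{The gap is in (c).} Your bound $|B_h(x)\cap J|\geq h/2$ only yields
\[
w(B_h(x))\geq 2^{-r}[w]_{A_r}^{-1}(h/|J|)^r w(J).
\]
So you have not proved the inequality as stated. Your next display, $w(B_h(x))^{-1/p}\leq c_Jh^{-r/p}$, also does not follow from what you proved: you would get $2^{r/p}c_Jh^{-r/p}$. Neither escape you mention is available. The constant $[w]_{A_r}^{-1}$ in (c) is explicit and $c_J$ is a fixed number, so nothing absorbs the factor $2^{-r}$.

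The missing observation is that the ball covers $J$ on both sides of $x$, not just one. Writing $J=[a,b]$,
\[
|B_h(x)\cap J|=\min\{h,x-a\}+\min\{h,b-x\}\geq\min\{h,|J|\}=h .
\]
If either minimum equals $h$ this is immediate; otherwise the sum is $b-a=|J|\geq h$. This is exactly the length bound the paper uses. Applying the small-set condition to $B_h(x)\cap J\subset J$ with this bound gives the stated inequality with constant $[w]_{A_r}^{-1}$. The consequence then follows with exactly $c_J$.

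The same two-sided count, with $J=[0,T]$, is also what justifies your claim $|B_h(x)|\geq h$ in (a).
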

\begin{proof}
    The small-set estimate gives
    $$
    \frac{w(B_R(x))}{w(B_h(x))}
    \lesssim[w]_{A_r}(R/h)^r,
    $$
    proving the power bound in (a). Its positive exponent gives \eqref{eq: Control Almost Monotonicity} and \eqref{eq: Control Dini Bound}. Moreover,
    $$
    \zeta_{\gamma,p,w}(x,2h)\leq2^\gamma\zeta_{\gamma,p,w}(x,h).
    $$
    If $|x-y|\leq h$, then $B_h(x)\subset B_{2h}(y)$ and conversely, so doubling of the weighted measure gives \eqref{eq: Control Spatial Comparability}.

    Part (b) follows from Theorem~2.10 and its bound on the local deweighting constant. For (c), the interval $E=B_h(x)\cap J$ has length at least $h$. Apply the small-set estimate to $E\subset J$ and use $w(B_h(x))\geq w(E)$. The conventions for $p=\infin$ give the remaining assertions directly.
\end{proof}

\subsection{The One-Parameter Embedding}

The next theorem follows from \cite[Proposition 2.1]{FrizSeeger2022} by local deweighting. Keeping the local constant gives the stated interval-mass factor.

\begin{theorem}[Weighted Besov--H\"older Embedding]
\label{thm: Weighted Besov Holder Embedding}
    Let $(E,d)$ be a complete metric space, let $1\leq r<\infin$ and $w\in A_r$, and suppose
    $$
    0<p,q\leq\infin,\qquad r/p<\alpha<1.
    $$
    Every $f\in B^\alpha_{p,q}(w;E)$ has a unique continuous representative. For this representative and every $0\leq s<t\leq T$,
    \begin{equation}
        d(f_s,f_t)\leq C[w]_{A_r}^{1/p}
        \frac{|t-s|^\alpha}{w([s,t])^{1/p}}
        [f]_{B^\alpha_{p,q}(w;[s,t])},
        \label{eq: Local Weighted Path Embedding}
    \end{equation}
    where $C$ depends only on $\alpha,p,q,r$. In particular,
    $$
    d(f_s,f_t)\lesssim
    \frac{|t-s|^\alpha}{w([s,t])^{1/p}}
    [f]_{B^\alpha_{p,q}(w)},
    \qquad
    B^\alpha_{p,q}(w;E)\hookrightarrow C^{\alpha-r/p}([0,T];E).
    $$
    For $p<\infin$, the latter seminorm bound is
    $$
    [f]_{C^{\alpha-r/p}}
    \leq C[w]_{A_r}^{2/p}
    \left(\frac{T^r}{w([0,T])}\right)^{1/p}
    [f]_{B^\alpha_{p,q}(w)}.
    $$
    For $p=\infin$, the weight factors are omitted.
\end{theorem}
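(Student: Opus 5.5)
Our plan is to reduce to the unweighted Besov--Hölder embedding \cite[Proposition 2.1]{FrizSeeger2022} on the single interval $J=[s,t]$, using the local deweighting of Lemma~\ref{lem: Weight Controls and Local Deweighting}(b). Fix $0\leq s<t\leq T$, put $J=[s,t]$ and $u=p/r$. The hypothesis $\alpha>r/p$ is exactly $\alpha>1/u$, which is the unweighted condition for embedding into $C^{\alpha-1/u}$. Theorem~\ref{thm: Deweighting}, applied on $J$ with the power modulus, gives
$$
[f]_{B^\alpha_{u,q}(J)}\leq c_J[f]_{B^\alpha_{p,q}(w;J)},\qquad c_J\leq[w]_{A_r}^{1/p}\frac{|J|^{r/p}}{w(J)^{1/p}}.
$$

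Next we invoke the unweighted embedding on $J$ in its scale-invariant form. For $g\in B^\alpha_{u,q}(J)$ it yields a continuous representative with $d(g_a,g_b)\leq C|b-a|^{\alpha-1/u}[g]_{B^\alpha_{u,q}(J)}$ for $a,b\in J$, where $C$ depends only on $\alpha,u,q$. This is the step to check carefully: the constant must not depend on $|J|$. We would verify this by rescaling $J$ to $[0,1]$. Since $\omega_u(g(\cdot\,|J|+s),\tau)=|J|^{-1/u}\omega_u(g,|J|\tau)$, both sides scale by $|J|^{\alpha-1/u}$.

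Taking $a=s$, $b=t$ and combining the two bounds, the factor $|t-s|^{\alpha-r/p}$ times $|J|^{r/p}$ gives $|t-s|^\alpha$, which is \eqref{eq: Local Weighted Path Embedding}. To handle representatives consistently, we would first apply the argument with $J=[0,T]$ to obtain one global continuous representative; uniqueness is automatic for continuous versions. On each subinterval, the local continuous representative agrees with it a.e., hence everywhere by continuity. The deweighting constant then transfers the estimate for all $s<t$ directly.

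For the global statements, we use the monotonicity $[f]_{B^\alpha_{p,q}(w;[s,t])}\leq[f]_{B^\alpha_{p,q}(w)}$. This holds because restricting the domain of the $s$-integral decreases the modulus, and the scale integral over $(0,|J|]$ is dominated by the one over $(0,T]$. For the Hölder bound, apply Lemma~\ref{lem: Weight Controls and Local Deweighting}(c) with the interval $[0,T]$. Then $w([s,t])\geq[w]_{A_r}^{-1}(|t-s|/T)^r w([0,T])$, which gives $|t-s|^\alpha w([s,t])^{-1/p}\leq[w]_{A_r}^{1/p}(T^r/w([0,T]))^{1/p}|t-s|^{\alpha-r/p}$. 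Combined with the $[w]^{1/p}_{A_r}$ already present, this yields the stated $[w]_{A_r}^{2/p}$ factor. For $p=\infin$ all weights disappear and the argument is just the unweighted proposition.

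The main obstacle we anticipate is the scale-independence of the unweighted constant, including the $q<1$ and $u<1$ quasi-norm regimes of the cited result. The rescaling argument above should settle it.
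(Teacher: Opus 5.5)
Your proposal is correct and follows essentially the same route as the paper. Both arguments use local deweighting on $J=[s,t]$ via Theorem~\ref{thm: Deweighting}, the translation- and rescaling-invariant unweighted embedding of \cite[Proposition 2.1]{FrizSeeger2022}, restriction for the global bound, and the small-set condition for the H\"older estimate.

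Two small points should be tightened:
\begin{enumerate}
\item It is the $L^p(w)\to L^{p/r}$ part of Theorem~\ref{thm: Deweighting} that places $f$ in the unweighted space $B^\alpha_{p/r,q}$ in the first place; the paper invokes this explicitly.
\item The lower bound $w([s,t])\geq[w]_{A_r}^{-1}(|t-s|/T)^r w([0,T])$ is the small-set condition of Lemma~\ref{lem: Standard Muckenhoupt Weights Properties}(i) applied to $[s,t]\subset[0,T]$. It is not Lemma~\ref{lem: Weight Controls and Local Deweighting}(c), which bounds $w(B_h(x))$ instead.
\end{enumerate}
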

\begin{proof}
    Suppose first that $p<\infin$ and put $u=p/r$. Theorem~2.10, including its $L^p$ estimate, and \cite[Proposition 2.1]{FrizSeeger2022} supply a continuous representative, since $\alpha>1/u$. On $J=[s,t]$, with $L=t-s$, the same unweighted embedding and Lemma~\ref{lem: Weight Controls and Local Deweighting}(b) give
    $$
    d(f_s,f_t)\leq CL^{\alpha-1/u}[f]_{B^\alpha_{u,q}(J)}
    \leq C[w]_{A_r}^{1/p}
    \frac{L^\alpha}{w(J)^{1/p}}
    [f]_{B^\alpha_{p,q}(w;J)}.
    $$
    The constants are uniform under translation and rescaling of $J$. This proves \eqref{eq: Local Weighted Path Embedding}, and restriction gives the global-seminorm version. The small-set bound
    $$
    w(J)\geq[w]_{A_r}^{-1}(L/T)^r w([0,T])
    $$
    gives the H\"older estimate. For $p=\infin$, apply the unweighted embedding directly. Uniqueness follows because continuous representatives agreeing almost everywhere agree everywhere.
\end{proof}

\begin{remark}[Global and Local Deweighting]
\label{rem: Global and Local Deweighting}
    Applying deweighting once on $[0,T]$ gives, for $r>1$,
    $$
    d(f_s,f_t)\lesssim
    \sigma([0,T])^{(r-1)/p}|t-s|^{\alpha-r/p}
    [f]_{B^\alpha_{p,q}(w)},
    \qquad \sigma=w^{-1/(r-1)}.
    $$
    Applying it on $J=[s,t]$ retains $\sigma(J)$, which the $A_r$ inequality converts into the interval-mass factor in \eqref{eq: Local Weighted Path Embedding}. Thus local deweighting and the unweighted embedding already recover the location-sensitive estimate. The direct weighted sewing and stability results proved later additionally control the weighted Besov norms of the integrals and remainders.
\end{remark}

\begin{example}[Power Weights]
\label{ex: Power Weight Embedding}
    Let $w(t)=t^\beta$, with $\beta>-1$. Then
    $$
    w\in A_r\quad\Longleftrightarrow\quad-1<\beta<r-1
    \quad(1<r<\infin),
    $$
    while $w\in A_1$ exactly when $-1<\beta\leq0$. These statements follow by evaluating the averages of $t^\beta$ and its dual weight; for $A_1$, compare the average with the essential infimum.

    Suppose $0<p<\infin$, $0<q\leq\infin$, and
    $$
    \frac{1+\max\{\beta,0\}}p<\alpha<1.
    $$
    An admissible weight index exists for Theorem~\ref{thm: Weighted Besov Holder Embedding}. Writing $N=[f]_{B^\alpha_{p,q}(t^\beta)}$, elementary integration gives
    $$
    w([s,s+h])=\frac{(s+h)^{\beta+1}-s^{\beta+1}}{\beta+1}
    \asymp_\beta h(s+h)^\beta,
    $$
    and therefore
    $$
    d(f_s,f_{s+h})\lesssim
    Nh^{\alpha-1/p}(s+h)^{-\beta/p}
    \asymp_\beta Nh^{\alpha-1/p}\max\{s,h\}^{-\beta/p}.
    $$
    In particular,
    $$
    d(f_s,f_{s+h})\lesssim
    \begin{cases}
        Nh^{\alpha-1/p}s^{-\beta/p},&0<h\leq s,\\
        Nh^{\alpha-(1+\beta)/p},&0\leq s\leq h.
    \end{cases}
    $$
    The local seminorm on $[s,s+h]$ may replace $N$. For $\beta>0$, taking $r=1+\beta+\varepsilon$ in the global H\"older embedding only gives exponent $\alpha-(1+\beta+\varepsilon)/p$. Letting $\varepsilon\downarrow0$ does not establish the endpoint exponent, since the constants need not stay bounded. The displayed endpoint power follows from the explicit interval-mass estimate.
\end{example}

\subsection{Two-Parameter Embeddings}

A general two-parameter map need not be additive. As in \cite[Proposition 2.7]{FrizSeeger2022}, its pointwise control therefore requires a condition on $\delta A$ in addition to its Besov norm. For $s<u<t$, write
$$
\ell_\vartheta(s,u,t)
=\bigl((u-s)\land(t-u)\bigr)^\vartheta
\bigl((u-s)\lor(t-u)\bigr)^{1-\vartheta},
\qquad 0<\vartheta\leq1/2.
$$

\begin{theorem}[Refined Two-Parameter Embedding]
\label{thm: Refined Two Parameter Embedding}
    Let $1<p<\infin$, $0<q\leq\infin$, and $w\in A_p$. Let $\omega:(0,T]\to(0,\infin)$ be nondecreasing, with $\omega(h)\to0$ as $h\downarrow0$, and suppose that
    $$
    \zeta_{\omega,p,w}(x,h)=\frac{\omega(h)}{w(B_h(x))^{1/p}}
    $$
    satisfies \eqref{eq: Control Almost Monotonicity}--\eqref{eq: Control Dini Bound}. Let $A:\triangle_2(0,T)\to V$ be measurable, with
    $$
    N=\|A\|_{\mathbb B^\omega_{p,q}(w)}<\infin.
    $$
    Suppose that, for some $M\geq0$ and $0<\vartheta\leq1/2$,
    $$
    |\delta A_{s,u,t}|
    \leq M\zeta_{\omega,p,w}(s,\ell_\vartheta(s,u,t)),
    \qquad 0\leq s<u<t\leq T.
    $$
    Then $A$ has a unique continuous representative on the closed simplex, vanishing on the diagonal, and
    $$
    |A_{s,t}|\leq C(N+M)
    \frac{\omega(t-s)}{w([s,t])^{1/p}},
    \qquad 0\leq s<t\leq T.
    $$
    The constant depends only on $p,q,[w]_{A_p},\vartheta$ and the control constants.
\end{theorem}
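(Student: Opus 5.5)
The natural route is to apply Lemma~\ref{lem: Localised Campanato Criterion} with $\varrho(s,t)=|A_{s,t}|$ for $s<t$, extended symmetrically and set to zero on the diagonal, and with control $\zeta=\zeta_{\omega,p,w}$. That lemma needs two inputs: a mean bound and an approximate triangle inequality.

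\textbf{Approximate triangle inequality.} This is essentially the hypothesis on $\delta A$. For distinct points ordered $s<u<t$ we have
$$|A_{s,t}|\leq|A_{s,u}|+|A_{u,t}|+|\delta A_{s,u,t}|,$$
with $|\delta A_{s,u,t}|\leq M\zeta(s,\ell_\vartheta(s,u,t))$, and $s=a\land b\land c$. For other orderings of $a,b,c$ (the middle point not between the outer two), I would rewrite the defect identity so that the desired term appears on the left. The resulting error is again controlled by $M\zeta$, after using \eqref{eq: Control Almost Monotonicity} and \eqref{eq: Control Spatial Comparability} to move basepoints and scales by bounded factors. This step is bookkeeping.

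\textbf{Mean bound.} This is the main obstacle: we must show
$$|B_R(x)|^{-2}\iint_{B_R(x)^2}|A_{s,t}|\,ds\,dt\lesssim N\zeta(x,R).$$
\begin{enumerate}[label=(\arabic*)]
\item Write the double integral as $\int_0^{2R}\int |A_{s,s+h}|\,ds\,dh$ over the admissible $s\in B_R(x)$.
\item For fixed $h$, apply Hölder's inequality against the weight $w_h(s)$:
$$\int_{B_R(x)}|A_{s,s+h}|\,ds\leq\Big(\int|A_{s,s+h}|^pw_h(s)\,ds\Big)^{1/p}\Big(\int_{B_R(x)} w_h(s)^{1-p'}\,ds\Big)^{1/p'}.$$
\item The first factor is at most $\Omega^w_p(A,h)$, localised to $B_{3R}(x)$.
\item For the second factor, Jensen's inequality as in the proof of Theorem~\ref{thm: Deweighting} gives $w_h^{1-p'}\leq\sigma_h$. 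Tonelli then bounds it by $\sigma(B_{2R}(x))^{1/p'}$, and the $A_p$ condition converts this into $\lesssim|B_R|\,w(B_R(x))^{-1/p}$.
\item Dividing by $|B_R|^2$ and integrating in $h$ leaves $w(B_R(x))^{-1/p}R^{-1}\int_0^{2R}\Omega^w_p(A,h)\,dh$.
\item Bound $\Omega^w_p(A,h)$ by $N\omega(h)$ when $q=\infty$; for general $q$, use the dyadic monotonicity argument of Proposition~\ref{prop: Vacuity Criterion} to get $\Omega^w_p(A,h)\lesssim N\omega(2h)$, and then use doubling of $\omega$, which follows from the hypotheses on $\zeta$ together with doubling of $w$.
\item Monotonicity of $\omega$ then gives $\lesssim N\omega(2R)/w(B_R(x))^{1/p}\lesssim N\zeta(x,R)$.
\end{enumerate}
The delicate points are keeping the localisation consistent, since the global $\Omega^w_p$ is larger than the local one, and the doubling of $\omega$ itself. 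For the latter I would use \eqref{eq: Control Doubling} combined with $w(B_{2h})\leq C\,w(B_h)$.

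\textbf{Conclusion.} The lemma then yields $|A_{s,t}|\leq C(N+M)\zeta(s,t-s)$ on a full-measure set $\mathcal X$. Since $B_{t-s}(s)\supset[s,t]$ has comparable weight by doubling, this gives the stated bound with $w([s,t])^{1/p}$ in the denominator. Because $\zeta(s,h)\to0$ as $h\to0$ by power decay, together with the approximate triangle inequality, the restriction of $A$ to $\mathcal X$ is uniformly continuous on compact pieces of the simplex and extends continuously to the closed simplex, vanishing on the diagonal. Uniqueness holds because continuous representatives agreeing almost everywhere coincide. The bound extends to all $s<t$ by continuity, since both sides are continuous for $s<t$.
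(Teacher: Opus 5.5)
Your proposal is correct and follows the paper's proof essentially step for step: the localised Campanato criterion with control $\zeta_{\omega,p,w}$; the mean bound via H\"older against $w_h(s)\,ds$, the Jensen inverse-weight estimate and the $A_p$ condition, with $\Omega^w_p(A,h)\lesssim N\omega(h)$ coming from doubling of $\omega$; the defect identity for the approximate triangle inequality in every ordering; and extension from the full-measure set $\mathcal X$ by uniform continuity. One small correction: $\omega$ is only assumed nondecreasing, so the right-hand side $\omega(t-s)/w([s,t])^{1/p}$ need not be continuous, and "both sides are continuous" does not justify the final step. Instead, extend the bound to arbitrary $s<t$ by approximating $s,t$ from inside $[s,t]$ with points of $\mathcal X$, then use monotonicity of $\omega$ and continuity of the weighted mass, as the paper does.
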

\begin{proof}
    We follow the Campanato argument for \cite[Proposition 2.7]{FrizSeeger2022}, using Lemma~\ref{lem: Localised Campanato Criterion}. Write $\zeta=\zeta_{\omega,p,w}$. Doubling of $\zeta$ and of the weighted measure implies that $\omega$ is doubling. Moreover,
    \begin{equation}
        \Omega_p^w(A,h)\lesssim N\omega(h),
        \qquad 0<h\leq T/2.
        \label{eq: Besov Small Scale Modulus Bound}
    \end{equation}
    For finite $q$, integrate the defining norm over $[h,2h]$ and use monotonicity of $\Omega_p^w(A,\cdot)$ and doubling of $\omega$; for $q=\infin$, the estimate is immediate.

    Extend $A$ antisymmetrically and put $\varrho(s,t)=|A_{s,t}|$. Fix $B=B_R(x)=[c,d]$, with $R\leq T/8$, and write $L=|B|$. H\"older's inequality and the inverse-weight bound from Theorem~2.10 give, for $0<h<L$,
    $$
    \int_c^{d-h}|A_{s,s+h}|\hspace{.2em}ds
    \leq[w]_{A_p}^{1/p}\frac{L}{w(B)^{1/p}}\Omega_p^w(A,h).
    $$
    Since $R\leq L\leq2R\leq T/4$, \eqref{eq: Besov Small Scale Modulus Bound} yields
    $$
    \frac1{L^2}\int_B\int_B|A_{s,t}|\hspace{.2em}ds\hspace{.2em}dt
    \lesssim\frac{N}{Lw(B)^{1/p}}\int_0^L\omega(h)\hspace{.2em}dh
    \lesssim N\zeta(x,R).
    $$
    This verifies the mean condition and local integrability. The identity defining $\delta A$ gives the approximate triangle inequality. Under antisymmetric extension, $|\delta A|$ is invariant under permutations of its arguments. The mixed scale of the two adjacent gaps in their increasing ordering is at most the mixed scale of $|a-b|$ and $|b-c|$, so \eqref{eq: Control Almost Monotonicity} verifies the condition for every ordering.

    Lemma~\ref{lem: Localised Campanato Criterion} supplies a full-measure set $\mathcal X$ on which
    $$
    |A_{s,t}|\lesssim(N+M)\zeta(s,|t-s|).
    $$
    By \eqref{eq: Control Power Decay} and the spatial comparison at scale $T$, $\sup_x\zeta(x,h)\to0$ as $h\downarrow0$. Thus $A$ is uniformly continuous towards the diagonal. Away from it, use
    $$
    |A_{s,t}-A_{s',t}|
    \leq|A_{s,s'}|+|\delta A_{s,s',t}|,
    \qquad s<s'<t,
    $$
    and the corresponding identity for the right endpoint. The defect bound makes both endpoint variations tend uniformly to zero. Hence $A$ extends continuously from $(\mathcal X\times\mathcal X)\cap\triangle_2(0,T)$ to the closed simplex.

    For each fixed $h>0$, both $s$ and $s+h$ lie in $\mathcal X$ for almost every basepoint $s$, so this replacement preserves the increment norms. On $\mathcal X\times\mathcal X$, the stated bound follows from $[s,t]\subset B_{t-s}(s)$. For arbitrary $s<t$, approximate the endpoints from within $[s,t]$ and use monotonicity of $\omega$, continuity of the representative, and absolute continuity of weighted mass. Uniqueness follows from density.
\end{proof}

For power regularity and the logarithmic moduli used below, local deweighting gives the following alternative. Its explicit decay assumption allows all positive integrability and scale indices.

\begin{theorem}[Two-Parameter Embedding in the Quasi-Banach Range]
\label{thm: Quasi Banach Two Parameter Embedding}
    Let $0<p,q\leq\infin$, $1\leq r<\infin$, and $w\in A_r$. Let $\omega:(0,T]\to(0,\infin)$ be nondecreasing and doubling, with
    $$
    \omega(2h)\leq D_\omega\omega(h),
    \qquad 0<2h\leq T.
    $$
    Suppose there are $d>r/p$ and $C_\omega\geq1$ such that
    \begin{equation}
        \omega(h)\leq C_\omega(h/L)^d\omega(L),
        \qquad 0<h\leq L\leq T.
        \label{eq: Modulus Positive Decay Gap}
    \end{equation}
    Let $A:\triangle_2(0,T)\to V$ be measurable, with $\|A\|_{\mathbb B^\omega_{p,q}(w)}<\infin$. Suppose that, for some $M\geq0$ and $0<\vartheta\leq1/2$,
    $$
    |\delta A_{s,u,t}|
    \leq M\frac{\omega(\ell_\vartheta(s,u,t))}
    {w(B_{\ell_\vartheta(s,u,t)}(s))^{1/p}},
    \qquad 0\leq s<u<t\leq T.
    $$
    Then $A$ has a unique continuous representative, vanishing on the diagonal, and
    $$
    |A_{s,t}|\leq C\left(
    \|A\|_{\mathbb B^\omega_{p,q}(w;[s,t])}+M\right)
    \frac{\omega(t-s)}{w([s,t])^{1/p}},
    \qquad 0\leq s<t\leq T.
    $$
    Here $C$ depends only on $p,q,r,[w]_{A_r},d,C_\omega,D_\omega,\vartheta$. In particular, for $\omega(h)=h^\gamma$, the assertion applies whenever $\gamma>r/p$, including $p<1$ or $q<1$.
\end{theorem}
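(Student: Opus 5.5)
The plan is to reduce to the unweighted two-parameter embedding of \cite[Proposition 2.7]{FrizSeeger2022} (in its quasi-Banach form) by local deweighting on each interval $J=[s,t]$, in the same way that Theorem~\ref{thm: Weighted Besov Holder Embedding} was deduced from \cite[Proposition 2.1]{FrizSeeger2022}. For $p=\infin$ there is nothing to deweight and the statement is the unweighted one, so I will assume $p<\infin$ and set $u=p/r$.

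Fix $J$ with $L=|J|$. By Lemma~\ref{lem: Weight Controls and Local Deweighting}(b),
$$
\|A\|_{\mathbb B^\omega_{u,q}(J)}\leq c_J\|A\|_{\mathbb B^\omega_{p,q}(w;J)},\qquad c_J=[w]_{A_r}^{1/p}L^{r/p}w(J)^{-1/p}.
$$
For the defect hypothesis, let $s\leq a<b<c\leq t$ and $\ell=\ell_\vartheta(a,b,c)\leq L$. Lemma~\ref{lem: Weight Controls and Local Deweighting}(c), applied with $x=a\in J$ and $h=\ell$, gives
$$
w(B_\ell(a))^{-1/p}\leq c_J\,\ell^{-r/p}.
$$
Hence $|\delta A_{a,b,c}|\leq Mc_J\,\tilde\omega(\ell)$, where $\tilde\omega(h)=\omega(h)h^{-r/p}$. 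Because $d>r/p$, the decay gap \eqref{eq: Modulus Positive Decay Gap} shows that $\tilde\omega$ is almost increasing, doubling, and has positive power decay with exponent $d-r/p$. Replacing $\tilde\omega$ by $\sup_{h'\leq h}\tilde\omega(h')$ makes it genuinely nondecreasing while changing it only up to a factor $C_\omega$. Moreover $\|A\|_{\mathbb B^{\tilde\omega}_{u,q}(J)}$ is controlled by $\|A\|_{\mathbb B^{\omega}_{u,q}(J)}$ only after absorbing the factor $h^{r/p}$. To handle this cleanly, I will instead verify the Campanato mean condition directly.

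On $J$, for a ball $B\subset J$ of length $\ell'$, apply H\"older with $u$ when $u\geq1$, or the $\mathbb B_{u}$ bound with the small-scale modulus estimate $\Omega_u(A,h)\lesssim\|A\|\,\omega(h)$ when $u<1$ (this uses doubling, as in \eqref{eq: Besov Small Scale Modulus Bound}). This gives a mean bound $\lesssim c_J\|A\|\,\omega(\ell')\ell'^{-1/u}=c_J\|A\|\,\tilde\omega(\ell')$. The approximate triangle inequality holds with $C_1=Mc_J$ and control $\tilde\omega$. Since $\tilde\omega$ is doubling and Dini (by its power decay), Lemma~\ref{lem: Localised Campanato Criterion} applies on $J$ with the spatially constant control $\zeta(x,h)=\tilde\omega(h)$, after rescaling $J$ to unit length so that the constants are translation and scale invariant. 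This yields, for a.e.\ $a<b$ in $J$,
$$
|A_{a,b}|\lesssim c_J(\|A\|_{\mathbb B^\omega_{p,q}(w;J)}+M)\,\tilde\omega(b-a).
$$
Taking $a,b$ close to $s,t$ gives $c_J\tilde\omega(L)=[w]_{A_r}^{1/p}\omega(L)w(J)^{-1/p}$, which is the asserted bound.

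The main obstacle is the mean condition when $u<1$, where averaging does not follow from H\"older. There I would first try the approach of \cite{FrizSeeger2022}: use the quasi-Banach form of their embedding, or bound $\int|A|$ by $\sup$ times measure through a bootstrap on dyadic scales using the defect bound. Continuity and uniqueness follow exactly as in the proof of Theorem~\ref{thm: Refined Two Parameter Embedding}. The global version is applied with $J=[0,T]$ to get continuity, and the local estimates are then read off for the continuous representative.
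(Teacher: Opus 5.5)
Your local deweighting step matches the paper's, and for $u=p/r\geq1$ your Campanato route works. H\"older on $J$ gives the $L^1$ mean bound $\lesssim c_J\|A\|_{\mathbb B^\omega_{p,q}(w;J)}\,\omega(\ell')\ell'^{-1/u}$. The envelope $\chi^*(h)=\sup_{v\leq h}\omega(v)v^{-1/u}$ is then a spatially constant control satisfying the hypotheses of Lemma~\ref{lem: Localised Campanato Criterion}, thanks to the gap $d-1/u>0$.

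The gap is the case $u<1$. This is not a marginal case; it is the range the theorem exists to cover. It contains every $p<1$ and, when $r>1$, every $p<r$. That is precisely the regime that Theorem~\ref{thm: Refined Two Parameter Embedding}, proved by the Campanato route with $w\in A_p$ (effectively $u=1$), does not reach.

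The Campanato criterion needs an $L^1$ double average of $|A_{s,t}|$. For $u<1$, the bound $\Omega_u(A,h;J)\lesssim c_J\|A\|\,\omega(h)$ gives no control of $\int|A_{s,s+h}|\,ds$: a tall narrow spike has small $L^u$ quasi-norm and large $L^1$ norm. So the mean bound you assert for $u<1$ does not follow. Your fallback of bounding $\int|A|$ by a supremum times a measure presupposes the pointwise estimate you are trying to prove. As written, the proposal proves the statement only for $p\geq r$ (and trivially for $p=\infin$).

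The missing step is to avoid an $L^1$ Campanato criterion altogether. The unweighted two-parameter embedding \cite[Proposition 2.7]{FrizSeeger2022} holds for all $0<u,q\leq\infin$, and you can apply it on $J$ directly to the deweighted data. This is your own first instinct.

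Your worry about renormalising the Besov norm by $h^{r/p}$ is unfounded. That proposition takes the Besov modulus $\omega$ itself; the paper uses the equivalent nondecreasing modulus $\widetilde\omega(h)=h^{1/u}\chi^*(h)$. Its conclusion already carries the loss $|t-s|^{-1/u}$, namely
$|A_{s,t}|\lesssim c_J(\|A\|_{\mathbb B^\omega_{p,q}(w;J)}+M)\,\omega(t-s)|t-s|^{-1/u}$.

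What remains to check is as follows.
\begin{itemize}
\item Its modulus hypotheses follow from $e=d-1/u>0$ via $\int_0^L\chi^*(h)^{a_0}\,dh/h\leq C_\omega^{a_0}(ea_0)^{-1}\chi^*(L)^{a_0}$, with $a_0=1\land u$.
\item Its mixed defect hypothesis is exactly your deweighted defect bound $|\delta A|\leq Mc_J\,\omega(\ell_\vartheta)\ell_\vartheta^{-1/u}$.
\end{itemize}
Applying the result on $[0,T]$ gives the continuous representative. Taking $J=[s,t]$ then turns $c_J|t-s|^{-1/u}$ into $[w]_{A_r}^{1/p}w([s,t])^{-1/p}$, which is the asserted bound.
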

\begin{proof}
    We apply \cite[Proposition 2.7]{FrizSeeger2022} after local deweighting. First suppose $p<\infin$ and put $u=p/r$. On an interval $J$, Lemma~\ref{lem: Weight Controls and Local Deweighting} gives
    $$
    \|A\|_{\mathbb B^\omega_{u,q}(J)}
    \leq c_J\|A\|_{\mathbb B^\omega_{p,q}(w;J)},
    \qquad
    |\delta A_{s,v,t}|
    \leq Mc_J\omega(\ell_\vartheta)\ell_\vartheta^{-1/u}
    $$
    for $s<v<t$ in $J$.

    Put $e=d-1/u>0$, $\chi(h)=\omega(h)h^{-1/u}$, and $\chi^*(h)=\sup_{0<v\leq h}\chi(v)$. Condition~\eqref{eq: Modulus Positive Decay Gap} gives
    $$
    \chi\leq\chi^*\leq C_\omega\chi,
    \qquad
    \chi^*(h)\leq C_\omega(h/L)^e\chi^*(L).
    $$
    The envelope $\chi^*$ is nondecreasing and doubling. With $a_0=1\land u$,
    $$
    \int_0^L\chi^*(h)^{a_0}\hspace{.2em}\frac{dh}{h}
    \leq\frac{C_\omega^{a_0}}{ea_0}\chi^*(L)^{a_0}.
    $$
    Therefore $\widetilde\omega(h)=h^{1/u}\chi^*(h)$ is equivalent to $\omega$ and satisfies the modulus hypotheses of the cited proposition. The displayed defect estimate supplies its mixed continuity assumption. Its conclusion, with constants uniform in $J$, is
    $$
    |A_{s,t}|\leq Cc_J\left(
    \|A\|_{\mathbb B^\omega_{p,q}(w;J)}+M\right)
    \omega(t-s)|t-s|^{-1/u},
    \qquad s,t\in J.
    $$
    Apply this first on $[0,T]$ to obtain the continuous representative. The local representatives agree with it by continuity and almost-everywhere agreement. Taking $J=[s,t]$ cancels the powers $|J|^{r/p}$ and $|t-s|^{-1/u}$ and proves the assertion. As in the preceding proof, the construction in \cite[Proposition 2.7]{FrizSeeger2022} preserves the increment norms at every fixed length.

    For $p=\infin$, set $u=\infin$, $c_J=1$, and $\chi=\omega$. The same argument applies, with the weight factors omitted. The unweighted proposition includes all positive $u,q$, so no additional Banach-range restriction is needed.
\end{proof}

The preceding theorem applies to
$$
\omega(h)=h^\gamma(1+\log(T/h))^b,
\qquad \gamma>r/p,\quad b\geq0,
$$
up to replacement by an equivalent nondecreasing modulus. Indeed, for $r/p<d<\gamma$ and $x=\log(L/h)\geq0$,
$$
\frac{\omega(h)}{\omega(L)}
\leq(h/L)^d e^{-(\gamma-d)x}(1+x)^b
\lesssim(h/L)^d.
$$
Thus $\widehat\omega(h)=\sup_{0<v\leq h}\omega(v)$ is comparable to $\omega$, is nondecreasing and doubling, and satisfies \eqref{eq: Modulus Positive Decay Gap}.

For the critical sewing exponent $\gamma_c=1\lor r/p$, the required gap $\gamma_c-r/p$ is positive when $p>r$. If $r>1$, openness supplies $1<r_-<r$ with $w\in A_{r_-}$, and $\gamma_c>r_-/p$, so the embedding applies with $r_-$ while keeping $\gamma_c$ fixed. When $p<r$, this also gives $1\lor r_-/p<\gamma_c$, making sewing subcritical at that smoothness. When $p=r>1$, the sewing exponent remains $1$: openness restores the embedding gap but leaves the exponent-one sewing endpoint. For $r=1$ and $p\leq1$, the stated theorem gives no endpoint conclusion.

These observations concern the embedding assumptions; the critical scale-index conditions are imposed in \S 4. In particular, if a critical remainder satisfies a power bound in $\mathbb B^{\gamma_c}_{p,\infin}(w)$ and the corresponding mixed defect estimate, then, whenever a positive embedding gap is available,
$$
|A_{s,t}|\lesssim\left(
\|A\|_{\mathbb B^{\gamma_c}_{p,\infin}(w)}+M\right)
\frac{|t-s|^{\gamma_c}}{w([s,t])^{1/p}}.
$$

\subsection{Interpolation and Integrability Regain}

Sewing controls a remainder at the integrability index of its defect. The following weighted versions of \cite[Lemmas 2.7 and 2.8]{FrizSeeger2022} combine that bound with pointwise control to increase the integrability index. The interpolation uses the same measure $w_h(s)\hspace{.2em}ds$ at each fixed scale and requires no further Muckenhoupt estimate.

\begin{lemma}[Interpolation]
\label{lem: Weighted Interpolation}
    Let $A:\triangle_2(0,T)\to V$ be measurable, let $w$ be a positive integrable weight, and let $0<p<P\leq\infin$, $0<q\leq\infin$. For finite $P$, put $\theta=p/P$.
    \begin{enumerate}[label=(\alph*)]
        \item If $P<\infin$, then
        $$
        \Omega_P^w(A,\tau)
        \leq\Omega_\infin(A,\tau)^{1-\theta}
        \Omega_p^w(A,\tau)^\theta.
        $$

        \item Suppose $P<\infin$, $0<\alpha<\gamma$, and
        $$
        \Omega_\infin(A,\tau)\leq H\tau^\delta,
        \qquad \|A\|_{\mathbb B^\gamma_{p,q}(w)}<\infin,
        \qquad H\geq0,\quad\delta\geq0.
        $$
        If $\kappa=\delta(1-\theta)+\gamma\theta-\alpha>0$, then
        $$
        \|A\|_{\mathbb B^\alpha_{P,q}(w)}
        \leq C_{\kappa,q,\theta}T^\kappa H^{1-\theta}
        \|A\|_{\mathbb B^\gamma_{p,q}(w)}^\theta,
        $$
        where
        $$
        C_{\kappa,q,\theta}
        =\left(\frac{1-\theta}{\kappa q}\right)^{(1-\theta)/q}
        \quad(q<\infin),\qquad
        C_{\kappa,\infin,\theta}=1.
        $$
        For $P=\infin$ and $\delta>\alpha$, the direct estimate is
        $$
        \|A\|_{\mathbb B^\alpha_{\infin,q}}
        \leq
        \begin{cases}
            HT^{\delta-\alpha}/((\delta-\alpha)q)^{1/q},&q<\infin,\\
            HT^{\delta-\alpha},&q=\infin.
        \end{cases}
        $$

        \item More generally, let $\alpha>0$ and suppose $P<\infin$,
        $$
        \Omega_\infin(A,\tau)\leq H\rho(\tau),
        \qquad \|A\|_{\mathbb B^\omega_{p,q}(w)}<\infin,
        $$
        where $H\geq0$, $\rho$ is positive and measurable, and $\omega$ is an admissible modulus. Set
        $$
        K(\tau)=\tau^{-\alpha}\rho(\tau)^{1-\theta}\omega(\tau)^\theta.
        $$
        Then
        $$
        \|A\|_{\mathbb B^\alpha_{P,q}(w)}
        \leq H^{1-\theta}
        \|K\|_{L^{q/(1-\theta)}((0,T),d\tau/\tau)}
        \|A\|_{\mathbb B^\omega_{p,q}(w)}^\theta,
        $$
        whenever the right-hand side is finite, with the supremum convention for $q=\infin$. For $P=\infin$,
        $$
        \|A\|_{\mathbb B^\alpha_{\infin,q}}
        \leq H\|\tau^{-\alpha}\rho(\tau)\|_{L^q((0,T),d\tau/\tau)}.
        $$
    \end{enumerate}
\end{lemma}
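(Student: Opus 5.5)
The plan is to prove (a) by a pointwise interpolation at each fixed scale, and then to obtain (b) and (c) by inserting (a) into the scale norm and applying Hölder's inequality in the measure $d\tau/\tau$.

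For (a), fix $0<h\leq\tau$. Write $|A_{s,s+h}|^P=|A_{s,s+h}|^{P-p}|A_{s,s+h}|^p$ and bound the first factor by $\Omega_\infin(A,h)^{P-p}\leq\Omega_\infin(A,\tau)^{P-p}$. This uses that the essential supremum is taken with respect to Lebesgue measure. Since $w_h(s)\,ds$ is absolutely continuous with respect to Lebesgue measure, the bound holds $w_h(s)\,ds$-almost everywhere. This gives
$$
\int_0^{T-h}|A_{s,s+h}|^Pw_h(s)\,ds\leq\Omega_\infin(A,\tau)^{P-p}\,\Omega_p^w(A,\tau)^p .
$$
Take $P$-th roots; the exponents become $1-\theta$ and $\theta$. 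Then take the supremum over $h\leq\tau$. No Muckenhoupt property is needed here, only positivity of $w$.

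For (c) with $P<\infin$, insert (a) and the hypothesis $\Omega_\infin\leq H\rho$ to get
$$
\tau^{-\alpha}\Omega_P^w(A,\tau)\leq H^{1-\theta}\,K(\tau)\,\bigl(\omega(\tau)^{-1}\Omega_p^w(A,\tau)\bigr)^\theta .
$$
Now apply Hölder in $L^q(d\tau/\tau)$ with exponent pair $q/(1-\theta)$ and $q/\theta$, so that $\frac{1-\theta}{q}+\frac{\theta}{q}=\frac1q$. The second factor has $L^{q/\theta}$ norm equal to $\|A\|_{\mathbb B^\omega_{p,q}(w)}^\theta$. This form of Hölder is valid for every $0<q\leq\infin$, including the quasi-norm range, with the supremum convention when $q=\infin$. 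For $P=\infin$ we simply have $\tau^{-\alpha}\Omega_\infin\leq H\tau^{-\alpha}\rho(\tau)$.

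Part (b) is the special case of (c) with $\rho(\tau)=\tau^\delta$ and $\omega(\tau)=\tau^\gamma$, so that $K(\tau)=\tau^\kappa$. It remains to compute
$$
\|\tau^\kappa\|_{L^{q/(1-\theta)}((0,T),d\tau/\tau)}=\Bigl(\int_0^T\tau^{\kappa q/(1-\theta)-1}d\tau\Bigr)^{(1-\theta)/q}=\Bigl(\frac{1-\theta}{\kappa q}\Bigr)^{(1-\theta)/q}T^\kappa .
$$
For $q=\infin$ this is just $\sup_\tau\tau^\kappa=T^\kappa$. The $P=\infin$ estimate is the same one-line computation with exponent $\delta-\alpha$.

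I expect the only delicate points to be these two: the almost-everywhere compatibility between the weighted measure and the Lebesgue essential supremum, and the bookkeeping of Hölder's inequality when $q<1$ or $\theta$ is small. Neither should be a real obstacle.
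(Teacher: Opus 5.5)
Your proposal is correct and follows essentially the same route as the paper: the fixed-scale bound $\int|A|^P w_h\le(\operatorname*{ess\,sup}|A|)^{P-p}\int|A|^p w_h$ for (a), followed by H\"older's inequality in $d\tau/\tau$ with exponents $q/(1-\theta)$ and $q/\theta$ and an explicit evaluation of $\|\tau^\kappa\|_{L^{q/(1-\theta)}(d\tau/\tau)}$. The only difference is cosmetic: you obtain (b) as the special case $\rho(\tau)=\tau^\delta$, $\omega(\tau)=\tau^\gamma$ of (c), whereas the paper proves (b) directly and then says that (c) follows by repeating the argument with $G_\omega=\Omega^w_p/\omega$.
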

\begin{proof}
    At a fixed increment length $h$,
    $$
    \int_0^{T-h}|A_{s,s+h}|^P w_h(s)\hspace{.2em}ds
    \leq\left(\operatorname*{ess\,sup}_{0\leq s\leq T-h}|A_{s,s+h}|\right)^{P-p}
    \int_0^{T-h}|A_{s,s+h}|^p w_h(s)\hspace{.2em}ds.
    $$
    Taking $P$th roots and then the supremum over $h\leq\tau$ proves (a), for every $p>0$.

    For (b), put $G(\tau)=\tau^{-\gamma}\Omega_p^w(A,\tau)$. Part (a) gives
    $$
    \tau^{-\alpha}\Omega_P^w(A,\tau)
    \leq H^{1-\theta}\tau^\kappa G(\tau)^\theta.
    $$
    For finite $q$, apply H\"older to the $q$th power with exponents $1/(1-\theta)$ and $1/\theta$, and evaluate
    $$
    \int_0^T\tau^{\kappa q/(1-\theta)}\hspace{.2em}\frac{d\tau}{\tau}
    =\frac{1-\theta}{\kappa q}T^{\kappa q/(1-\theta)}.
    $$
    Taking $q$th roots gives the stated constant. For $q=\infin$, take suprema. The argument remains valid for $q<1$, since H\"older is applied to the nonnegative integral.

    For (c), replace $G$ by $G_\omega(\tau)=\Omega_p^w(A,\tau)/\omega(\tau)$ to obtain
    $$
    \tau^{-\alpha}\Omega_P^w(A,\tau)
    \leq H^{1-\theta}K(\tau)G_\omega(\tau)^\theta,
    $$
    and repeat the same H\"older or supremum estimate. The assertions for $P=\infin$ follow directly by integrating the assumed pointwise modulus bound.
\end{proof}

Let $w\in A_r$, $0<p<P\leq\infin$, $0<q\leq\infin$, $\alpha>0$, and $\gamma>r/p$. Put
$$
\theta=p/P,\qquad
Q_w=[w]_{A_r}^{1/p}\left(\frac{T^r}{w([0,T])}\right)^{1/p},
\qquad
\Theta=\gamma-\alpha-r\left(\frac1p-\frac1P\right),
$$
with $\theta=0$ when $P=\infin$.

First suppose $\omega(h)=h^\gamma\ell(h)$ is an admissible modulus, up to equivalence, and
$$
|A_{s,t}|\leq N\frac{|t-s|^\gamma\ell(t-s)}{w([s,t])^{1/p}},
\qquad
\|A\|_{\mathbb B^\omega_{p,q}(w)}\leq N.
$$
The lower-mass estimate gives
$$
\Omega_\infin(A,h)\leq NQ_w
\sup_{0<v\leq h}v^{\gamma-r/p}\ell(v).
$$
If this supremum is bounded by a constant times $h^{\gamma-r/p}\ell(h)$, as for the logarithmic losses above, Lemma~\ref{lem: Weighted Interpolation}(c) applies with $\rho(h)=h^{\gamma-r/p}\ell(h)$. Since
$$
h^{-\alpha}\rho(h)^{1-\theta}\omega(h)^\theta
=h^\Theta\ell(h),
$$
it yields
$$
\|A\|_{\mathbb B^\alpha_{P,q}(w)}
\lesssim NQ_w^{1-\theta}
\|h^\Theta\ell(h)\|_{L^{q/(1-\theta)}((0,T),dh/h)}.
$$
For $P=\infin$, use the direct estimate in the same lemma. When $\Theta>0$, the Hardy bounds on $\ell$ used in \S 4 give
$$
\|A\|_{\mathbb B^\alpha_{P,q}(w)}
\lesssim NQ_w^{1-\theta}T^\Theta\ell(T).
$$

For the critical application, a power estimate at scale index $\infin$ can be used directly. Suppose instead that
$$
\Omega_p^w(A,h)\leq Nh^\gamma,
\qquad
|A_{s,t}|\leq N\frac{|t-s|^\gamma}{w([s,t])^{1/p}}.
$$
Then $\Omega_\infin(A,h)\leq NQ_wh^{\gamma-r/p}$, and interpolation at each scale gives
$$
\Omega_P^w(A,h)
\leq NQ_w^{1-\theta}
h^{\gamma-r(1/p-1/P)}
=NQ_w^{1-\theta}h^{\alpha+\Theta}.
$$
If $\Theta>0$, integration over scales yields
$$
\|A\|_{\mathbb B^\alpha_{P,q}(w)}
\leq NQ_w^{1-\theta}
\begin{cases}
    T^\Theta/(\Theta q)^{1/q},&q<\infin,\\
    T^\Theta,&q=\infin.
\end{cases}
$$
This gives integrability regain without a logarithmic factor and does not require a finite-$q$ remainder bound at the original smoothness $\gamma$. If a smaller admissible weight index $r_-$ is used to obtain the embedding gap, replace $r$ by $r_-$ in $Q_w$ and $\Theta$, keeping $\gamma$ fixed.

\section{A Weighted Besov Sewing Lemma}
In this section we generalise, to the weighted setting, the sewing lemma of \cite{FrizSeeger2022}, which in turn generalises those of \cite{Gubinelli2004Controlling}, \cite{FeyeldeLaPradelle2006Curvilinear}, \cite{FeyeldeLaPradelleMokobodzki2008Noncommutative}. Our approach mirrors that of Friz and Seeger: (i) we control dyadic Riemann-type sums of a two-parameter map $\Xi$, (ii) prove ``subcritical'' and ``critical'' sewing results associated with different parameter regimes, and (iii) use the interpolation result of \S 3 to prove an ``integrability regain'' theorem, which ensures that we can place the sewn remainder in a function space appropriate for Picard iteration arguments.

Given some $\Xi: \bigtriangleup_2(0, T) \rarr V$ and a partition $\pi = \{ 0 = \tau_0 < \tau_1 < ... < \tau_m = 1 \}$ we define the corresponding Riemann sum $I_\pi \Xi_{s, t}$ and remainder $R_\pi$ with
$$
I_\pi \Xi_{s, t} = \sum_{i = 1}^{m} \Xi_{s + \tau_{i - 1}(t-s), s + \tau_i(t - s)}, \qquad R_\pi \Xi = I_\pi \Xi - \Xi,
$$ and we set $\pi_n = \{ k2^{-n}: 0 \leq k \leq 2^{n} \}$ to be the $n$th level dyadic partition.

\begin{lemma}[Subdivision and Dyadic Bounds]
\label{lem:Dyadic Bounds for Sewing}
    Let $w \in A_r$ and $0 < p_2 < \infin$, and set
    $$
    a_2 = 1 \land p_2, \qquad \nu_2 = 1 \lor \frac{r}{p_2}.
    $$
    Let $V$ be a finite-dimensional normed vector space and let $\Xi: \bigtriangleup_2(0, T) \rarr V$ be measurable. Then the following hold:
    \begin{enumerate}[label=(\alph*)]
        \item\textup{(Subdivision Bounds)} Let
        $$
        \pi = \{0 = \tau_0 < ... < \tau_N = 1\}, \qquad \lambda_i = \tau_i - \tau_{i - 1}.
        $$
        For every measurable $B: \bigtriangleup_2(0, T) \rarr V$ and every $0 < h \leq T$,
        $$
        \Omega^w_{p_2}(I_\pi B, h)^{p_2} \leq [w]_{A_r} \sum_{i = 1}^{N} \lambda_i^{1 - p_2 \nu_2} \Omega^w_{p_2}(B, \lambda_i h)^{p_2}.
        $$
        The same estimate holds for any subset of the summands defining $I_\pi B$, with the corresponding terms omitted from the right-hand side.

        \item\textup{(Dyadic Bounds)} For every integer $\ell \geq 0$ and every $0 < h \leq T$,
        $$
        \Omega^w_{p_2}(R_{\pi_{\ell + 1}} \Xi - R_{\pi_{\ell}} \Xi, h) \lesssim 2^{\ell \nu_2} \overline{\Omega}^w_{p_2}(\delta \Xi, 2^{-\ell} h).
        $$
        Consequently, for integers $0 \leq m < n$,
        $$
        \Omega^w_{p_2}(R_{\pi_n} \Xi - R_{\pi_m} \Xi, h)^{a_2} \lesssim \sum_{\ell = m}^{n - 1} 2^{\ell a_2 \nu_2} \overline{\Omega}^w_{p_2}(\delta \Xi, 2^{-\ell} h)^{a_2}.
        $$

        \item\textup{(Integral Bound)} Let $0 < b \leq a_2$. For integers $0 \leq m < n$ and $0 < h \leq T$ satisfying $2^{1 - m} h \leq T$,
        $$
        \Omega^w_{p_2}(R_{\pi_n} \Xi - R_{\pi_m} \Xi, h) \lesssim h^{\nu_2} \left[ \int^{2^{1 - m} h}_{0} \left( \frac{\overline{\Omega}^w_{p_2}(\delta \Xi, \tau)}{\tau^{\nu_2}} \right)^{b} \hspace{.2em} \frac{d\tau}{\tau}  \right]^{1/b}.
        $$
    \end{enumerate}
    The implicit constants depend only on $p_2, r, [w]_{A_r}$, and, in (c), additionally on $b$.
\end{lemma}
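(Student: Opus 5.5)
Part (a) will follow from the pointwise subdivision inequality, Lemma~\ref{lem: Shift and Scale Bounds for Muckenhoupt Weights}(v). Fix $0<h\leq T$ and $s\in[0,T-h]$, and let $I=[s,s+h]$ with children $I_i=[s+\tau_{i-1}h,s+\tau_ih]$. Each child has length $\lambda_ih$, so $\langle w\rangle_{I_i}=w_{\lambda_ih}(s+\tau_{i-1}h)$. Applying (v) with $z_i=B_{s+\tau_{i-1}h,\,s+\tau_ih}$ and $a=\max\{p_2,r\}=p_2\nu_2$ gives
$$
w_h(s)\,|I_\pi B_{s,s+h}|^{p_2}\leq[w]_{A_r}\sum_{i=1}^N\lambda_i^{1-p_2\nu_2}\,w_{\lambda_ih}(s+\tau_{i-1}h)\,|B_{s+\tau_{i-1}h,\,s+\tau_ih}|^{p_2}.
$$
Integrate in $s$ over $[0,T-h]$ and substitute $s'=s+\tau_{i-1}h$ in the $i$th term. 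The image of the $s$-range lies in $[0,T-\lambda_ih]$, so the $i$th integral is at most $\Omega^w_{p_2}(B,\lambda_ih)^{p_2}$, because the modulus takes a supremum over lengths at most $\lambda_ih$. This argument is done at a fixed length $h$; taking the supremum over lengths $h'\leq h$ and using monotonicity of $\Omega^w_{p_2}(B,\cdot)$ in its second argument then gives (a) as stated. For a subset of the summands, inequality (v) applies verbatim with the omitted $z_i$ set to zero, and those terms simply drop out.

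For (b), I will use the identity
$$
R_{\pi_{\ell+1}}\Xi_{s,t}-R_{\pi_\ell}\Xi_{s,t}=-\sum_{k=1}^{2^\ell}\delta\Xi_{t_{k-1},m_k,t_k},
$$
where $t_k=s+k2^{-\ell}(t-s)$ and $m_k$ is the midpoint of $[t_{k-1},t_k]$. This is $I_{\pi_\ell}$ applied to the two-parameter map $C_{a,b}=\delta\Xi_{a,(a+b)/2,b}$. Part (a) with $N=2^\ell$ and $\lambda_i=2^{-\ell}$ gives
$$
\Omega^w_{p_2}(\cdot,h)^{p_2}\leq[w]_{A_r}\,2^{\ell(p_2\nu_2-1)}\,2^\ell\,\Omega^w_{p_2}(C,2^{-\ell}h)^{p_2}.
$$
The midpoint choice $\theta=1/2$ bounds $\Omega^w_{p_2}(C,\cdot)$ by $\overline{\Omega}^w_{p_2}(\delta\Xi,\cdot)$, since both are integrated against the same parent weight $w_{h'}(s)$. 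Taking $p_2$th roots yields $2^{\ell\nu_2}$. The telescoped bound then follows from the quasi-triangle inequality $\|\sum f_\ell\|^{a_2}\leq\sum\|f_\ell\|^{a_2}$ in $L^{p_2}(w_h\,ds)$, applied at each fixed length before taking suprema. This step is valid because for $p_2<1$ the $p_2$th power is subadditive, and for $p_2\geq1$ Minkowski's inequality holds.

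For (c), I will convert the sum into an integral. For $\tau\in[2^{-\ell}h,2^{1-\ell}h]$, monotonicity gives
$$
\overline{\Omega}^w_{p_2}(\delta\Xi,2^{-\ell}h)\leq\overline{\Omega}^w_{p_2}(\delta\Xi,\tau),
\qquad 2^{\ell\nu_2}\asymp h^{\nu_2}\tau^{-\nu_2}.
$$
Hence each term $2^{\ell\nu_2}\overline{\Omega}^w_{p_2}(\delta\Xi,2^{-\ell}h)$ is, up to a constant, bounded by the $L^b(d\tau/\tau)$ average of $h^{\nu_2}\tau^{-\nu_2}\overline{\Omega}^w_{p_2}(\delta\Xi,\tau)$ over that dyadic block. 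For the $\ell$-sum I will use the $\ell^b$ form, which requires $b\leq a_2$; the embedding $\ell^b\subset\ell^{a_2}$ permits replacing the exponent $a_2$ by $b$ in (b). So I first redo (b) with $b$ in place of $a_2$ (subadditivity of $x\mapsto x^b$ composed with the $a_2$ bound), then sum the blocks for $\ell\geq m$, whose union lies in $(0,2^{1-m}h]$. The main obstacle, mild in this lemma, is the care needed in the changes of variables in (a): the shifted ranges must stay inside $[0,T-\lambda_ih]$, and the supremum over lengths must commute with the subdivision.
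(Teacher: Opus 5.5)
Your proposal is correct and follows the paper's proof essentially step for step. For (a) you apply the joint partition estimate of Lemma~\ref{lem: Shift and Scale Bounds for Muckenhoupt Weights}(v) at a fixed length, translate each child integral into $[0,T-\lambda_i u]$, and then take the supremum over lengths; for (b) you use the identity $R_{\pi_{\ell+1}}\Xi-R_{\pi_\ell}\Xi=-I_{\pi_\ell}B$ with $B_{s,t}=\delta\Xi_{s,(s+t)/2,t}$ together with (a) at $N=2^\ell$; and for (c) you lower the exponent from $a_2$ to $b$ by subadditivity and use monotonicity to turn the dyadic sum into an integral over disjoint blocks, exactly as the paper does.
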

\begin{proof}
    Set
    $$
    p := p_2, \qquad a := 1 \land p, \qquad \nu := 1 \lor \frac{r}{p},
    $$
    so $p\nu = p \lor r$. The dyadic telescoping and sum-to-integral arguments follow \cite[Lemma 3.1]{FrizSeeger2022}; the weighted input is the joint partition estimate of Lemma~\ref{lem: Shift and Scale Bounds for Muckenhoupt Weights}(v).

    For (a), fix $0 < u \leq h$ and $s \in [0, T-u]$. The intervals
    $$
    I = [s, s+u], \qquad I_i = [s+\tau_{i-1}u, s+\tau_i u]
    $$
    partition $I$, with $|I_i| = \lambda_i u$, and
    $$
    \langle w \rangle_I = w_u(s), \qquad \langle w \rangle_{I_i} = w_{\lambda_i u}(s+\tau_{i-1}u).
    $$
    Applying Lemma~\ref{lem: Shift and Scale Bounds for Muckenhoupt Weights}(v) and integrating over $s$ gives
    \begin{equation}
        \int_0^{T-u} |(I_\pi B)_{s,s+u}|^p w_u(s) \hspace{.2em} ds \leq [w]_{A_r} \sum_{i=1}^N \lambda_i^{1-p\nu} \int_0^{T-u} |B_{s+\tau_{i-1}u,s+\tau_i u}|^p w_{\lambda_i u}(s+\tau_{i-1}u) \hspace{.2em} ds. \label{eq: Sewing Bound Lemma Intermediate 1}
    \end{equation}
    With $v=s+\tau_{i-1}u$, the integration interval is contained in $[0,T-\lambda_i u]$, so
    \begin{equation}
        \int_0^{T-u} |B_{s+\tau_{i-1}u,s+\tau_i u}|^p w_{\lambda_i u}(s+\tau_{i-1}u) \hspace{.2em} ds \leq \int_0^{T-\lambda_i u} |B_{v,v+\lambda_i u}|^p w_{\lambda_i u}(v) \hspace{.2em} dv \leq \Omega_p^w(B,\lambda_i h)^p. \label{eq: Sewing Bound Lemma Intermediate 2}
    \end{equation}
    Substituting \eqref{eq: Sewing Bound Lemma Intermediate 2} into \eqref{eq: Sewing Bound Lemma Intermediate 1} and taking the supremum over $0<u\leq h$ proves (a). The subset assertion follows by setting the omitted $z_i$ equal to zero in the same partition estimate.

    For (b), define the two-parameter map
    $$
    B_{s,t} := \delta\Xi_{s,(s+t)/2,t}.
    $$
    The dyadic telescoping identity gives
    $$
    (R_{\pi_{\ell+1}}\Xi-R_{\pi_\ell}\Xi)_{s,t} = -(I_{\pi_\ell}B)_{s,t}.
    $$
    Applying (a) with $N=2^\ell$ and $\lambda_i=2^{-\ell}$, and using $\Omega_p^w(B,\tau)\leq\overline{\Omega}_p^w(\delta\Xi,\tau)$, yields
    $$
    \Omega_p^w(R_{\pi_{\ell+1}}\Xi-R_{\pi_\ell}\Xi,h)^p \leq [w]_{A_r} 2^{\ell p\nu} \overline{\Omega}_p^w(\delta\Xi,2^{-\ell}h)^p.
    $$
    Taking $p$th roots proves the first bound. Telescoping over $\ell=m,\ldots,n-1$ and using the triangle inequality when $p\geq1$, or $p$-subadditivity when $p<1$, gives
    $$
    \Omega_p^w(R_{\pi_n}\Xi-R_{\pi_m}\Xi,h)^a \lesssim \sum_{\ell=m}^{n-1} 2^{\ell a\nu} \overline{\Omega}_p^w(\delta\Xi,2^{-\ell}h)^a,
    $$
    proving the second bound.

    For (c), write $G(\tau):=\overline{\Omega}_p^w(\delta\Xi,\tau)$. Since $b/a\leq1$, part (b) and subadditivity give
    $$
    \Omega_p^w(R_{\pi_n}\Xi-R_{\pi_m}\Xi,h)^b \lesssim \sum_{\ell=m}^{n-1} 2^{\ell b\nu} G(2^{-\ell}h)^b.
    $$
    By monotonicity of $G$,
    $$
    2^{\ell b\nu}G(2^{-\ell}h)^b \lesssim h^{b\nu} \int_{2^{-\ell}h}^{2^{1-\ell}h} \left(\frac{G(\tau)}{\tau^\nu}\right)^b \hspace{.2em} \frac{d\tau}{\tau}.
    $$
    These integration intervals have disjoint interiors and lie in $(0,2^{1-m}h]\subseteq(0,T]$. Summing and taking $b$th roots therefore gives
    $$
    \Omega_p^w(R_{\pi_n}\Xi-R_{\pi_m}\Xi,h) \lesssim h^\nu \left[\int_0^{2^{1-m}h} \left(\frac{\overline{\Omega}_p^w(\delta\Xi,\tau)}{\tau^\nu}\right)^b \hspace{.2em} \frac{d\tau}{\tau}\right]^{1/b},
    $$
    as required.
\end{proof}

\begin{remark}[Weight Tax]
    The subdivision estimate in part (a) of the preceding lemma gives the dyadic coefficient $2^{\ell \nu_2}$. Comparing the parent and child weights separately and then
    applying the triangle or quasi-triangle inequality instead gives
    $$
    2^{\ell(1 \lor 1/p_2)} 2^{\ell(r-1)/p_2},
    $$
    corresponding to the potentially larger exponent $(1 \lor \frac{1}{p_2}) + \frac{r-1}{p_2}$. The joint estimate thus yields the coefficient $2^{-\ell(\gamma-\nu_2)}$ in subcritical sewing, leading to the sewing threshold $\gamma > \nu_2$. At $\gamma = \nu_2$, this geometric decay disappears, and summability must instead come from the scale integrability of the defect, as in critical sewing. In particular, when $p_2 \geq r$, the resulting threshold is $1$, just as in the unweighted Banach range.
\end{remark}

\subsection{Subcritical Weighted Sewing}
\begin{theorem}[Subcritical Weighted Sewing]
\label{thm:Subcritical Weighted Sewing}
    Let $V$ be a finite-dimensional normed vector space, and let $1 \leq r < \infin$ and $w \in A_r$. Suppose
    $$
    0 < p_1, p_2, q_1, q_2 \leq \infin, \qquad 0 < p_2 < \infin, \qquad 0 < \alpha < 1,
    $$
    and set
    $$
    \nu_2 = 1 \lor \frac{r}{p_2}, \qquad \gamma > \nu_2, \qquad \epsilon = \gamma - \nu_2.
    $$
    Write
    $$
    p_* = p_1 \land p_2, \qquad q_* = q_1 \lor q_2, \qquad W = w([0, T]).
    $$
    Let $\Xi: \bigtriangleup_2(0, T) \rarr V$ be measurable and satisfy
    $$
    \Xi \in \mathbb{B}^\alpha_{p_1, q_1}(w), \qquad \delta \Xi \in \overline{\mathbb{B}}^\gamma_{p_2, q_2}(w).
    $$
    Then there exists a measurable path $\mathcal{I} \Xi \in B^\alpha_{p_*, q_*}(w)$ along with a sewn two-parameter remainder path $R \Xi \in \mathbb{B}^\gamma_{p_2, q_2}(w)$ satisfying the following properties.
    \begin{enumerate}[label=(\alph*)]
        \item\textup{(Increment Identity)} For every $0 < h \leq T$,
        $$
        \Omega^w_{p_2}(\delta \mathcal{I} \Xi - \Xi - R\Xi, h) = 0.
        $$
        In particular, one may choose the representative
        $$
        (R \Xi)_{s, t} = (\mathcal{I} \Xi)_t - (\mathcal{I} \Xi)_s - \Xi_{s, t}.
        $$

        \item\textup{(Dyadic Convergence)} For every integer $n \geq 0$,
        $$
        \| I_{\pi_n} \Xi - \delta \mathcal{I} \Xi \|_{\mathbb{B}^\gamma_{p_2, q_2}(w)} = \|R_{\pi_n} \Xi - R \Xi \|_{\mathbb{B}^\gamma_{p_2, q_2}(w)} \lesssim 2^{-n \epsilon} \| \delta \Xi \|_{\overline{\mathbb{B}}^\gamma_{p_2, q_2}(w)}.
        $$

        \item\textup{(Arbitrary Convergence)} Let
        $$
        \pi = \{0 = \tau_0 < ... < \tau_N = 1\}.
        $$
        For every $q_2 \leq s \leq \infin$ and every $\pi$ with $\| \pi \| \leq 1/2$,
        $$
        \|I_\pi \Xi - \delta \mathcal{I} \Xi \|_{\mathbb{B}^\gamma_{p_2, s}(w)} = \| R_\pi \Xi - R \Xi \|_{\mathbb{B}^\gamma_{p_2, s}(w)} \lesssim \| \pi \|^\epsilon \| \delta \Xi \|_{\overline{\mathbb{B}}^\gamma_{p_2, q_2}(w)}.
        $$
        Thus convergence along arbitrary partition holds, in particular, in the remainder space $\mathbb{B}^\gamma_{p_2, q_2}(w)$.

        \item\textup{(Path Bounds)} The remainder satisfies
        $$
        \| R \Xi \|_{\mathbb{B}^\gamma_{p_2, q_2}(w)} \lesssim \| \delta \Xi \|_{\overline{\mathbb{B}}^\gamma_{p_2, q_2}(w)},
        $$
        and the sewn path satisfies
        $$
        [\mathcal{I} \Xi]_{B^\alpha_{p_*, q_*}(w)} \lesssim W^{1/p_* - 1/p_1} \| \Xi \|_{\mathbb{B}^\alpha_{p_1, q_1}(w)} + W^{1/p_* - 1/p_2} T^{\gamma - \alpha} \| \delta \Xi \|_{\overline{\mathbb{B}}^\gamma_{p_2, q_2}(w)}.
        $$
    \end{enumerate}
    The sewn increment $\delta \mathcal{I} \Xi$ is unique modulo zero-increment seminorms among measurable additive maps $A$ satisfying $A - \Xi \in \mathbb{B}^\gamma_{p_2, q_2}(w)$.
\end{theorem}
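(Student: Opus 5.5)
The proof follows the Friz--Seeger scheme, with Lemma~\ref{lem:Dyadic Bounds for Sewing} supplying all weighted input. Write $p=p_2$, $a=1\land p$, $\nu=\nu_2$, $G(\tau)=\overline{\Omega}^w_p(\delta\Xi,\tau)$.

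\textbf{Step 1: the dyadic remainders are Cauchy.} I show that $(R_{\pi_n}\Xi)_n$ is Cauchy in $\mathbb{B}^\gamma_{p,q_2}(w)$. By Lemma~\ref{lem:Dyadic Bounds for Sewing}(b), for $m<n$,
\[
\Omega^w_p(R_{\pi_n}\Xi-R_{\pi_m}\Xi,h)^a\lesssim\sum_{\ell\ge m}2^{\ell a\nu}G(2^{-\ell}h)^a .
\]
Multiplying by $h^{-a\gamma}$ and writing $2^{\ell a\nu}=2^{-\ell a\epsilon}(2^{-\ell})^{-a\gamma}$ gives
\[
\bigl(h^{-\gamma}\Omega^w_p(\cdot,h)\bigr)^a\lesssim\sum_{\ell\ge m}2^{-\ell a\epsilon}\bigl((2^{-\ell}h)^{-\gamma}G(2^{-\ell}h)\bigr)^a .
\]
Take the $L^{q_2/a}(d\tau/\tau)$ quasi-norm. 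Dilation invariance of $d\tau/\tau$, together with $G\equiv G(T)$ beyond $T$ (harmless, since only $h\le T$ is integrated and $2^{-\ell}h\le h$), turns each summand into $\|\delta\Xi\|^a$. Minkowski's inequality when $q_2/a\ge1$, or $(q_2/a)$-subadditivity otherwise, then yields
\[
\|R_{\pi_n}\Xi-R_{\pi_m}\Xi\|_{\mathbb{B}^\gamma}\lesssim 2^{-m\epsilon}\|\delta\Xi\|.
\]
Since $R_{\pi_0}\Xi=0$, completeness gives a limit $R\Xi$, and the same estimate yields (b) and the bound in (d).

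\textbf{Step 2: the limit is additive.} Set $A=\Xi+R\Xi=\lim I_{\pi_n}\Xi$. I need $\overline{\Omega}^w_p(\delta A,\cdot)=0$ so that Proposition~\ref{prop: Two-Parameter Cocycle Result} produces $\mathcal{I}\Xi$ with (a). For the midpoint split, $\delta(I_{\pi_{n+1}}\Xi)_{s,m,t}=R$-type terms that vanish in the limit. For general $\theta$ the plan is:
\begin{itemize}
\item restrict to dyadic $\theta$, where $I_{\pi_{n}}$ on $[s,t]$ splits into dyadic sums on the two subintervals;
\item control those sums via Lemma~\ref{lem:Dyadic Bounds for Sewing}(a), applied with the parent weight;
\item extend to arbitrary $\theta$ using part (c) below together with continuity in $\theta$ in measure.
\end{itemize}
In fact (c) handles this directly: it gives $I_\pi\Xi\to A$ for the two-piece partition $\{0,\theta,1\}$ refined, and additivity of Riemann sums over concatenated partitions passes to the limit.

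\textbf{Step 3: arbitrary partitions.} For (c), follow Friz--Seeger Lemma~3.4. Let $\pi$ have $\|\pi\|\le1/2$. Then
\[
R_\pi\Xi-R\Xi=I_\pi(\Xi-A)=-\sum_i (R\Xi)_{\text{child } i}.
\]
Lemma~\ref{lem:Dyadic Bounds for Sewing}(a) applied to $B=R\Xi$ gives
\[
\Omega^w_p(I_\pi R\Xi,h)^p\le[w]_{A_r}\sum_i\lambda_i^{1-p\nu}\Omega^w_p(R\Xi,\lambda_ih)^p .
\]
This is not directly summable, because $R\Xi$ only lies in $\mathbb{B}^\gamma_{p,q_2}$, not pointwise. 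I first pass from $q_2$ to $s\ge q_2$ via the monotone-modulus embedding, giving $\Omega^w_p(R\Xi,\tau)\lesssim\tau^\gamma\|R\Xi\|$. Then
\[
\sum_i\lambda_i^{1-p\nu}(\lambda_ih)^{p\gamma}\le h^{p\gamma}\|\pi\|^{p\epsilon}\sum_i\lambda_i=h^{p\gamma}\|\pi\|^{p\epsilon},
\]
which gives the $\mathbb{B}^\gamma_{p,\infty}$ bound. For finite $s\ge q_2$ I would instead split $R\Xi=(R\Xi-R_{\pi_n}\Xi)+R_{\pi_n}\Xi$ with $2^{-n}\approx\|\pi\|$ and treat each piece by the dyadic bounds. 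I expect this step, getting the $\|\pi\|^\epsilon$ rate in $L^s(d\tau/\tau)$ with nonuniform $\lambda_i$ and $h$-dependent scales, to be the main obstacle.

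\textbf{Step 4: the path bound and uniqueness.} For the path bound in (d), write $\delta\mathcal{I}\Xi=\Xi+R\Xi$ and lower the integrability to $p_*$ using the finite-measure Hölder step of Proposition~\ref{prop: Hölder Composition Estimate}, which costs $W^{1/p_*-1/p_i}$. Raising $q$ to $q_*$ and trading $\tau^{\gamma}$ for $T^{\gamma-\alpha}\tau^\alpha$ completes it. For uniqueness, take the difference $D$ of two additive candidates. Then $D\in\mathbb{B}^\gamma_{p,q_2}(w)$ with $\gamma>\nu$, and $D$ is additive, so Proposition~\ref{prop: Vacuity Criterion} (via its proof for two-parameter increments, or applied to the path generating $D$) forces $D$ to be constant, i.e. $\Omega^w_p(D,\cdot)=0$.
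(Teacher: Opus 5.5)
\textbf{The gap: Steps 2 and 3 are circular.} As a result, the additivity of the dyadic limit $A=\Xi+R\Xi$, which is the heart of the sewing lemma, is never established.

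The decisive argument in Step 2 (``in fact (c) handles this directly'') uses convergence $I_\pi\Xi\to A$ along non-dyadic partitions. Your proof of (c) then begins with $R_\pi\Xi-R\Xi=I_\pi(\Xi-A)$. Since $R_\pi\Xi-R\Xi=I_\pi\Xi-A$, that identity is equivalent to $I_\pi A=A$ on almost every slice. That is exactly the additivity of $A$ along $\pi$ which Step 2 was meant to deliver.

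The non-circular part of your plan does not close the gap:
\begin{itemize}
\item Dyadic convergence and the parent--child comparison do give $\delta A_{s,(s+t)/2,t}=0$. But midpoint additivity alone does not imply additivity: take $A_{s,t}=f(t-s)$ with $f(h)=h\,g(\log_2h)$, $g$ one-periodic and nonconstant.
\item For dyadic $\theta\neq1/2$, the partition $\pi_n$ does not restrict to dyadic partitions of both children. For $\theta=3/4$ the left child carries $3\cdot2^{n-2}$ equal pieces, so you again need non-dyadic convergence.
\item ``Continuity in $\theta$ in measure'' of a merely measurable $A$ is unjustified.
\end{itemize}

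The paper sidesteps this entirely. By Theorem~\ref{thm: Deweighting}, $\Xi\in\mathbb{B}^\alpha_{p_1/r,q_1}$ and $\delta\Xi\in\overline{\mathbb{B}}^\gamma_{p_2/r,q_2}$. The condition $\gamma>1\lor r/p_2$ is exactly the unweighted subcritical threshold at integrability $p_2/r$. So \cite[Theorem 3.1]{FrizSeeger2022} supplies a genuine path $F$ with $I_{\pi_n}\Xi\to\delta F$. Deweighting your weighted convergence identifies $\delta F=\Xi+R\Xi$ on each slice, and positivity of $w_h$ transfers the null sets. Only afterwards is (c) proved, using the exact additivity of $\delta F$.

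If you prefer to stay inside the weighted framework, you must first prove non-dyadic convergence without assuming additivity. For example, compare $I_{\rho_m}\Xi$, with $\rho_m=\theta\pi_m\cup(\theta+(1-\theta)\pi_m)$, against $I_{\pi_n}\Xi$ at comparable mesh through their common refinement. The inserted-point defects can be summed jointly with Lemma~\ref{lem: Shift and Scale Bounds for Muckenhoupt Weights}(v).

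\textbf{A second gap in (c).} Even granting additivity, Step 3 only proves (c) for $s=\infin$. For finite $s\geq q_2$ you stop at the obstacle you identified. Splitting $R\Xi=(R\Xi-R_{\pi_n}\Xi)+R_{\pi_n}\Xi$ does not address the nonuniform lengths $\lambda_i$.

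The missing idea is to group children by dyadic length: $\mathcal{J}_k=\{i:2^{-k-1}<\lambda_i\leq2^{-k}\}$. Since $\#\mathcal{J}_k\leq2^{k+1}$, the subset form of Lemma~\ref{lem:Dyadic Bounds for Sewing}(a) gives $\Omega^w_{p_2}(B_k,h)\lesssim2^{k\nu_2}\Omega^w_{p_2}(R\Xi,2^{-k}h)$ for the class sum $B_k$. The point is that the dilation $2^{-k}$ is now fixed. Dilation invariance of $d\tau/\tau$ and the monotone embedding from $q_2$ to $s$ then yield $\|B_k\|_{\mathbb{B}^\gamma_{p_2,s}(w)}\lesssim2^{-k\epsilon}\|\delta\Xi\|_{\overline{\mathbb{B}}^\gamma_{p_2,q_2}(w)}$. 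Finally, the $(1\land p_2\land s)$-triangle inequality sums over $k\geq K$, where $2^{-K-1}<\|\pi\|\leq2^{-K}$, to give the rate $\|\pi\|^\epsilon$.

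\textbf{Minor omissions in Step 4.} You do not extend the scale bound from $(0,T/2]$ to $(0,T]$, which the paper does via the parent--child comparison. You also do not verify the zeroth-order condition $\mathcal{I}\Xi\in L^{p_*}(w)$.
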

\begin{proof}
    Set
    $$
    p := p_2, \qquad q := q_2, \qquad \nu := 1 \lor \frac{r}{p}, \qquad \epsilon := \gamma-\nu, \qquad b := 1 \land p \land q,
    $$
    and write
    $$
    G(h) := h^{-\gamma}\overline{\Omega}_p^w(\delta\Xi,h), \qquad D := \|\delta\Xi\|_{\overline{\mathbb{B}}^\gamma_{p,q}(w)}.
    $$
    We construct the remainder in the weighted space and use deweighting together with \cite[Theorem 3.1]{FrizSeeger2022} to identify the limit as an additive increment.

    By Lemma~\ref{lem:Dyadic Bounds for Sewing}(b) and $b$-subadditivity,
    $$
    \left[h^{-\gamma}\Omega_p^w(R_{\pi_n}\Xi-R_{\pi_m}\Xi,h)\right]^b \lesssim \sum_{\ell=m}^{n-1} 2^{-\ell b\epsilon}G(2^{-\ell}h)^b.
    $$
    Taking the $L^{q/b}(dh/h)$-norm, with the supremum convention when $q=\infin$, gives
    \begin{align*}
        \|R_{\pi_n}\Xi-R_{\pi_m}\Xi\|_{\mathbb{B}^\gamma_{p,q}(w)}^b
        &\lesssim \sum_{\ell=m}^{n-1} 2^{-\ell b\epsilon}\|G(2^{-\ell}\cdot)\|_{L^q((0,T],dh/h)}^b
        \\
        &\leq D^b \sum_{\ell=m}^{n-1}2^{-\ell b\epsilon}
        \lesssim 2^{-mb\epsilon}D^b.
    \end{align*}
    Completeness therefore yields a limit $R\Xi\in\mathbb{B}^\gamma_{p,q}(w)$ satisfying
    $$
    \|R_{\pi_n}\Xi-R\Xi\|_{\mathbb{B}^\gamma_{p,q}(w)} \lesssim 2^{-n\epsilon}D.
    $$
    Since $R_{\pi_0}\Xi=0$, this also gives $\|R\Xi\|_{\mathbb{B}^\gamma_{p,q}(w)}\lesssim D$.

    By Theorem 2.10,
    $$
    \Xi\in\mathbb{B}^\alpha_{p_1/r,q_1}, \qquad \delta\Xi\in\overline{\mathbb{B}}^\gamma_{p/r,q},
    $$
    with $\infin/r=\infin$. Since $\gamma>1\lor r/p$, the unweighted sewing theorem and its dyadic convergence argument in \cite[Theorem 3.1]{FrizSeeger2022} provide a measurable path $F$ such that
    $$
    \|I_{\pi_n}\Xi-\delta F\|_{\mathbb{B}^\gamma_{p/r,q}} \rarr 0.
    $$
    Deweighting the convergence already proved shows that $\Xi+R\Xi$ is the same limit. Consequently,
    $$
    \Omega_{p/r}(\delta F-\Xi-R\Xi,h)=0, \qquad 0<h\leq T.
    $$
    Since $w_h$ is positive and finite, the weighted and unweighted slice measures have the same null sets. Hence
    $$
    \Omega_p^w(\delta F-\Xi-R\Xi,h)=0, \qquad 0<h\leq T.
    $$
    Set $\mathcal{I}\Xi:=F$ and choose $R\Xi=\delta F-\Xi$. This proves (a), and the preceding convergence estimate proves (b).

    We next prove the path bound. Write $P:=p_*$ and $Q:=q_*$. The finite-measure inequalities and $\int_0^{T-h}w_h(s)\hspace{.2em}ds\leq W$ give
    $$
    \omega_P^w(F,h) \lesssim W^{1/P-1/p_1}\Omega_{p_1}^w(\Xi,h)+W^{1/P-1/p}\Omega_p^w(R\Xi,h).
    $$
    For any nondecreasing $H$, $\beta>0$, and $0<u\leq v\leq\infin$, monotonicity on $[h,2h]$ gives
    $$
    \|h^{-\beta}H(h)\|_{L^v((0,T/2],dh/h)} \lesssim \|h^{-\beta}H(h)\|_{L^u((0,T],dh/h)}.
    $$
    Applying this to the preceding modulus bound, and using $\gamma>\alpha$, yields
    $$
    \|h^{-\alpha}\omega_P^w(F,h)\|_{L^Q((0,T/2],dh/h)} \lesssim W^{1/P-1/p_1}\|\Xi\|_{\mathbb{B}^\alpha_{p_1,q_1}(w)}+W^{1/P-1/p}T^{\gamma-\alpha}D.
    $$
    Splitting each increment into two equal children and applying the parent-child weight comparison gives $\omega_P^w(F,h)\lesssim\omega_P^w(F,h/2)$. Thus the same estimate holds over $(0,T]$, proving the seminorm bound in (d).

    For completeness, we verify the zeroth-order condition $F\in L^P(w)$. For an interval $I$ of length $L$, put $I_h:=\{s:s,s+h\in I\}$. Comparing pairs separated by at least $L/4$, and inserting a third point for closer pairs, gives the usual local oscillation estimate. Together with Lemma~\ref{lem: Shift and Scale Bounds for Muckenhoupt Weights}(i), this yields
    $$
    \frac{w(I)}{L}\inf_{c\in V}\int_I|F(t)-c|^P\hspace{.2em}dt \lesssim \frac{1}{L}\int_{L/4}^L\int_{I_h}|\delta F_{s,s+h}|^P w_h(s)\hspace{.2em}ds\hspace{.2em}dh.
    $$
    In particular, $F\in L^P(I)$ whenever $L\leq T/2$. Let $\mathcal{D}_j$ be the dyadic partition of $[0,T]$ into intervals of length $h_j=2^{-j}T$, and on each $I\in\mathcal{D}_j$ choose a constant $c_I$ minimizing the local $L^P$ error. Define $F_j=c_I$ on $I$. Comparing the constants on each parent and its children, and summing the preceding estimate, gives
    $$
    \|F_{j+1}-F_j\|_{L^P(w)}^P \lesssim \sum_{I\in\mathcal{D}_j}\frac{w(I)}{|I|}\int_I|F-c_I|^P \lesssim \omega_P^w(F,h_j)^P.
    $$
    The seminorm bound gives $\omega_P^w(F,h_j)\lesssim h_j^\alpha[F]_{B^\alpha_{P,Q}(w)}$, so these differences are summable to the power $1\land P$. Each $F_j$ belongs to $L^P(w)$, and Lebesgue differentiation gives $F_j\to F$ almost everywhere. Completeness of $L^P(w)$ therefore proves $F\in L^P(w)$.

    For (c), fix $q\leq v\leq\infin$ and a partition $\pi$ with $\|\pi\|\leq1/2$. Write
    $$
    \lambda_i:=\tau_i-\tau_{i-1}, \qquad \mathcal{J}_k:=\{i:2^{-k-1}<\lambda_i\leq2^{-k}\},
    $$
    and choose $K\geq1$ such that $2^{-K-1}<\|\pi\|\leq2^{-K}$. Let $B_k$ be the sum of the increments of $R\Xi$ over the children indexed by $\mathcal{J}_k$. Since $\#\mathcal{J}_k\leq2^{k+1}$, Lemma~\ref{lem:Dyadic Bounds for Sewing}(a) gives
    $$
    \Omega_p^w(B_k,h)\lesssim2^{k\nu}\Omega_p^w(R\Xi,2^{-k}h).
    $$
    Consequently, the scale embedding above and $2^{-k}T\leq T/2$ imply
    $$
    \|B_k\|_{\mathbb{B}^\gamma_{p,v}(w)} \lesssim 2^{-k\epsilon}\|h^{-\gamma}\Omega_p^w(R\Xi,h)\|_{L^v((0,2^{-k}T],dh/h)} \lesssim 2^{-k\epsilon}D.
    $$
    Additivity gives $\delta F-I_\pi\Xi=\sum_{k\geq K}B_k$. With $b_v:=1\land p\land v$, we obtain
    $$
    \|\delta F-I_\pi\Xi\|_{\mathbb{B}^\gamma_{p,v}(w)}^{b_v} \lesssim \sum_{k\geq K}2^{-kb_v\epsilon}D^{b_v} \lesssim \|\pi\|^{b_v\epsilon}D^{b_v}.
    $$
    Taking $b_v$th roots proves (c), including the asserted equality since $I_\pi\Xi-\delta F=R_\pi\Xi-R\Xi$.

    Finally, any other admissible additive map has the form $\delta\widetilde F$ modulo zero-increment seminorms by Proposition 2.15. Its difference from $\delta F$ belongs to $\mathbb{B}^\gamma_{p,q}(w)$. Since $\gamma>\nu$, Proposition 2.13 implies that $\widetilde F-F$ has a constant version. This proves uniqueness.
\end{proof}

\begin{remark}[Convergence along Arbitrary Partitions]
    Theorem~\ref{thm:Subcritical Weighted Sewing}(c) gives convergence in the original remainder space $\mathbb{B}^\gamma_{p_2, q_2}(w)$ for every $0 < q_2 \leq \infin$, with rate $\|\pi\|^{\gamma-\nu_2}$. Grouping child intervals of comparable length makes the summation geometric under the sole smoothness condition $\gamma > \nu_2$, including in the quasi-Banach range.
\end{remark}

\subsection{Critical Weighted Sewing}
At the boundary
$$
    \gamma_c:=\nu(p_2,r)=1\vee\frac r{p_2},
$$
the geometric decay used in subcritical sewing disappears.
The condition $0<q_2\le1\wedge p_2$ still gives summable dyadic
tails and a remainder in the little space
$\mathbb B^{\gamma_c}_{p_2,\infty;\circ}(w)$.
We use loss functions to describe its finite-scale-index
regularity.

For each $0<\sigma<\infty$, fix a positive, nonincreasing function
$\ell_\sigma:(0,T]\to(0,\infty)$ satisfying
$$
    \int_0^T\ell_\sigma(h)^{-\sigma}\frac{dh}{h}<\infty
$$
and, for every $\eta,\zeta>0$ and $0<\delta\le T$,
$$
    \int_0^\delta
        \ell_\sigma(h)^\eta h^\zeta\frac{dh}{h}
    \lesssim_{\sigma,\eta,\zeta}
    \ell_\sigma(\delta)^\eta\delta^\zeta.
$$
Set $\ell_\infty\equiv1$. A standard choice is
$$
    \ell_\sigma(h)
    =
    \left(1+\log\frac Th\right)^{1/\sigma+\epsilon},
    \qquad \epsilon>0,
    \qquad 0<\sigma<\infty.
$$
To make the modulus convention explicit, define
$$
    \omega_\sigma(h)
    :=
    \sup_{0<u\le h}u^{\gamma_c}\ell_\sigma(u).
$$
The preceding assumptions imply
$$
    \omega_\sigma(h)\asymp
    h^{\gamma_c}\ell_\sigma(h),
    \qquad
    \omega_\infty(h)=h^{\gamma_c}.
$$
We also write
$$
    \Lambda_\sigma
    :=
    \left(
        \int_0^T\ell_\sigma(h)^{-\sigma}\frac{dh}{h}
    \right)^{1/\sigma},
    \qquad
    \Lambda_\infty:=1.
$$

When $r>1$ and $r>p_2$, openness of the Muckenhoupt classes
makes this boundary subcritical after choosing a smaller
admissible weight index; the following corollary records the
resulting stronger conclusions. When $r\le p_2$, the boundary
is $\gamma_c=1$ and remains a genuine sewing endpoint.
The case $r=1$, $p_2<1$ also retains a genuine endpoint,
at $\gamma_c=1/p_2$.

\begin{theorem}[Critical Weighted Sewing]
\label{thm:Critical Weighted Sewing}
    Let $V$ be a finite-dimensional normed vector space, and let $1 \leq r < \infin$ and $w \in A_r$. Suppose
    $$
    0 < p_1, q_1 \leq \infin, \qquad 0 < p_2 < \infin, \qquad 0 < q_2 \leq 1 \land p_2, \qquad 0 < \alpha < 1.
    $$
    Set
    $$
    \gamma_c = 1 \lor \frac{r}{p_2}, \qquad p_* = p_1 \land p_2, \qquad q_* = q_1 \lor q_2, \qquad W = w([0, T]).
    $$
    Let $\Xi: \bigtriangleup_2(0, T) \rarr V$ be measurable and satisfy
    $$
    \Xi \in \mathbb{B}^\alpha_{p_1, q_1}(w), \qquad D_c(\Xi) := \| \delta \Xi \|_{\overline{\mathbb{B}}^{\gamma_c}_{p_2, q_2}(w)} + T^{-\gamma_c} \overline{\Omega}^w_{p_2}(\delta \Xi, T) < \infin.
    $$
    Then there exists a measurable path $\mathcal{I} \Xi \in B^\alpha_{p_*, q_*}(w)$ along with a sewn two-parameter remainder path $R \Xi$, with
    $$
    R \Xi \in \mathbb{B}^{\gamma_c}_{p_2, \infin; \circ}(w) \cap \bigcap_{0 < \sigma < \infin} \mathbb{B}^{\omega_\sigma}_{p_2, \sigma}(w),
    $$
    satisfying the following properties.
    \begin{enumerate}[label=(\alph*)]
        \item\textup{(Increment Identity)} For every $0 < h \leq T$,
        $$
        \Omega^w_{p_2}(\delta \mathcal{I} \Xi - \Xi - R\Xi, h) = 0.
        $$
        In particular, one may choose the representative
        $$
        (R \Xi)_{s, t} = (\mathcal{I} \Xi)_t - (\mathcal{I} \Xi)_s - \Xi_{s, t}.
        $$

        \item\textup{(Dyadic Convergence)} For every $0 < \sigma \leq \infin$,
        $$
        \lim_{n \rarr \infin} \|I_{\pi_n} \Xi - \delta \mathcal{I} \Xi \|_{\mathbb{B}^{\omega_\sigma}_{p_2, \sigma}(w)} = \lim_{n \rarr \infin} \| R_{\pi_n} \Xi - R \Xi \|_{\mathbb{B}^{\omega_\sigma}_{p_2, \sigma}(w)} = 0.
        $$

        \item\textup{(Arbitrary Convergence)} For every $0 < \sigma \leq \infin$,
        $$
        \lim_{\| \pi \| \rarr 0} \|I_\pi \Xi - \delta \mathcal{I} \Xi \|_{\mathbb{B}^{\omega_\sigma}_{p_2, \sigma}(w)} = \lim_{\| \pi \| \rarr 0} \| R_\pi \Xi - R \Xi \|_{\mathbb{B}^{\omega_\sigma}_{p_2, \sigma}(w)} = 0.
        $$
        In particular, taking $\sigma = \infin$ gives convergence in the space $\mathbb{B}^{\gamma_c}_{p_2, \infin}(w)$.

        \item\textup{(Path Bounds)} The remainder satisfies
        $$
        \| R \Xi \|_{\mathbb{B}^{\gamma_c}_{p_2, \infin}(w)} \lesssim D_c(\Xi),
        $$
        and, for every $0 < \sigma < \infin$,
        $$
        \|R \Xi \|_{\mathbb{B}^{\omega_\sigma}_{p_2, \sigma}(w)} \lesssim \Lambda_\sigma D_c(\Xi).
        $$
        The sewn path satisfies
        $$
        [\mathcal{I} \Xi]_{B^\alpha_{p_*, q_*}(w)} \lesssim W^{1/p_* - 1/p_1} \| \Xi \|_{\mathbb{B}^\alpha_{p_1, q_1}(w)} + W^{1/p_* - 1/p_2} T^{\gamma_c - \alpha} D_c(\Xi).
        $$
    \end{enumerate}
    The sewn increment $\delta \mathcal{I} \Xi$ is unique modulo zero-increment seminorms among measurable additive maps $A$ satisfying $A - \Xi \in \mathbb{B}^{\gamma_c}_{p_2, \infin; \circ}(w)$.
\end{theorem}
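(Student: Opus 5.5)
We follow the subcritical argument, replacing geometric decay by summability of the defect across scales. Write $p=p_2$, $q=q_2$, $\nu=\gamma_c$, $G(\tau)=\tau^{-\nu}\overline{\Omega}^w_p(\delta\Xi,\tau)$, and $a=1\land p$. Since $q\leq a$, Lemma~\ref{lem:Dyadic Bounds for Sewing}(c) with $b=q$ is the key input. For $0\le m<n$ and $2^{1-m}h\le T$ it gives
$$
h^{-\nu}\Omega^w_p(R_{\pi_n}\Xi-R_{\pi_m}\Xi,h)\lesssim \Bigl(\int_0^{2^{1-m}h}G(\tau)^{q}\,\frac{d\tau}{\tau}\Bigr)^{1/q}.
$$
The right-hand side is at most $\|\delta\Xi\|_{\overline{\mathbb B}^{\gamma_c}_{p,q}(w)}$ and tends to $0$ uniformly in $h$ as $m\to\infty$.

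For scales $h$ with $2^{1-m}h>T$, we would use Lemma~\ref{lem:Dyadic Bounds for Sewing}(b) termwise. Terms with $2^{-\ell}h\le T/2$ are absorbed into the integral in the same way. The finitely many remaining terms, bounded in number by $\log_2(2h/T)$ plus one, are controlled by $T^{-\gamma_c}\overline{\Omega}^w_p(\delta\Xi,T)$, using monotonicity of $\overline{\Omega}$. This is exactly why the second summand appears in $D_c(\Xi)$.

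Together these show that $(R_{\pi_n}\Xi)$ is Cauchy in $\mathbb B^{\gamma_c}_{p,\infin}(w)$. The limit $R\Xi$ satisfies $\|R\Xi\|\lesssim D_c(\Xi)$. Since each $R_{\pi_n}\Xi$ is at least as good at small scales, via $\int_0^{2h}G^q\to0$ as $h\to0$, we get $R\Xi\in\mathbb B^{\gamma_c}_{p,\infin;\circ}(w)$.

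For the log-loss spaces, the displayed bound with $m=0$ gives $\Omega^w_p(R_{\pi_n}\Xi,h)\lesssim h^{\nu}\epsilon(2h)$, where $\epsilon(\delta)=(\int_0^\delta G^q\,d\tau/\tau)^{1/q}$. Since $\omega_\sigma(h)\asymp h^{\nu}\ell_\sigma(h)$, we obtain
$$
\|R_{\pi_n}\Xi-R_{\pi_m}\Xi\|_{\mathbb B^{\omega_\sigma}_{p,\sigma}(w)}\lesssim \|\ell_\sigma^{-1}\epsilon_m\|_{L^\sigma(dh/h)}\le \Lambda_\sigma\sup_h\epsilon_m(h),
$$
with $\epsilon_m$ the tail version. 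The sup tends to $0$ as $m\to\infty$ and is at most $D_c$. This gives (b) and the $\Lambda_\sigma$ bound in (d).

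The identification of the limit as $\delta F-\Xi$ is done as before. Theorem~\ref{thm: Deweighting} places $\delta\Xi$ in $\overline{\mathbb B}^{\gamma_c}_{p/r,q}$ with $\gamma_c=1\lor 1/(p/r)$ and $q\le 1\land(p/r)$ in the relevant regime. There we invoke the critical case of \cite[Theorem 3.1]{FrizSeeger2022}. When $p/r$ exceeds $q$ but $q\le p/r$ fails, we instead argue directly: the weighted limit satisfies $\delta(\Xi+R\Xi)=\lim\delta(I_{\pi_n}\Xi)=0$ slice-wise, and Proposition~\ref{prop: Two-Parameter Cocycle Result} produces $F$. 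The direct route is cleaner and we would use it uniformly. To verify $\overline\Omega^w_p(\delta(\Xi+R\Xi),\tau)=0$, we pass to the limit in $\delta I_{\pi_n}\Xi$. This is the step I expect to be the main obstacle: $\delta I_{\pi_n}\Xi_{s,u,t}$ is not zero for non-dyadic $u$, so one must pass through arbitrary-partition convergence (c) first, or rely on the deweighted Friz--Seeger identification. I would try the latter first.

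The path bound and $F\in L^{p_*}(w)$ are verbatim from Theorem~\ref{thm:Subcritical Weighted Sewing}, since only $\gamma_c>\alpha$ was used there.

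For (c), we group the children by dyadic length as in the subcritical proof, setting $B_k=\sum_{i\in\mathcal J_k}(R\Xi)_{\text{child}}$. Lemma~\ref{lem:Dyadic Bounds for Sewing}(a) now gives $\Omega^w_p(B_k,h)\lesssim 2^{k\nu}\Omega^w_p(R\Xi,2^{-k}h)$ with no decay. The compensating fact is that $R\Xi$ itself has a small-scale modulus $o(\tau^{\nu})$, controlled by the tail $\epsilon(2\tau)$. The sum over $k\ge K$ must then be handled through the integral form rather than termwise. The concrete route is to represent $R\Xi$ as $\sum_\ell$ of the dyadic defect pieces $I_{\pi_\ell}B$ and regroup the double sum as an integral of $G^q$ over $(0,2\|\pi\|h]$. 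That integral vanishes as $\|\pi\|\to0$ in the $\omega_\sigma$-norms.

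Uniqueness follows from Proposition~\ref{prop: Two-Parameter Cocycle Result} and Proposition~\ref{prop: Vacuity Criterion}, using the little-$o$ condition at $\gamma=\nu$ and $q=\infin$.
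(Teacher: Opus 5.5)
Your Cauchy estimate via Lemma~\ref{lem:Dyadic Bounds for Sewing}(c), the little-space property, the loss-space bounds, the path bound and the uniqueness argument all match the paper's proof. The gap is in identifying the dyadic limit $\Xi+R\Xi$ with an additive increment $\delta F$. It occurs in the regime $r>1$, $p_2<r$, $p_2/r<q_2\le 1\land p_2$.

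\begin{itemize}
\item \emph{Deweighting fails there.} Deweighting at index $r$ gives $\delta\Xi\in\overline{\mathbb{B}}^{\gamma_c}_{p_2/r,q_2}$, and $\gamma_c=1\lor r/p_2$ is exactly the unweighted critical exponent. The unweighted critical theorem (\cite[Theorem 3.2]{FrizSeeger2022}; Theorem 3.1 is the subcritical one) therefore needs $q_2\le 1\land(p_2/r)$, which fails in this range.
\item \emph{The direct route is not justified.} It rests on $\delta(\Xi+R\Xi)=\lim\delta(I_{\pi_n}\Xi)=0$, but $\delta(I_{\pi_n}\Xi)\neq 0$.
\item \emph{Going through (c) is circular,} as you suspect. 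The identity $\delta F-I_\pi\Xi=I_\pi R\Xi$ on which (c) rests already presupposes the path $F$.
\end{itemize}

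Your stated fallback is the deweighted identification. So the proposal as written proves the theorem only when $r=1$, $p_2\ge r$, or $q_2\le p_2/r$.

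\textbf{How the paper closes the gap.} It uses openness, Lemma~\ref{lem: Standard Muckenhoupt Weights Properties}(iii).
\begin{itemize}
\item If $r>1$ and $p_2<r$, pick $\rho\in(1,r)$ with $w\in A_\rho$. Then $\gamma_c=r/p_2>1\lor\rho/p_2$, so Theorem~\ref{thm:Subcritical Weighted Sewing} applies with weight index $\rho$, and it allows any $q_2$. It yields a measurable $F$ whose increments are the dyadic limit. Limits in $L^{p_2}(w_h)$ are unique on each slice, so $\Omega^w_{p_2}(\delta F-\Xi-R\Xi,h)=0$.
\item In the remaining cases $q_2\le 1\land(p_2/r)$, and deweighting applies.
\end{itemize}

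\textbf{Completing your direct route instead.} This is also possible, but not via the identity you wrote. Insert the points of each partition into the other and apply Lemma~\ref{lem: Shift and Scale Bounds for Muckenhoupt Weights}(v). For fixed $\theta\in(0,1)$ this gives $\|\delta(I_{\pi_n}\Xi)_{\cdot,\cdot+\theta h,\cdot+h}\|_{L^{p_2}(w_h)}\lesssim_\theta 2^{n\gamma_c}\overline{\Omega}^w_{p_2}(\delta\Xi,2^{-n}h)$. The right side tends to $0$ because $q_2<\infin$. The parent--child comparison then transfers convergence at scales $\theta h$ and $(1-\theta)h$ to the weight $w_h$. Hence $\delta(\Xi+R\Xi)=0$ on every slice, and Proposition~\ref{prop: Two-Parameter Cocycle Result} produces $F$. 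This estimate is the content missing from your proposal.

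\textbf{On (c).} Your grouping by dyadic length is what makes the decay appear to vanish. Instead, once $F$ is in hand, apply Lemma~\ref{lem:Dyadic Bounds for Sewing}(a) to the whole partition with $\eta(a)=\sup_{u\le a}u^{-\gamma_c}\Omega^w_{p_2}(R\Xi,u)$. Since $\lambda_i^{1-p_2\gamma_c}(\lambda_i h)^{p_2\gamma_c}=\lambda_i h^{p_2\gamma_c}$ and $\sum_i\lambda_i=1$, this gives $\|\delta F-I_\pi\Xi\|_{\mathbb{B}^{\gamma_c}_{p_2,\infin}(w)}\le[w]_{A_r}^{1/p_2}\eta(\|\pi\|T)\to0$. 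The bound $\|\cdot\|_{\mathbb{B}^{\omega_\sigma}_{p_2,\sigma}(w)}\le\Lambda_\sigma\|\cdot\|_{\mathbb{B}^{\gamma_c}_{p_2,\infin}(w)}$ then covers every $\sigma$.
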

\begin{proof}
    Set $p = p_2$, $q = q_2$, $\nu = \gamma_c$, and $D = D_c(\Xi)$, and write
    $$
    G(h) = \overline{\Omega}^w_p(\delta \Xi, h), \qquad H(a) = \left( \int_0^a \left( \frac{G(\tau)}{\tau^\nu} \right)^q \frac{d\tau}{\tau} \right)^{1/q}.
    $$
    Then $H(T) \leq D$ and $H(a) \rarr 0$ as $a \rarr 0^+$. We follow the dyadic construction of \cite[proof of Theorem 3.2]{FrizSeeger2022}, using Lemma~\ref{lem:Dyadic Bounds for Sewing}.

    Since $q \leq 1 \land p$, Lemma~\ref{lem:Dyadic Bounds for Sewing}(c) gives, for $n > m \geq 1$,
    $$
    \Omega^w_p(R_{\pi_n}\Xi - R_{\pi_m}\Xi, h) \lesssim h^\nu H(2^{1-m}h).
    $$
    Consequently,
    $$
    \|R_{\pi_n}\Xi - R_{\pi_m}\Xi\|_{\mathbb{B}^\nu_{p,\infin}(w)} \lesssim H(2^{1-m}T) \rarr 0.
    $$
    Moreover, monotonicity of $G$ gives
    $$
    \sup_{0 < h \leq T} \frac{G(h)}{h^\nu} \lesssim H(T) + T^{-\nu}G(T) \leq D.
    $$
    Part (b) of the same lemma therefore gives $\|R_{\pi_1}\Xi\|_{\mathbb{B}^\nu_{p,\infin}(w)} \lesssim D$. Completeness yields a limit $R\Xi \in \mathbb{B}^\nu_{p,\infin}(w)$ with
    $$
    \|R\Xi\|_{\mathbb{B}^\nu_{p,\infin}(w)} \lesssim D.
    $$
    Taking $m = 0$ in part (c) and passing to the limit gives
    $$
    \frac{\Omega^w_p(R\Xi,h)}{h^\nu} \lesssim H(2h), \qquad 0 < h \leq T/2.
    $$
    Thus $R\Xi \in \mathbb{B}^\nu_{p,\infin;\circ}(w)$.

    For every two-parameter map $B$ and every $0 < \sigma \leq \infin$, the definitions give
    $$
    \|B\|_{\mathbb{B}^{\omega_\sigma}_{p,\sigma}(w)} \leq \Lambda_\sigma \|B\|_{\mathbb{B}^\nu_{p,\infin}(w)}.
    $$
    Applying this to $R\Xi$ and $R_{\pi_n}\Xi-R\Xi$ proves the asserted remainder bounds and convergence in all the stated spaces.

    We next identify $\Xi+R\Xi$ as an additive increment. If $r > 1$ and $p < r$, openness gives some $\rho \in (1,r)$ with $w \in A_\rho$, and $\nu > 1 \lor \rho/p$. Theorem~\ref{thm:Subcritical Weighted Sewing}, applied with weight index $\rho$, then supplies a measurable path $F$ whose increments are the dyadic limit. Otherwise, set $u = p/r$. We have $\nu = 1 \lor 1/u$ and $q \leq 1 \land u$, so Theorem 2.10 and \cite[Theorem 3.2]{FrizSeeger2022}, applied with integrability indices $p_1/r$ and $u$, again supply such a path $F$ in the unweighted setting. Deweighting the convergence established above identifies the same limit. In either case, uniqueness of the limit on each increment slice and positivity of $w_h$ give
    $$
    \Omega^w_p(\delta F-\Xi-R\Xi,h) = 0, \qquad 0 < h \leq T.
    $$
    Set $\mathcal{I}\Xi = F$ and choose $R\Xi = \delta F-\Xi$ pointwise. This proves (a) and (b).

    For arbitrary partitions, define
    $$
    \eta(a) = \sup_{0 < u \leq a} \frac{\Omega^w_p(R\Xi,u)}{u^\nu}.
    $$
    The little-space property gives $\eta(a) \rarr 0$ as $a \rarr 0^+$. Let $\lambda_i = \tau_i-\tau_{i-1}$. Additivity and Lemma~\ref{lem:Dyadic Bounds for Sewing}(a) yield
    $$
    \Omega^w_p(\delta F-I_\pi\Xi,h)^p = \Omega^w_p(I_\pi R\Xi,h)^p \leq [w]_{A_r} \sum_i \lambda_i^{1-p\nu} \Omega^w_p(R\Xi,\lambda_i h)^p \leq [w]_{A_r} h^{p\nu} \eta(\|\pi\|h)^p,
    $$
    where we used $\sum_i\lambda_i = 1$. Hence
    $$
    \|\delta F-I_\pi\Xi\|_{\mathbb{B}^\nu_{p,\infin}(w)} \leq [w]_{A_r}^{1/p}\eta(\|\pi\|T) \rarr 0.
    $$
    The preceding embedding into $\mathbb{B}^{\omega_\sigma}_{p,\sigma}(w)$ gives (c), since $R_\pi\Xi-R\Xi = I_\pi\Xi-\delta F$.

    Since $\nu > \alpha$, the remainder bound also gives
    $$
    \|R\Xi\|_{\mathbb{B}^\alpha_{p,q_*}(w)} \lesssim T^{\nu-\alpha}D.
    $$
    The spatial and scale embeddings used in the proof of Theorem~\ref{thm:Subcritical Weighted Sewing}, together with the same argument for membership in $L^{p_*}(w)$, therefore yield $F \in B^\alpha_{p_*,q_*}(w)$ and
    $$
    [F]_{B^\alpha_{p_*,q_*}(w)} \lesssim W^{1/p_*-1/p_1}\|\Xi\|_{\mathbb{B}^\alpha_{p_1,q_1}(w)} + W^{1/p_*-1/p}T^{\nu-\alpha}D.
    $$
    This completes (d).

    Finally, let $A$ be another measurable additive map satisfying the stated little-space condition. Proposition 2.15 gives $A = \delta \widetilde{F}$ modulo zero-increment seminorms, and
    $$
    \omega^w_p(\widetilde{F}-F,h) = o(h^\nu).
    $$
    Proposition 2.13 implies that $\widetilde{F}-F$ has a constant version, proving uniqueness.
\end{proof}

\begin{corollary}[Critical Sewing by Openness]
\label{cor: Critical Sewing by Opennesss}
    Let $1 < r < \infin$ and $w \in A_r$, and suppose
    $$
    0 < p_2 < r, \qquad 0 < p_1, q_1, q_2 \leq \infin, \qquad 0 < \alpha < 1.
    $$
    Set $\gamma_c = \frac{r}{p_2}$. Let $\Xi: \bigtriangleup_2(0, T) \rarr V$ be a measurable map with
    $$
    \Xi \in \mathbb{B}^\alpha_{p_1, q_1}(w), \qquad \delta \Xi \in \overline{\mathbb{B}}^{\gamma_c}_{p_2, q_2}(w).
    $$
    Then there exists an $r_- \in (1, r)$ with $w \in A_{r_-}$ and such that all the conclusions of Theorem~\ref{thm:Subcritical Weighted Sewing} hold with weight index $r_-$ and defect smoothness $\gamma_c$. The convergence exponent then becomes
    $$
    \epsilon_- = \gamma_c - \left( 1 \lor \frac{r_-}{p_2} \right) > 0,
    $$
    and, in particular, $R \Xi \in \mathbb{B}^{\gamma_c}_{p_2, q_2}(w)$ for arbitrary $0 < q_2 \leq \infin$.
\end{corollary}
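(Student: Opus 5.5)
The plan is to reduce everything to Theorem~\ref{thm:Subcritical Weighted Sewing} by replacing the weight index $r$ with a smaller one supplied by openness. Since $r>1$, Lemma~\ref{lem: Standard Muckenhoupt Weights Properties}(iii) gives $\epsilon_0\in(0,r-1)$ with $w\in A_{r-\epsilon}$ for all $0<\epsilon\leq\epsilon_0$, and $[w]_{A_{r-\epsilon}}$ is controlled by $r$ and $[w]_{A_r}$. I will fix such an $\epsilon$ and put $r_-=r-\epsilon\in(1,r)$. The point of the hypothesis $p_2<r$ is that $\gamma_c=r/p_2>1$, so the maximum in $\nu(p_2,r)$ is attained by the $r/p_2$ branch. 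That branch moves strictly when $r$ is lowered.

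Next I verify the subcritical threshold at the new index. There are two cases. If $r_-\geq p_2$, then $1\lor r_-/p_2=r_-/p_2<r/p_2=\gamma_c$. If $r_-<p_2$, then $1\lor r_-/p_2=1<r/p_2=\gamma_c$, and this uses $p_2<r$. In both cases
\[
\epsilon_-=\gamma_c-\Bigl(1\lor\frac{r_-}{p_2}\Bigr)>0 .
\]
One may also shrink $\epsilon$ so that $r_-\geq p_2$; this keeps the formula uniform, but it is not needed.

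The remaining check is that the function spaces themselves do not depend on the index. The moduli $\omega^w_p$, $\Omega^w_p$, $\overline{\Omega}^w_p$ involve only $w$ through $w_h$, and not $r$. Hence the hypotheses $\Xi\in\mathbb{B}^\alpha_{p_1,q_1}(w)$ and $\delta\Xi\in\overline{\mathbb{B}}^{\gamma_c}_{p_2,q_2}(w)$ are literally the hypotheses of Theorem~\ref{thm:Subcritical Weighted Sewing} with $\gamma=\gamma_c$ and weight class $A_{r_-}$. The Muckenhoupt index enters only through the constants, for instance via Lemma~\ref{lem: Shift and Scale Bounds for Muckenhoupt Weights}(v) inside Lemma~\ref{lem:Dyadic Bounds for Sewing}. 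Applying the theorem therefore yields $\mathcal{I}\Xi$ and $R\Xi$ with all of the properties (a)--(d), with rate $2^{-n\epsilon_-}$ and $\|\pi\|^{\epsilon_-}$, and with $R\Xi\in\mathbb{B}^{\gamma_c}_{p_2,q_2}(w)$ for arbitrary $q_2$. The uniqueness clause carries over as well, since Proposition~\ref{prop: Vacuity Criterion} is also applied with $r_-$.

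The only potential obstacle is bookkeeping: the implicit constants now depend on $r_-$ and $[w]_{A_{r_-}}$, hence on $\epsilon_0$ and the reverse Hölder data, which are ultimately functions of $r$ and $[w]_{A_r}$. I will state this dependence explicitly and note that no uniformity as $r_-\to r$ is claimed.
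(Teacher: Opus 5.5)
Your proposal is correct and follows the paper's proof: openness supplies $r_-\in(1,r)$ with $w\in A_{r_-}$. Your case check of $1\lor r_-/p_2<r/p_2=\gamma_c$ is valid and puts $\gamma_c$ strictly in the subcritical range, and Theorem~\ref{thm:Subcritical Weighted Sewing} then applies verbatim with index $r_-$. Your extra remarks are accurate details that the paper leaves implicit: the weighted spaces depend only on $w$ and not on the index, the uniqueness clause goes through via the vacuity criterion at $r_-$, and the constants now depend on $[w]_{A_{r_-}}$.
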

\begin{proof}
    By Lemma~\ref{lem: Standard Muckenhoupt Weights Properties}(iii), there exists $r_- \in (1,r)$ such that $w \in A_{r_-}$. Since $p_2 < r$ and $r_- < r$, we have
    $$
    1 \lor \frac{r_-}{p_2} < \frac{r}{p_2} = \gamma_c.
    $$
    Theorem~\ref{thm:Subcritical Weighted Sewing} therefore applies with weight index $r_-$ and defect smoothness $\gamma_c$, giving all the asserted conclusions.
\end{proof}

\subsection{Integrability Regain}
It's important that the ``integral path'' increments $\delta \mathcal{I} \Xi$ belongs to the same space as the initial model $\Xi$ (this space being $\mathbb{B}^\alpha_{p_1, q_1}(w)$), since then one can close fixed point arguments in the appropriate weighted spaces. Part (d) of Theorems~\ref{thm:Subcritical Weighted Sewing} and \ref{thm:Critical Weighted Sewing} show that this is the case when $p_1 \leq p_2$ and $q_1 \geq q_2$. The case $p_1 > p_2$ is important however, since $\Xi$ is typically a tensor product of increments, and successive tensor products reduce integrability (this also typically yields $q_1 \geq q_2$, so this parameter doesn't pose a problem). We thus address this regime separately here, using the pointwise embeddings and interpolation estimates of $\S 3$.

\begin{theorem}[Integrability Regain]
\label{thm:Integrability Regain for Sewing}
    Let $V$ be a finite-dimensional vector space, $1 \leq r < \infin$, and $w \in A_r$. Suppose
    $$
    0 < p_2 < p_1 \leq \infin, \qquad 0 < q_1, q_2 \leq \infin, \qquad 0 < \alpha < 1,
    $$
    and let $\Xi: \bigtriangleup_2(0, T) \rarr V$ be measurable with $\Xi \in \mathbb{B}^\alpha_{p_1, q_1}(w)$.
    Set
    $$
    \eta = \frac{1}{p_2} - \frac{1}{p_1}, \qquad q_* = q_1 \lor q_2,
    $$
    and for $\beta \geq 0, \rho \geq 1$, set
    $$
    \Theta_{\beta, \rho} = \beta - \alpha - \rho \eta, \qquad Q_{w, \rho} = \left( \frac{T^\rho}{w([0, T])} \right)^{\eta}.
    $$
    For $s < u < t$ set
    $$
    a = (u - s) \land (t - u), \qquad b = (u - s) \lor (t - u).
    $$
    Denote by $\textup{C}_\beta$ the condition that for some $0 < \vartheta \leq 1/2$ and $M \geq 0$ we have
    $$
    |\delta \Xi_{s, u, t}| \leq M \frac{(a^\vartheta b^{1 - \vartheta})^{\beta}}{w(B_{a^\vartheta b^{1 - \vartheta}}(s))^{1/p_2}}.
    $$
    Suppose one of the following two regimes hold.
    \begin{enumerate}[label=(\roman*)]
        \item\textup{(Subcritical)} For some $\gamma > 1 \lor \frac{r}{p_2}$ we have
        $$
        \delta \Xi \in \overline{\mathbb{B}}^\gamma_{p_2, q_2}(w), \qquad C_\gamma, \qquad \Theta_{\gamma, r} > 0.
        $$
        
        \item\textup{(Critical)} For $\gamma_c = 1 \lor \frac{r}{p_2}$ we have
        $$
        \delta \Xi \in \overline{\mathbb{B}}^{\gamma_c}_{p_2, q_2}(w), \qquad C_{\gamma_c}
        $$
        along with the condition $0 < q_2 \leq 1 \land p_2$ and the existence of some $\rho \in [1, r]$ such that $w \in A_\rho$, $\gamma_c > \frac{\rho}{p_2}$, and $\Theta_{\gamma_c, \rho} > 0$.
    \end{enumerate}
    In either case, some sewing lemma applies; let $\mathcal{I} \Xi, R \Xi$ be the resulting sewn and remainder paths, and set $N_\beta = \| \delta \Xi \|_{\overline{\mathbb{B}}^\beta_{p_2, q_2}(w)} + M$. Then we have the following.
    \begin{enumerate}[label=(\alph*)]
        \item\textup{(Pointwise Remainder Estimate)} The remainder has a continuous representative, also denoted by $R \Xi$, with
        $$
        \sup_{0 \leq s < t \leq T} \frac{|R \Xi_{s, t}| w([s, t])^{1/p_2}}{|t - s|^\beta} \lesssim N_\beta.
        $$

        \item\textup{(Integrability Regain)} We have
        $$
        R \Xi \in \mathbb{B}^\alpha_{p_1, q_2}(w), \qquad \|R \Xi \|_{\mathbb{B}^\alpha_{p_1, q_2}(w)} \lesssim Q_{w, \rho} T^{\Theta_{\beta, \rho}} N_\beta.
        $$

        \item\textup{(Sewn Path Regularity)} We have
        $$
        \mathcal{I} \Xi \in B^\alpha_{p_1, q_*}(w), \qquad [\mathcal{I} \Xi]_{B^\alpha_{p_1, q_*}(w)} \lesssim \| \Xi \|_{\mathbb{B}^\alpha_{p_1, q_1}(w)} + Q_{w, \rho} T^{\Theta_{\beta, \rho}} N_\beta. 
        $$
    \end{enumerate}
    When sewing is obtained through Corollary~\ref{cor: Critical Sewing by Opennesss}, then we apply the conditions (i) with $r_-$ in place of $r$ and $\gamma = \gamma_c$; $q_2$ is then unrestricted.
\end{theorem}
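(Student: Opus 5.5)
The argument has three steps. First, sew to obtain $\mathcal I\Xi$ and $R\Xi$. Second, use a two-parameter embedding of \S3 to get the pointwise bound (a). Third, feed (a) and the sewing remainder bound into the interpolation recipe at the end of \S3 to obtain (b). Part (c) then follows from the increment identity. Throughout, $\beta$ denotes $\gamma$ in regime (i) and $\gamma_c$ in regime (ii). In regime (ii) we also use $\rho$ (with $r_-$ when sewing comes from Corollary~\ref{cor: Critical Sewing by Opennesss}); in regime (i) we take $\rho=r$.

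\textbf{Step 1 (sewing).} In regime (i), Theorem~\ref{thm:Subcritical Weighted Sewing} gives $R\Xi$ with $\|R\Xi\|_{\mathbb B^\gamma_{p_2,q_2}(w)}\lesssim\|\delta\Xi\|_{\overline{\mathbb B}^\gamma_{p_2,q_2}(w)}$. In regime (ii), Theorem~\ref{thm:Critical Weighted Sewing} gives $\Omega^w_{p_2}(R\Xi,h)\lesssim D_c h^{\gamma_c}$, that is, a bound at scale index $\infty$. The term $T^{-\gamma_c}\overline\Omega^w_{p_2}(\delta\Xi,T)$ in $D_c$ is dominated by $\|\delta\Xi\|_{\overline{\mathbb B}^{\gamma_c}_{p_2,q_2}(w)}$ via monotonicity on $[T/2,T]$ and the doubling of the weight, so it is absorbed into $N_\beta$. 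In both regimes we therefore have
$$\Omega^w_{p_2}(R\Xi,h)\lesssim N_\beta h^\beta .$$

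\textbf{Step 2 (pointwise estimate, part (a)).} Take the representative $R\Xi=\delta\mathcal I\Xi-\Xi$. Then $\delta(R\Xi)=-\delta\Xi$, so condition $\mathrm C_\beta$ is precisely the mixed-defect hypothesis of Theorem~\ref{thm: Quasi Banach Two Parameter Embedding} with $\omega(h)=h^\beta$. That theorem requires a decay gap $d>\rho/p_2$. We get it by choosing $d=\beta$, since $\beta>\rho/p_2$: in regime (i), $\gamma>r/p_2$; in regime (ii), this is the assumption $\gamma_c>\rho/p_2$. The theorem is applied with weight index $\rho$ and scale index $\infty$, using $\|R\Xi\|_{\mathbb B^\beta_{p_2,\infty}(w)}\lesssim N_\beta$ from Step 1. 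It yields a continuous representative with
$$|R\Xi_{s,t}|\lesssim N_\beta\,\frac{|t-s|^\beta}{w([s,t])^{1/p_2}},$$
which is (a).

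\textbf{Step 3 (integrability regain, part (b)).} The lower-mass bound of Lemma~\ref{lem: Weight Controls and Local Deweighting}(c) with $J=[0,T]$ turns (a) into $\Omega_\infty(R\Xi,h)\lesssim N_\beta Q h^{\beta-\rho/p_2}$, where $Q=[w]^{1/p_2}_{A_\rho}(T^\rho/w([0,T]))^{1/p_2}$. We then interpolate with Lemma~\ref{lem: Weighted Interpolation}(a), using $\theta=p_2/p_1$, $\Omega^w_{p_2}(R\Xi,h)\lesssim N_\beta h^\beta$, and $1-\theta=p_2\eta$. This gives
$$\Omega^w_{p_1}(R\Xi,h)\lesssim N_\beta\,Q_{w,\rho}\,h^{\beta-\rho\eta}=N_\beta\,Q_{w,\rho}\,h^{\alpha+\Theta_{\beta,\rho}} .$$
Since $\Theta_{\beta,\rho}>0$, integrating $h^{\Theta q_2}\,dh/h$ over $(0,T]$ gives the $\mathbb B^\alpha_{p_1,q_2}(w)$ bound with factor $T^{\Theta_{\beta,\rho}}$; this is exactly the critical computation displayed after Lemma~\ref{lem: Weighted Interpolation}. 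When $p_1=\infty$ we use the direct $P=\infty$ estimate instead.

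\textbf{Step 4 (sewn path, part (c)).} Write $\delta\mathcal I\Xi=\Xi+R\Xi$. The quasi-triangle inequality for $\omega^w_{p_1}$ combines $\|\Xi\|_{\mathbb B^\alpha_{p_1,q_1}(w)}$ with the bound from (b). The monotone-scale embedding $L^u(dh/h)\hookrightarrow L^v(dh/h)$ used in the proof of Theorem~\ref{thm:Subcritical Weighted Sewing} passes to the scale index $q_*$. The half-scale comparison from that same proof extends the estimate from $(0,T/2]$ to $(0,T]$, and its dyadic $L^{p_1}(w)$ argument gives membership in $L^{p_1}(w)$.

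\textbf{Main obstacle.} I expect the main obstacle to be Step 2 in the critical regime. There the remainder is controlled only at scale index $\infty$, so the embedding must be applied with $q=\infty$ and with weight index $\rho$ rather than $r$. One must check that the constants depend only on $[w]_{A_\rho}$, and that the quantity controlled there coincides with $N_\beta$, which is built from the $q_2$ defect norm. I would first verify the $q=\infty$ instance of Theorem~\ref{thm: Quasi Banach Two Parameter Embedding} with $\omega(h)=h^{\gamma_c}$ and index $\rho$.
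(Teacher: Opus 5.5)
Your architecture is the paper's. You sew, take $R\Xi=\delta\mathcal I\Xi-\Xi$ so that $\delta R\Xi=-\delta\Xi$, and read $\textup{C}_\beta$ as the mixed-defect hypothesis of Theorem~\ref{thm: Quasi Banach Two Parameter Embedding} with weight index $\rho$. Then come the lower-mass estimate, interpolation, and the increment identity for (c).

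The gap is in Step~1. The conclusion $D_c(\Xi)\lesssim N_\beta$ is true, but not for the reason you give. The claim that $T^{-\gamma_c}\overline{\Omega}^w_{p_2}(\delta\Xi,T)$ is dominated by $\|\delta\Xi\|_{\overline{\mathbb B}^{\gamma_c}_{p_2,q_2}(w)}$ is false.
\begin{itemize}
\item Monotonicity on $[T/2,T]$ only controls $\overline{\Omega}^w_{p_2}(\delta\Xi,T/2)$.
\item Reaching scale $T$ would need a halving bound $\overline{\Omega}^w_{p_2}(\delta\Xi,h)\lesssim\overline{\Omega}^w_{p_2}(\delta\Xi,h/2)$. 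That bound holds for path moduli by additivity, but a three-parameter defect has no analogue.
\item The values $\delta\Xi_{s,u,t}=\Xi_{s,t}-\Xi_{s,u}-\Xi_{u,t}$ with $t-s$ near $T$ are unconstrained by smaller scales. For instance, take $\Xi_{s,t}=\phi(t-s)$ with $\phi$ supported in $(T-\epsilon,T]$. Then $\delta\Xi$ vanishes on spans at most $T-\epsilon$. The norm (with $q_2$ finite, as is forced at criticality) is at most a multiple of $\log(T/(T-\epsilon))^{1/q_2}\,T^{-\gamma_c}\overline{\Omega}^w_{p_2}(\delta\Xi,T)$.
\end{itemize}
This is exactly why $D_c$ lists the top-scale term separately. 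The correct route uses $\textup{C}_{\gamma_c}$. The lower-mass estimate gives $|\delta\Xi_{s,u,t}|\lesssim M\,(T^\rho/w([0,T]))^{1/p_2}(t-s)^{\gamma_c-\rho/p_2}$. Integrating against $w_h$ with Lemma~\ref{lem: Shift and Scale Bounds for Muckenhoupt Weights}(iii) then yields $T^{-\gamma_c}\overline{\Omega}^w_{p_2}(\delta\Xi,T)\lesssim M$. That is why $M$ enters $N_\beta$.

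The same oversight affects regime (i). From $\|R\Xi\|_{\mathbb B^\gamma_{p_2,q_2}(w)}\lesssim N_\beta$ you only get $\Omega^w_{p_2}(R\Xi,h)\lesssim N_\beta h^\gamma$ for $h\le T/2$. So your bound ``in both regimes'' is unjustified at the top scales there. The same applies to the scale-index-$\infty$ application of Theorem~\ref{thm: Quasi Banach Two Parameter Embedding} in Step~2 and to integrating $h^{\Theta q_2}$ in Step~3.

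The cleanest repair is the paper's. In regime (i), apply the embedding directly at scale index $q_2$, which it allows. Then interpolate with Lemma~\ref{lem: Weighted Interpolation}(b), taking $\delta=\beta-\rho/p_2$; that lemma needs only the integrated $q_2$-norm, not a bound at every scale. Alternatively, control the top scales with $\textup{C}_\gamma$ as above.

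With these corrections the rest of your argument goes through. The constant bookkeeping you flagged in Step~2 is routine by comparison.
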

\begin{proof}
    We follow \cite[proof of Theorem 3.3]{FrizSeeger2022}, using the weighted embedding and interpolation results of $\S 3$. Write $R = R\Xi$, $p = p_2$, $P = p_1$, and $\theta = p/P$, with $\theta = 0$ when $P = \infin$. In case (i), take $(\beta,\rho) = (\gamma,r)$; in case (ii), take $\beta = \gamma_c$ and $\rho$ as in the hypothesis. In both cases,
    $$
    \beta > \frac{\rho}{p}, \qquad \Theta_{\beta,\rho} = \beta-\alpha-\frac{\rho}{p}(1-\theta) > 0.
    $$

    First, the lower-mass estimate and $\textup{C}_\beta$ give
    $$
    |\delta\Xi_{s,u,t}| \lesssim M \left( \frac{T^\rho}{W} \right)^{1/p} (t-s)^{\beta-\rho/p}, \qquad W = w([0,T]).
    $$
    Together with Lemma~\ref{lem: Shift and Scale Bounds for Muckenhoupt Weights}(iii), this implies
    $$
    T^{-\beta}\overline{\Omega}^w_p(\delta\Xi,T) \lesssim M.
    $$
    Thus, in the critical case, $D_c(\Xi) \lesssim N_\beta$. Theorems~\ref{thm:Subcritical Weighted Sewing} and~\ref{thm:Critical Weighted Sewing} consequently give
    $$
    \|R\|_{\mathbb{B}^\beta_{p,q_2}(w)} \lesssim N_\beta \quad \textup{in case (i)}, \qquad \|R\|_{\mathbb{B}^\beta_{p,\infin}(w)} \lesssim N_\beta \quad \textup{in case (ii)}.
    $$

    Choose initially $R = \delta\mathcal{I}\Xi-\Xi$, so that $\delta R = -\delta\Xi$. Theorem 3.7, applied with weight index $\rho$, modulus $h^\beta$, and scale index $q_2$ in case (i) or $\infin$ in case (ii), supplies a continuous representative satisfying
    $$
    |R_{s,t}| \lesssim N_\beta \frac{|t-s|^\beta}{w([s,t])^{1/p}}.
    $$
    This proves (a). The lower-mass estimate then yields
    $$
    \Omega_\infin(R,h) \lesssim N_\beta \left( \frac{T^\rho}{W} \right)^{1/p} h^{\beta-\rho/p}.
    $$

    In case (i), apply Lemma 3.8 with $\delta = \beta-\rho/p$ and $\gamma = \beta$. Since
    $$
    \delta(1-\theta)+\beta\theta-\alpha = \Theta_{\beta,\rho}, \qquad \left( \frac{T^\rho}{W} \right)^{(1-\theta)/p} = Q_{w,\rho},
    $$
    we obtain
    $$
    \|R\|_{\mathbb{B}^\alpha_{P,q_2}(w)} \lesssim Q_{w,\rho} T^{\Theta_{\beta,\rho}} N_\beta.
    $$
    When $P = \infin$, the same estimate follows by integrating the preceding pointwise modulus bound directly.

    In case (ii), the endpoint remainder bound gives $\Omega^w_p(R,h) \lesssim N_\beta h^\beta$. Interpolating this with the bound on $\Omega_\infin(R,h)$ gives
    $$
    \Omega^w_P(R,h) \lesssim N_\beta Q_{w,\rho} h^{\beta-\rho(1/p-1/P)},
    $$
    again with the direct interpretation when $P = \infin$. Since $\Theta_{\beta,\rho} > 0$, integration against $dh/h$ yields
    $$
    \|R\|_{\mathbb{B}^\alpha_{P,q_2}(w)} \lesssim N_\beta Q_{w,\rho} \left( \int_0^T h^{q_2\Theta_{\beta,\rho}} \frac{dh}{h} \right)^{1/q_2} \lesssim Q_{w,\rho} T^{\Theta_{\beta,\rho}} N_\beta.
    $$
    This proves (b) in both regimes.

    Finally, $\delta\mathcal{I}\Xi = \Xi+R$ and the path-space argument in the proof of Theorem~\ref{thm:Subcritical Weighted Sewing} give $\mathcal{I}\Xi \in B^\alpha_{p_1,q_*}(w)$ and
    $$
    [\mathcal{I}\Xi]_{B^\alpha_{p_1,q_*}(w)} \lesssim \|\Xi\|_{\mathbb{B}^\alpha_{p_1,q_1}(w)} + \|R\|_{\mathbb{B}^\alpha_{p_1,q_2}(w)}.
    $$
    Applying (b) proves (c). The openness case follows from case (i) with $r_-$ in place of $r$ and $\gamma = \gamma_c$.
\end{proof}

\section{Young Integration}
In this section we use the sewing machinery of $\S 4$ to give meaning to integrals associated with a bounded bilinear map $B: V_0 \times V_1 \rarr V_2$ between finite-dimensional normed spaces. These integrals take the form
$$
\int^t_s B(f_u, dg_u),
$$
where $f \in B^{\alpha_0}_{p_0, q_0}(w; V_0)$ and $g \in B^{\alpha_1}_{p_1, q_1}(w; V_1)$. We assume $w \in A_r$ for some $1 \leq r < \infin$ and set
$$
\frac{1}{p_2} = \frac{1}{p_0} + \frac{1}{p_1},
$$
with the convention $1/\infin = 0$. The bilinear formulation includes scalar multiplication and the evaluation map $B(A,x) = Ax$ used in the differential equations below.

As in the standard theory of Young integration, the two-parameter local model we take is $\Xi_{s,t} = B(f_s,\delta g_{s,t})$. For sewing, one of our requirements is sufficient three-parameter weighted Besov regularity of the additive defect $\delta\Xi_{s,u,t}$. A simple calculation shows that
$$
\delta\Xi_{s,u,t} = -B(\delta f_{s,u},\delta g_{u,t}),
$$
so the problem can be rephrased as proving sufficient regularity of this bilinear product. At a fixed increment length $0 < h < T$, this requires a bound on
$$
\sup_{0 < \theta < 1} \|B(\delta f_{\cdot,\cdot+\theta h},\delta g_{\cdot+\theta h,\cdot+h})\|_{L^{p_2}(w_h)},
$$
where $L^p(w_h)$ denotes $L^p([0,T-h],w_h(s)\,ds)$.

In the unweighted case, Hölder's inequality yields the appropriate product estimate (see \cite[proof of Theorem 4.1]{FrizSeeger2022}). However, the nonuniformity in comparing a weight on a parent interval to the weights on two arbitrary subintervals necessitates an additional step. To see this, suppose $0 < \theta \leq 1/2$ and write $u = s+\theta h$, $t = s+h$. Thus $\delta f_{s,u}$ is the ``short'' increment and $\delta g_{u,t}$ is the ``long'' one. The generalised Hölder inequality yields
$$
\|B(\delta f_{\cdot,\cdot+\theta h},\delta g_{\cdot+\theta h,\cdot+h})\|_{L^{p_2}(w_h)} \leq \|B\| \|\delta f_{\cdot,\cdot+\theta h}\|_{L^{p_0}(w_h)} \|\delta g_{\cdot+\theta h,\cdot+h}\|_{L^{p_1}(w_h)}.
$$
Since the long increment has length between $h/2$ and $h$, the shifting and scaling bounds give
$$
\|\delta g_{\cdot+\theta h,\cdot+h}\|_{L^{p_1}(w_h)} \lesssim \omega^w_{p_1}(g,h).
$$
For the short increment, however, the parent-child estimate for $A_r$ weights gives $w_h(s) \lesssim \theta^{-(r-1)}w_{\theta h}(s)$. Thus
$$
\|\delta f_{\cdot,\cdot+\theta h}\|_{L^{p_0}(w_h)} \lesssim \theta^{-(r-1)/p_0}\omega^w_{p_0}(f,\theta h),
$$
and so
$$
\|B(\delta f_{\cdot,\cdot+\theta h},\delta g_{\cdot+\theta h,\cdot+h})\|_{L^{p_2}(w_h)} \lesssim \|B\| \theta^{-(r-1)/p_0}\omega^w_{p_0}(f,\theta h)\omega^w_{p_1}(g,h).
$$
The factor $\theta^{-(r-1)/p_0}$ may diverge as $\theta \rarr 0$, so this estimate does not yet give a bound uniform in $\theta$. However, under the condition $\alpha_0 > (r-1)/p_0$, the Besov decay of the shorter increment compensates for the singular factor introduced during the parent-child comparison. The case $1/2 \leq \theta < 1$ is analogous, with $g$ supplying the shorter increment and the corresponding condition $\alpha_1 > (r-1)/p_1$. Lemma~\ref{lem:Shorter Increment Besov Decay} makes this compensation precise, and Lemma~\ref{lem: Young Intermediate 2} applies it to obtain the defect regularity required for sewing.

\begin{lemma}
\label{lem:Shorter Increment Besov Decay}
    Let $\omega: (0, T] \rarr [0, \infin)$ be a nondecreasing map and let $\kappa \geq 0$. Set
    $$
    \omega^{\sharp, \kappa}(\tau) = \sup_{0 < \sigma \leq \tau} \left( \frac{\tau}{\sigma} \right)^{\kappa} \omega(\sigma).
    $$
    If $\alpha > \kappa$ and $0 < q \leq \infin$, then
    $$
    \| \tau^{-\alpha} \omega^{\sharp, \kappa}(\tau) \|_{L^q((0, T], d\tau / \tau)} \lesssim \| \tau^{-\alpha} \omega(\tau) \|_{L^q((0, T], d\tau / \tau)}
    $$
\end{lemma}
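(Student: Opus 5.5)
We would prove this as a Hardy-type inequality on the multiplicative group $((0,T], d\tau/\tau)$. Write $F(\tau) = \tau^{-\alpha}\omega(\tau)$. For $0<\sigma\leq\tau$,
$$
\tau^{-\alpha}\left(\frac{\tau}{\sigma}\right)^{\kappa}\omega(\sigma)
= \left(\frac{\sigma}{\tau}\right)^{\alpha-\kappa} F(\sigma).
$$
This gives the pointwise identity
$$
\tau^{-\alpha}\omega^{\sharp,\kappa}(\tau) = \sup_{0<\sigma\leq\tau}\left(\frac{\sigma}{\tau}\right)^{\alpha-\kappa}F(\sigma).
$$
The supremum over a continuum is then discretised dyadically so that it is dominated by a geometrically weighted sum.

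Fix $\tau$ and split $(0,\tau]$ into the blocks $(2^{-k-1}\tau,2^{-k}\tau]$ for $k\geq0$. On the $k$th block we have $(\sigma/\tau)^{\alpha-\kappa}\leq 2^{-k(\alpha-\kappa)}$. Monotonicity of $\omega$ gives $\omega(\sigma)\leq\omega(2^{-k}\tau)$, hence $F(\sigma)\leq 2^{\alpha}F(2^{-k}\tau)$ there. This is the only place monotonicity is used, and it is what turns the supremum into point evaluations. Therefore
$$
\tau^{-\alpha}\omega^{\sharp,\kappa}(\tau)\lesssim \sup_{k\geq0}2^{-k(\alpha-\kappa)}F(2^{-k}\tau).
$$
With $\varepsilon=\alpha-\kappa>0$ and $b=1\land q$, we bound the supremum by a sum:
$$
\bigl(\tau^{-\alpha}\omega^{\sharp,\kappa}(\tau)\bigr)^b \lesssim \sum_{k\geq0}2^{-kb\varepsilon}F(2^{-k}\tau)^b.
$$

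Next we take the $L^{q/b}((0,T],d\tau/\tau)$ norm. If $q/b\geq1$ we use Minkowski's inequality; if $q<1$, then $q/b=1$, so the step is just linearity of the integral. The key point is dilation invariance of $d\tau/\tau$. The substitution $\tau\mapsto 2^{-k}\tau$ maps $(0,T]$ into $(0,2^{-k}T]\subset(0,T]$, so
$$
\|F(2^{-k}\cdot)\|_{L^q((0,T],d\tau/\tau)}\leq\|F\|_{L^q((0,T],d\tau/\tau)}.
$$
The same bound holds for $q=\infty$ with suprema. Summing the geometric series $\sum_k 2^{-kb\varepsilon}\lesssim_{\varepsilon,q}1$ and taking $b$th roots gives the claim, with constant depending only on $\alpha-\kappa$ and $q$ (and $2^\alpha$).

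The only delicate point is the strict inequality $\alpha>\kappa$, which is exactly what makes the geometric weights summable. A secondary point is that $\omega^{\sharp,\kappa}$ may be infinite pointwise. The argument handles this automatically, since the right-hand side bounds it whenever it is finite; if the right-hand side is infinite there is nothing to prove. For the case $q<1$, we use $b$-subadditivity, as in the proofs of Lemma~\ref{lem:Dyadic Bounds for Sewing} and Theorem~\ref{thm:Subcritical Weighted Sewing}.
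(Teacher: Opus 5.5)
Your proof is correct and follows essentially the same route as the paper: monotonicity of $\omega$ discretises the supremum dyadically, the supremum is dominated by a geometrically weighted sum with ratio $2^{-(\alpha-\kappa)}$, and dilation invariance of $d\tau/\tau$ closes the argument. The differences are cosmetic. The paper uses $(\sup_j a_j)^q\le\sum_j a_j^q$ for every finite $q$ followed by Tonelli, rather than splitting into Minkowski for $q\ge1$ and subadditivity for $q<1$. It also obtains the constant $2^\kappa$ instead of your $2^\alpha$.
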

\begin{proof}
    Write
    $$
    F(\tau) = \tau^{-\alpha}\omega(\tau), \qquad G(\tau) = \tau^{-\alpha}\omega^{\sharp,\kappa}(\tau), \qquad d = \alpha-\kappa > 0.
    $$
    We may assume that $\|F\|_{L^q(d\tau/\tau)} < \infin$. Fix $0 < \sigma \leq \tau \leq T$ and choose $j \geq 0$ with $2^{-j-1}\tau < \sigma \leq 2^{-j}\tau$. Since $\omega$ is nondecreasing and $\kappa \geq 0$,
    $$
    \left(\frac{\tau}{\sigma}\right)^\kappa \omega(\sigma) \leq 2^{(j+1)\kappa}\omega(2^{-j}\tau).
    $$
    Taking the supremum over $\sigma$ and multiplying by $\tau^{-\alpha}$ gives
    $$
    G(\tau) \leq 2^\kappa \sup_{j \geq 0} 2^{-jd}F(2^{-j}\tau).
    $$

    If $0 < q < \infin$, then $(\sup_j a_j)^q \leq \sum_j a_j^q$ for nonnegative $a_j$. Tonelli's theorem and the change of variables $u = 2^{-j}\tau$ therefore give
    \begin{align*}
        \|G\|_{L^q(d\tau/\tau)}^q
        &\leq 2^{\kappa q}\sum_{j=0}^{\infin}2^{-jdq}\int_0^T F(2^{-j}\tau)^q \frac{d\tau}{\tau} \\
        &= 2^{\kappa q}\sum_{j=0}^{\infin}2^{-jdq}\int_0^{2^{-j}T} F(u)^q \frac{du}{u} \\
        &\leq \frac{2^{\kappa q}}{1-2^{-dq}}\|F\|_{L^q(d\tau/\tau)}^q,
    \end{align*}
    where the geometric series converges because $dq > 0$. Taking $q$th roots proves the claim.

    If $q = \infin$, taking essential suprema instead gives
    $$
    \|G\|_{L^\infin(d\tau/\tau)} \leq 2^\kappa \sup_{j \geq 0} 2^{-jd}\|F(2^{-j}\cdot)\|_{L^\infin((0,T],d\tau/\tau)} \leq 2^\kappa \|F\|_{L^\infin(d\tau/\tau)}.
    $$
\end{proof}

\begin{lemma}
\label{lem: Young Intermediate 2}
    Let $1 \leq r < \infin$ and $w \in A_r$, and let $B: V_0 \times V_1 \rarr V_2$ be a bounded bilinear map of finite-dimensional normed spaces. Let
    $$
    0 < p_0, p_1, q_0, q_1 \leq \infin, \qquad \frac{1}{p_2} = \frac{1}{p_0} + \frac{1}{p_1}, \qquad \frac{1}{q_2} = \frac{1}{q_0} + \frac{1}{q_1},
    $$ with $p_2 < \infin$. Suppose
    $$
    0 < \alpha_i < 1, \qquad \alpha_i > \frac{r-1}{p_i}, \qquad i = 0, 1,
    $$ where we set $\frac{1}{\infin} = 0$. For
    $$
    f \in B^{\alpha_0}_{p_0, q_0}(w; V_0), \qquad g \in B^{\alpha_1}_{p_1, q_1}(w; V_1)
    $$ we define
    $$
    C_{s, u, t} = B(\delta f_{s, u}, \delta g_{u, t}).
    $$
    Then
    $$
    \| C \|_{\overline{\mathbb{B}}^{\alpha_0 + \alpha_1}_{p_2, q_2}(w)} \lesssim \|B\| [f]_{B^{\alpha_0}_{p_0, q_0}(w)} [g]_{B^{\alpha_1}_{p_1, q_1}(w)}.
    $$
\end{lemma}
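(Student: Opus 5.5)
We follow the heuristic sketched just before Lemma~\ref{lem:Shorter Increment Besov Decay}. Fix $0<h\leq\tau\leq T$ and $\theta\in[0,1]$; the endpoints $\theta\in\{0,1\}$ give $C=0$ since one increment is degenerate. For $0<\theta\leq 1/2$, generalised Hölder in $L^{p_2}(w_h)$ with exponents $p_0,p_1$ gives
$$
\Bigl(\int_0^{T-h}|C_{s,s+\theta h,s+h}|^{p_2}w_h(s)\,ds\Bigr)^{1/p_2}\leq\|B\|\,\|\delta f_{\cdot,\cdot+\theta h}\|_{L^{p_0}(w_h)}\|\delta g_{\cdot+\theta h,\cdot+h}\|_{L^{p_1}(w_h)}.
$$
For the short factor, Lemma~\ref{lem: Shift and Scale Bounds for Muckenhoupt Weights}(i) with $\lambda=\theta^{-1}$ gives $w_h(s)\leq[w]_{A_r}\theta^{-(r-1)}w_{\theta h}(s)$. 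Hence
$$
\|\delta f_{\cdot,\cdot+\theta h}\|_{L^{p_0}(w_h)}\lesssim\theta^{-(r-1)/p_0}\omega^w_{p_0}(f,\theta h).
$$
For the long factor, set $v=s+\theta h$. The child $[v,v+(1-\theta)h]$ has length at least $h/2$, so $w_h(s)\leq 2^{r-1}[w]_{A_r}w_{(1-\theta)h}(v)$. Changing variables then gives $\lesssim\omega^w_{p_1}(g,h)$. When $p_i=\infin$ the weight is irrelevant and these steps are trivial.

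Next, write $\kappa_0=(r-1)/p_0$ and use the notation of Lemma~\ref{lem:Shorter Increment Besov Decay}. Since $\theta h\leq\tau$,
$$
\theta^{-\kappa_0}\omega^w_{p_0}(f,\theta h)=(h/(\theta h))^{\kappa_0}\omega^w_{p_0}(f,\theta h)\leq\omega_f^{\sharp,\kappa_0}(h)\leq\omega_f^{\sharp,\kappa_0}(\tau),
$$
because $\omega^{\sharp,\kappa}$ is nondecreasing. The symmetric case $1/2\leq\theta<1$ gives $\omega^w_{p_0}(f,\tau)\,\omega_g^{\sharp,\kappa_1}(\tau)$ with $\kappa_1=(r-1)/p_1$. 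Taking suprema over $h\leq\tau$ and $\theta$ yields
$$
\overline{\Omega}^w_{p_2}(C,\tau)\lesssim\|B\|\bigl(\omega_f^{\sharp,\kappa_0}(\tau)\omega^w_{p_1}(g,\tau)+\omega^w_{p_0}(f,\tau)\omega_g^{\sharp,\kappa_1}(\tau)\bigr).
$$
Note $\omega\leq\omega^{\sharp,\kappa}$, taking $\sigma=\tau$.

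Finally, multiply by $\tau^{-\alpha_0-\alpha_1}$ and take the $L^{q_2}(d\tau/\tau)$ quasi-norm. Hölder's inequality in $L^{q}(d\tau/\tau)$ with $1/q_2=1/q_0+1/q_1$ is valid for all positive exponents, with $\infin$ conventions. Lemma~\ref{lem:Shorter Increment Besov Decay}, applicable because $\alpha_i>\kappa_i$, converts the sharp moduli back to $[f]_{B^{\alpha_0}_{p_0,q_0}(w)}$ and $[g]_{B^{\alpha_1}_{p_1,q_1}(w)}$. A quasi-triangle inequality then combines the two terms.

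The main obstacle is the uniformity in $\theta$ of the short-increment bound. This is resolved by absorbing the factor $\theta^{-\kappa}$ into the sharp envelope at scale $h$. One must also check that measurability of the supremum over $\theta$ poses no issue, since we only bound the supremum pointwise in $\tau$.
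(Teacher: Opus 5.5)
Your proposal is correct and follows essentially the same route as the paper: the parent--child weight comparison, generalised H\"older in $L^{p_2}(w_h)$, absorption of $\theta^{-\kappa_i}$ into the envelope of Lemma~\ref{lem:Shorter Increment Besov Decay}, and H\"older in the scale variable. The only difference is cosmetic. The paper avoids your split at $\theta=1/2$ by bounding both factors by their envelopes at once, which gives $\overline{\Omega}^w_{p_2}(C,\tau)\lesssim\|B\|\,\omega_f^{\sharp,\kappa_0}(\tau)\,\omega_g^{\sharp,\kappa_1}(\tau)$. Your remark $\omega\le\omega^{\sharp,\kappa}$ already recovers this bound.
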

\begin{proof}
    Set
    $$
    \kappa_i = \frac{r-1}{p_i}, \qquad \omega_0(\tau) = \omega^w_{p_0}(f,\tau), \qquad \omega_1(\tau) = \omega^w_{p_1}(g,\tau),
    $$
    and define
    $$
    H_i(\tau) = \sup_{0 < \sigma \leq \tau}\left(\frac{\tau}{\sigma}\right)^{\kappa_i}\omega_i(\sigma), \qquad i = 0,1.
    $$
    Since $\alpha_i > \kappa_i$, Lemma~\ref{lem:Shorter Increment Besov Decay} gives
    $$
    \|\tau^{-\alpha_0}H_0(\tau)\|_{L^{q_0}(d\tau/\tau)} \lesssim [f]_{B^{\alpha_0}_{p_0,q_0}(w)}, \qquad \|\tau^{-\alpha_1}H_1(\tau)\|_{L^{q_1}(d\tau/\tau)} \lesssim [g]_{B^{\alpha_1}_{p_1,q_1}(w)}.
    $$

    Fix $0 < h < T$ and $0 < \theta < 1$, and set $a = \theta h$, $b = (1-\theta)h$. The parent-child comparisons of Lemma~\ref{lem: Shift and Scale Bounds for Muckenhoupt Weights}(i) give
    $$
    w_h(s) \leq [w]_{A_r}\theta^{-(r-1)}w_a(s), \qquad w_h(s) \leq [w]_{A_r}(1-\theta)^{-(r-1)}w_b(s+a).
    $$
    Writing $L^p(w_h) = L^p([0,T-h],w_h(s)\,ds)$ and changing variables in the second estimate, we obtain
    $$
    \|\delta f_{\cdot,\cdot+a}\|_{L^{p_0}(w_h)} \lesssim \theta^{-\kappa_0}\omega_0(a), \qquad \|\delta g_{\cdot+a,\cdot+h}\|_{L^{p_1}(w_h)} \lesssim (1-\theta)^{-\kappa_1}\omega_1(b).
    $$
    When either input exponent is infinite, the corresponding estimate follows directly by taking an essential supremum. Hölder's inequality therefore gives, in all cases,
    \begin{align*}
        \|C_{\cdot,\cdot+a,\cdot+h}\|_{L^{p_2}(w_h)}
        &\leq \|B\|\|\delta f_{\cdot,\cdot+a}\|_{L^{p_0}(w_h)}\|\delta g_{\cdot+a,\cdot+h}\|_{L^{p_1}(w_h)} \\
        &\lesssim \|B\|\theta^{-\kappa_0}\omega_0(\theta h)(1-\theta)^{-\kappa_1}\omega_1((1-\theta)h) \\
        &\leq \|B\|H_0(h)H_1(h).
    \end{align*}
    Since the $H_i$ are nondecreasing, taking suprema over $\theta$ and $h \leq \tau$ yields
    $$
    \overline{\Omega}^w_{p_2}(C,\tau) \lesssim \|B\|H_0(\tau)H_1(\tau).
    $$
    Finally, Hölder's inequality in the scale variable, with $1/q_2 = 1/q_0+1/q_1$, gives
    $$
    \|C\|_{\overline{\mathbb{B}}^{\alpha_0+\alpha_1}_{p_2,q_2}(w)} \lesssim \|B\|\|\tau^{-\alpha_0}H_0(\tau)\|_{L^{q_0}(d\tau/\tau)}\|\tau^{-\alpha_1}H_1(\tau)\|_{L^{q_1}(d\tau/\tau)} \lesssim \|B\|[f]_{B^{\alpha_0}_{p_0,q_0}(w)}[g]_{B^{\alpha_1}_{p_1,q_1}(w)}.
    $$
\end{proof}

\begin{remark}[Product and Sewing Conditions]
\label{rem:Young Conditions Imply Sewing Conditions}
    The hypotheses
    $$
        \alpha_i>\frac{r-1}{p_i},
        \qquad i=0,1,
    $$
    in Lemma~\ref{lem: Young Intermediate 2} ensure that the shorter-increment
    decay compensates for the parent--child weight comparison, uniformly
    over the splitting parameter.

    Writing
    $$
        \gamma:=\alpha_0+\alpha_1,
        \qquad
        \frac1{p_2}:=\frac1{p_0}+\frac1{p_1},
        \qquad
        \nu_2:=1\vee\frac r{p_2},
    $$
    subcritical sewing requires $\gamma>\nu_2$, while the boundary
    $\gamma=\nu_2$ is treated by the critical and openness results of
    Section~4. These sewing conditions do not, by themselves, imply
    the individual product hypotheses. For example,
    $$
        r=2,\qquad p_0=p_1=4,\qquad
        \alpha_0=\frac15,\qquad \alpha_1=\frac9{10}
    $$
    give $\gamma=11/10>\nu_2=1$, but
    $\alpha_0<(r-1)/p_0=1/4$.

    We therefore retain the product hypotheses explicitly in the
    weighted Young integral theorem below. They are automatically
    satisfied under the stronger continuity assumptions
    $\alpha_i>r/p_i$, $i=0,1$.
\end{remark}

\begin{theorem}[Weighted Young Integral]
\label{thm:Weighted Young Integral}
    Let $1 \leq r < \infin$ and $w \in A_r$, and let
    $$
    B: V_0 \times V_1 \rarr V_2
    $$
    be a bounded bilinear map of finite-dimensional normed vector spaces.
    Let
    $$
    0 < \alpha_0, \alpha_1 < 1, \qquad 0 < p_0, q_0, p_1, q_1 \leq \infin.
    $$
    Define
    $$
    \gamma = \alpha_0 + \alpha_1, \qquad \frac{1}{p_2} = \frac{1}{p_0} + \frac{1}{p_1}, \qquad \frac{1}{q_2} = \frac{1}{q_0} + \frac{1}{q_1}, \qquad \nu_2 = 1 \lor \frac{r}{p_2}.
    $$
    and assume $p_2 < \infin$. Also assume
    $$
    \alpha_i > \frac{r - 1}{p_i} \qquad \textup{ for } i = 0, 1.
    $$
    Let $f \in B^{\alpha_0}_{p_0, q_0}(w; V_0)$ and $g \in B^{\alpha_1}_{p_1, q_1}(w; V_1)$, and assume
    $$
    F_\bullet := \sup_{0 < h < T} \| f \|_{L^{p_0}([0, T - h], w_h)} < \infin.
    $$
    For concision we set
    $$
    F = [f]_{B^{\alpha_0}_{p_0, q_0}(w)}, \qquad G = [g]_{B^{\alpha_1}_{p_1, q_1}(w)}, \qquad W = w([0, T])
    $$
    For a partition $\pi = \{ 0 = \tau_0 < ... < \tau_N = 1 \}$ we define
    $$
    I^\pi_{s, t} = \sum_{j = 1}^{N} B(f_{s + \tau_{j - 1}(t - s)}, \delta g_{s + \tau_{j - 1}(t - s), s + \tau_j(t - s)}).
    $$
    \begin{enumerate}[label=(\alph*)]
        \item\textup{(Subcritical Sewing)} If $\gamma > \nu_2$ then there is a path $I \in B^{\alpha_1}_{p_2, q_1}(w; V_2)$ such that:
        \begin{enumerate}[label=(\roman*)]
            \item The remainder $R_{s, t} := \delta I_{s, t} - B(f_s, \delta g_{s, t})$ satisfies
            $$
            \|R\|_{\mathbb{B}^\gamma_{p_2, q_2}(w)} \lesssim \|B\| FG.
            $$

            \item We have the bound
            $$
            [I]_{B^{\alpha_1}_{p_2, q_1}(w)} \lesssim \|B\|(F_\bullet + T^{\alpha_0}F)G.
            $$

            \item For every $q_2 \leq \sigma \leq \infin$ and every partition $\pi$ with $\| \pi \| \leq 1/2$,
            $$
            \|I^\pi - \delta I\|_{\mathbb{B}^\gamma_{p_2, \sigma}(w)} \lesssim_\sigma \| \pi \|^{\gamma - \nu_2} \|B\| FG.
            $$
        \end{enumerate}

        \item\textup{(Critical Sewing)} If $\gamma = \gamma_c := \nu_2$ and $0 < q_2 \leq 1 \land p_2$, then there is a path $I \in B^{\alpha_1}_{p_2, q_1}(w; V_2)$ such that:
        \begin{enumerate}[label=(\roman*)]
            \item The remainder $R_{s, t} := \delta I - \Xi$ satisfies
            $$
            R \in \mathbb{B}^{\gamma_c}_{p_2, \infin; \circ}(w) \cap \bigcap_{0 < \sigma < \infin} \mathbb{B}^{\omega_\sigma}_{p_2, \sigma}(w), \qquad \|R\|_{\mathbb{B}^{\gamma_c}_{p_2, \infin}(w)} \lesssim \|B\|FG, \qquad \|R\|_{\mathbb{B}^{\omega_\sigma}_{p_2, \sigma}(w)} \lesssim \Lambda_\sigma \|B\|FG.
            $$

            \item We have the bound
            $$
            [I]_{B^{\alpha_1}_{p_2, q_1}(w)} \lesssim \|B\|(F_\bullet + T^{\alpha_0}F)G.
            $$
            
            \item For every $0 < \sigma \leq \infin$,
            $$
            \lim_{\| \pi \| \rarr 0} \|I^\pi - \delta I\|_{\mathbb{B}^{\omega_\sigma}_{p_2, \sigma}(w)} = 0.
            $$
            In particular, taking $\sigma = \infin$ gives convergence in $\mathbb{B}^{\gamma_c}_{p_2, \infin}(w)$.
        \end{enumerate}

        \item\textup{(Critical Sewing by Openness)} Suppose
        $$
        r > 1, \qquad p_2 < r, \qquad \gamma = \frac{r}{p_2},
        $$
        then there exists $r_- \in (1, r)$ such that $w \in A_{r_-}$. Set
        $$
        \epsilon_- := \gamma - \left(1 \lor \frac{r_-}{p_2} \right) > 0.
        $$
        All conclusions of (a) hold with weight index $r_-$ for arbitrary $0 < q_2 \leq \infin$, and with the updated convergence rate $\| \pi \|^{\epsilon_-}$.

        \item\textup{(Integrability Regain)} Suppose
        $$
        \alpha_i > \frac{r}{p_i} \hspace{1em} \textup{ for } i = 0, 1 \hspace{1em} \textup{ in } (a), (b), \qquad \textup{ and } \qquad \alpha_i > \frac{r_-}{p_i} \hspace{1em} \textup{ for } i = 0, 1 \hspace{1em} \textup{ in } (c).
        $$
        Then, using the continuous representatives of $f, g$, the path $I$ can be chosen with $I_0 = 0$, belongs to $B^{\alpha_1}_{p_1, q_1}(w; V_2)$, and
        $$
        [I]_{B^{\alpha_1}_{p_1, q_1}(w)} \lesssim \|B\| \left(|f_0| + \frac{T^{\alpha_0}}{W^{1/p_0}} F \right) G.
        $$
    \end{enumerate}
    In parts (a), (c) the sewn increment $\delta \mathcal{I} \Xi$ is unique modulo zero-increment seminorms among additive maps whose remainders belong to $\mathbb{B}^{\gamma}_{p_2, q_2}(w)$, and in part (b), uniqueness holds among additive maps whose remainders belong to $\mathbb{B}^{\gamma_c}_{p_2, \infin; \circ}(w)$.
\end{theorem}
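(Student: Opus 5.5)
The plan is to apply the sewing results of \S 4 to the local model $\Xi_{s,t}=B(f_s,\delta g_{s,t})$, whose defect is $\delta\Xi_{s,u,t}=-B(\delta f_{s,u},\delta g_{u,t})$. Lemma~\ref{lem: Young Intermediate 2} gives
$$
\|\delta\Xi\|_{\overline{\mathbb B}^{\gamma}_{p_2,q_2}(w)}\lesssim\|B\|FG,
$$
and this single estimate feeds all four parts.

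\textbf{Model regularity.} We first check that $\Xi\in\mathbb B^{\alpha_1}_{p_2,q_1}(w)$. At fixed $h$, H\"older's inequality in $L^{p_2}(w_h)$ gives
$$
\|\Xi_{\cdot,\cdot+h}\|_{L^{p_2}(w_h)}\leq\|B\|\,\|f\|_{L^{p_0}(w_h)}\,\|\delta g_{\cdot,\cdot+h}\|_{L^{p_1}(w_h)}\leq\|B\|F_\bullet\,\omega^w_{p_1}(g,h).
$$
Hence $\|\Xi\|_{\mathbb B^{\alpha_1}_{p_2,q_1}(w)}\leq\|B\|F_\bullet G$.

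\textbf{Parts (a)--(c).} For (a), apply Theorem~\ref{thm:Subcritical Weighted Sewing} with $(p_1,q_1,\alpha)$ replaced by $(p_2,q_1,\alpha_1)$. Then $p_*=p_2$, $q_*=q_1\lor q_2=q_1$ (since $q_2\leq q_1$), and the $W$-powers disappear. Part (d) of that theorem gives (i) and
$$
[I]\lesssim\|B\|F_\bullet G+T^{\gamma-\alpha_1}\|B\|FG,
$$
which is (ii) because $\gamma-\alpha_1=\alpha_0$. Part (c) of that theorem gives (iii), since $I^\pi=I_\pi\Xi$.

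For (b), use Theorem~\ref{thm:Critical Weighted Sewing} in the same way. The one extra input is the term $T^{-\gamma_c}\overline\Omega^w_{p_2}(\delta\Xi,T)$ in $D_c$. It is bounded by $\sup_\tau\tau^{-\gamma}\overline\Omega^w_{p_2}(\delta\Xi,\tau)$, and by monotonicity on $[T/2,T]$ this is $\lesssim\|\delta\Xi\|_{\overline{\mathbb B}^{\gamma}_{p_2,q_2}}$ (finite $q_2$). So $D_c\lesssim\|B\|FG$.

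For (c), invoke Corollary~\ref{cor: Critical Sewing by Opennesss}. The product lemma must then be rerun with $r_-$; the hypothesis $\alpha_i>(r-1)/p_i\geq(r_--1)/p_i$ is preserved. Uniqueness in all cases is inherited from the sewing theorems.

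\textbf{Part (d), the main obstacle.} The goal is to raise integrability from $p_2$ to $p_1$ through Theorem~\ref{thm:Integrability Regain for Sewing}, taking its "$p_1$" to be our $p_1$. There are three steps.

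\emph{Model in $p_1$.} With $\alpha_0>r/p_0$, Theorem~\ref{thm: Weighted Besov Holder Embedding} gives a continuous $f$ with
$$
|f_s|\leq|f_0|+C\,s^{\alpha_0}w([0,s])^{-1/p_0}F\lesssim|f_0|+T^{\alpha_0}W^{-1/p_0}F\cdot[w]^{O(1)}.
$$
The last inequality uses the lower-mass estimate together with $\alpha_0>r/p_0$, so that $s^{\alpha_0-r/p_0}T^{r/p_0}\leq T^{\alpha_0}$. Therefore
$$
\|\Xi\|_{\mathbb B^{\alpha_1}_{p_1,q_1}(w)}\lesssim\|B\|\bigl(|f_0|+T^{\alpha_0}W^{-1/p_0}F\bigr)G.
$$

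\emph{Condition $\mathrm C_\gamma$.} Write $a=u-s$ and $b=t-u$. The pointwise embedding bounds $|\delta f_{s,u}|$ by $F\,a^{\alpha_0}w([s,u])^{-1/p_0}$ and $|\delta g_{u,t}|$ by $G\,b^{\alpha_1}w([u,t])^{-1/p_1}$. These must be converted into the mixed-scale form $\ell^{\gamma}/w(B_\ell(s))^{1/p_2}$ with $\ell=a^\vartheta b^{1-\vartheta}$. I would take $\vartheta$ small and use the small-set and doubling estimates to compare $w([s,u])$ and $w([u,t])$ with $w(B_\ell(s))$. The comparison for the short interval costs a factor $(\ell/a)^{r/p_0}$, which is absorbed by the surplus $\alpha_0-r/p_0>0$ once $\vartheta$ is chosen small enough. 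This bookkeeping is the delicate step.

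\emph{Conclusion.} Check $\Theta>0$: we have
$$
\Theta=\gamma-\alpha_1-r(1/p_2-1/p_1)=\alpha_0-r/p_0>0,
$$
and $Q_{w,r}T^{\Theta}=T^{\alpha_0}W^{-1/p_0}$. Part (c) of Theorem~\ref{thm:Integrability Regain for Sewing} then yields the stated bound. Normalising $I_0=0$ just fixes the additive constant.
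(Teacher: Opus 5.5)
Your overall route is the paper's: the H\"older bound $\|\Xi\|_{\mathbb B^{\alpha_1}_{p_2,q_1}(w)}\leq\|B\|F_\bullet G$, the defect bound from Lemma~\ref{lem: Young Intermediate 2}, then Theorems~\ref{thm:Subcritical Weighted Sewing} and~\ref{thm:Critical Weighted Sewing}, Corollary~\ref{cor: Critical Sewing by Opennesss} and Theorem~\ref{thm:Integrability Regain for Sewing}. Parts (a) and (c) go through as you describe.

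\textbf{The gap is in (b),} at exactly the extra input you single out. It is not true that $T^{-\gamma_c}\overline{\Omega}^w_{p_2}(\delta\Xi,T)\lesssim\|\delta\Xi\|_{\overline{\mathbb B}^{\gamma_c}_{p_2,q_2}(w)}$ ``by monotonicity on $[T/2,T]$''. On that interval monotonicity bounds the integrand from \emph{below} by its value at $T/2$. So integrating over $[\tau,2\tau]$ controls $\tau^{-\gamma_c}\overline{\Omega}^w_{p_2}(\delta\Xi,\tau)$ only for $\tau\leq T/2$. A three-parameter map supported on increment lengths in $(T-\epsilon,T)$ has scale norm smaller than its largest-scale term by a factor of order $(\epsilon/T)^{1/q_2}$. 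This is why $D_c$ carries that term separately.

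To close the step you must use that $\delta\Xi$ is built from \emph{path} increments. Split an increment of length $h$ into halves and apply the parent--child comparison of Lemma~\ref{lem: Shift and Scale Bounds for Muckenhoupt Weights}(i). This gives $\omega^w_{p_0}(f,h)\lesssim\omega^w_{p_0}(f,h/2)$, and likewise for $g$. Hence $\omega^w_{p_0}(f,h)\lesssim h^{\alpha_0}F$ and $\omega^w_{p_1}(g,h)\lesssim h^{\alpha_1}G$ for all $h\leq T$, not just $h\leq T/2$. The weight comparisons in the proof of Lemma~\ref{lem: Young Intermediate 2}, with $\kappa_i=(r-1)/p_i$, then give $\|\delta\Xi_{\cdot,\cdot+\theta h,\cdot+h}\|_{L^{p_2}(w_h)}\lesssim\|B\|FG\,h^{\gamma_c}\theta^{\alpha_0-\kappa_0}(1-\theta)^{\alpha_1-\kappa_1}\leq\|B\|FG\,h^{\gamma_c}$, uniformly in $\theta$ and $h$. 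Hence $D_c(\Xi)\lesssim\|B\|FG$.

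\textbf{Part (d) needs two further repairs and one bookkeeping correction.}

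First, in case (c) the hypothesis is only $\alpha_i>r_-/p_i$. There $\gamma=r/p_2=r/p_0+r/p_1$, so $\alpha_i>r/p_i$ cannot hold for both $i$. Your pointwise bounds, your check of $\mathrm C_\gamma$, and the positivity of $\Theta$ must therefore all be run with $\rho=r_-$, as Theorem~\ref{thm:Integrability Regain for Sewing} prescribes. Then $\Theta_{\gamma,\rho}=\alpha_0-\rho/p_0>0$ and $Q_{w,\rho}T^{\Theta_{\gamma,\rho}}=T^{\alpha_0}W^{-1/p_0}$, as before.

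Second, Theorem~\ref{thm:Integrability Regain for Sewing} assumes $p_2<p_1$. The case $p_0=\infin$, where $p_2=p_1$, must be handled directly: there $F_\bullet\leq\|f\|_\infin$, and (a)(ii) or (b)(ii) already gives the bound.

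Third, in your $\mathrm C_\gamma$ sketch the cost $(\ell/a)^{r/p_0}$ is right only when $[s,u]$ is the shorter gap. When $[u,t]$ is shorter it is not adjacent to $s$, and comparing $w([u,t])$ with $w(B_\ell(s))$ costs $((u-s)/(t-u))^{r/p_1}$. This is still absorbed, because the total homogeneity is $\gamma$ and $\vartheta\gamma\leq(\alpha_0-\rho/p_0)\land(\alpha_1-\rho/p_1)$. The paper avoids the case split by comparing both masses with $w([s,t])$ first and then using $w(B_\ell(s))\lesssim w([s,t])$.
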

\begin{proof}
    We follow \cite[proof of Theorem 4.1]{FrizSeeger2022}, using the weighted product estimate and sewing theorems. Set $\Xi_{s,t} = B(f_s,\delta g_{s,t})$. By bilinearity,
    $$
    \delta\Xi_{s,u,t} = -B(\delta f_{s,u},\delta g_{u,t}).
    $$
    For every $0 < h < T$, Hölder's inequality gives
    $$
    \|\Xi_{\cdot,\cdot+h}\|_{L^{p_2}(w_h)} \leq \|B\|\|f\|_{L^{p_0}([0,T-h],w_h)}\|\delta g_{\cdot,\cdot+h}\|_{L^{p_1}(w_h)} \leq \|B\|F_\bullet\omega^w_{p_1}(g,h).
    $$
    Consequently, Lemma~\ref{lem: Young Intermediate 2} yields
    $$
    \|\Xi\|_{\mathbb{B}^{\alpha_1}_{p_2,q_1}(w)} \lesssim \|B\|F_\bullet G, \qquad \|\delta\Xi\|_{\overline{\mathbb{B}}^\gamma_{p_2,q_2}(w)} \lesssim \|B\|FG.
    $$

    For (a), suppose $\gamma > \nu_2$. Since $q_2 \leq q_1$, Theorem~\ref{thm:Subcritical Weighted Sewing} gives $I \in B^{\alpha_1}_{p_2,q_1}(w)$ and $R = \delta I-\Xi$ satisfying
    $$
    \|R\|_{\mathbb{B}^\gamma_{p_2,q_2}(w)} \lesssim \|B\|FG, \qquad [I]_{B^{\alpha_1}_{p_2,q_1}(w)} \lesssim \|B\|(F_\bullet+T^{\alpha_0}F)G.
    $$
    Since $I^\pi = I_\pi\Xi$, its arbitrary-partition estimate gives (a)(iii).

    For (b), we verify the largest-scale bound required in $D_c(\Xi)$. Write $f_0 = f$ and $f_1 = g$ temporarily. Monotonicity gives $\omega^w_{p_i}(f_i,h) \lesssim h^{\alpha_i}[f_i]_{B^{\alpha_i}_{p_i,q_i}(w)}$ for $h \leq T/2$. Splitting increments in half and applying Lemma~\ref{lem: Shift and Scale Bounds for Muckenhoupt Weights}(i) gives
    $$
    \omega^w_{p_i}(f_i,h) \lesssim \omega^w_{p_i}(f_i,h/2),
    $$
    which extends the bound to all $h \leq T$. Thus
    $$
    \omega^w_{p_0}(f,h) \lesssim h^{\alpha_0}F, \qquad \omega^w_{p_1}(g,h) \lesssim h^{\alpha_1}G, \qquad 0 < h \leq T.
    $$
    Set $\kappa_i = (r-1)/p_i$. The weight comparisons in the proof of Lemma~\ref{lem: Young Intermediate 2} now give
    $$
    \|\delta\Xi_{\cdot,\cdot+\theta h,\cdot+h}\|_{L^{p_2}(w_h)} \lesssim \|B\|FG h^{\gamma_c}\theta^{\alpha_0-\kappa_0}(1-\theta)^{\alpha_1-\kappa_1} \leq \|B\|FG h^{\gamma_c}.
    $$
    Taking suprema yields $T^{-\gamma_c}\overline{\Omega}^w_{p_2}(\delta\Xi,T) \lesssim \|B\|FG$, and hence $D_c(\Xi) \lesssim \|B\|FG$. Theorem~\ref{thm:Critical Weighted Sewing}, again using $q_2 \leq q_1$, gives all the conclusions in (b).

    For (c), openness supplies $r_- \in (1,r)$ with $w \in A_{r_-}$. Since
    $$
    \gamma = \frac{r}{p_2} > 1 \lor \frac{r_-}{p_2},
    $$
    and the product hypotheses remain valid with $r_-$ in place of $r$, part (a) applies with this smaller weight index. The asserted uniqueness in (a)--(c) follows from the corresponding sewing theorems.

    For (d), set $\rho = r$ in (a), (b), and $\rho = r_-$ in (c). Then $w \in A_\rho$ and $\alpha_i > \rho/p_i$. Using the continuous representatives supplied by Theorem 3.3, the embedding and small-set estimates give
    $$
    \|f\|_\infin \lesssim |f_0|+\frac{T^{\alpha_0}}{W^{1/p_0}}F, \qquad \|\Xi\|_{\mathbb{B}^{\alpha_1}_{p_1,q_1}(w)} \leq \|B\|\|f\|_\infin G.
    $$
    If $p_0 = \infin$, then $p_2 = p_1$ and $F_\bullet \leq \|f\|_\infin$, so the preceding path estimates already give the desired bound. Suppose henceforth that $p_0 < \infin$, so $p_2 < p_1$.

    We verify the hypotheses of Theorem~\ref{thm:Integrability Regain for Sewing}. Set $\epsilon_i = \alpha_i-\rho/p_i > 0$. For $s < u < t$, write
    $$
    J = [s,t], \qquad h = t-s, \qquad a = (u-s)\land(t-u), \qquad b = (u-s)\lor(t-u).
    $$
    Choose $0 < \vartheta \leq 1/2$ with $\vartheta\gamma \leq \epsilon_0\land\epsilon_1$, and set $\ell = a^\vartheta b^{1-\vartheta}$. The localized embedding and small-set estimates give
    \begin{align*}
        |\delta\Xi_{s,u,t}|
        &\lesssim \|B\|FG\frac{(u-s)^{\alpha_0}(t-u)^{\alpha_1}}{w([s,u])^{1/p_0}w([u,t])^{1/p_1}} \\
        &\lesssim \|B\|FG\frac{h^{\rho/p_2}(u-s)^{\epsilon_0}(t-u)^{\epsilon_1}}{w(J)^{1/p_2}}
        \lesssim \|B\|FG\frac{\ell^\gamma}{w(J)^{1/p_2}},
    \end{align*}
    where the last inequality follows from $h \leq 2b$ and the choice of $\vartheta$. Since $\ell \leq h$, another weight comparison gives $w(B_\ell(s)) \leq w(B_h(s)) \lesssim w(J)$. Therefore
    $$
    |\delta\Xi_{s,u,t}| \lesssim \|B\|FG\frac{(a^\vartheta b^{1-\vartheta})^\gamma}{w(B_{a^\vartheta b^{1-\vartheta}}(s))^{1/p_2}},
    $$
    so $\textup{C}_\gamma$ holds with $M \lesssim \|B\|FG$.

    The remaining parameters satisfy
    $$
    \eta = \frac{1}{p_2}-\frac{1}{p_1} = \frac{1}{p_0}, \qquad \Theta_{\gamma,\rho} = \gamma-\alpha_1-\rho\eta = \alpha_0-\frac{\rho}{p_0} > 0.
    $$
    Also $\gamma > \rho/p_2$; in case (b), this forces $\gamma_c = 1 > \rho/p_2$, verifying the additional critical condition. Since $q_2 \leq q_1$ and $N_\gamma \lesssim \|B\|FG$, Theorem~\ref{thm:Integrability Regain for Sewing} gives
    \begin{align*}
        [I]_{B^{\alpha_1}_{p_1,q_1}(w)}
        &\lesssim \|\Xi\|_{\mathbb{B}^{\alpha_1}_{p_1,q_1}(w)}+\left(\frac{T^\rho}{W}\right)^{1/p_0}T^{\alpha_0-\rho/p_0}\|B\|FG \\
        &\lesssim \|B\|\left(|f_0|+\frac{T^{\alpha_0}}{W^{1/p_0}}F\right)G.
    \end{align*}
    Finally, $\alpha_1 > \rho/p_1$ supplies a continuous representative of $I$ by Theorem 3.3. Subtracting its value at zero ensures $I_0 = 0$ without changing its increments or the estimate.
\end{proof}

\begin{remark}[Critical Convergence and Integrability Regain]
    In part (a), convergence holds in the power spaces $\mathbb{B}^\gamma_{p_2,\sigma}(w)$ for $\sigma \geq q_2$. In part (b), convergence in $\mathbb{B}^{\gamma_c}_{p_2,\infin}(w)$ gives the finite-index conclusions through
    $$
    \|I^\pi-\delta I\|_{\mathbb{B}^{\omega_\sigma}_{p_2,\sigma}(w)} \leq \Lambda_\sigma \|I^\pi-\delta I\|_{\mathbb{B}^{\gamma_c}_{p_2,\infin}(w)}, \qquad 0 < \sigma < \infin.
    $$
    These finite-index remainder estimates use the loss modulus $\omega_\sigma$. The path estimates concern the lower smoothness $\alpha_1 < \gamma_c$. This positive gap restores finite scale integrability and gives the bound in (b)(ii) without a logarithmic factor. Under the additional assumptions in (d), pointwise control also permits a return to the integrability index $p_1$.
\end{remark}

We now consider differential equations
\begin{equation}
Y_t = \xi + \int^t_0 F(Y_s) \hspace{.2em} dX_s \label{(5.1)}
\end{equation}
where $X: [0, T] \rarr \R^d$, $F: \R^m \rarr \mathcal{L}(\R^d, \R^m)$, and this integral is interpreted in the weighted Young sense (as in Theorem~\ref{thm:Weighted Young Integral}). Specifically, we prove weighted analogues of \cite[Theorems 4.2, 4.3]{FrizSeeger2022}, which concern existence and uniqueness of solutions lying in the appropriate Besov space and local Lipschitz continuity of the Îto-Lyons map respectively.

\begin{theorem}[Existence and Uniqueness]
\label{thm: Young Existence and Uniqueness}
    Let $1 \leq r < \infin$ and $w \in A_r$, and suppose
    $$
    0 < p < \infin, \qquad 0 < q \leq \infin, \qquad 0 < \alpha < 1, \qquad 0 < \delta \leq 1,
    $$
    and let
    $$
    X \in B^\alpha_{p, q}(w; \R^d), \qquad F \in C^{1, \delta}_b(\R^m; \mathcal{L}(\R^d, \R^m)).
    $$
    Suppose
    $$
    \alpha > \frac{r}{p}, \qquad \delta \alpha > \frac{r}{p},
    $$
    and that one of the following regimes hold:
    $$
    \textup{(Subcritical)} \hspace{1em}  (1 + \delta)\alpha > 1, \qquad \textup{(Critical)} \hspace{1em} (1 + \delta)\alpha = 1, \quad 0 < q \leq 1 + \delta.
    $$
    Then for every $\xi \in \R^m$ there is a unique continuous path $Y \in B^\alpha_{p, q}(w; \R^m)$ such that
    $$
    Y_t = \xi + \int^t_0 F(Y_u) \hspace{.2em} dX_u
    $$
    for all $0 \leq t \leq T$. Moreover,
    $$
    [Y]_{B^\alpha_{p, q}(w)} \leq M
    $$
    where $M$ is some constant that depends only on the fixed initial parameters of the differential equation, not on the solution $Y$ we're consisdeirng.
\end{theorem}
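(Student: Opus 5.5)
We follow the Picard-iteration scheme of \cite[Theorem 4.2]{FrizSeeger2022}, working on short intervals and using Theorem~\ref{thm:Weighted Young Integral} as the integration engine.

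\textbf{Solution map and integral bounds.} For a continuous path $Y\in B^\alpha_{p,q}(w;\R^m)$ with $Y_0=\xi$, put
\[
\Gamma(Y)_t=\xi+\int_0^tF(Y_u)\,dX_u .
\]
The integral is taken from Theorem~\ref{thm:Weighted Young Integral}, with bilinear map the evaluation $B(A,x)=Ax$, integrand $f=F(Y)$ and integrator $g=X$. For the integrand, Proposition~\ref{prop: Hölder Composition Estimate}, applied to the Lipschitz map $F$ with $\delta=1$, gives $[F(Y)]_{B^\alpha_{p,q}(w)}\le\|DF\|_\infin[Y]_{B^\alpha_{p,q}(w)}$. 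We then take $p_0=p_1=p$, $\alpha_0=\alpha_1=\alpha$ and $q_0=q_1=q$. This fixes $p_2=p/2$ and $\gamma=2\alpha$.

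\textbf{Using $\delta\alpha$ as the smoothness of the integrand.} The sewing threshold $\nu_2=1\vee 2r/p$ need not be below $2\alpha$, so the choice above is not always enough. Instead, we regard $F(Y)$ as a path of smoothness $\delta\alpha$ with $p_0=p/\delta$ and $q_0=q/\delta$. This is the first form of Proposition~\ref{prop: Hölder Composition Estimate}, applied to the $\delta$-Hölder map $F$; when $\delta=1$ the Lipschitz case above is the same thing.

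With this choice the defect smoothness is $\gamma=(1+\delta)\alpha$ and $1/p_2=(1+\delta)/p$. The hypothesis $\delta\alpha>r/p$ gives $(1+\delta)\alpha>(1+\delta)r/p=r/p_2$. Combined with $(1+\delta)\alpha>1$ this yields $\gamma>\nu_2$, which is the subcritical regime (a). If instead $(1+\delta)\alpha=1$, then $r/p_2<1$, so $\nu_2=1=\gamma$ and we are in the critical regime (b). There the condition $q_2=q/(1+\delta)\le 1\wedge p_2$ is what we need, and it is exactly where the assumption $q\le1+\delta$ enters; the condition $q_2\le p_2$ needs a small check or the openness variant.

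The product hypotheses $\alpha_i>(r-1)/p_i$ and the integrability-regain hypotheses $\alpha_i>r/p_i$ follow from $\alpha>r/p$ and $\delta\alpha>r/p$. Part (d) of Theorem~\ref{thm:Weighted Young Integral} then returns the integral to $B^\alpha_{p,q}(w)$ with
\[
[\Gamma(Y)]_{B^\alpha_{p,q}(w;J)}\lesssim \|F\|_{C^1_b}\Bigl(1+\tfrac{|J|^{\delta\alpha}}{w(J)^{\delta/p}}[Y]^\delta\Bigr)[X]_{B^\alpha_{p,q}(w;J)}
\]
on any subinterval $J$.

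\textbf{Contraction for differences.} For two candidate solutions $Y,\tilde Y$ we write $F(Y)-F(\tilde Y)=\int_0^1DF(\tilde Y+\lambda(Y-\tilde Y))\,d\lambda\,(Y-\tilde Y)$. This difference of integrands is controlled using the $\delta$-Hölder continuity of $DF$, which is where the $C^{1,\delta}_b$ assumption is used. It is the Young analogue of the Friz--Seeger difference estimate: one factor is measured in $B^\alpha_{p,q}$ and the other in a sup norm obtained from Theorem~\ref{thm: Weighted Besov Holder Embedding}.

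\textbf{Smallness and patching.} We need the constant in front of $[X]$ to be small. We choose a partition of $[0,T]$ into intervals $J_k$ on which $[X]_{B^\alpha_{p,q}(w;J_k)}$ is small. This is possible for $q<\infin$ by absolute continuity of the scale integral. For $q=\infin$ we instead rely on the factor $|J|^{\alpha-r/p}$ from the embedding, or on the reduction of $q$ to a finite value. On each $J_k$, $\Gamma$ maps a ball of $B^\alpha_{p,q}(w;J_k)$ into itself and contracts, so completeness gives a unique local solution. Solutions on consecutive intervals are then concatenated.

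\textbf{Main obstacle.} The hardest step is the global bound $[Y]\le M$ after patching. The number of pieces must be controlled uniformly: it may depend only on $[X]$, $[w]_{A_r}$, $T$ and $w([0,T])$, and must not depend on $Y$. The local seminorms must also be summed back into a global weighted Besov seminorm. For this summation I would first try the subdivision estimate of Lemma~\ref{lem: Shift and Scale Bounds for Muckenhoupt Weights}(v): it splits each increment of length $h$ at the partition points, and its constant depends only on the number of pieces. Uniqueness follows from the same local contraction argument applied interval by interval.
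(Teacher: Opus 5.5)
The genuine gap is the step you flag yourself as the main obstacle: passing from the local fixed points to a global solution with $Y\in B^\alpha_{p,q}(w)$ and $[Y]_{B^\alpha_{p,q}(w)}\le M$. Lemma~\ref{lem: Shift and Scale Bounds for Muckenhoupt Weights}(v) does not do this.

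\begin{itemize}
\item If an increment $[s,s+h]$ is split at a partition point $t_k$, the child proportion $\lambda=(t_k-s)/h$ tends to $0$ as $s\uparrow t_k$. Since $p>r\ge1$ here, the factor $\lambda^{1-(p\vee r)}=\lambda^{1-p}$ is unbounded.
\item Moreover, the children $[s,t_k]$ have $s$-dependent lengths. Integrating in $s$ therefore does not produce the fixed-length local moduli $\omega^w_p(Y,\cdot\,;J_k)$ at all.
\end{itemize}

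So the constant is not controlled by the number of pieces; that is true only for equal subdivisions. The missing idea is to avoid splitting altogether:

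\begin{itemize}
\item Solve on overlapping intervals $J_j=[j\ell,(j+2)\ell]$, restarting at $j\ell$ from the previous solution. Local uniqueness makes consecutive solutions agree on the overlaps.
\item Every increment of length $h\le\ell$ then lies in a single $J_j$. Hence $\omega^w_p(Y,h)^p\le\sum_j\omega^w_p(Y,h;J_j)^p$, and the small scales are bounded by $C_NR$.
\item For $h>\ell$, Lemma~\ref{lem: Shift and Scale Bounds for Muckenhoupt Weights}(iii) gives $\omega^w_p(Y,h)\le w([0,T])^{1/p}\operatorname{osc}_{[0,T]}Y$. Chaining the local embeddings $\|Y-Y_{j\ell}\|_{\infty;J_j}\lesssim\lambda_{J_j}R$ bounds the oscillation.
\end{itemize}

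Since $\ell=T/N$ is fixed by the data alone, this produces $M$.

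Several smaller points also need correcting.

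\begin{itemize}
\item Your reason for abandoning the pair $(\alpha,\alpha)$ is wrong. We have $2\alpha\ge(1+\delta)\alpha\ge1$ and $2\alpha>2r/p$, so that pair always meets the sewing condition. Equality occurs only at $\delta=1$, $\alpha=1/2$, where $q\le2$ permits critical sewing; the paper uses exactly this pair for the self-map.
\item Your choice $(\delta\alpha,p/\delta,q/\delta)$ is nevertheless admissible. The check $q_2\le p_2$ reduces to $q\le p$, which holds at criticality because $p>(1+\delta)r\ge1+\delta\ge q$.
\item You never need $[X]_{B^\alpha_{p,q}(w;J)}$ to be small, which fails in general for $q=\infty$. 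All smallness comes from $\lambda_J=|J|^\alpha/w(J)^{1/p}\lesssim(T^r/w([0,T]))^{1/p}|J|^{\alpha-r/p}$, uniformly in the position of $J$ and in the initial value.
\item In the contraction, write $[\cdot]_J=[\cdot]_{B^\alpha_{p,q}(w;J)}$ and $D=Y-Z$, and say explicitly that $H=F(Y)-F(Z)$ vanishes at $a$. Then Theorem~\ref{thm:Weighted Young Integral}(d), with smoothness $(\delta\alpha,\alpha)$, integrability $(p,p)$ and scale $(q/\delta,q)$, leaves only $[X]_{B^\alpha_{p,q}(w)}|J|^{\delta\alpha}w(J)^{-1/p}[H]_{B^{\delta\alpha}_{p,q/\delta}(w;J)}$.
\item Combine this with $[H]_{B^{\delta\alpha}_{p,q/\delta}(w;J)}\lesssim\|DF\|_\infty|J|^{(1-\delta)\alpha}[D]_J+[DF]_{C^\delta}w(J)^{(1-\delta)/p}\|D\|_{\infty;J}([Y]_J^\delta+[Z]_J^\delta)$ and $\|D\|_{\infty;J}\lesssim\lambda_J[D]_J$. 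This gives a contraction coefficient of order $\lambda_J+R^\delta\lambda_J^{1+\delta}$.
\item The contraction gives uniqueness only inside the ball. For uniqueness among all solutions in $B^\alpha_{p,q}(w)$, first show that any solution lies in the ball on short intervals. Do this by applying the self-map bound to it and absorbing the $[Y]_J$ term, as the paper does.
\end{itemize}
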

\begin{proof}
    We follow the Picard argument of \cite[proof of Theorem 4.2]{FrizSeeger2022}. Set
    $$
    K = [X]_{B^\alpha_{p,q}(w)}, \qquad W = w([0,T]), \qquad \beta = \delta\alpha, \qquad L_F = \|DF\|_\infin, \qquad H_F = [DF]_{C^\delta}.
    $$
    Note that $p > 1$. For $J = [a,b] \subset [0,T]$, write
    $$
    L = b-a, \qquad W_J = w(J), \qquad [U]_J = [U]_{B^\alpha_{p,q}(w;J)}, \qquad \lambda_J = \frac{L^\alpha}{W_J^{1/p}}.
    $$
    The small-set estimate gives
    $$
    \lambda_J \leq [w]_{A_r}^{1/p}\left(\frac{T^r}{W}\right)^{1/p}L^{\alpha-r/p},
    $$
    so $\lambda_J \rarr 0$ uniformly in the position of $J$ as $L \rarr 0$. Theorem 3.3 also gives
    $$
    \|U-U_a\|_{\infin;J} \lesssim \lambda_J[U]_J.
    $$
    All constants below are uniform in $J$.

    Fix $\eta \in \R^m$ and define, using continuous representatives,
    $$
    \mathcal{X}_{J,\eta,R} = \{Y \in B^\alpha_{p,q}(w;J;\R^m) : Y_a = \eta,\ [Y]_J \leq R\}.
    $$
    Completeness of the Besov space and the preceding embedding show that this ball is complete for the metric $d_J(Y,Z) = [Y-Z]_J^{1\land q}$. Set
    $$
    (\mathcal{M}Y)_t = \eta+\int_a^t F(Y_s)\hspace{.2em}dX_s.
    $$

    We first show that $\mathcal{M}$ preserves the ball. Since $F$ is Lipschitz,
    $$
    [F(Y)]_J \leq L_F[Y]_J.
    $$
    Theorem~\ref{thm:Weighted Young Integral}(d) applies with both smoothness indices equal to $\alpha$: we have $2\alpha > 2r/p$, and $2\alpha > 1$ unless $\delta = 1$ and $\alpha = 1/2$, in which case $q \leq 2$. Hence
    $$
    [\mathcal{M}Y]_J \leq CK\left(\|F\|_\infin+L_F\lambda_J[Y]_J\right).
    $$
    Choose $R = 1+2CK\|F\|_\infin$. For sufficiently short $J$, uniformly in $\eta$, we have $CKL_F\lambda_J \leq 1/2$, and therefore $\mathcal{M}(\mathcal{X}_{J,\eta,R}) \subset \mathcal{X}_{J,\eta,R}$.

    We next prove contraction. Let $Y,Z \in \mathcal{X}_{J,\eta,R}$ and write $D = Y-Z$, $H = F(Y)-F(Z)$. The mean-value argument of \cite[Lemma 2.4]{FrizSeeger2022}, using weighted Hölder and Lemma~\ref{lem: Shift and Scale Bounds for Muckenhoupt Weights}(iii), gives
    $$
    [H]_{B^\beta_{p,q/\delta}(w;J)} \lesssim L_F L^{\alpha-\beta}[D]_J+H_F W_J^{(1-\delta)/p}\|D\|_{\infin;J}\left([Y]_J^\delta+[Z]_J^\delta\right).
    $$
    Here the first term uses the scale embedding from $B^\alpha_{p,q}$ to $B^\beta_{p,q/\delta}$; the second uses the finite-measure factor $W_J^{(1-\delta)/p}$. Since $D_a = 0$, the preceding pointwise embedding gives $\|D\|_{\infin;J} \lesssim \lambda_J[D]_J$.

    Apply Theorem~\ref{thm:Weighted Young Integral}(d) to $H$ and $X$, with smoothness indices $\beta,\alpha$ and scale indices $q/\delta,q$. Indeed, $\beta,\alpha > r/p$ and $\alpha+\beta > 2r/p$. At the critical boundary, $p/2 > r$ and the defect scale index is $q/(1+\delta) \leq 1$. Since $H_a = 0$, we obtain
    \begin{align*}
        [\mathcal{M}Y-\mathcal{M}Z]_J
        &\lesssim K\frac{L^\beta}{W_J^{1/p}}[H]_{B^\beta_{p,q/\delta}(w;J)} \\
        &\lesssim K\left(L_F\lambda_J+H_F R^\delta\lambda_J^{1+\delta}\right)[Y-Z]_J.
    \end{align*}
    Choose $L_0 > 0$ sufficiently small that the ball-preserving estimate holds and this last coefficient is at most $1/2$ whenever $|J| \leq L_0$. Banach's fixed-point theorem then supplies a local solution. Moreover, applying the ball-preserving estimate to any solution and absorbing its last term gives
    $$
    [Y]_J \leq 2CK\|F\|_\infin < R.
    $$
    Thus local uniqueness holds among all solutions in $B^\alpha_{p,q}(w;J)$.

    We continue these solutions using overlapping intervals. Choose an integer $N \geq 2$ such that $\ell = T/N$ satisfies $2\ell \leq L_0$, and put
    $$
    J_j = [j\ell,(j+2)\ell], \qquad j = 0,\ldots,N-2.
    $$
    Solve first on $J_0$ with initial value $\xi$, then successively on $J_j$ with initial value supplied by the preceding solution at $j\ell$. Local uniqueness makes the solutions agree on overlaps, giving a continuous path $Y$ on $[0,T]$ with $[Y]_{J_j} \leq R$.

    Every increment of length at most $\ell$ lies in some $J_j$, so
    $$
    \omega^w_p(Y,h) \leq \sum_{j=0}^{N-2}\omega^w_p(Y,h;J_j), \qquad 0 < h \leq \ell.
    $$
    This controls the small-scale part of the global Besov seminorm. For the remaining scales, Lemma~\ref{lem: Shift and Scale Bounds for Muckenhoupt Weights}(iii) gives $\omega^w_p(Y,h) \leq W^{1/p}\operatorname{osc}_{[0,T]}Y$. The local pointwise estimates also give
    $$
    \operatorname{osc}_{[0,T]}Y \lesssim NR\max_j\lambda_{J_j}.
    $$
    Consequently,
    $$
    [Y]_{B^\alpha_{p,q}(w)} \lesssim_N R+W^{1/p}\ell^{-\alpha}\operatorname{osc}_{[0,T]}Y \leq M,
    $$
    where $M$ depends only on the fixed data and is independent of $\xi$. Continuity also ensures $Y \in L^p(w)$.

    Locality and additivity of the Young integral now give
    $$
    Y_t = \xi+\int_0^t F(Y_s)\hspace{.2em}dX_s, \qquad 0 \leq t \leq T.
    $$
    Finally, any other solution belongs to the same local balls by the preceding a priori estimate, so successive applications of local uniqueness identify it with $Y$ on all of $[0,T]$.
\end{proof}

\begin{theorem}[Local Lipschitz Continuity of Itô-Lyons Map]
\label{thm: Local Lipschitz Continuity of Itô-Lyons Map}
    Assume the parameter and weight hypotheses of Theorem~\ref{thm: Young Existence and Uniqueness}, in either the subcritical or critical regime. For $i = 1, 2$ let
    $$
    \xi^i \in \R^m, \qquad X^i \in B^\alpha_{p, q}(w; \R^d), \qquad F^i \in C^{1, \delta}_{b}(\R^m; \mathcal{L}(\R^d, \R^m)),
    $$
    and let $Y^i$ be the corresponding solutions to
    $$
    Y^i = \xi^i + \int^t_0 F^i(Y^i_u) \hspace{.2em} dX^i_u.
    $$
    For every $M > 0$ there is a constant $C_M > 0$ such that whenever
    $$
    |\xi^i| + [X^i]_{B^\alpha_{p, q}(w)} + \|F^i\|_{C^{1, \delta}_{b}} \leq M,
    $$
    we have
    $$
    |Y^1_0 - Y^2_0| + [Y^1 - Y^2]_{B^\alpha_{p, q}(w)} \leq C_M \left(|\xi^1 - \xi^2| + [X^1 - X^2]_{B^\alpha_{p, q}(w)} + \|F^1 - F^2\|_{C^\delta_b} \right).
    $$
    Consequently the solution map $\Phi: (\xi, F, X) \mapsto Y$ is locally Lipschitz continuous as a map in
    $$
    \R^m \times C^{1, \delta}_{b}(\R^m; \mathcal{L}(\R^d, \R^m)) \times B^\alpha_{p, q}(w; \R^d) \rarr B^\alpha_{p, q}(w; \R^m).
    $$
\end{theorem}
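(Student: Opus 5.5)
We follow the structure of \cite[proof of Theorem 4.3]{FrizSeeger2022}. The plan is to estimate the difference $D = Y^1 - Y^2$ on short intervals $J=[a,b]$, exactly as in the contraction step of Theorem~\ref{thm: Young Existence and Uniqueness}, and then patch the local bounds together along the overlapping cover $J_j=[j\ell,(j+2)\ell]$. We keep the notation $\lambda_J = L^\alpha/W_J^{1/p}$, $[U]_J=[U]_{B^\alpha_{p,q}(w;J)}$. By the a priori bound in Theorem~\ref{thm: Young Existence and Uniqueness}, both solutions satisfy $[Y^i]_J\le [Y^i]_{B^\alpha_{p,q}(w)}\le R_M$, where $R_M$ depends only on $M$ and the fixed data. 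Theorem~\ref{thm: Weighted Besov Holder Embedding} then gives $\|Y^i\|_\infin\lesssim_M 1$.

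On $J$ we decompose the increment of $D$ as
$$
\delta D = \int F^1(Y^1)\,d(X^1-X^2) + \int \bigl(F^1(Y^1)-F^1(Y^2)\bigr)\,dX^2 + \int \bigl(F^1(Y^2)-F^2(Y^2)\bigr)\,dX^2,
$$
which is legitimate by bilinearity and uniqueness of the weighted Young integral. We bound each term with Theorem~\ref{thm:Weighted Young Integral}(d).

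The first term is at most
$$
C\,[X^1-X^2]\bigl(\|F^1\|_\infin + L_F\lambda_J R_M\bigr)\lesssim_M [X^1-X^2]_{B^\alpha_{p,q}(w)}.
$$

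The third term has integrand $G=F^1-F^2$ evaluated along $Y^2$. We have $|G(Y^2_a)|\le\|F^1-F^2\|_\infin$. For its Besov seminorm at smoothness $\beta=\delta\alpha$ we use the composition estimate, Proposition~\ref{prop: Hölder Composition Estimate}, in the form $[G(Y^2)]_{B^\beta_{p,q/\delta}(w;J)}\le [G]_{C^\delta}W_J^{(1-\delta)/p}[Y^2]^\delta_J$. Applying Theorem~\ref{thm:Weighted Young Integral}(d) with indices $(\beta,q/\delta)$ and $(\alpha,q)$, as in the contraction step, bounds this term by $C_M\|F^1-F^2\|_{C^\delta_b}$.

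The second term is handled exactly as in the contraction argument, via the mean-value estimate for $H=F^1(Y^1)-F^1(Y^2)$. The difference from before is that $D_a$ need not vanish, so $\|D\|_{\infin;J}\lesssim |D_a|+\lambda_J[D]_J$. This gives
$$
[D]_J \le C_M\bigl(|D_a| + [X^1-X^2] + \|F^1-F^2\|_{C^\delta_b}\bigr) + C_M\bigl(\lambda_J+\lambda_J^{1+\delta}\bigr)[D]_J .
$$
The boundary term $|H_a|\le L_F|D_a|$ enters through the $|f_0|$ term of Theorem~\ref{thm:Weighted Young Integral}(d). Since $\lambda_J\lesssim L^{\alpha-r/p}$ uniformly in the position of $J$, we can choose $\ell$ depending only on $M$ so that the last coefficient is at most $1/2$, and absorb it.

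For the global estimate we iterate along the intervals $J_j$. The pointwise embedding gives
$$
|D_{(j+1)\ell}|\le |D_{j\ell}| + C\lambda_{J_j}[D]_{J_j}\le C_M\bigl(|D_{j\ell}|+\Delta\bigr),
$$
where $\Delta$ denotes the data difference. Inductively, with $N$ fixed by $M$, this yields $\max_j([D]_{J_j}+\|D\|_{\infin;J_j})\le C_M(|\xi^1-\xi^2|+\Delta)$. Finally we assemble the global seminorm as in the existence proof: small scales are controlled by $\sum_j\omega^w_p(D,h;J_j)$, and large scales by $W^{1/p}\operatorname{osc}D$.

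The main obstacle I anticipate is the mean-value estimate for $H$ with $D_a\ne0$. It needs the bound $\|Y^1-Y^2\|_\infin\,[F^1]_{C^{1,\delta}}$ times the $\beta$-seminorms, and it needs constants uniform in the location of $J$. That is where the weighted-mass factors $W_J^{(1-\delta)/p}$ and $\lambda_J$ must combine correctly, namely as $W_J^{(1-\delta)/p}L^\beta W_J^{-1/p}\lambda_J^{\delta}$-type products reducing to $\lambda_J^{1+\delta}$. In the critical regime the same index checks as in Theorem~\ref{thm: Young Existence and Uniqueness} apply: $q/(1+\delta)\le1$ and $p/2>r$.
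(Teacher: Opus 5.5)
Your proposal is correct and follows the paper's proof essentially step for step. The ingredients are the same: a three-term splitting of $\delta D$, in which you merely interchange the roles of $(F^1,X^1)$ and $(F^2,X^2)$. You then apply Theorem~\ref{thm:Weighted Young Integral}(d) with smoothness pairs $(\alpha,\alpha)$ and $(\delta\alpha,\alpha)$. The mean-value estimate for $H$ uses $\|D\|_{\infty;J}\lesssim|D_a|+\lambda_J[D]_J$ together with $L^{\delta\alpha}W_J^{-1/p}W_J^{(1-\delta)/p}=\lambda_J^\delta$. This gives the $(\lambda_J+\lambda_J^{1+\delta})[D]_J$ term, which you absorb on short intervals before iterating over the overlapping cover. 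The only item you leave implicit is the $L^p(w)$ part of the full quasi-norm in the final local-Lipschitz claim; the paper obtains it from $\|D\|_{L^p(w)}\le W^{1/p}\|D\|_\infty$.
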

\begin{proof}
    We follow \cite[proof of Theorem 4.3]{FrizSeeger2022}. Theorem~\ref{thm: Young Existence and Uniqueness} and its proof supply a uniform bound
    $$
    [Y^1]_{B^\alpha_{p,q}(w)}+[Y^2]_{B^\alpha_{p,q}(w)} \leq R_M.
    $$
    Set
    $$
    D = Y^1-Y^2, \qquad \Delta X = X^1-X^2, \qquad \Delta F = F^1-F^2, \qquad \beta = \delta\alpha,
    $$
    and write
    $$
    \mathcal{E} = [\Delta X]_{B^\alpha_{p,q}(w)}+\|\Delta F\|_{C^\delta_b}.
    $$
    For $J = [a,b]$, use the notation
    $$
    L = b-a, \qquad W_J = w(J), \qquad \lambda_J = \frac{L^\alpha}{W_J^{1/p}}, \qquad [U]_J = [U]_{B^\alpha_{p,q}(w;J)}, \qquad [U]_{\beta,J} = [U]_{B^\beta_{p,q/\delta}(w;J)}.
    $$
    As in the preceding proof,
    $$
    \lambda_J \lesssim \left(\frac{T^r}{w([0,T])}\right)^{1/p}L^{\alpha-r/p},
    $$
    uniformly in the position of $J$.

    By bilinearity of the Young integral, for $t \in J$,
    \begin{align*}
        D_t-D_a
        &= \int_a^t \Delta F(Y^1_u)\hspace{.2em}dX^1_u \\
        &\quad+\int_a^t \big(F^2(Y^1_u)-F^2(Y^2_u)\big)\hspace{.2em}dX^1_u
        +\int_a^t F^2(Y^2_u)\hspace{.2em}d\Delta X_u.
    \end{align*}
    Proposition 2.17 and the Lipschitz composition estimate give
    $$
    [\Delta F(Y^1)]_{\beta,J} \leq \|\Delta F\|_{C^\delta_b}W_J^{(1-\delta)/p}R_M^\delta, \qquad [F^2(Y^2)]_J \leq MR_M.
    $$
    Set $H = F^2(Y^1)-F^2(Y^2)$. The composition difference estimate from the preceding proof yields
    $$
    [H]_{\beta,J} \lesssim_M L^{\alpha-\beta}[D]_J+W_J^{(1-\delta)/p}R_M^\delta\|D\|_{\infin;J}.
    $$
    Moreover, Theorem 3.3 and the Lipschitz bound on $F^2$ give
    $$
    \|D\|_{\infin;J} \leq |D_a|+C\lambda_J[D]_J, \qquad |H_a| \leq M|D_a|.
    $$

    Apply Theorem~\ref{thm:Weighted Young Integral}(d) to the three integrals, using the smoothness pair $(\beta,\alpha)$ for the first two and $(\alpha,\alpha)$ for the third. The parameter checks are those in the preceding proof; in particular, at the critical boundary the first two integrals have defect scale index $q/(1+\delta) \leq 1$. Since
    $$
    \frac{L^\beta}{W_J^{1/p}}L^{\alpha-\beta} = \lambda_J, \qquad \frac{L^\beta}{W_J^{1/p}}W_J^{(1-\delta)/p} = \lambda_J^\delta,
    $$
    the preceding estimates and the quasi-triangle inequality give, whenever $\lambda_J \leq 1$,
    $$
    [D]_J \leq C_M\big(|D_a|+\mathcal{E}\big)+C_M\big(\lambda_J+\lambda_J^{1+\delta}\big)[D]_J.
    $$
    Choose $L_0 > 0$ sufficiently small that the last coefficient is at most $1/2$ and $\lambda_J \leq 1$ whenever $|J| \leq L_0$. Absorbing that term gives
    $$
    [D]_J \leq C_M\big(|D_a|+\mathcal{E}\big),
    $$
    and hence
    $$
    \|D\|_{\infin;J} \leq (1+C_M\lambda_J)|D_a|+C_M\lambda_J\mathcal{E}.
    $$

    Choose an equally spaced partition of $[0,T]$ with spacing $\ell$ satisfying $2\ell \leq L_0$. Iterating the last estimate over consecutive mesh intervals yields
    $$
    \|D\|_\infin \leq C_M\big(|D_0|+\mathcal{E}\big).
    $$
    Applying the local seminorm estimate on the overlapping intervals $J_j = [j\ell,(j+2)\ell]$ therefore gives
    $$
    [D]_{J_j} \leq C_M\big(|D_0|+\mathcal{E}\big).
    $$
    Every increment of length at most $\ell$ lies in one of these intervals. The global modulus argument from the preceding proof, with the larger scales controlled by $\omega^w_p(D,h) \leq 2W^{1/p}\|D\|_\infin$, consequently gives
    $$
    |D_0|+[D]_{B^\alpha_{p,q}(w)} \leq C_M\big(|D_0|+\mathcal{E}\big).
    $$
    Since $D_0 = \xi^1-\xi^2$, this is the asserted estimate.

    Finally, $\|D\|_{L^p(w)} \leq W^{1/p}\|D\|_\infin$ supplies the corresponding full Besov quasi-norm bound. Together with the continuous embedding $C_b^{1,\delta} \subset C_b^\delta$, this proves the stated local Lipschitz continuity.
\end{proof}

\begin{remark}[Roles of the Parameter Conditions]
    The condition $\alpha>r/p$ supplies continuous representatives
    and the pointwise estimates used in the differential-equation
    argument. The composition difference estimates are taken at
    the lower smoothness $\beta=\delta\alpha$. The condition
    $\delta\alpha>r/p$ then ensures that the integrability-regain gap
    $$
        (\alpha+\beta)-\alpha
        -r\left(\frac2p-\frac1p\right)
        =
        \delta\alpha-\frac rp
    $$
    is positive. This is a sufficient condition used by the present
    regain argument. Since $0<\delta\le1$, it also implies
    $\alpha>r/p$; we retain both inequalities to indicate their roles.

    Together, these assumptions give
    $$
        (1+\delta)\alpha>\frac{2r}{p}.
    $$
    Consequently, the remaining sewing condition is
    $$
        (1+\delta)\alpha>1,
    $$
    or, at the critical boundary,
    $$
        (1+\delta)\alpha=1,
        \qquad q\le1+\delta.
    $$
    At this boundary, the retained assumptions force $p>2r$,
    so the defect's sewing threshold is $1$. Openness of the
    Muckenhoupt classes cannot lower this threshold and therefore
    does not remove the critical restriction on $q$.
\end{remark}

\section{Weighted Besov Rough Paths}
In this section we develop rough integration and rough differential equations in the weighted Besov setting. Our organisation follows that of \cite[\S 5]{FrizSeeger2022}.

\subsection{Weighted Rough Paths and Pointwise Control}
For $N \in \N$ we give
$$
T^{(N)}(\R^d) = \bigoplus_{k = 0}^{N} (\R^d)^{\otimes k}, \qquad T_1^{(N)}(\R^d) = 1 + \bigoplus_{k = 1}^N (\R^d)^{\otimes k}
$$ the standard truncated tensor product. We set $\delta_\lambda$ to be the dilation map, so
$$
\delta_\lambda(1 + x^{(1)} + ... + x^{(N)}) = 1 + \lambda x^{(1)} + ... + \lambda^N x^{(N)},
$$ and we let $\tau_k$ denote truncation at level $k$.

\begin{definition}[Weighted Besov Rough Path]
    A level-$N$ weighted Besov rough path is a measurable map
    $$
    \bold{X} = (1, \bold{X}^{(1)}, ..., \bold{X}^{(N)}): \bigtriangleup_2(0, T) \rarr T_1^{(N)}(\R^d)
    $$ such that Chen's relation
    $$
    \bold{X}_{s, t} = \bold{X}_{s, u} \otimes \bold{X}_{u, t} \qquad \textup{ for all } 0 < s < u < t \leq T
    $$ holds and
    $$
    \bold{X}^{(k)} \in \mathbb{B}^{k \alpha}_{p/k, q/k}(w; (\R^d)^{\otimes k}) \qquad \textup{ for all } 1 \leq k \leq N.
    $$
    We denote by $\bold{B}^{\alpha, N}_{p, q}(w)$ the space of level-$N$ weighted Besov rough paths satisfying the preceding conditions, and we define
    $$
    \vertiii{\bold{X}}_{\bold{B}^{\alpha, N}_{p, q}(w)} = \sum_{k = 1}^{N} \| \bold{X}^{(k)} \|^{1/k}_{\mathbb{B}^{k \alpha}_{p/k, q/k}(w)}.
    $$
\end{definition}

\begin{proposition}[Pointwise Rough Path Embedding]
\label{prop:Pointwise Rough Path Embedding}
    Suppose $\alpha > \frac{r}{p}$ and let $\bold{X}, \widetilde{\bold{X}} \in \bold{B}^{\alpha, N}_{p, q}(w)$. Then both $\bold{X}, \widetilde{\bold{X}}$ have continuous multiplicative representatives and, for $1 \leq k \leq N$,
    $$
    |\bold{X}_{s, t}^{(k)}| \lesssim \vertiii{\tau_k \bold{X}}^k_{\bold{B}^{\alpha, k}_{p, q}(w)} \zeta_{k \alpha, p/k, w}(s, t - s).
    $$
    Moreover,
    \begin{align*}
        |\bold{X}^{(k)}_{s, t} - \widetilde{\bold{X}}^{(k)}_{s, t}|  \lesssim \zeta_{k \alpha, p/k, w}(s, t - s) \cdot \sum_{j = 1}^{k} \bigg( \vertiii{\tau_k \bold{X}}_{\bold{B}^{\alpha, k}_{p, q}(w)} &\lor \vertiii{\tau_k \widetilde{\bold{X}}}_{\bold{B}^{\alpha, k}_{p, q}(w)} \bigg)^{k - j} \\ &\cdot \| \bold{X}^{(j)} - \widetilde{\bold{X}^{(j)}}\|_{\mathbb{B}^{j \alpha}_{p/j, q/j}(w)}.
    \end{align*}
\end{proposition}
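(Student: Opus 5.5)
We argue by induction on the level $k$, applying at each level the quasi-Banach two-parameter embedding, Theorem~\ref{thm: Quasi Banach Two Parameter Embedding}, with integrability index $p/k$, scale index $q/k$, modulus $\omega(h)=h^{k\alpha}$ and weight index $r$. Its decay hypothesis needs $k\alpha>r/(p/k)=kr/p$, which is exactly $\alpha>r/p$. Note also that $\zeta_{k\alpha,p/k,w}(s,h)=h^{k\alpha}/w(B_h(s))^{k/p}$, and $w(B_{t-s}(s))\asymp w([s,t])$ by doubling. So the pointwise bound in the statement is equivalent, up to constants, to the conclusion $\omega(t-s)/w([s,t])^{k/p}$ of that theorem.

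\textbf{Level $k=1$.} Chen's relation makes $\bold{X}^{(1)}$ additive, so $\delta\bold{X}^{(1)}=0$. Proposition~\ref{prop: Two-Parameter Cocycle Result} writes $\bold{X}^{(1)}=\delta x$ almost everywhere, and Theorem~\ref{thm: Quasi Banach Two Parameter Embedding} with $M=0$ (or Theorem~\ref{thm: Weighted Besov Holder Embedding}) gives a continuous representative together with the bound $\|\bold{X}^{(1)}\|\,\zeta_{\alpha,p,w}$.

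\textbf{Inductive step.} Assume levels $j<k$ have continuous representatives satisfying
$$
|\bold{X}^{(j)}_{s,t}|\lesssim \vertiii{\tau_j\bold{X}}^j\,\zeta_{j\alpha,p/j,w}(s,t-s).
$$
Chen's relation gives
$$
\delta\bold{X}^{(k)}_{s,u,t}=\sum_{j=1}^{k-1}\bold{X}^{(j)}_{s,u}\otimes\bold{X}^{(k-j)}_{u,t}.
$$
Each term is bounded by
$$
\vertiii{\tau_{k-1}\bold{X}}^k\,\frac{(u-s)^{j\alpha}(t-u)^{(k-j)\alpha}}{w([s,u])^{j/p}\,w([u,t])^{(k-j)/p}}.
$$
We then repeat the computation from the proof of Theorem~\ref{thm:Weighted Young Integral}(d). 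Write $\epsilon=\alpha-r/p>0$. The small-set estimate gives $w([s,u])\gtrsim ((u-s)/h)^r w(J)$ with $J=[s,t]$ and $h=t-s$, and similarly for $[u,t]$. This converts the product above into
$$
h^{kr/p}(u-s)^{j\epsilon}(t-u)^{(k-j)\epsilon}/w(J)^{k/p}.
$$
Choosing $\vartheta\le1/2$ with $\vartheta k\alpha\le\epsilon$ bounds this by $\ell_\vartheta^{k\alpha}/w(J)^{k/p}$. Since $\ell_\vartheta\le h$, we have $w(B_{\ell_\vartheta}(s))\lesssim w(J)$, which yields the mixed defect hypothesis with $M\lesssim\vertiii{\tau_{k-1}\bold{X}}^k$. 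Theorem~\ref{thm: Quasi Banach Two Parameter Embedding} then supplies a continuous representative of $\bold{X}^{(k)}$ with
$$
|\bold{X}^{(k)}_{s,t}|\lesssim\bigl(\|\bold{X}^{(k)}\|+\vertiii{\tau_{k-1}\bold{X}}^k\bigr)\zeta.
$$
Since $\|\bold{X}^{(k)}\|\le\vertiii{\tau_k\bold{X}}^k$, this is the claimed bound.

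\textbf{Chen's relation for the representative.} The representative must still be multiplicative everywhere. Chen's relation holds almost everywhere, and both sides are continuous in $(s,u,t)$ once the lower levels are continuous. So Chen's relation extends to all triples by density.

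\textbf{Differences.} Apply the same theorem to $A=\bold{X}^{(k)}-\widetilde{\bold{X}}^{(k)}$, again by induction. Its defect is
$$
\delta A=\sum_{j=1}^{k-1}\Bigl[(\bold{X}^{(j)}-\widetilde{\bold{X}}^{(j)})\otimes\bold{X}^{(k-j)}+\widetilde{\bold{X}}^{(j)}\otimes(\bold{X}^{(k-j)}-\widetilde{\bold{X}}^{(k-j)})\Bigr].
$$
Each factor is estimated pointwise: the difference factors by the inductive difference bound, the others by the first part. The weight product is then handled exactly as above. This gives $M$ as the claimed sum of terms $(\vertiii{\cdot}\lor\vertiii{\cdot})^{k-j}\|\bold{X}^{(j)}-\widetilde{\bold{X}}^{(j)}\|$, after re-indexing the sum. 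The Besov term $\|A\|$ supplies the $j=k$ summand.

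\textbf{Main obstacle.} The main difficulty is the weight-product step: converting the two child masses into the parent mass with a gain of powers of the shorter gap, uniformly in the split point. As sketched, it is resolved by the gap $\epsilon=\alpha-r/p$ and the choice of $\vartheta$. One point to check is the case $p/k<1$ or $q/k<1$. Theorem~\ref{thm: Quasi Banach Two Parameter Embedding} explicitly allows these quasi-Banach indices, so no further restriction is needed.
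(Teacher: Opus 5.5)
Your argument is correct, but it is organised differently from the paper's.

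The paper deweights only once. On $J=[s,t]$, Theorem~\ref{thm: Deweighting} turns the restricted truncated path into an unweighted Besov rough path with integrability $u=p/r$ and homogeneous size at most $c_J S_k$, where $c_J=[w]_{A_r}^{1/p}|J|^{r/p}/w(J)^{1/p}$. The unweighted induction of \cite[Proposition 5.1]{FrizSeeger2022} is then quoted verbatim. The weight re-enters only at the end, through $c_J^k h^{k(\alpha-r/p)}\lesssim h^{k\alpha}/w([s,t])^{k/p}$. Since a single constant $c_J$ serves every level and every sub-increment of $J$, the paper never compares the masses of $[s,u]$ and $[u,t]$ with that of $[s,t]$.

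You instead run the induction inside the weighted framework. Your inductive hypothesis carries the child masses, so you must pay for converting $w([s,u])^{-j/p}w([u,t])^{-(k-j)/p}$ into $w([s,t])^{-k/p}$. You do this correctly, exactly as in Theorem~\ref{thm:Weighted Young Integral}(d): the small-set estimate, the gap $\epsilon=\alpha-r/p$, and a splitting parameter $\vartheta\le\epsilon/(k\alpha)$. You then feed the resulting mixed defect bound into Theorem~\ref{thm: Quasi Banach Two Parameter Embedding}.

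The two routes trade off as follows.
\begin{itemize}
\item The paper's route is shorter. It delegates the construction of multiplicative versions, and the rescaling uniformity, to the cited unweighted result.
\item Your route uses only tools proved in the paper. It makes the mechanism behind the difference estimate, and the dependence of the constants, explicit.
\end{itemize}

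The price of your route is that you must handle versions yourself, and one point is left implicit. Theorem~\ref{thm: Quasi Banach Two Parameter Embedding} asks for the defect bound at every triple $s<u<t$. Your bound on $\delta\bold{X}^{(k)}$, however, uses the \emph{continuous} lower levels, and Chen's identity with those holds only off a null set. This is easily repaired by first modifying $\bold{X}^{(k)}$ on a null set:
\begin{itemize}
\item Build $A_{s,t}=\sum_{j=1}^{k-1}\bold{X}^{(k-j)}_{0,s}\otimes\bold{X}^{(j)}_{s,t}$ from the continuous lower levels, which are multiplicative everywhere by your density argument.
\item Then $\delta A=-\sum_{j=1}^{k-1}\bold{X}^{(k-j)}_{s,u}\otimes\bold{X}^{(j)}_{u,t}$ holds exactly, so $\bold{X}^{(k)}+A$ is additive almost everywhere.
\item Proposition~\ref{prop: Two-Parameter Cocycle Result} writes $\bold{X}^{(k)}+A=\delta F$.
\item Apply the embedding to $\delta F-A$, whose defect identity holds at every triple.
\end{itemize}
Alternatively, observe that the Campanato argument underlying the embedding only needs the defect bound on a full-measure set of triples.
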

\begin{proof}
    We apply the argument of \cite[Proposition 5.1]{FrizSeeger2022} after local deweighting. Suppose first that $p < \infin$ and set $u = p/r$. Write $S_k = \vertiii{\tau_k \bold{X}}_{\bold{B}^{\alpha, k}_{p, q}(w)}$, $\widetilde{S}_k = \vertiii{\tau_k \widetilde{\bold{X}}}_{\bold{B}^{\alpha, k}_{p, q}(w)}$, $H_k = S_k \lor \widetilde{S}_k$, and $D_j = \|\bold{X}^{(j)} - \widetilde{\bold{X}}^{(j)}\|_{\mathbb{B}^{j\alpha}_{p/j, q/j}(w)}$.

    For every nondegenerate interval $J \subset [0, T]$, Theorem~2.10 gives
    $$
    \|A\|_{\mathbb{B}^{j\alpha}_{u/j, q/j}(J)} \leq c_J^j \|A\|_{\mathbb{B}^{j\alpha}_{p/j, q/j}(w; J)}, \qquad c_J = [w]_{A_r}^{1/p}\frac{|J|^{r/p}}{w(J)^{1/p}},
    $$
    for $1 \leq j \leq k$. Consequently, the unweighted homogeneous sizes of the restricted rough paths are bounded by $c_J S_k$ and $c_J \widetilde{S}_k$, and their level-$j$ difference seminorms are bounded by $c_J^j D_j$.

    The induction in the proof of \cite[Proposition 5.1]{FrizSeeger2022} uses only Chen's relation and $\alpha > 1/u$. Applied first on $[0, T]$, it supplies continuous multiplicative representatives. Applying the same estimates on $J = [s, t]$, with constants independent of $J$ by rescaling, then gives
    $$
    |\bold{X}^{(k)}_{s, t}| \lesssim c_J^k |t - s|^{k(\alpha - r/p)} S_k^k
    $$
    and
    $$
    |\bold{X}^{(k)}_{s, t} - \widetilde{\bold{X}}^{(k)}_{s, t}| \lesssim c_J^k |t - s|^{k(\alpha - r/p)} \sum_{j = 1}^{k} H_k^{k-j}D_j.
    $$
    The local continuous representatives agree with the global ones by uniqueness.

    Set $h = t - s$. Since $[s, t] \subset B_h(s)$ and $|B_h(s)| \leq 2h$, Lemma~2.3 gives $w(B_h(s)) \lesssim w([s, t])$. Hence
    $$
    c_J^k h^{k(\alpha - r/p)} \lesssim \frac{h^{k\alpha}}{w([s, t])^{k/p}} \lesssim \frac{h^{k\alpha}}{w(B_h(s))^{k/p}} = \zeta_{k\alpha, p/k, w}(s, h),
    $$
    proving both estimates. If $p = \infin$, the weight is irrelevant and the same unweighted argument applies directly.
\end{proof}

\begin{corollary}[Weighted Campanato Criterion]
    Let $\bold{X}$ satisfy Chen's relation and suppose that for constants $C_k$ we have
    $$
    \Omega^w_{p/k}(\bold{X}^{(k)}, h) \leq C_k h^{k \alpha} \qquad \textup{ for all } 0 < h \leq T, 1 \leq k \leq N.
    $$
    Then if $\alpha > \frac{r}{p}$, $\bold{X}$ has a continuous version and
    $$
    |\bold{X}^{(k)}_{s, t}| \lesssim \left( \max_{1 \leq j \leq k} C_j^{1/j} \right)^k \zeta_{k \alpha, p/k, w}(s, t - s).
    $$
\end{corollary}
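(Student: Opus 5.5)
The plan is to reduce this corollary to Proposition~\ref{prop:Pointwise Rough Path Embedding}. We show that the uniform power bounds on the weighted moduli place $\bold{X}$ in $\bold{B}^{\alpha,N}_{p,\infin}(w)$, with homogeneous size controlled by $\max_j C_j^{1/j}$. Concretely, the hypothesis $\Omega^w_{p/k}(\bold{X}^{(k)},h)\leq C_kh^{k\alpha}$ for all $0<h\leq T$ gives
$$
\|\bold{X}^{(k)}\|_{\mathbb{B}^{k\alpha}_{p/k,\infin}(w)}=\sup_{0<\tau\leq T}\tau^{-k\alpha}\Omega^w_{p/k}(\bold{X}^{(k)},\tau)\leq C_k,
$$
since $\Omega^w$ is already a supremum over $h\leq\tau$. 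It follows that $\vertiii{\tau_k\bold{X}}_{\bold{B}^{\alpha,k}_{p,\infin}(w)}\leq\sum_{j\leq k}C_j^{1/j}\leq k\max_{j\leq k}C_j^{1/j}$. Chen's relation is assumed, so $\bold{X}$ is a level-$N$ weighted Besov rough path with scale index $q=\infin$.

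We then apply Proposition~\ref{prop:Pointwise Rough Path Embedding} with $q=\infin$ and $\alpha>r/p$. This gives a continuous multiplicative representative together with
$$
|\bold{X}^{(k)}_{s,t}|\lesssim\vertiii{\tau_k\bold{X}}^k\zeta_{k\alpha,p/k,w}(s,t-s)\leq k^k\Bigl(\max_{j\leq k}C_j^{1/j}\Bigr)^k\zeta_{k\alpha,p/k,w}(s,t-s).
$$
The factor $k^k\leq N^N$ is absorbed into the implicit constant.

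The main point to check is that Proposition~\ref{prop:Pointwise Rough Path Embedding} is available at $q=\infin$. Its proof goes through Theorem~2.10 and the unweighted \cite[Proposition 5.1]{FrizSeeger2022}, and both permit arbitrary $0<q\leq\infin$, so this holds. One further subtlety is the case $p=\infin$. There the moduli are unweighted, $\zeta$ reduces to $h^{k\alpha}$ by convention, and the same reduction applies verbatim.

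If one prefers not to pass through the full proposition, an alternative route is available. One localises to $J=[s,t]$, deweights via Theorem~2.10 to get unweighted bounds $\Omega_{u/k}(\bold{X}^{(k)},h;J)\leq c_J^kC_kh^{k\alpha}$ with $u=p/r$, and then applies the unweighted Campanato criterion of Friz--Seeger. The conversion $c_J^k h^{k(\alpha-r/p)}\lesssim\zeta_{k\alpha,p/k,w}(s,h)$ then proceeds exactly as in the last step of the preceding proof.
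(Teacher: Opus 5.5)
Your proposal is correct and follows the paper's proof essentially verbatim. The power bounds give $\|\bold{X}^{(k)}\|_{\mathbb{B}^{k\alpha}_{p/k,\infin}(w)}\leq C_k$, so $\bold{X}\in\bold{B}^{\alpha,N}_{p,\infin}(w)$, and Proposition~\ref{prop:Pointwise Rough Path Embedding} at $q=\infin$ then yields the continuous multiplicative representative together with the bound, using $\bigl(\sum_{j\leq k}C_j^{1/j}\bigr)^k\lesssim\bigl(\max_{j\leq k}C_j^{1/j}\bigr)^k$.
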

\begin{proof}
    The hypothesis gives
    $$
    \|\bold{X}^{(k)}\|_{\mathbb{B}^{k\alpha}_{p/k, \infin}(w)} = \sup_{0 < h \leq T} h^{-k\alpha} \Omega^w_{p/k}(\bold{X}^{(k)}, h) \leq C_k.
    $$
    Thus $\bold{X} \in \bold{B}^{\alpha, N}_{p, \infin}(w)$. Proposition~\ref{prop:Pointwise Rough Path Embedding}, applied with $q = \infin$, supplies a continuous multiplicative representative and gives
    $$
    |\bold{X}^{(k)}_{s, t}| \lesssim \left(\sum_{j = 1}^{k} C_j^{1/j}\right)^k \zeta_{k\alpha, p/k, w}(s, t - s) \lesssim \left(\max_{1 \leq j \leq k} C_j^{1/j}\right)^k \zeta_{k\alpha, p/k, w}(s, t - s),
    $$
    as required.
\end{proof}

\begin{lemma}[Trading Regularity for Integrability]
\label{lem: Trading Regularity for Integrability}
    Let $N \geq 2$, $0 < p, q \leq \infin$, and $w \in A_r$. Suppose
    $$
    \alpha > \frac{r}{p}, \qquad \bold{X}, \widetilde{\bold{X}} \in \bold{B}^{\alpha, N}_{p, q}(w).
    $$
    Define
    $$
    \Lambda_{w, T} = \left( \frac{T^r}{w([0, T])} \right)^{1/p}
    $$ for $p < \infin$, with $\Lambda_{w, T} = 1$ if $p = \infin$,
    and for $1 \leq k \leq N$ write
    $$
    S_k(\bold{X}) = \vertiii{\tau_k \bold{X}}_{\bold{B}^{\alpha, k}_{p, q}(w)} = \sum_{i = 1}^{k} \| X^{(i)} \|_{\mathbb{B}^{i \alpha}_{p/i, q/i}(w)}^{1/i}, \qquad H_k = S_k(\bold{X}) \lor S_k(\widetilde{\bold{X}}).
    $$
    Then for every $1 \leq j < k \leq N$,
    $$
    \|X^{(k)}\|_{\mathbb{B}^{j \alpha}_{p/j, q/j}(w)} \leq C \Lambda_{w, T}^{k - j} T^{(k-j)(\alpha - r/p)} S_k(\bold{X})^k
    $$
    and
    $$
    \|X^{(k)} - \widetilde{X}^{(k)}\|_{\mathbb{B}^{j \alpha}_{p/j, q/j}(w)} \leq C \Lambda_{w, T}^{k - j} T^{(k-j)(\alpha - r/p)} \sum_{i = 1}^{k} H_k^{k - i} \|X^{(i)} - \widetilde{X}^{(i)}\|_{\mathbb{B}^{i \alpha}_{p/i, q/i}(w)}.
    $$
\end{lemma}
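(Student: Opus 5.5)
We prove the estimate by interpolating between the pointwise bound of Proposition~\ref{prop:Pointwise Rough Path Embedding} and the native Besov bound at level $k$. This is the weighted analogue of the interpolation step in \cite[Lemma 5.2]{FrizSeeger2022}, carried out with Lemma~\ref{lem: Weighted Interpolation}. The case $p=\infin$ is immediate, since there the sup norm at level $k$ already controls the lower-index norm after integrating $h^{(k-j)\alpha}$ over scales. We therefore assume $p<\infin$.

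\emph{Step 1: a sup-modulus bound.} Proposition~\ref{prop:Pointwise Rough Path Embedding} gives
$$|\bold X^{(k)}_{s,t}|\lesssim S_k^k\,\frac{(t-s)^{k\alpha}}{w(B_{t-s}(s))^{k/p}}.$$
The lower-mass estimate $w(B_h(s))\geq [w]_{A_r}^{-1}(h/T)^r w([0,T])$ from Lemma~\ref{lem: Weight Controls and Local Deweighting}(c) then yields
$$\Omega_\infin(\bold X^{(k)},h)\lesssim \Lambda_{w,T}^{k}S_k^k\,h^{k(\alpha-r/p)}.$$
The same argument, applied to the difference estimate of that proposition, gives
$$\Omega_\infin(\bold X^{(k)}-\widetilde{\bold X}^{(k)},h)\lesssim \Lambda_{w,T}^k h^{k(\alpha-r/p)}\sum_{i\leq k}H_k^{k-i}D_i.$$

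\emph{Step 2: interpolation.} We apply Lemma~\ref{lem: Weighted Interpolation}(b) with lower integrability $p/k$, upper integrability $P=p/j$ and $\theta=j/k$. The inputs are
\begin{itemize}
\item $\gamma=k\alpha$ and scale index $q/k$, using the native bound $\|\bold X^{(k)}\|_{\mathbb B^{k\alpha}_{p/k,q/k}(w)}\le S_k^k$;
\item $\delta=k(\alpha-r/p)$ and $H\lesssim\Lambda_{w,T}^kS_k^k$, from Step~1;
\item target smoothness $j\alpha$.
\end{itemize}
The gain exponent is then
$$\kappa=k(\alpha-r/p)(1-j/k)+k\alpha\cdot j/k-j\alpha=(k-j)(\alpha-r/p)>0.$$
Lemma~\ref{lem: Weighted Interpolation}(b) produces a bound in scale index $q/k$, namely
$$\|\bold X^{(k)}\|_{\mathbb B^{j\alpha}_{p/j,q/k}(w)}\lesssim T^{\kappa}\,(\Lambda_{w,T}^kS_k^k)^{1-j/k}(S_k^k)^{j/k}=T^\kappa\Lambda_{w,T}^{k-j}S_k^k.$$
This is exactly the claimed bound.

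\emph{Step 3: raising the scale index.} The lemma asks for scale index $q/j\geq q/k$. This follows from $\|\cdot\|_{L^{q/j}}\le\|\cdot\|_{L^{\infin}}^{1-j/k}\|\cdot\|_{L^{q/k}}^{j/k}$ in the scale variable. Alternatively, we redo the Hölder step of the lemma's proof with target $q/j$; then the pointwise factor $\tau^\kappa$ only needs to lie in $L^{q/(k-j)}(d\tau/\tau)$. Its norm is of order $T^\kappa$ because $\kappa>0$, so the constant is unchanged. A third option is the monotonicity scale embedding used in the proof of Theorem~\ref{thm:Subcritical Weighted Sewing}, supplemented by a bound at the top scale.

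\emph{Step 4: the difference estimate.} We interpolate the difference exactly as above. The sup bound from Step~1 carries the factor $\Lambda_{w,T}^k\sum_i H_k^{k-i}D_i$. For the level-$k$ Besov norm of the difference we use $D_k$ directly. The interpolated product has the form $(\Lambda^k\Sigma)^{1-\theta}D_k^\theta$ with $D_k\le\Sigma:=\sum_iH_k^{k-i}D_i$, so it is bounded by $\Lambda^{k(1-\theta)}\Sigma$. This gives the stated estimate.

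The main obstacle is bookkeeping in the pointwise step. One point is to make sure the lower-mass bound is applied with balls $B_h(s)$ at the correct power $k/p$, so that the weight factor comes out as $\Lambda_{w,T}^{k(1-\theta)}=\Lambda_{w,T}^{k-j}$. The other is the scale-index upgrade from $q/k$ to $q/j$ in Step~3.
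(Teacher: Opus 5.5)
Your proposal is correct and follows essentially the paper's argument: the sup-modulus bound $\Omega_\infty(A,\tau)\lesssim\Lambda_{w,T}^kK\tau^{k(\alpha-r/p)}$ from Proposition~\ref{prop:Pointwise Rough Path Embedding} and the lower-mass estimate, interpolation against $w_h(s)\,ds$ at each scale with $\theta=j/k$, and, for the difference estimate, the same choice $K=\sum_i H_k^{k-i}\|X^{(i)}-\widetilde X^{(i)}\|_{\mathbb{B}^{i\alpha}_{p/i,q/i}(w)}$, which dominates the level-$k$ difference. The one difference is that the paper takes the $L^{q/j}(d\tau/\tau)$ quasi-norm directly. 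With $G(\tau)=\tau^{-k\alpha}\Omega^w_{p/k}(A,\tau)$ it uses $\theta\cdot(q/j)=q/k$, so that $\|G^\theta\|_{L^{q/j}}=\|G\|_{L^{q/k}}^\theta$, and bounds $\tau^{(k-j)(\alpha-r/p)}$ by its value at $\tau=T$; this makes your Step~3 upgrade unnecessary (and in your second option the factor $\tau^\kappa$ only needs to be bounded, not to lie in $L^{q/(k-j)}$, which is the Hölder exponent for target index $q/k$).
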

\begin{proof}
    We follow the interpolation argument of \cite[Lemma 5.1]{FrizSeeger2022}. Fix $1 \leq j < k \leq N$ and set $\theta = j/k$ and $\epsilon = \alpha - r/p > 0$. For $A = \bold{X}^{(k)}$, put $K = S_k(\bold{X})^k$; for $A = \bold{X}^{(k)} - \widetilde{\bold{X}}^{(k)}$, put
    $$
    K = \sum_{i = 1}^{k} H_k^{k-i} \|\bold{X}^{(i)} - \widetilde{\bold{X}}^{(i)}\|_{\mathbb{B}^{i\alpha}_{p/i, q/i}(w)}.
    $$
    In either case, Proposition~\ref{prop:Pointwise Rough Path Embedding} and the small-set bound
    $$
    w(B_h(s)) \geq [w]_{A_r}^{-1}\left(\frac{h}{T}\right)^r w([0, T])
    $$
    give
    $$
    \|A\|_{\mathbb{B}^{k\alpha}_{p/k, q/k}(w)} \leq K, \qquad \Omega_\infin(A, \tau) \lesssim \Lambda_{w, T}^k K \tau^{k\epsilon}.
    $$

    Interpolation against the same measure $w_h(s) \hspace{.2em} ds$, as in Lemma~3.8, yields
    $$
    \Omega^w_{p/j}(A, \tau) \leq \Omega_\infin(A, \tau)^{1-\theta} \Omega^w_{p/k}(A, \tau)^\theta.
    $$
    For $p = \infin$, this inequality is simply an identity. Consequently,
    $$
    \tau^{-j\alpha}\Omega^w_{p/j}(A, \tau) \lesssim \Lambda_{w, T}^{k-j} K^{1-\theta} \tau^{(k-j)\epsilon}\left(\tau^{-k\alpha}\Omega^w_{p/k}(A, \tau)\right)^\theta.
    $$
    Taking the $L^{q/j}((0, T], d\tau/\tau)$ quasi-norm and using $\theta(q/j) = q/k$, with the usual supremum convention when $q = \infin$, gives
    $$
    \|A\|_{\mathbb{B}^{j\alpha}_{p/j, q/j}(w)} \lesssim \Lambda_{w, T}^{k-j} T^{(k-j)\epsilon} K^{1-\theta} \|A\|_{\mathbb{B}^{k\alpha}_{p/k, q/k}(w)}^\theta \leq \Lambda_{w, T}^{k-j} T^{(k-j)\epsilon} K.
    $$
    The two choices of $A$ and $K$ give the asserted estimates.
\end{proof}

\subsection{A Weighted Lyons Extension Theorem}
\begin{theorem}[Weighted Lyons Extension]
\label{thm: Weighted Lyons Extension}
    Let $1 \leq M \leq N$ and $\alpha > \frac{r}{p}$, and suppose
    $$
    (M + 1) \alpha > 1.
    $$
    Then there is a unique continuous map
    $$
    \mathscr{E}_{M, N}: \bold{B}^{\alpha, M}_{p, q}(w) \rarr \bold{B}^{\alpha, N}_{p, q}(w)
    $$ satisfying the following given $\bold{X}, \widetilde{\bold{X}} \in \bold{B}^{\alpha, M}_{p, q}(w)$:
    \begin{enumerate}[label=(\roman*)]
        \item For all $1 \leq k \leq M$ we have $(\mathscr{E} \bold{X})^{(k)} = \bold{X}^{(k)}$.
        \item For all $M < k \leq N$ we have $\| (\mathscr{E} \bold{X})^{(k)} \|_{\mathbb{B}^{k \alpha}_{p/k, q/k}(w)} \lesssim \vertiii{\bold{X}}^k_{\bold{B}^{\alpha, M}_{p, q}(w)}$.
        \item For all $M < k \leq N$, we have
        \begin{align*}
            \| (\mathscr{E} \bold{X})^{(k)} - (\mathscr{E} \widetilde{\bold{X}})^{(k)} \|_{\mathbb{B}^{k \alpha}_{p/k, q/k}(w)} \lesssim \sum_{j = 1}^{M} &(\vertiii{\bold{X}}_{\bold{B}^{\alpha, M}_{p, q}(w)} \lor \vertiii{\widetilde{\bold{X}}}_{\bold{B}^{\alpha, M}_{p, q}(w)})^{k - j} \\ &\cdot \| \bold{X}^{(j)} - \widetilde{\bold{X}}^{(j)} \|_{\mathbb{B}^{j \alpha}_{p/j, q/j}(w)}.
        \end{align*}
    \end{enumerate}
    In particular, $\mathscr{E}$ is locally Lipschitz in the levelwise weighted Besov topology.
\end{theorem}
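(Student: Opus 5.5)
We argue by induction on the level, extending one level at a time from $M$ to $M+1$ and then iterating up to $N$, as in the unweighted Lyons extension of \cite{FrizSeeger2022}; the weighted input is the sewing machinery of \S4 together with the pointwise embedding of Proposition~\ref{prop:Pointwise Rough Path Embedding}.

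\textbf{Set-up.} Suppose a multiplicative $\bold{X}$ of level $k-1\geq M$ has already been constructed in $\bold{B}^{\alpha,k-1}_{p,q}(w)$, with the bounds (ii), (iii) at each level below $k$. Consider the level-$k$ germ
$$
\Xi_{s,t}=\sum_{i=1}^{k-1}\bold{X}^{(i)}_{s,u}\otimes\bold{X}^{(k-i)}_{u,t}\Big|_{\text{candidate}}, \qquad \text{specifically}\quad \Xi_{s,t}=0 .
$$
The level-$k$ component $\bold{X}^{(k)}$ is then sought as an additive correction whose defect is prescribed by Chen. Concretely, define the two-parameter map $\Xi$ through the truncated product: the defect of any candidate $\bold{X}^{(k)}$ must equal
$$
\delta\bold{X}^{(k)}_{s,u,t}=\sum_{i=1}^{k-1}\bold{X}^{(i)}_{s,u}\otimes\bold{X}^{(k-i)}_{u,t}.
$$
We therefore sew the germ $\Xi\equiv 0$ corrected by the multiplicative germ. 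Equivalently, we take
$$
\bold{X}^{(k)}_{s,t}:=\lim_{\|\pi\|\to0}\Big(\bigotimes_{[a,b]\in\pi}(\tau_{k-1}\bold{X})_{a,b}\Big)^{(k)} ,
$$
and express the $k$th level of the dyadic products as a sewing of the germ whose defect is $-\sum_i\bold{X}^{(i)}_{s,u}\otimes\bold{X}^{(k-i)}_{u,t}$.

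\textbf{Defect estimate.} We bound this defect in $\overline{\mathbb{B}}^{k\alpha}_{p/k,q/k}(w)$ by adapting Lemma~\ref{lem: Young Intermediate 2}, applied with the indices $(i\alpha,p/i,q/i)$ and $((k-i)\alpha,p/(k-i),q/(k-i))$. The product hypothesis of that lemma is $i\alpha>(r-1)i/p$, which follows from $\alpha>r/p$. This gives
$$
\|\delta\Xi\|\lesssim\sum_i S^iS^{k-i},
$$
and similarly for differences via bilinearity. Since $k\geq M+1$, we have $k\alpha\geq(M+1)\alpha>1$. The sewing threshold is $\nu=1\lor rk/p$, and because $\alpha>r/p$ we get $k\alpha>rk/p$; so $k\alpha>\nu$ and Theorem~\ref{thm:Subcritical Weighted Sewing} applies with $p_1=p_2=p/k$ and $q_1=q_2=q/k$. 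The germ itself contributes nothing at scale $k\alpha$ beyond its defect. The remainder bound (d) then gives
$$
\|\bold{X}^{(k)}\|_{\mathbb{B}^{k\alpha}_{p/k,q/k}(w)}\lesssim\|\delta\Xi\|_{\overline{\mathbb{B}}^{k\alpha}_{p/k,q/k}(w)}\lesssim\vertiii{\bold{X}}^k,
$$
where $\bold{X}^{(k)}$ is now the sewn remainder itself, a genuinely two-parameter object.

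\textbf{Main obstacle.} The main difficulty is that the sewing theorem is phrased for paths, since it produces $\delta\mathcal{I}\Xi$, whereas $\bold{X}^{(k)}$ is not additive. We would handle this exactly as Friz–Seeger do, by fixing $s$ or, more cleanly, by rerunning the dyadic Cauchy argument of Lemma~\ref{lem:Dyadic Bounds for Sewing}(b) directly on the multiplicative dyadic products. This is legitimate because that lemma only uses the telescoping identity and the joint subdivision bound of Lemma~\ref{lem: Shift and Scale Bounds for Muckenhoupt Weights}(v), neither of which needs additivity of the limit.

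\textbf{Chen, uniqueness and Lipschitz bounds.} Chen's relation then holds a.e. by passing to the limit in the finite products, and continuous representatives come from Proposition~\ref{prop:Pointwise Rough Path Embedding}. For uniqueness, the difference of two extensions is additive at level $M+1$ and lies in $\mathbb{B}^{(M+1)\alpha}_{p/(M+1),q/(M+1)}(w)$ with $(M+1)\alpha>\nu$, so Proposition~\ref{prop: Vacuity Criterion} shows it vanishes. Uniqueness at higher levels follows inductively in the same way. The Lipschitz estimate (iii) comes from repeating the defect bound on the differences, telescoping each product $\bold{X}^{(i)}\otimes\bold{X}^{(k-i)}-\widetilde{\bold{X}}^{(i)}\otimes\widetilde{\bold{X}}^{(k-i)}$.
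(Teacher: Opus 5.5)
Your overall outline matches the paper: one level at a time, the defect bound via the two-parameter form of Lemma~\ref{lem: Young Intermediate 2}, the threshold check $k\alpha>1\lor kr/p$, uniqueness by vacuity, and Lipschitz bounds by telescoping products and linearity. The construction step, however, has a genuine gap: you never exhibit a two-parameter germ to which Theorem~\ref{thm:Subcritical Weighted Sewing} can be applied. Your $\Xi$ first depends on a free point $u$ and is then declared to be $\Xi\equiv0$. Sewing the zero germ returns the zero remainder, not $\bold{X}^{(k)}$, and a prescribed three-point defect is of no use to the sewing lemma unless it is realised as $\delta$ of an explicit germ. The missing idea, which your remark about ``fixing $s$'' only gestures at, is the germ anchored at $0$,
\[
A_{s,t}=\sum_{j=1}^{k-1}\bold{X}^{(k-j)}_{0,s}\otimes\bold{X}^{(j)}_{s,t},
\]
i.e.\ the level-$k$ part of $\bold{X}_{0,s}\otimes\bold{X}_{s,t}$ with the unknown level-$k$ entries set to zero. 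Chen's relation at the lower levels gives $\delta A_{s,u,t}=-\sum_{j}\bold{X}^{(k-j)}_{s,u}\otimes\bold{X}^{(j)}_{u,t}$. The bounded coefficients and Lemma~\ref{lem: Trading Regularity for Integrability} give $A\in\mathbb{B}^{\alpha}_{p,q}(w)$, and one then \emph{defines} $\bold{X}^{(k)}:=RA=\delta\mathcal{I}A-A$. With this germ your ``main obstacle'' disappears. The sewing lemma already outputs a non-additive two-parameter remainder, and Chen's relation at level $k$ is just the identity $\delta(RA)=-\delta A$, which holds pointwise because $\delta(\delta\mathcal{I}A)=0$.

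Your fallback does not close the gap. Your multiplicative dyadic products are in fact exactly $R_{\pi_n}A$ for this germ. So the Cauchy argument of Lemma~\ref{lem:Dyadic Bounds for Sewing}(b) does give a limit in $\mathbb{B}^{k\alpha}_{p/k,q/k}(w)$ with the right bound. But Chen's relation does not follow ``by passing to the limit in the finite products''. The rescaled dyadic partition of $[s,t]$ restricts to the rescaled dyadic partitions of $[s,u]$ and $[u,t]$ only when $u$ is the midpoint. Along your sequence, the finite-product identity is therefore available only at midpoints. Upgrading it to arbitrary $u$ is precisely the additivity assertion $\Xi+R\Xi=\delta\mathcal{I}\Xi$ of the sewing lemma. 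The paper obtains this from the unweighted sewing theorem of Friz and Seeger via deweighting; your route would have to reprove it, for instance through an arbitrary-partition comparison of the products, and no such argument is given. Finally, Theorem~\ref{thm:Subcritical Weighted Sewing} requires $p_2<\infty$, so the case $p=\infty$ has to be handled separately by the unweighted sewing theorem.
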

\begin{proof}
    As in \cite[Theorem 5.1]{FrizSeeger2022}, it suffices to construct one additional level and then iterate. Set $k = M + 1$ and $S = \vertiii{\bold{X}}_{\bold{B}^{\alpha, M}_{p, q}(w)}$. Using the continuous representative supplied by Proposition~\ref{prop:Pointwise Rough Path Embedding}, define
    $$
    A_{s, t} = \sum_{j = 1}^{M} \bold{X}^{(k-j)}_{0, s} \otimes \bold{X}^{(j)}_{s, t}.
    $$
    The coefficients $\bold{X}^{(k-j)}_{0, s}$ are bounded, and Lemma~\ref{lem: Trading Regularity for Integrability} gives $A \in \mathbb{B}^\alpha_{p, q}(w)$. Chen's relation gives the same cancellation as in the cited proof:
    $$
    \delta A_{s, u, t} = -\sum_{j = 1}^{M} \bold{X}^{(k-j)}_{s, u} \otimes \bold{X}^{(j)}_{u, t}.
    $$

    The proof of Lemma~5.2 applies equally to products of two-parameter maps and to smoothness exponents greater than one: it uses only their moduli and the parent-child weight comparisons. Its conditions hold here because $j\alpha > j(r-1)/p$ for every $j \geq 1$. Thus
    $$
    \|\delta A\|_{\overline{\mathbb{B}}^{k\alpha}_{p/k, q/k}(w)} \lesssim \sum_{j = 1}^{M} \|\bold{X}^{(k-j)}\|_{\mathbb{B}^{(k-j)\alpha}_{p/(k-j), q/(k-j)}(w)} \|\bold{X}^{(j)}\|_{\mathbb{B}^{j\alpha}_{p/j, q/j}(w)} \lesssim S^k.
    $$
    Since $k\alpha > 1 \lor kr/p$, Theorem~\ref{thm:Subcritical Weighted Sewing} applies. For $p = \infin$, use the unweighted product estimate and \cite[Theorem 3.1]{FrizSeeger2022} instead. Keeping the lower levels unchanged, set
    $$
    (\mathscr{E}\bold{X})^{(k)} = RA = \delta(\mathcal{I}A) - A.
    $$
    Then $\delta(RA) = -\delta A$, so the extended map satisfies Chen's relation, and
    $$
    \|(\mathscr{E}\bold{X})^{(k)}\|_{\mathbb{B}^{k\alpha}_{p/k, q/k}(w)} \lesssim S^k.
    $$
    Proposition~\ref{prop:Pointwise Rough Path Embedding} supplies its continuous multiplicative representative.

    For a second input $\widetilde{\bold{X}}$, construct $\widetilde{A}$ in the same way and set
    $$
    H = S \lor \vertiii{\widetilde{\bold{X}}}_{\bold{B}^{\alpha, M}_{p, q}(w)}, \qquad D_j = \|\bold{X}^{(j)} - \widetilde{\bold{X}}^{(j)}\|_{\mathbb{B}^{j\alpha}_{p/j, q/j}(w)}.
    $$
    Expanding each product difference in $\delta A - \delta\widetilde{A}$ into two terms and applying the same product estimate gives
    $$
    \|\delta A - \delta\widetilde{A}\|_{\overline{\mathbb{B}}^{k\alpha}_{p/k, q/k}(w)} \lesssim \sum_{j = 1}^{M} H^{k-j}D_j.
    $$
    Linearity of sewing therefore yields
    $$
    \|(\mathscr{E}\bold{X})^{(k)} - (\mathscr{E}\widetilde{\bold{X}})^{(k)}\|_{\mathbb{B}^{k\alpha}_{p/k, q/k}(w)} \lesssim \sum_{j = 1}^{M} H^{k-j}D_j.
    $$

    Finally, the difference of two level-$k$ extensions agreeing at all lower levels is additive and belongs to $\mathbb{B}^{k\alpha}_{p/k, q/k}(w)$. Since $k\alpha > 1 \lor kr/p$, Proposition~2.13 makes this difference zero. Iterating the construction proves uniqueness and the asserted bounds through level $N$. The difference estimate gives local Lipschitz continuity in the stated levelwise difference quantities.
\end{proof}

\begin{remark}[Critical Extension]
    Set $L = M + 1$ and suppose
    $$
    \alpha > \frac{r}{p}, \qquad L\alpha = 1, \qquad 0 < q \leq L.
    $$
    Then $Lr/p < 1$, so the sewing threshold at level $L$ is $1$, and $q/L \leq 1$. The weighted product estimates and Theorem~\ref{thm:Critical Weighted Sewing} therefore allow the one-level extension construction to be carried out at this boundary. For $p = \infin$, the weight is irrelevant and we use \cite[Theorem 3.2 and Remark 3.3]{FrizSeeger2022}.

    Every $\bold{X} \in \bold{B}^{\alpha, M}_{p, q}(w)$ consequently admits a continuous multiplicative extension through level $L$. With the loss functions of $\S 4$, set
    $$
    \omega^{(L)}_\sigma(h) = \sup_{0 < u \leq h} u\ell_\sigma(u), \qquad 0 < \sigma \leq \infin,
    $$
    where $\ell_\infin = 1$. The new component satisfies
    $$
    \bold{X}^{(L)} \in \mathbb{B}^1_{p/L, \infin; \circ}(w) \cap \bigcap_{0 < \sigma < \infin} \mathbb{B}^{\omega^{(L)}_\sigma}_{p/L, \sigma}(w).
    $$
    Writing $S = \vertiii{\bold{X}}_{\bold{B}^{\alpha, M}_{p, q}(w)}$, we have
    $$
    \|\bold{X}^{(L)}\|_{\mathbb{B}^1_{p/L, \infin}(w)} \lesssim S^L, \qquad \|\bold{X}^{(L)}\|_{\mathbb{B}^{\omega^{(L)}_\sigma}_{p/L, \sigma}(w)} \lesssim \Lambda_\sigma S^L
    $$
    for every $0 < \sigma < \infin$. Continuity follows from the two-parameter embedding, since $1 > Lr/p$. The extension is unique among those whose level-$L$ component belongs to $\mathbb{B}^1_{p/L, \infin; \circ}(w)$: the difference of two such components is additive, so Proposition~2.13 applies.

    These conclusions do not establish $\bold{X}^{(L)} \in \mathbb{B}^1_{p/L, q/L}(w)$ and hence do not give the preceding extension theorem with its original target space; compare \cite[Remark 5.4]{FrizSeeger2022}. Openness does not resolve this endpoint distinction: for every smaller admissible weight index $r_-$,
    $$
    1 \lor \frac{Lr_-}{p} = 1 = L\alpha.
    $$
    Thus lowering the weight index neither makes this boundary subcritical nor removes the restriction $q \leq L$ in the critical sewing argument.
\end{remark}

\subsection{Level Two Controlled Rough Paths and Rough Integration}
We set $\frac{r}{p} < \alpha \leq \frac{1}{2}$ and take $\bold{X} = (1, X, \mathbb{X}) \in \bold{B}^{\alpha, 2}_{p, q}(w)$.

\begin{definition}
    Let $E$ be a finite-dimensional vector space. We say a pair
    $$(Y, Y'): [0, T] \rarr E \times \mathcal{L}(\R^d, E)$$
    is controlled by $\bold{X}$ if
    $$
    Y \in B^\alpha_{p, q}(w; E), \qquad Y' \in B^\alpha_{p, q}(w; \mathcal{L}(\R^d, E)), \qquad R^Y_{s, t} = \delta Y_{s, t} - Y'_s X_{s, t} \in \mathbb{B}^{2\alpha}_{p/2 ,q/2}(w; E).
    $$
    In this case we write $(Y, Y') \in \mathscr{D}^\alpha_{p, q; \bold{X}}(w)$ and set
    $$
    \| (Y, Y') \|_{\mathscr{D}_{\bold{X}}} = |Y_0| + |Y'_0| + [Y']_{B^\alpha_{p, q}(w)} + \| R^Y \|_{\mathbb{B}^{2\alpha}_{p/2, q/2}(w)}.
    $$
    For paths controlled by different drivers $\bold{X}, \widetilde{\bold{X}}$ we define
    $$
    d_{\bold{X}, \widetilde{\bold{X}}}((Y, Y'), (\widetilde{Y}, \widetilde{Y}')) = |Y_0 - \widetilde{Y}_0| + |Y'_0 - \widetilde{Y}'_0| + [Y' - \widetilde{Y}']_{B^\alpha_{p, q}(w)} + \|R^Y - R^{\widetilde{Y}}\|_{\mathbb{B}^{2\alpha}_{p/2, q/2}(w)}.
    $$
\end{definition}

We use the continuous representatives supplied by $\alpha > r/p$. For a fixed driver $\bold{X}$, the displayed size is a norm when $p, q \geq 2$ and a quasi-norm otherwise. The controlled-path space is complete: setting $\theta = 1 \land (p/2) \land (q/2)$, a complete metric inducing the same topology is obtained by raising each summand in $d_{\bold{X}, \bold{X}}$ to the power $\theta$ and summing. Completeness follows from Besov completeness and Lemma~\ref{lem: Controlled-Remainder Estimate}, which controls $Y$ through its controlled decomposition. For different drivers, $d_{\bold{X}, \widetilde{\bold{X}}}$ is a comparison quantity rather than a distance on the space of driver--path pairs. Throughout, the Besov difference quantities in its definition are the unpowered difference seminorms, and all subsequent Lipschitz estimates use this convention.

The next lemma recovers the Besov regularity of $Y$ from its controlled decomposition and gives pointwise control of the remainder. Allowing $\beta < 2\alpha$ will also be useful in the stability estimates.

\begin{lemma}[Controlled-Remainder Estimates]
\label{lem: Controlled-Remainder Estimate}
    Let $(Y, Y') \in \mathscr{D}^\alpha_{p, q; \bold{X}}(w; E)$ and suppose
    $$
    \alpha + \frac{r}{p} < \beta \leq 2\alpha.
    $$
    Set
    $$
    Q = [Y']_{B^\alpha_{p, q}(w)} [X]_{B^\alpha_{p, q}(w)}, \qquad E_\beta = \| R^Y \|_{\mathbb{B}^\beta_{p/2, q/2}(w)}.
    $$
    Then
    $$
    \sup_{0 \leq s < t \leq T} \frac{|R^Y_{s, t}|}{\zeta_{\beta, p/2, w}(s, t - s)} \leq C(E_\beta + T^{2\alpha - \beta} Q),
    $$
    and
    $$
    [Y]_{B^\alpha_{p, q}(w)} \leq C \left[ |Y'_0| [X]_{B^\alpha_{p, q}(w)} + \Lambda_{w, T}\left( T^{\alpha - r/p}Q + T^{\beta - \alpha - r/p}E_\beta \right) \right].
    $$
    For another controlled pair $(\widetilde{Y}, \widetilde{Y}') \in \mathscr{D}^\alpha_{p, q; \widetilde{\bold{X}}}(w; E)$ we define
    $$
    E_{\beta, \Delta} = \|R^Y - R^{\widetilde{Y}}\|_{\mathbb{B}^\beta_{p/2, q/2}(w)}
    $$ and
    $$
    Q_\Delta = [Y' - \widetilde{Y}']_{B^\alpha_{p, q}(w)} [X]_{B^\alpha_{p, q}(w)} + [\widetilde{Y}']_{B^\alpha_{p, q}(w)}[X - \widetilde{X}]_{B^\alpha_{p, q}(w)}.
    $$
    Then
    $$
    \sup_{0 \leq s < t \leq T} \frac{|R^Y_{s, t} - R^{\widetilde{Y}}_{s, t}|}{\zeta_{\beta, p/2, w}(s, t - s)} \leq C(E_{\beta, \Delta} + T^{2\alpha - \beta} Q_\Delta)
    $$ and
    $$
    [Y - \widetilde{Y}]_{B^\alpha_{p, q}(w)} \leq C \left[ |Y'_0 - \widetilde{Y}'_0| [X]_{B^\alpha_{p, q}(w)} + |\widetilde{Y}'_0| [X - \widetilde{X}]_{B^\alpha_{p, q}(w)} + \Lambda_{w, T} \left( T^{\alpha - r/p} Q_\Delta + T^{\beta - \alpha - r/p} E_{\beta, \Delta} \right) \right].
    $$
\end{lemma}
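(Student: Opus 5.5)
The plan is to obtain the pointwise remainder bound from the two-parameter embedding, Theorem~\ref{thm: Quasi Banach Two Parameter Embedding}, and then to deduce the bound on $[Y]$ from $\delta Y_{s,t} = Y'_s X_{s,t} + R^Y_{s,t}$ and the weighted pointwise embeddings. The difference estimates follow by the same steps applied to $R^Y - R^{\widetilde Y}$ and $\delta(Y-\widetilde Y)$.

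\textbf{The remainder bound.} We apply Theorem~\ref{thm: Quasi Banach Two Parameter Embedding} to $A = R^Y$ with integrability $p/2$, scale index $q/2$, weight index $r$ and modulus $\omega(h)=h^\beta$. The decay condition holds because $\beta > \alpha + r/p > 2r/p = r/(p/2)$.

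\emph{Norm input.} Since $\beta \le 2\alpha$, monotonicity of the modulus gives $\|R^Y\|_{\mathbb B^\beta_{p/2,q/2}(w)} = E_\beta$. If one prefers to work from the $2\alpha$-norm, a factor $T^{2\alpha-\beta}$ appears instead.

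\emph{Defect input.} Chen's relation gives
$$\delta R^Y_{s,u,t} = \delta Y'_{s,u}\, X_{u,t}.$$
Apply Theorem~\ref{thm: Weighted Besov Holder Embedding} to $Y'$ on $[s,u]$ and to $X$ on $[u,t]$. This yields
$$|\delta R^Y_{s,u,t}| \lesssim Q\,\frac{(u-s)^\alpha (t-u)^\alpha}{w([s,u])^{1/p}\,w([u,t])^{1/p}}.$$
This must now be converted into the mixed form $M\,\zeta_{\beta,p/2,w}(s,\ell_\vartheta)$. Put $h=t-s$. By the small-set estimate, $w([s,u]) \gtrsim ((u-s)/h)^r\, w(B_{\ell}(s))$, and likewise for $[u,t]$. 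The product is then at most
$$\frac{h^{2r/p}\,(u-s)^{\alpha-r/p}\,(t-u)^{\alpha-r/p}}{w(B_\ell(s))^{2/p}}.$$
Choose $\vartheta$ small, exactly as in the proof of Theorem~\ref{thm:Weighted Young Integral}(d), using $\alpha - r/p > 0$. Since $h \le 2b$, this bounds the expression by a multiple of $\ell^{2\alpha}$. Hence
$$M \lesssim Q\,\ell^{2\alpha-\beta} \lesssim T^{2\alpha-\beta} Q.$$
The conclusion of Theorem~\ref{thm: Quasi Banach Two Parameter Embedding} carries $w([s,t])^{2/p}$, and this converts to $w(B_{t-s}(s))^{2/p}$ by doubling. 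This gives the first display.

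\textbf{The bound on $[Y]$.} Write $Y'_s = Y'_0 + \delta Y'_{0,s}$. The term $Y'_0 X$ contributes $|Y'_0|\,[X]$. For $\delta Y'_{0,s} X_{s,s+h}$, bound $\|\delta Y'_{0,\cdot}\|_\infty \lesssim \Lambda_{w,T}\, T^{\alpha-r/p}\, [Y']$ using the Hölder form of Theorem~\ref{thm: Weighted Besov Holder Embedding}. Multiplying by $[X]$ gives the $Q$ term.

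For $R^Y$, two routes are possible.

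\emph{Interpolation route.} Interpolate at each scale as in Lemma~\ref{lem: Weighted Interpolation}(a), between the sup bound
$$\Omega_\infty(R^Y,h) \lesssim \Lambda_{w,T}^2\, h^{\beta-2r/p}\,(E_\beta + T^{2\alpha-\beta} Q)$$
and the $L^{p/2}(w)$ bound, with $\theta = 1/2$. This gives
$$\Omega^w_p(R^Y,h) \lesssim \Lambda_{w,T}\, h^{\beta - r/p}\,(\cdots).$$
Dividing by $h^\alpha$, the exponent $\beta - \alpha - r/p$ is positive, so integration against $dh/h$ yields $T^{\beta-\alpha-r/p}$.

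\emph{Direct route.} Alternatively, use the pointwise remainder bound and the lower-mass estimate directly, together with $\int_0^{T-h} w_h \le W$.

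Whichever route is used, the bookkeeping has to deliver exactly the stated $\Lambda_{w,T}$ powers. In the interpolation route, the factor $(E_\beta + T^{2\alpha-\beta}Q)$ multiplying $T^{\beta-\alpha-r/p}$ produces a $T^{2\alpha-\beta}Q$ term. Combined, this is $T^{\alpha-r/p}Q$, which matches the stated form.

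\textbf{Difference estimates.} The defect of $R^Y - R^{\widetilde Y}$ is
$$\delta(Y'-\widetilde Y')_{s,u}\, X_{u,t} + \delta\widetilde Y'_{s,u}\,(X-\widetilde X)_{u,t},$$
and the same embeddings bound it by $Q_\Delta$ times the same product. For $\delta(Y-\widetilde Y)$, expand
$$Y'_s X_{s,t} - \widetilde Y'_s \widetilde X_{s,t} = (Y'-\widetilde Y')_s X_{s,t} + \widetilde Y'_s\,(X-\widetilde X)_{s,t}$$
and repeat the argument above.

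The main obstacle is the defect verification in the remainder bound. The two-parameter embedding demands the specific mixed-scale form with weight evaluated on $B_\ell(s)$, while the natural bound involves weights of the two child intervals. Keeping the small-set losses $h^{2r/p}$ absorbable requires the gap $\alpha > r/p$ and a careful choice of $\vartheta$, and I would copy that verification from the proof of Theorem~\ref{thm:Weighted Young Integral}(d).
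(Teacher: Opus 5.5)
Your main line is essentially the paper's proof. The defect identity $\delta R^Y_{s,u,t}=\delta Y'_{s,u}X_{u,t}$ plus a deweighted Campanato embedding gives the pointwise bound: you invoke Theorem~\ref{thm: Quasi Banach Two Parameter Embedding} with small $\vartheta$, where the paper deweights locally on $J=[s,t]$ and applies \cite[Proposition 2.7]{FrizSeeger2022} with splitting parameter $1/2$. The lower-mass bound, interpolation with $\theta=1/2$, and $\delta Y=Y'X+R^Y$ with $\|Y'\|_\infty\lesssim|Y'_0|+\Lambda_{w,T}T^{\alpha-r/p}[Y']_{B^\alpha_{p,q}(w)}$ then finish exactly as you describe.

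Drop the ``direct route'', though. Bounding $\Omega^w_p(R^Y,h)$ by $w([0,T])^{1/p}\Omega_\infty(R^Y,h)$ only gives $h^{-\alpha}\Omega^w_p(R^Y,h)\lesssim\Lambda_{w,T}T^{r/p}h^{\beta-\alpha-2r/p}(E_\beta+T^{2\alpha-\beta}Q)$. That exponent can be negative when only $\beta>\alpha+r/p$ is assumed, so the interpolation step is precisely where this hypothesis is needed.
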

\begin{proof}
    We follow \cite[proof of Lemma 5.2]{FrizSeeger2022}. Set $K_\beta = E_\beta + T^{2\alpha-\beta}Q$. The controlled decomposition gives
    $$
    \delta R^Y_{s, u, t} = \delta Y'_{s, u}X_{u, t}.
    $$
    Suppose first that $p < \infin$ and put $v = p/r$. On any interval $J$, local deweighting and the one-parameter embedding give
    $$
    \|R^Y\|_{\mathbb{B}^\beta_{v/2, q/2}(J)} \lesssim c_J^2 E_\beta, \qquad |\delta R^Y_{s, u, t}| \lesssim c_J^2 Q (u-s)^{\alpha-r/p}(t-u)^{\alpha-r/p},
    $$
    where $c_J \lesssim |J|^{r/p}w(J)^{-1/p}$. Since $2r/p < \beta \leq 2\alpha$, the latter bound implies
    $$
    |\delta R^Y_{s, u, t}| \lesssim c_J^2 Q |J|^{2\alpha-\beta}\bigl((u-s)(t-u)\bigr)^{(\beta-2r/p)/2}.
    $$
    Applying \cite[Proposition 2.7]{FrizSeeger2022} with splitting parameter $1/2$ yields
    $$
    |R^Y_{s, t}| \lesssim c_J^2\bigl(E_\beta + |J|^{2\alpha-\beta}Q\bigr)|t-s|^{\beta-2r/p}.
    $$
    Taking $J = [s, t]$ and using Lemma~2.3 to compare $w([s,t])$ with $w(B_{t-s}(s))$ proves
    $$
    |R^Y_{s, t}| \lesssim K_\beta \zeta_{\beta, p/2, w}(s, t-s).
    $$
    For $p = \infin$, the same conclusion follows directly from the unweighted embeddings.

    The lower mass bound now gives $\Omega_\infin(R^Y, h) \lesssim \Lambda_{w,T}^2 K_\beta h^{\beta-2r/p}$. Interpolating against $w_h(s) \hspace{.2em} ds$, we obtain
    $$
    h^{-\alpha}\Omega^w_p(R^Y, h) \lesssim \Lambda_{w,T}K_\beta^{1/2}h^{\beta-\alpha-r/p}\bigl(h^{-\beta}\Omega^w_{p/2}(R^Y, h)\bigr)^{1/2}.
    $$
    Taking the $L^q(dh/h)$ quasi-norm, with the usual supremum conventions, gives
    $$
    \|R^Y\|_{\mathbb{B}^\alpha_{p,q}(w)} \lesssim \Lambda_{w,T}T^{\beta-\alpha-r/p}K_\beta^{1/2}E_\beta^{1/2} \lesssim \Lambda_{w,T}\bigl(T^{\beta-\alpha-r/p}E_\beta + T^{\alpha-r/p}Q\bigr).
    $$
    Together with
    $$
    \|Y'\|_\infin \lesssim |Y'_0| + \Lambda_{w,T}T^{\alpha-r/p}[Y']_{B^\alpha_{p,q}(w)}
    $$
    and $\delta Y = Y'X + R^Y$, this proves the estimate on $Y$.

    For the differences, write $\Delta R = R^Y - R^{\widetilde{Y}}$ and similarly for the paths. The identity
    $$
    \delta\Delta R_{s,u,t} = \delta\Delta Y'_{s,u}X_{u,t} + \delta\widetilde{Y}'_{s,u}\Delta X_{u,t}
    $$
    allows the same embedding and interpolation argument with $E_{\beta,\Delta}$ and $Q_\Delta$ in place of $E_\beta$ and $Q$. Finally, use
    $$
    \delta\Delta Y_{s,t} = \Delta Y'_s X_{s,t} + \widetilde{Y}'_s\Delta X_{s,t} + \Delta R_{s,t}
    $$
    and the one-parameter embeddings for $\Delta Y'$ and $\widetilde{Y}'$ to obtain the asserted path difference estimate.
\end{proof}

At $\alpha = 1/3$, set $\omega^{(3)}_\sigma(h) = \sup_{0 < u \leq h} u\ell_\sigma(u)$ for $0 < \sigma \leq \infin$, where the loss functions are as in $\S 4$ and $\ell_\infin = 1$.

\begin{theorem}[Weighted Rough Integration]
\label{thm: Weighted Rough Integration}
    Let $1 \leq r < \infin$, $w \in A_r$, and $0 < p, q \leq \infin$. Suppose
    $$
    \frac{r}{p} < \alpha \leq \frac{1}{2},
    $$
    with $1/\infin = 0$.
    Let $V = \R^d$, let $E$ be a finite-dimensional vector space, and take
    $$
    \bold{X} = (1, X, \mathbb{X}) \in \bold{B}^{\alpha, 2}_{p, q}(w), \qquad (Y, Y') \in \mathscr{D}^\alpha_{p, q; \bold{X}}(w; \mathcal{L}(V, E)).
    $$
    Assume either
    $$
    \textup{(Subcritical)} \hspace{1em} \alpha > \frac{1}{3}, \qquad \textup{ or } \qquad \textup{(Critical)} \hspace{1em} \alpha = \frac{1}{3} \hspace{1em} \textup{ and } \hspace{1em} 0 < q \leq 3.
    $$
    Define
    $$
    \Xi_{s, t} = Y_s X_{s, t} + Y'_s \mathbb{X}_{s, t}
    $$
    and
    $$
    \mathcal{N}_\Xi = \|R^Y\|_{\mathbb{B}^{2\alpha}_{p/2, q/2}(w)} [X]_{B^\alpha_{p, q}(w)} + [Y']_{B^\alpha_{p, q}(w)} \|\mathbb{X}\|_{\mathbb{B}^{2\alpha}_{p/2, q/2}(w)} + [Y']_{B^\alpha_{p, q}(w)} [X]_{B^\alpha_{p, q}(w)}^2.
    $$
    Write
    $$
    \nu_3 = 1 \lor \frac{3r}{p}, \qquad \Lambda_{w, T} = \left( \frac{T^r}{w([0, T])} \right)^{1/p}.
    $$
    Then for every $z \in E$ there is a unique continuous path
    $$
    Z_t = z + \int^t_0 Y_u \hspace{.2em} d\bold{X}_u
    $$
    whose remainder $R^\Xi = \delta Z - \Xi$ has the following properties.

    \begin{enumerate}[label=(\alph*)]
        \item In the subcritical case,
        $$
        R^\Xi \in \mathbb{B}^{3\alpha}_{p/3, q/3}(w), \qquad \|R^\Xi\|_{\mathbb{B}^{3\alpha}_{p/3, q/3}(w)} \lesssim C \mathcal{N}_\Xi.
        $$

        \item In the critical case,
        $$
        R^\Xi \in \mathbb{B}^1_{p/3, \infin; \circ}(w) \cap \bigcap_{0 < \sigma < \infin} \mathbb{B}^{\omega_{\sigma}^{(3)}}_{p/3, \sigma}(w)
        $$
        where
        $$
        \|R^\Xi\|_{\mathbb{B}^1_{p/3, \infin}(w)} \lesssim \mathcal{N}_\Xi
        $$
        and, for every $0 < \sigma < \infin$,
        $$
        \| R^\Xi \|_{\mathbb{B}^{\omega_{\sigma}^{(3)}}_{p/3, \sigma}(w)} \lesssim \Lambda_\sigma \mathcal{N}_\Xi.
        $$
    \end{enumerate}

    For a partition $\pi = \{0 = u_0 < ... < u_N = 1\}$ we set
    $$
    t_i = s + u_i(t - s), \qquad I^\pi \Xi_{s, t} = \sum_{i = 0}^{N - 1} \Xi_{t_i, t_{i + 1}}.
    $$

    \begin{enumerate}[label=(\alph*), start=3]
        \item In the subcritical case, for every $q/3 \leq v \leq \infin$ and every partition $\pi$ with $\|\pi\| \leq 1/2$,
        $$
        \|I^\pi \Xi - \delta Z\|_{\mathbb{B}^{3\alpha}_{p/3, v}(w)} \lesssim_v \|\pi\|^{3\alpha - \nu_3} \mathcal{N}_{\Xi}.
        $$

        \item In the critical case, for every $0 < \sigma \leq \infin$,
        $$
        \lim_{\| \pi \| \rarr 0} \|I^\pi \Xi - \delta Z \|_{\mathbb{B}^{\omega_{\sigma}^{(3)}}_{p/3, \sigma}(w)} = 0.
        $$
        In particular,
        $$
        \lim_{\| \pi \| \rarr 0} \|I^\pi \Xi - \delta Z \|_{\mathbb{B}^{1}_{p/3, \infin}(w)} = 0.
        $$

        \item In both cases,
        $$
        (Z, Y) \in \mathscr{D}^\alpha_{p, q; \bold{X}}(w), \qquad R^Z = Y' \mathbb{X} + R^\Xi,
        $$ and
        $$
        \|R^Z\|_{\mathbb{B}^{2\alpha}_{p/2, q/2}(w)} \lesssim \left[ \left( |Y'_0| + \Lambda_{w, T} T^{\alpha - r/p} [Y']_{B^\alpha_{p, q}(w)} \right) \|\mathbb{X}\|_{\mathbb{B}^{2\alpha}_{p/2, q/2}(w)} + \Lambda_{w, T} T^{\alpha - r/p} \mathcal{N}_\Xi \right].
        $$
    \end{enumerate}
\end{theorem}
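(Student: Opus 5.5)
We follow \cite[proof of Theorem 5.2]{FrizSeeger2022} and feed the local model $\Xi_{s,t}=Y_sX_{s,t}+Y'_s\mathbb{X}_{s,t}$ into the weighted sewing machinery of \S4. The first step is the defect identity. Using Chen's relation $\mathbb{X}_{s,t}=\mathbb{X}_{s,u}+\mathbb{X}_{u,t}+X_{s,u}\otimes X_{u,t}$ together with $\delta Y_{s,u}=Y'_sX_{s,u}+R^Y_{s,u}$, we obtain
$$
\delta\Xi_{s,u,t}=-R^Y_{s,u}X_{u,t}-\delta Y'_{s,u}\mathbb{X}_{u,t}.
$$
Each term is a bilinear product of a ``left'' increment on $[s,u]$ and a ``right'' increment on $[u,t]$. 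We then run the argument of Lemma~\ref{lem: Young Intermediate 2} with two-parameter inputs, as was already done in the proof of Theorem~\ref{thm: Weighted Lyons Extension}. The parent-child comparisons cost $\theta^{-(r-1)/p_i}$ factors, which are absorbed via Lemma~\ref{lem:Shorter Increment Besov Decay}. This requires smoothness exceeding $(r-1)/(\text{integrability})$ for every input: for $R^Y$ and $\mathbb{X}$ it reads $2\alpha>2(r-1)/p$, and for $X,Y'$ it reads $\alpha>(r-1)/p$, both implied by $\alpha>r/p$. With $1/(p/3)=2/p+1/p$ and $3/q=2/q+1/q$, Hölder in space and in scale gives
$$
\|\delta\Xi\|_{\overline{\mathbb B}^{3\alpha}_{p/3,q/3}(w)}\lesssim \|R^Y\|_{\mathbb B^{2\alpha}_{p/2,q/2}(w)}[X]_{B^\alpha_{p,q}(w)}+[Y']_{B^\alpha_{p,q}(w)}\|\mathbb{X}\|_{\mathbb B^{2\alpha}_{p/2,q/2}(w)}\le\mathcal N_\Xi.
$$

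Next we pass to sewing. The threshold is $\nu_3=1\lor 3r/p$, and $\alpha>r/p$ gives $3\alpha>3r/p$.

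In the subcritical case $3\alpha>1$, so $3\alpha>\nu_3$. Theorem~\ref{thm:Subcritical Weighted Sewing} then yields $Z$, the bound in (a), and, through part (c) of that theorem, the partition estimate (c).

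In the critical case $\alpha=1/3$, so $\nu_3=1$ since $3r/p<1$, and $q/3\le 1\land(p/3)$. Here $p/3>r\geq 1$, so only $q\le3$ is needed. Theorem~\ref{thm:Critical Weighted Sewing} then gives (b) and (d), once we check $D_c(\Xi)\lesssim\mathcal N_\Xi$, that is, the largest-scale term $T^{-1}\overline\Omega^w_{p/3}(\delta\Xi,T)$. As in Theorem~\ref{thm:Weighted Young Integral}(b), we extend modulus bounds of the form $\omega\lesssim h^{\alpha}[\cdot]$ to all $h\le T$ by halving increments and applying Lemma~\ref{lem: Shift and Scale Bounds for Muckenhoupt Weights}(i). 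The factors $\theta^{2\alpha-2\kappa}(1-\theta)^{\alpha-\kappa}$ are then bounded, giving the required estimate.

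The term $\mathcal N_\Xi$ contains $[Y'][X]^2$, which is generous; it covers the $\mathbb{X}$-norm equivalents that arise if one uses $\|\mathbb{X}\|\lesssim$ rough path size. The estimate for $\Xi$ itself in a $\mathbb B^\alpha$ space only affects path regularity, not the remainder bounds. Uniqueness of $Z$ given $Z_0=z$ follows from the uniqueness clauses of the sewing theorems, and continuity follows from Theorem~\ref{thm: Quasi Banach Two Parameter Embedding} applied to $R^\Xi$ together with the continuity of $\Xi$.

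For (e), we have $\delta Z_{s,t}-Y_sX_{s,t}=Y'_s\mathbb{X}_{s,t}+R^\Xi_{s,t}$, so $R^Z=Y'\mathbb{X}+R^\Xi$. The first term is handled with $\|Y'\|_\infty\lesssim|Y'_0|+\Lambda_{w,T}T^{\alpha-r/p}[Y']$ from Theorem~\ref{thm: Weighted Besov Holder Embedding}.

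The main obstacle is bounding $R^\Xi$ in $\mathbb B^{2\alpha}_{p/2,q/2}(w)$. Sewing gives it only at integrability $p/3$ and smoothness $3\alpha$, so we must trade smoothness for integrability. We plan to invoke Theorem~\ref{thm:Integrability Regain for Sewing}, with $p_2=p/3$, $p_1=p/2$, target smoothness $2\alpha$, and $\rho=r$. This yields $\eta=1/p$ and $\Theta=3\alpha-2\alpha-r/p=\alpha-r/p>0$. To apply it we must verify condition $\mathrm C_{3\alpha}$ for $\delta\Xi$. We get this pointwise from Proposition~\ref{prop:Pointwise Rough Path Embedding}, Theorem~\ref{thm: Weighted Besov Holder Embedding}, and the pointwise remainder bound of Lemma~\ref{lem: Controlled-Remainder Estimate} with $\beta=2\alpha$. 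We then combine the interval masses $w([s,u])^{-2/p}w([u,t])^{-1/p}$ into $w([s,t])^{-3/p}$ by small-set estimates, absorbing the leftover powers with the gap $\alpha-r/p$ and a choice of $\vartheta$, exactly as in the proof of Theorem~\ref{thm:Weighted Young Integral}(d). This gives $\|R^\Xi\|_{\mathbb B^{2\alpha}_{p/2,q/2}}\lesssim\Lambda_{w,T}T^{\alpha-r/p}\mathcal N_\Xi$.

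One point needs care. The factor $\Lambda$ should appear only to the power $1$; the matching is $Q_{w,r}=(T^r/W)^{1/p}=\Lambda_{w,T}$, and we must track that the scale index lands at $q/2$ rather than $q/3$. Since $q/3\le q/2$, we interpolate using the power form at scale index $\infty$, following the critical-case computation in that proof, and then integrate against $dh/h$ with the positive gap.
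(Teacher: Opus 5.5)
Your route is the paper's: the defect identity, the two-parameter form of the weighted product estimate, subcritical or critical sewing, the pointwise defect bound feeding the two-parameter embedding for $R^\Xi$, interpolation from $p/3$ to $p/2$, and $R^Z=Y'\mathbb X+R^\Xi$. Several steps, however, do not go through as written.

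The most important problem is that you set aside $\Xi\in\mathbb B^\alpha_{p,q}(w)$ as affecting ``only path regularity''. It is in fact a hypothesis of both Theorem~\ref{thm:Subcritical Weighted Sewing} and Theorem~\ref{thm:Critical Weighted Sewing}. Moreover, path regularity is part of (e): $(Z,Y)\in\mathscr D^\alpha_{p,q;\bold X}(w)$ requires $Z\in B^\alpha_{p,q}(w)$, which you never establish. The paper gets $\Xi\in\mathbb B^\alpha_{p,q}(w)$ from boundedness of $Y,Y'$ and Lemma~\ref{lem: Trading Regularity for Integrability}, which puts $\mathbb X$ in $\mathbb B^\alpha_{p,q}(w)$. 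It gets $R^\Xi\in\mathbb B^\alpha_{p,q}(w)$ from the same interpolation with target integrability $p$, and then $\delta Z=\Xi+R^\Xi$ gives $Z\in B^\alpha_{p,q}(w)$.

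You also do not treat $p=\infin$, which the statement allows. There $p/3=\infin$ lies outside the weighted sewing theorems, which require $p_2<\infin$. That case needs the unweighted sewing results of \cite{FrizSeeger2022}, together with a direct dyadic grouping of partition lengths to get (c) when $v<1$.

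Two smaller points also need repair. First, your check of $D_c(\Xi)$ by halving works only for the additive factors $X$ and $\delta Y'$. At $h=T$ you need $\Omega^w_{p/2}(R^Y,\theta T)$ and $\Omega^w_{p/2}(\mathbb X,(1-\theta)T)$ for all $\theta\in(0,1)$. The finite-index norms $\mathbb B^{2\alpha}_{p/2,q/2}(w)$, with $q/2\le 3/2$, do not by themselves control these moduli near scale $T$. Splitting $R^Y$ and $\mathbb X$ in half generates the defects $\delta Y'_{s,u}X_{u,t}$ and $X_{s,u}\otimes X_{u,t}$. These are bounded by $[Y'][X]$ and $[X]^2$, hence by $\mathcal N_\Xi$, so the step can be fixed that way. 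It is simpler to do as the paper does: for $h\ge T/2$, integrate against $w_h(s)\,ds$ the pointwise defect bound $|\delta\Xi_{s,u,t}|\lesssim\mathcal N_\Xi|t-s|^{3\alpha}/w([s,t])^{3/p}$, which you derive anyway for $\mathrm C_{3\alpha}$.

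Second, Theorem~\ref{thm:Integrability Regain for Sewing} cannot be cited verbatim with target smoothness $2\alpha$ and $p_1=p/2$. It assumes $\Xi\in\mathbb B^{2\alpha}_{p/2,q_1}(w)$ with $2\alpha<1$. But the term $Y_sX_{s,t}$ has smoothness only $\alpha$, and $2\alpha=1$ when $\alpha=1/2$. Its parts (a)--(b) use only $\delta\Xi$ and the sewing remainder bound, so run the interpolation directly. In the subcritical case the power form $\Omega^w_{p/3}(R^\Xi,h)\lesssim\mathcal N_\Xi h^{3\alpha}$ follows from the sewing bound only for $h\le T/2$, but you do not need it. Setting $G(h)=h^{-3\alpha}\Omega^w_{p/3}(R^\Xi,h)$, you have $h^{-2\alpha}\Omega^w_{p/2}(R^\Xi,h)\lesssim\Lambda_{w,T}\mathcal N_\Xi^{1/3}h^{\alpha-r/p}G(h)^{2/3}$. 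Since $\|G^{2/3}\|_{L^{q/2}(dh/h)}=\|G\|_{L^{q/3}(dh/h)}^{2/3}$, the estimate lands at scale index $q/2$ automatically.
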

\begin{proof}
    We follow \cite[Theorem 5.4]{FrizSeeger2022}. Boundedness of $Y, Y'$ and Lemma~\ref{lem: Trading Regularity for Integrability} give $\Xi \in \mathbb{B}^\alpha_{p,q}(w)$. Identifying $Y'_s(v_1 \otimes v_2)$ with $(Y'_s v_1)v_2$, Chen's relation yields
    $$
    \delta\Xi_{s,u,t} = -R^Y_{s,u}X_{u,t} - \delta Y'_{s,u}\mathbb{X}_{u,t}.
    $$
    The two-parameter version of Lemma~5.2 used in the proof of Theorem~\ref{thm: Weighted Lyons Extension} therefore gives
    $$
    \|\delta\Xi\|_{\overline{\mathbb{B}}^{3\alpha}_{p/3,q/3}(w)} \lesssim \|R^Y\|_{\mathbb{B}^{2\alpha}_{p/2,q/2}(w)}[X]_{B^\alpha_{p,q}(w)} + [Y']_{B^\alpha_{p,q}(w)}\|\mathbb{X}\|_{\mathbb{B}^{2\alpha}_{p/2,q/2}(w)} \leq \mathcal{N}_\Xi.
    $$

    Set $\eta = \alpha-r/p > 0$. On an interval $J$, the local pointwise embeddings and Lemma~\ref{lem: Controlled-Remainder Estimate} with $\beta = 2\alpha$ give
    $$
    |\delta\Xi_{s,u,t}| \lesssim c_J^3\mathcal{N}_\Xi a^\eta b^{2\eta}, \qquad c_J = \frac{|J|^{r/p}}{w(J)^{1/p}},
    $$
    where $a = (u-s)\land(t-u)$ and $b = (u-s)\lor(t-u)$, with $c_J = 1$ when $p = \infin$. In particular, taking $J = [s,t]$ gives
    $$
    |\delta\Xi_{s,u,t}| \lesssim \mathcal{N}_\Xi\frac{|t-s|^{3\alpha}}{w([s,t])^{3/p}}.
    $$
    This also controls the largest-scale term required for critical sewing: for $h \leq T/2$, use monotonicity and the preceding Besov defect bound; for $h \geq T/2$, integrate this pointwise estimate against $w_h(s) \hspace{.2em} ds$. Thus, at $\alpha = 1/3$,
    $$
    D_c(\Xi) = \|\delta\Xi\|_{\overline{\mathbb{B}}^1_{p/3,q/3}(w)} + T^{-1}\overline{\Omega}^w_{p/3}(\delta\Xi,T) \lesssim \mathcal{N}_\Xi.
    $$

    In the subcritical case, $3\alpha > 1 \lor 3r/p$. At criticality, $p > 3r$ and $q/3 \leq 1$, so the critical sewing hypotheses hold. Theorems~\ref{thm:Subcritical Weighted Sewing} and~\ref{thm:Critical Weighted Sewing} supply a sewn increment $\delta Z = \Xi + R^\Xi$ and give (a)--(d) for finite $p$. When $p = \infin$, use \cite[Theorems 3.1, 3.2 and Remark 3.3]{FrizSeeger2022}; the partition estimates are addressed below.

    We next obtain pointwise control of $R^\Xi$. Locally deweight its bound in (a), or its power bound at scale index $\infin$ in (b). Since $\delta R^\Xi = -\delta\Xi$, the preceding local defect estimate permits application of \cite[Proposition 2.7]{FrizSeeger2022} with splitting parameter $1/3$. Taking $J = [s,t]$ yields a continuous representative satisfying
    $$
    |R^\Xi_{s,t}| \lesssim \mathcal{N}_\Xi\frac{|t-s|^{3\alpha}}{w([s,t])^{3/p}}, \qquad \Omega_\infin(R^\Xi,h) \lesssim \Lambda_{w,T}^3\mathcal{N}_\Xi h^{3\eta}.
    $$
    Since $\Xi$ is continuous, the sewn increment consequently determines a continuous path $Z$ with prescribed initial value $z$.

    For completeness, when $p = \infin$, (a) and this pointwise bound give $\|R^\Xi\|_{\mathbb{B}^{3\alpha}_{\infin,v}} \lesssim_v \mathcal{N}_\Xi$ for every $v \geq q/3$. In the identity
    $$
    I^\pi\Xi_{s,t} - \delta Z_{s,t} = -\sum_i R^\Xi_{t_i,t_{i+1}},
    $$
    group the partition lengths into dyadic classes and sum the resulting geometric series with exponent $3\alpha-1$, using the $(1\land v)$-triangle inequality. This proves (c), including $v < 1$. At criticality, convergence in $\mathbb{B}^1_{\infin,\infin}$ implies convergence in every stated loss space because
    $$
    \|A\|_{\mathbb{B}^{\omega^{(3)}_\sigma}_{\infin,\sigma}} \leq \Lambda_\sigma\|A\|_{\mathbb{B}^1_{\infin,\infin}}, \qquad 0 < \sigma < \infin.
    $$
    Thus (d) holds as well.

    For finite $p$, interpolation against $w_h(s) \hspace{.2em} ds$ gives
    $$
    h^{-2\alpha}\Omega^w_{p/2}(R^\Xi,h) \lesssim \Lambda_{w,T}\mathcal{N}_\Xi^{1/3}h^\eta\bigl(h^{-3\alpha}\Omega^w_{p/3}(R^\Xi,h)\bigr)^{2/3}.
    $$
    In the subcritical case, take the $L^{q/2}(dh/h)$ quasi-norm and use (a). At criticality, use the power bound in (b) and integrate $h^\eta$. Both give
    $$
    \|R^\Xi\|_{\mathbb{B}^{2\alpha}_{p/2,q/2}(w)} \lesssim \Lambda_{w,T}T^\eta\mathcal{N}_\Xi.
    $$
    For $p = \infin$, integrate the pointwise estimate directly. The same argument with target integrability $p$ gives $R^\Xi \in \mathbb{B}^\alpha_{p,q}(w)$, so $\delta Z = \Xi + R^\Xi$ implies $Z \in B^\alpha_{p,q}(w)$.

    Finally,
    $$
    R^Z_{s,t} = Y'_s\mathbb{X}_{s,t} + R^\Xi_{s,t}, \qquad \|Y'\|_\infin \lesssim |Y'_0| + \Lambda_{w,T}T^\eta[Y']_{B^\alpha_{p,q}(w)}.
    $$
    These bounds prove (e). The difference of two candidate sewn increments is additive and belongs to the subcritical remainder space, or to $\mathbb{B}^1_{p/3,\infin;\circ}(w)$ at criticality. Proposition~2.13 and the prescribed initial value give uniqueness.
\end{proof}

The following local estimate supplies the small factor needed for the fixed-point argument: since $\beta > \alpha + r/p$, the contribution of the controlled-remainder differences tends to zero with the interval length.

\begin{theorem}[Stability of Weighted Rough Integration]
\label{thm: Stability of Weighted Rough Integration}
    Fix $T_0 > 0$, let $1 \leq r < \infin$, and let $w \in A_r$. Suppose
    $$
    0 < p, q \leq \infin, \qquad \frac{r}{p} < \alpha \leq \frac{1}{2}, \qquad \alpha + \frac{r}{p} < \beta \leq 2\alpha,
    $$
    with $1/\infin = 0$. Assume either
    $$
    \textup{(Subcritical)} \hspace{1em} \alpha + \beta > 1, \qquad \textup{ or } \qquad \textup{(Critical)} \hspace{1em} \alpha + \beta = 1 \hspace{1em} \textup{ and } \hspace{1em} 0 < q \leq 3.
    $$
    Let $V = \R^d$ and let $E$ be a finite-dimensional normed vector space, and take
    $$
    \bold{X}, \widetilde{\bold{X}} \in \bold{B}^{\alpha, 2}_{p, q}(w), \qquad (Y, Y') \in \mathscr{D}^\alpha_{p, q; \bold{X}}(w; \mathcal{L}(V, E)), \qquad (\widetilde{Y}, \widetilde{Y}') \in \mathscr{D}^\alpha_{p, q; \widetilde{\bold{X}}}(w; \mathcal{L}(V, E)),
    $$
    where all spaces here are assumed to be over $[0, T_0]$.
    For $M > 0$ suppose all homogeneous sizes of our drivers and controlled paths are $\leq M$.
    Set $J = [a, b] \subset [0, T_0]$ and set $\tau = b - a$.
    For $z, \tilde{z} \in E$ define
    $$
    Z_t = z + \int^t_a Y_u \hspace{.2em} d\bold{X}_u, \qquad \widetilde{Z}_t = \tilde{z} + \int^t_a \widetilde{Y}_u \hspace{.2em} d\widetilde{\bold{X}}_u.
    $$
    Then
    \begin{align*}
        d^J_{\bold{X}, \widetilde{\bold{X}}}((Z, Y), (\widetilde{Z}, \widetilde{Y})) \lesssim_M |z - \tilde{z}| + &|Y_a - \widetilde{Y}_a| + |Y'_a - \widetilde{Y}'_a| + \varrho_{J}(\bold{X}, \widetilde{\bold{X}}) \\&+ \tau^{\beta - \alpha - r/p} \left( [Y' - \widetilde{Y}']_{B^\alpha_{p, q}(w; J)} + \|R^Y - R^{\widetilde{Y}}\|_{\mathbb{B}^\beta_{p/2, q/2}(w; J)} \right)
    \end{align*} where
    $$
    \varrho_{J}(\bold{X}, \widetilde{\bold{X}}) = [X - \widetilde{X}]_{B^\alpha_{p, q}(w; J)} + \|\mathbb{X} - \widetilde{\mathbb{X}}\|_{\mathbb{B}^{2\alpha}_{p/2, q/2}(w)}.
    $$
\end{theorem}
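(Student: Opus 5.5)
We follow the structure of \cite[proof of Theorem 5.5]{FrizSeeger2022}, working throughout on the restricted interval $J$. All the preceding results (deweighting, embeddings, sewing, Lemma~\ref{lem: Controlled-Remainder Estimate}) hold verbatim on $J$ with the restricted weight, and their constants are uniform under translation and rescaling. The controlled integral $Z$ has Gubinelli derivative $Y$ and remainder $R^Z = Y'\mathbb{X} + R^\Xi$, by Theorem~\ref{thm: Weighted Rough Integration}(e). The comparison quantity $d^J_{\bold{X},\widetilde{\bold{X}}}((Z,Y),(\widetilde Z,\widetilde Y))$ therefore consists of four pieces:
\begin{itemize}
    \item $|z-\tilde z|$;
    \item $|Y_a-\widetilde Y_a|$;
    \item $[Y-\widetilde Y]_{B^\alpha_{p,q}(w;J)}$;
    \item $\|R^Z-R^{\widetilde Z}\|_{\mathbb{B}^{2\alpha}_{p/2,q/2}(w;J)}$.
\end{itemize}
The first two appear directly on the right-hand side.

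For the third piece we use the difference estimate of Lemma~\ref{lem: Controlled-Remainder Estimate} on $J$. On $J$ the factor $\Lambda_{w,\tau}\tau^{\beta-\alpha-r/p}$ appears in place of $\Lambda_{w,T}T^{\beta-\alpha-r/p}$. It should instead be handled through the local form
$$
c_J\,\tau^{-r/p}\lesssim w(J)^{-1/p}.
$$
This point needs care, because $w(J)^{-1/p}$ is not small. I plan to absorb this factor by keeping the size $M$ of the weighted norms over $[0,T_0]$ fixed and using the lower-mass bound $w(J)\gtrsim (\tau/T_0)^r w([0,T_0])$. The latter gives $\Lambda_{w,J}\tau^{\beta-\alpha-r/p}\lesssim_{M,T_0}\tau^{\beta-\alpha-r/p}$. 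After this, the terms multiplying $[Y'-\widetilde Y']$ and $E_{\beta,\Delta}$ carry the factor $\tau^{\beta-\alpha-r/p}$ or $\tau^{\alpha-r/p}$. Since $\tau\le T_0$ and $\beta\le 2\alpha$, the latter can be bounded by $T_0^{(2\alpha-\beta)}\tau^{\beta-\alpha-r/p}$. The remaining driver terms give $\varrho_J$.

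For the fourth piece we split
$$
R^Z-R^{\widetilde Z} = (Y'-\widetilde Y')\mathbb{X} + \widetilde Y'(\mathbb{X}-\widetilde{\mathbb{X}}) + (R^\Xi - R^{\widetilde\Xi}).
$$
For the first two summands we bound $\|Y'-\widetilde Y'\|_{\infty;J}\le |Y'_a-\widetilde Y'_a| + C\lambda_J[Y'-\widetilde Y']_J$ using Theorem~\ref{thm: Weighted Besov Holder Embedding}, and $\|\widetilde Y'\|_\infty\lesssim_M 1$.

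For the third summand we apply sewing linearly to $\Xi-\widetilde\Xi$. Its defect is
$$
\delta(\Xi-\widetilde\Xi)_{s,u,t} = -(R^Y-R^{\widetilde Y})_{s,u}X_{u,t} - R^{\widetilde Y}_{s,u}(X-\widetilde X)_{u,t} - \delta(Y'-\widetilde Y')_{s,u}\mathbb{X}_{u,t} - \delta\widetilde Y'_{s,u}(\mathbb{X}-\widetilde{\mathbb{X}})_{u,t}.
$$
We estimate this in $\overline{\mathbb{B}}^{\alpha+\beta}_{p/3,q/3}(w;J)$, using smoothness $\beta$ for the controlled-remainder difference. Lemma~\ref{lem: Young Intermediate 2}, in its two-parameter form, applies because $\beta>(r-1)\cdot 2/p$. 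The sewing threshold is met with $\gamma=\alpha+\beta>1\lor 3r/p$, or critically with $\alpha+\beta=1$ and $q\le 3$, using Theorem~\ref{thm:Critical Weighted Sewing}.

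We then need $R^\Xi-R^{\widetilde\Xi}$ back in $\mathbb{B}^{2\alpha}_{p/2,q/2}(w;J)$. For this we run the pointwise/interpolation argument from the proof of Theorem~\ref{thm: Weighted Rough Integration}: local deweighting plus \cite[Proposition 2.7]{FrizSeeger2022} give
$$
|(R^\Xi-R^{\widetilde\Xi})_{s,t}|\lesssim \mathcal{N}_\Delta |t-s|^{\alpha+\beta}/w([s,t])^{3/p}.
$$
Interpolating with the sewing bound at integrability $p/3$ then yields a gain $\tau^{\beta-\alpha-r/p}$ relative to the $2\alpha$ level. This step accounts for the small factor multiplying the $Y'$- and $R^Y$-differences.

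The main obstacle is bookkeeping the constants on $J$. We must ensure that every $Y$-difference term genuinely carries a positive power of $\tau$, that driver differences appear only through $\varrho_J$, and that initial-value terms appear without a factor. I will try first to express every local estimate through $\zeta_{\gamma,p/k,w}(s,t-s)$, whose global bounds on $[0,T_0]$ are translation-uniform. This avoids $w(J)$-dependent constants, so that only the explicit powers $\tau^{\beta-\alpha-r/p}$ survive.
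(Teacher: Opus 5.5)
Your proposal is correct and follows essentially the same route as the paper. Both arguments use the four-term defect identity for $\delta(\Xi-\widetilde\Xi)$, estimated at smoothness $\alpha+\beta$ via the two-parameter product estimate, followed by sewing of the difference applied linearly. Both then use local deweighting with the mixed-exponent two-parameter embedding to get the pointwise bound, interpolate from $p/3$ to $p/2$ to gain $\tau^{\beta-\alpha-r/p}$, and finish with Lemma~\ref{lem: Controlled-Remainder Estimate} on $J$, where $\Lambda_{w,J}\lesssim\Lambda_{w,T_0}$ by the small-set condition.

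The paper additionally spells out two details you leave implicit. At $\alpha+\beta=1$ it bounds the largest-scale term in $D_c$. It also identifies the sewn remainder of the difference with $R^\Xi-R^{\widetilde\Xi}$ when $\alpha+\beta<3\alpha$, using critical uniqueness or, equivalently, convergence in the stronger remainder space.
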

\begin{proof}
    We follow \cite[Theorem 5.5]{FrizSeeger2022}. Write $\Delta$ for differences and set
    $$
    \gamma = \alpha+\beta, \qquad \eta = \alpha-r/p, \qquad \epsilon = \beta-\alpha-r/p,
    $$
    so $0 < \epsilon \leq \eta$, and put
    $$
    D_J = [\Delta Y']_{B^\alpha_{p,q}(w;J)} + \|\Delta R^Y\|_{\mathbb{B}^\beta_{p/2,q/2}(w;J)}, \qquad K_J = \varrho_J(\bold{X},\widetilde{\bold{X}}) + D_J.
    $$
    The small-set condition gives
    $$
    \Lambda_{w,J} := \left(\frac{\tau^r}{w(J)}\right)^{1/p} \lesssim \Lambda_{w,T_0},
    $$
    with both quantities interpreted as $1$ when $p = \infin$. All constants below are therefore uniform in $J$.

    For the models $\Xi_{s,t} = Y_sX_{s,t}+Y'_s\mathbb{X}_{s,t}$ and $\widetilde{\Xi}$, subtracting the defect identities gives
    \begin{align*}
        \delta\Delta\Xi_{s,u,t} = {}& -\Delta R^Y_{s,u}X_{u,t} - R^{\widetilde{Y}}_{s,u}\Delta X_{u,t} \\
        &- \delta\Delta Y'_{s,u}\mathbb{X}_{u,t} - \delta\widetilde{Y}'_{s,u}\Delta\mathbb{X}_{u,t}.
    \end{align*}
    The weighted product estimate, lowering smoothness $3\alpha$ to $\gamma$ where necessary, yields
    $$
    \|\delta\Delta\Xi\|_{\overline{\mathbb{B}}^\gamma_{p/3,q/3}(w;J)} \lesssim_M K_J.
    $$
    On any interval $I \subseteq J$, Proposition~\ref{prop:Pointwise Rough Path Embedding} and Lemma~\ref{lem: Controlled-Remainder Estimate}, including their difference estimates, also give
    \begin{align*}
        |\delta\Delta\Xi_{s,u,t}| \lesssim_M c_I^3K_J\bigl(& (u-s)^{\beta-2r/p}(t-u)^\eta \\
        &+ (u-s)^\eta(t-u)^{\beta-2r/p}\bigr), \qquad c_I = \frac{|I|^{r/p}}{w(I)^{1/p}}.
    \end{align*}
    Here $c_I = 1$ for $p = \infin$, and factors $|I|^{2\alpha-\beta}$ are absorbed using $|I| \leq T_0$.

    Since $\gamma > 3r/p$, Theorems~\ref{thm:Subcritical Weighted Sewing} and~\ref{thm:Critical Weighted Sewing} apply in the respective regimes. At $\gamma = 1$, the largest-scale argument in the preceding proof bounds the additional critical defect term by $C_MK_J$. For $p = \infin$, use the corresponding unweighted sewing results. Uniqueness identifies the sewn remainder with
    $$
    \Delta R^\Xi = R^\Xi-R^{\widetilde{\Xi}}.
    $$
    In particular, when $\gamma = 1 < 3\alpha$, the original subcritical remainders belong to $\mathbb{B}^1_{p/3,\infin;\circ}(w;J)$, so critical uniqueness still applies.

    Repeat the local embedding and interpolation steps of the preceding proof with smoothness $\gamma$. The mixed exponents above permit splitting parameter $\eta/(\gamma-3r/p) \in (0,1/2)$, giving
    $$
    |\Delta R^\Xi_{s,t}| \lesssim_M K_J\frac{|t-s|^\gamma}{w([s,t])^{3/p}}.
    $$
    Interpolation from $p/3$ to $p/2$ then yields
    $$
    \|\Delta R^\Xi\|_{\mathbb{B}^{2\alpha}_{p/2,q/2}(w;J)} \lesssim_M \Lambda_{w,J}\tau^\epsilon K_J \lesssim_M \tau^\epsilon K_J.
    $$
    In the subcritical case, use the sewn remainder bound at scale index $q/3$; at criticality, use its power bound at scale index $\infin$ and integrate $h^\epsilon$. For $p = \infin$, the pointwise estimate gives the same conclusion directly.

    Finally, the one-parameter embedding gives
    $$
    \|\Delta Y'\|_{L^\infin(J)} \lesssim |\Delta Y'_a| + \Lambda_{w,J}\tau^\eta[\Delta Y']_{B^\alpha_{p,q}(w;J)},
    $$
    while $\|\widetilde{Y}'\|_{L^\infin(J)} \lesssim_M 1$. Combining
    $$
    \Delta R^Z_{s,t} = \Delta Y'_s\mathbb{X}_{s,t} + \widetilde{Y}'_s\Delta\mathbb{X}_{s,t} + \Delta R^\Xi_{s,t}
    $$
    with Lemma~\ref{lem: Controlled-Remainder Estimate} gives
    $$
    [\Delta Y]_{B^\alpha_{p,q}(w;J)} + \|\Delta R^Z\|_{\mathbb{B}^{2\alpha}_{p/2,q/2}(w;J)} \lesssim_M |\Delta Y'_a| + \varrho_J(\bold{X},\widetilde{\bold{X}}) + \tau^\epsilon D_J.
    $$
    We used $\epsilon \leq \eta$ and absorbed the remaining powers of $T_0$ into the constant. Adding $|z-\widetilde{z}|+|Y_a-\widetilde{Y}_a|$ proves the assertion.
\end{proof}

\begin{proposition}[Composition]
\label{prop: Rough Composition Estimates}
    Let $E, E_0$ be finite-dimensional vector spaces and let $(Y, Y') \in \mathscr{D}^\alpha_{p, q; \bold{X}}(w; E)$. For $F: E \rarr E_0$ define
    $$
    \mathcal{C}_F(Y, Y') = (F(Y), DF(Y) Y').
    $$
    \begin{enumerate}[label=(\alph*)]
        \item If $F \in C^2_b(E; E_0)$ then
        $$
        \mathcal{C}_F(Y, Y') \in \mathscr{D}^\alpha_{p, q; \bold{X}}(w; E_0),
        $$ and for any $M > 0$ there is some $\mathcal{C}_{M, F}$ such that
        $$
        \| \mathcal{C}_{F}(Y, Y') \|_{\mathscr{D}_X} \leq C_{M, F}
        $$ whenever the homogeneous driver size and the input controlled-path size are $\leq M$.

        \item Let $0 < \delta \leq 1$ and $F \in C^{2, \delta}_{b}(E; E_0)$. For a second controlled pair $(\widetilde{Y}, \widetilde{Y}')$ over $\widetilde{\bold{X}}$, set
        $$
        D = \varrho(\bold{X}, \widetilde{\bold{X}}) + d_{\bold{X}, \widetilde{\bold{X}}}((Y, Y'), (\widetilde{Y}, \widetilde{Y}')).
        $$
        On sets where both driver and controlled-path sizes are $\leq M$,
        $$
        d_{\bold{X}, \widetilde{\bold{X}}}(\mathcal{C}_F(Y, Y'), \mathcal{C}_F(\widetilde{Y}, \widetilde{Y}')) \leq C_{M, F} D^\delta.
        $$

        \item In particular, if $F \in C^{2, 1}_b(E; E_0)$ then
        $$
        d_{\bold{X}, \widetilde{\bold{X}}}(\mathcal{C}_F(Y, Y'), \mathcal{C}_F(\widetilde{Y}, \widetilde{Y}')) \leq C_{M, F} D.
        $$
    \end{enumerate}
\end{proposition}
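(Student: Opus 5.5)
We would follow the structure of \cite[Lemma 5.3]{FrizSeeger2022}: Taylor-expand $F$ around $Y_s$, and reduce every estimate to the weighted composition estimate (Proposition~\ref{prop: Hölder Composition Estimate}), weighted Hölder in the integrability variable, and the pointwise embeddings of \S3. Throughout we use continuous representatives and the sup bounds
$$
\|Y\|_\infin+\|Y'\|_\infin\lesssim_M 1,
$$
which follow from Theorem~\ref{thm: Weighted Besov Holder Embedding} together with Lemma~\ref{lem: Controlled-Remainder Estimate}. The latter also gives $[Y]_{B^\alpha_{p,q}(w)}\lesssim_M 1$.

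For (a), the Gubinelli derivative $DF(Y)Y'$ is handled by the product rule. We write
$$
\delta(DF(Y)Y')_{s,t}=DF(Y_t)\delta Y'_{s,t}+\delta DF(Y)_{s,t}Y'_s.
$$
Then $[DF(Y)]_{B^\alpha_{p,q}(w)}\le\|D^2F\|_\infin[Y]_{B^\alpha_{p,q}(w)}$ by the Lipschitz case $\delta=1$ of the composition estimate, and the $L^p(w)$ part comes from boundedness. For the remainder we use
$$
R^{F(Y)}_{s,t}=DF(Y_s)R^Y_{s,t}+\int_0^1(1-\theta)D^2F(Y_s+\theta\delta Y_{s,t})[\delta Y_{s,t},\delta Y_{s,t}]\,d\theta.
$$
The first term is bounded in $\mathbb{B}^{2\alpha}_{p/2,q/2}(w)$ by $\|DF\|_\infin\|R^Y\|$. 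The second is bounded pointwise by $\|D^2F\|_\infin|\delta Y_{s,t}|^2$, and weighted Hölder at each fixed $h$ gives $\Omega^w_{p/2}(\cdot,h)\le\|D^2F\|_\infin\omega^w_p(Y,h)^2$. Taking the $L^{q/2}(d\tau/\tau)$ quasi-norm of $\tau^{-2\alpha}\omega^w_p(Y,\tau)^2$ gives $[Y]^2_{B^\alpha_{p,q}(w)}$. No Muckenhoupt input is needed beyond what is already in the embeddings.

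For (b) and (c), we write $\Delta$ for differences and expand $\Delta R^{F(Y)}$ by interpolating linearly between $(Y,\widetilde Y)$. The term $DF(Y_s)R^Y-DF(\widetilde Y_s)R^{\widetilde Y}$ splits as
$$
DF(Y_s)\Delta R^Y+\bigl(DF(Y_s)-DF(\widetilde Y_s)\bigr)R^{\widetilde Y},
$$
and these are bounded by $\|\Delta R^Y\|$ and $\|\Delta Y\|_\infin\|R^{\widetilde Y}\|$ respectively. The sup norm $\|\Delta Y\|_\infin$ is controlled by $|\Delta Y_0|$ plus $\Lambda_{w,T}T^{\alpha-r/p}[\Delta Y]$, and $[\Delta Y]$ in turn by the difference estimate of Lemma~\ref{lem: Controlled-Remainder Estimate}, which is $\lesssim_M D$. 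For the quadratic Taylor term, we write the difference as
$$
\int(1-\theta)\bigl\{D^2F(\cdot)[\delta Y,\delta Y]-D^2F(\tilde\cdot)[\delta\widetilde Y,\delta\widetilde Y]\bigr\}\,d\theta.
$$
Using $[D^2F]_\delta$, this is bounded pointwise by
$$
[D^2F]_\delta\bigl(\|\Delta Y\|_\infin\bigr)^\delta|\delta Y|^2+\|D^2F\|_\infin(|\delta Y|+|\delta\widetilde Y|)|\delta\Delta Y|.
$$
Weighted Hölder again yields the $\mathbb{B}^{2\alpha}_{p/2,q/2}(w)$ bound $\lesssim_M D^\delta+D$, and $D\lesssim_M D^\delta$ on bounded sets because $D\lesssim M$. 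The derivative part $\Delta(DF(Y)Y')$ is treated the same way. Its most delicate term is $[DF(Y)-DF(\widetilde Y)]_{B^\alpha_{p,q}(w)}$, which we handle via the mean-value representation
$$
DF(Y)-DF(\widetilde Y)=\int_0^1D^2F(\widetilde Y+\theta\Delta Y)\,d\theta\,\Delta Y.
$$
Its increments involve $\delta\Delta Y$ times a bounded factor, plus $\Delta Y$ times increments of $D^2F(\cdots)$, which are bounded by $\|D^3F\|$-free Hölder increments: only $[D^2F]_\delta|\delta Y|^\delta$ is available. This gives a term $\|\Delta Y\|_\infin\,\omega^w_{p}(Y,h)^\delta$ in $L^p(w_h)$, only at smoothness $\delta\alpha$, not $\alpha$.

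This is the main obstacle. In \cite{FrizSeeger2022} it is resolved by bounding the product differently: the increment of $DF(Y_\cdot)-DF(\widetilde Y_\cdot)$ is estimated both by $2\|D^2F\|(|\delta Y|+|\delta\widetilde Y|)$ and by $C|\Delta Y|\cdot$ (Hölder factor), and one interpolates. We would first try the cruder route that only needs $D^\delta$, which the statement permits. We bound the increment by
$$
\min\bigl\{C(|\delta Y|+|\delta\widetilde Y|),\,C\|\Delta Y\|_\infin\bigr\}+C|\delta\Delta Y|\le C(|\delta Y|+|\delta\widetilde Y|)^{\delta}\|\Delta Y\|_\infin^{1-\delta}+C|\delta\Delta Y|.
$$
This loses smoothness, so instead we would use $\min\{a,b\}\le a^{1-\delta}b^{\delta}$ arranged so that the full power of the increment survives. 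The route that should work is
$$
|\delta(DF(Y)-DF(\widetilde Y))|\lesssim|\delta\Delta Y|+[D^2F]_\delta\|\Delta Y\|_\infin\bigl(|\delta Y|^\delta+|\delta\widetilde Y|^\delta\bigr)\wedge\bigl(|\delta Y|+|\delta\widetilde Y|\bigr).
$$
Applying $x\wedge y\le x^{\delta}y^{1-\delta}$ to the second factor, with $x=\|\Delta Y\|_\infin$ and $y=|\delta Y|+|\delta\widetilde Y|$, gives $\|\Delta Y\|_\infin^\delta|\delta Y|$ up to constants, hence a bound $\lesssim_M D^\delta[Y]$ at full smoothness $\alpha$. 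Here the weight enters only through the sup-norm embedding constant $\Lambda_{w,T}$. Part (c) then follows from (b) with $\delta=1$.
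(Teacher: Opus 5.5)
Your part (a), and your handling of the remainder difference in (b), match the paper. That includes the Taylor-remainder bound $[D^2F]_\delta e^\delta|\delta Y|^2+\|D^2F\|_\infty(|\delta Y|+|\delta\widetilde Y|)|\delta\Delta Y|$, where $e=\|\Delta Y\|_\infty$.

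The gap is the estimate of $G=DF(Y)-DF(\widetilde Y)$ in $B^\alpha_{p,q}(w)$. You need it for $[\Delta(DF(Y)Y')]_B$, since $\Delta(DF(Y)Y')=GY'+DF(\widetilde Y)\Delta Y'$. Your target form $e^\delta|\delta Y|$ is the right one, but your derivation does not produce it. With $x=e$ and $y=|\delta Y|+|\delta\widetilde Y|$, the inequality $x\wedge y\le x^\delta y^{1-\delta}$ yields $e^\delta y^{1-\delta}$, not $e^\delta y$. There the increment appears only to the power $1-\delta$, so you control only smoothness $(1-\delta)\alpha$. That is no better than your ``cruder route''.

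No interpolation of the two bounds you actually have can repair this. Those bounds are the spatial mean-value bound $|\delta G|\lesssim|\delta\Delta Y|+e\,y^\delta$ and the crude bound $|\delta G|\lesssim y$. In the regime $y^{1-\delta}\le e$, which contains all sufficiently short increments, $\min\{y,e\,y^\delta\}=y$. The inequality $y\lesssim e^\delta y$ fails for small $e$. Hence (b) is not established for $0<\delta<1$. For $\delta=1$ your spatial representation already gives $|\delta G|\lesssim|\delta\Delta Y|+e\,(|\delta Y|+|\delta\widetilde Y|)$, so (c) can be proved directly, but not by appeal to (b).

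The missing idea is to apply the fundamental theorem of calculus along the time segment, not between $Y$ and $\widetilde Y$. Write $\delta DF(Y)_{s,t}=\int_0^1D^2F(Y_s+\theta\delta Y_{s,t})[\delta Y_{s,t}]\,d\theta$, do the same for $\widetilde Y$, and subtract. The corresponding points $Y_s+\theta\delta Y_{s,t}$ and $\widetilde Y_s+\theta\delta\widetilde Y_{s,t}$ differ by at most $e$, so
\[
|\delta G_{s,t}|\le\|D^2F\|_\infty|\delta\Delta Y_{s,t}|+[D^2F]_\delta\,e^\delta|\delta\widetilde Y_{s,t}|.
\]
Hence $[G]_{B^\alpha_{p,q}(w)}\lesssim[\Delta Y]_{B^\alpha_{p,q}(w)}+e^\delta[\widetilde Y]_{B^\alpha_{p,q}(w)}\lesssim_M D+D^\delta$ at full smoothness $\alpha$. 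Together with $\|G\|_\infty\lesssim e$, the product estimate then gives $[DF(Y)Y'-DF(\widetilde Y)\widetilde Y']_B\lesssim_M D+D^\delta$. This is the paper's argument. It is the same corresponding-point trick you already used for the Taylor remainder, and with it the rest of your proof goes through.
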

\begin{proof}
    We use the Taylor expansion underlying \cite[Proposition 5.2]{FrizSeeger2022}. Write $B = B^\alpha_{p,q}(w)$ and $\mathbb{B} = \mathbb{B}^{2\alpha}_{p/2,q/2}(w)$. The elementary product estimates
    $$
    [UV]_B \lesssim \|U\|_\infin[V]_B + \|V\|_\infin[U]_B, \qquad \|(\delta U)(\delta V)\|_{\mathbb{B}} \lesssim [U]_B[V]_B
    $$
    follow from pointwise bounds and H\"older's inequality in $w_h(s) \hspace{.2em} ds$ and in the scale variable, including the stated quasi-Banach range.

    Set $Z = F(Y)$ and $Z' = DF(Y)Y'$. Then
    $$
    [Z]_B \leq \|DF\|_\infin[Y]_B, \qquad [Z']_B \lesssim \|DF\|_\infin[Y']_B + \|D^2F\|_\infin\|Y'\|_\infin[Y]_B.
    $$
    Taylor's formula gives
    $$
    R^Z_{s,t} = DF(Y_s)R^Y_{s,t} + T_F(Y)_{s,t},
    $$
    where
    $$
    T_F(Y)_{s,t} = \int_0^1 (1-\theta)D^2F(Y_s+\theta\delta Y_{s,t})[\delta Y_{s,t},\delta Y_{s,t}] \hspace{.2em} d\theta.
    $$
    Consequently,
    $$
    \|R^Z\|_{\mathbb{B}} \lesssim \|DF\|_\infin\|R^Y\|_{\mathbb{B}} + \|D^2F\|_\infin[Y]_B^2.
    $$
    Lemma~\ref{lem: Controlled-Remainder Estimate} with $\beta = 2\alpha$ and the one-parameter embedding control $[Y]_B$ and $\|Y'\|_\infin$ on the prescribed bounded sets. Together with the initial-value bounds, this proves (a).

    For (b), write $\Delta Y = Y-\widetilde{Y}$, $\Delta Y' = Y'-\widetilde{Y}'$, and $e = \|\Delta Y\|_\infin$. The same embeddings give
    $$
    [\Delta Y]_B + e + \|\Delta Y'\|_\infin \lesssim_M D.
    $$
    Put $G = DF(Y)-DF(\widetilde{Y})$. The fundamental theorem of calculus and the $\delta$-H\"older bound on $D^2F$ imply
    $$
    \|G\|_\infin \lesssim_F e, \qquad [G]_B \lesssim_F [\Delta Y]_B + e^\delta[\widetilde{Y}]_B.
    $$
    Indeed, corresponding points on the segments joining $Y_s$ to $Y_t$ and $\widetilde{Y}_s$ to $\widetilde{Y}_t$ differ by at most $e$. Using
    $$
    Z'-\widetilde{Z}' = GY' + DF(\widetilde{Y})\Delta Y'
    $$
    and the product estimate therefore gives $[Z'-\widetilde{Z}']_B \lesssim_{M,F} D+D^\delta$.

    Subtracting the integral Taylor remainders yields
    $$
    |T_F(Y)_{s,t}-T_F(\widetilde{Y})_{s,t}| \lesssim_F (|\delta Y_{s,t}|+|\delta\widetilde{Y}_{s,t}|)|\delta\Delta Y_{s,t}| + e^\delta|\delta\widetilde{Y}_{s,t}|^2.
    $$
    Hence
    $$
    \|T_F(Y)-T_F(\widetilde{Y})\|_{\mathbb{B}} \lesssim_F ([Y]_B+[\widetilde{Y}]_B)[\Delta Y]_B + e^\delta[\widetilde{Y}]_B^2.
    $$
    Combining this with
    \begin{align*}
        R^Z_{s,t}-R^{\widetilde{Z}}_{s,t} = {}& G_sR^Y_{s,t} + DF(\widetilde{Y}_s)(R^Y_{s,t}-R^{\widetilde{Y}}_{s,t}) \\
        &+ T_F(Y)_{s,t}-T_F(\widetilde{Y})_{s,t}
    \end{align*}
    proves $\|R^Z-R^{\widetilde{Z}}\|_{\mathbb{B}} \lesssim_{M,F} D+D^\delta$. The initial-value differences are bounded by $C_{M,F}D$. Since $D$ is bounded on the prescribed set, $D+D^\delta \lesssim_M D^\delta$, proving (b). Taking $\delta = 1$ gives (c).
\end{proof}

\subsection{Level-Two Weighted Rough Differential Equations}
We continue to assume $r/p < \alpha \leq 1/2$. We now apply the preceding integration and composition estimates to rough differential equations. Throughout we take $F \in C^{2, 1}_b$, which gives local Lipschitz continuity of composition in the full controlled-path space.

The two relevant thresholds are
$$
\alpha > \frac{r}{p}, \qquad 3\alpha > 1 \lor \frac{3r}{p}.
$$
The first provides pointwise control and the positive exponent $\alpha - r/p$ needed for contraction on sufficiently short intervals. The second permits subcritical sewing and, under the first condition, is equivalent to $\alpha > 1/3$. At $\alpha = 1/3$, we instead use critical sewing with $q \leq 3$; the subsequent integrability regain still benefits from the strictly positive gap $\alpha - r/p$.

\begin{theorem}[Level-$2$ Existence and Uniqueness]
\label{thm: Level-2 Existence and Uniqueness}
    Suppose
    $$
    0 < p, q \leq \infin, \qquad \frac{r}{p} < \alpha \leq \frac{1}{2}.
    $$
    Let
    $$
    F \in C^{2, 1}_{b}(\R^m; \mathcal{L}(\R^d, \R^m)), \qquad \bold{X} = (1, X, \mathbb{X}) \in \bold{B}^{\alpha, 2}_{p, q}(w),
    $$
    and assume either
    $$
    \textup{(Subcritical)} \hspace{1em} \alpha > \frac{1}{3}, \qquad \textup{ or } \qquad \textup{(Critical)} \hspace{1em} \alpha = \frac{1}{3} \hspace{1em} \textup{ and } \hspace{1em} 0 < q \leq 3.
    $$
    Then for every $\xi \in \R^m$ there exists a unique continuous controlled solution
    $$
    Y_t = \xi + \int^t_0 F(Y_s) \hspace{.2em} d\bold{X}_s, \qquad (Y, F(Y)) \in \mathscr{D}^\alpha_{p, q; \bold{X}}(w; \R^m).
    $$
    Moreover, for every $M > 0$ there is a $C_M > 0$ such that the bounds
    $$
    |\xi|, \|F\|_{C^{2, 1}_{b}}, \vertiii{\bold{X}}_{\bold{B}^{\alpha, 2}_{p, q}(w)} \leq M
    $$ imply
    $$
    |\xi| + |F(\xi)| + [F(Y)]_{B^\alpha_{p, q}(w)} + \|R^Y\|_{\mathbb{B}^{2\alpha}_{p/2, q/2}(w)} \leq C_M,
    $$ where
    $$
    R^Y_{s, t} = \delta Y_{s, t} - F(Y_s) X_{s, t}.
    $$
\end{theorem}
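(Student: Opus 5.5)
We follow the Picard scheme of \cite[Theorem 5.6]{FrizSeeger2022}, run on short intervals, exactly as in the proof of Theorem~\ref{thm: Young Existence and Uniqueness}.

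\emph{Solution map.} For $J=[a,b]$ and $\eta\in\R^m$ we consider the set $\mathcal X_{J,\eta,R}$ of controlled pairs $(Y,Y')\in\mathscr D^\alpha_{p,q;\bold X}(w;J)$ with $Y_a=\eta$, $Y'_a=F(\eta)$, and $[Y']_{B^\alpha_{p,q}(w;J)}+\|R^Y\|_{\mathbb B^{2\alpha}_{p/2,q/2}(w;J)}\le R$. We equip it with the powered metric $d_{\bold X,\bold X}^\theta$ recorded after the controlled-path definition; it is complete. The map is
\[
\mathcal M(Y,Y')=\Bigl(\eta+\int_a^\cdot F(Y_u)\,d\bold X_u,\;F(Y)\Bigr).
\]
It is well defined by Proposition~\ref{prop: Rough Composition Estimates}(a) followed by Theorem~\ref{thm: Weighted Rough Integration}. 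The critical case $\alpha=1/3$, $q\le3$ is covered by part (b) of that theorem.

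\emph{Invariance.} We use Theorem~\ref{thm: Weighted Rough Integration}(e) on $J$ together with the small-set bound $\Lambda_{w,J}\lesssim\Lambda_{w,T}$. The new remainder is bounded by $\|F\|_\infin\|\mathbb X\|$ plus terms carrying a factor $|J|^{\alpha-r/p}$ times a polynomial in $R$ and $M$. The new Gubinelli derivative $F(Y)$ has seminorm at most $\|DF\|_\infin[Y]_{B^\alpha_{p,q}(w;J)}$. By Lemma~\ref{lem: Controlled-Remainder Estimate}, $[Y]_J\lesssim|F(\eta)|[X]+|J|^{\alpha-r/p}\Lambda(\dots)$. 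Choosing $R\simeq 1+C(M)$ and then $|J|\le L_0(M)$ small, uniformly in position and in $\eta$, gives $\mathcal M(\mathcal X)\subset\mathcal X$.

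\emph{Contraction.} This is the main step. We apply Theorem~\ref{thm: Stability of Weighted Rough Integration} with the same driver, so $\varrho_J=0$, and with equal initial data. It bounds $d^J$ of the images by $\tau^{\beta-\alpha-r/p}$ times the controlled distance of the integrands. Note that $\Delta Y'_a=0$ because both derivatives start at $F(\eta)$.

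The integrands are $\mathcal C_F(Y,Y')$, so we need Lipschitz composition. This is supplied by Proposition~\ref{prop: Rough Composition Estimates}(c), using $F\in C^{2,1}_b$. The composition is taken in the weaker $\mathbb B^\beta$ remainder norm, with $\beta\in(\alpha+r/p,2\alpha]$.

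The critical case needs $\alpha+\beta$ to stay admissible. If $\alpha>1/3$, we take $\beta<2\alpha$ with $\alpha+\beta>1$, which is possible since $\alpha+2\alpha>1$; if needed we simply use $\beta=2\alpha$. If $\alpha=1/3$, we take $\beta=2\alpha$, so $\alpha+\beta=1$ and $q\le3$, which is the critical regime of that theorem. In both cases $\beta>\alpha+r/p$ holds because $\alpha>r/p$.

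We expect the main obstacle here. The small factor $\tau^{\beta-\alpha-r/p}$ must beat constants depending on $R$, uniformly in position; this uses the localised constants $\Lambda_{w,J}\lesssim\Lambda_{w,T}$. The quasi-metric must also be handled through the power $\theta$. Once the factor is at most $1/2$ for $|J|\le L_0$, Banach's fixed-point theorem gives a unique local solution. An a priori bound, obtained by absorbing terms as in the Young proof, shows that every solution lies in the ball, so uniqueness holds among all controlled solutions.

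\emph{Globalisation.} We patch solutions on the overlapping intervals $J_j=[j\ell,(j+2)\ell]$ with $2\ell\le L_0$, using local uniqueness to glue them; this uses the additivity and locality of the rough integral. The global estimates on $[F(Y)]$ and $R^Y$ follow as in Theorem~\ref{thm: Young Existence and Uniqueness}. Small scales $h\le\ell$ are controlled by summing the local moduli over the $N=T/\ell$ intervals. Large scales are controlled by sup-norm and oscillation bounds, through $\omega^w_p\le W^{1/p}\operatorname{osc}$ and the corresponding bound for two-parameter maps, with the pointwise estimates of Lemma~\ref{lem: Controlled-Remainder Estimate}. Since $N$ depends only on $M$, this yields $C_M$.
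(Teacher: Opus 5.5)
Your proposal is correct and follows the paper's proof essentially step for step. The common steps are: the same ball of controlled pairs with prescribed initial data $(\eta,F(\eta))$; invariance from the controlled-remainder lemma and part (e) of the rough integration theorem, with an $R$-independent leading constant; contraction from the stability theorem combined with Lipschitz composition; and globalisation over overlapping intervals. In the contraction step, $\beta=2\alpha$ already covers both regimes, so the option $\beta<2\alpha$ is unnecessary.

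The one step to phrase more carefully is uniqueness. The rough a priori estimate is superlinear in the controlled size, for instance through the Taylor term $\|D^2F\|_\infin[Y]^2$ in $R^{F(Y)}$, so the linear absorption of the Young proof does not transfer verbatim. Instead, as the paper does, choose the interval length depending on the finite sizes of the given solutions before absorbing.
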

\begin{proof}
    We follow the fixed-point argument of \cite[Theorem 5.6]{FrizSeeger2022}. Fix bounds $M$ on the data and put $\eta = \alpha-r/p > 0$. For $J = [a,b]$, write $\tau = b-a$ and
    $$
    \mathcal{S}_J(Y,Y') = [Y']_{B^\alpha_{p,q}(w;J)} + \|R^Y\|_{\mathbb{B}^{2\alpha}_{p/2,q/2}(w;J)}.
    $$
    Prescribe $Y_a = y$ and $Y'_a = F(y)$, and define
    $$
    \Phi(Y,Y') = \left(y+\int_a^\cdot F(Y_s) \hspace{.2em} d\bold{X}_s,\ F(Y)\right).
    $$
    Proposition~\ref{prop: Rough Composition Estimates} and Theorem~\ref{thm: Weighted Rough Integration} show that $\Phi$ maps controlled pairs with these initial values into the same affine space. The closed set $\mathcal{S}_J(Y,Y') \leq R$ is nonempty, since it contains
    $$
    Y^0_t = y+F(y)X_{a,t}, \qquad (Y^0)'_t = F(y), \qquad R^{Y^0} = 0.
    $$
    It is complete for the metric obtained by raising the two difference seminorms to the power $\theta = 1\land(p/2)\land(q/2)$ and summing.

    All local constants can be chosen independently of $a$ and $y$. Indeed, translation replaces $F$ by $F_y(x) = F(y+x)$ without changing its $C_b^{2,1}$ bound, and the small-set condition gives
    $$
    \Lambda_{w,J} := \left(\frac{\tau^r}{w(J)}\right)^{1/p} \lesssim \Lambda_{w,T},
    $$
    with the usual convention when $p = \infin$. On the closed set above, Lemma~\ref{lem: Controlled-Remainder Estimate} with $\beta = 2\alpha$ yields
    $$
    [Y]_{B^\alpha_{p,q}(w;J)} \lesssim |F(y)|[X]_{B^\alpha_{p,q}(w;J)} + \Lambda_{w,J}\tau^\eta R\left(1+[X]_{B^\alpha_{p,q}(w;J)}\right).
    $$
    Consequently, the composition estimates and Theorem~\ref{thm: Weighted Rough Integration}(e), applied to $(F(Y),DF(Y)Y')$, give
    $$
    \mathcal{S}_J(\Phi(Y,Y')) \leq C_0+C_R\tau^\eta.
    $$
    Here $C_0$ is independent of $R$: the initial derivative of the integrand is $DF(y)F(y)$, while all remaining contributions have the factor $\tau^\eta$. Choose $R > 2C_0$, then choose $\tau_0 > 0$ sufficiently small that $\Phi$ preserves this closed set whenever $\tau \leq \tau_0$.

    For two pairs in the set, apply Theorem~\ref{thm: Stability of Weighted Rough Integration} with $\beta = 2\alpha$ to their compositions. Their initial integrand values are both $F(y)$ and their initial integrand derivatives are both $DF(y)F(y)$, so all initial differences vanish. Proposition~\ref{prop: Rough Composition Estimates}(c) gives
    $$
    d^J_{\bold{X},\bold{X}}\bigl(\Phi(Y,Y'),\Phi(\widetilde{Y},\widetilde{Y}')\bigr) \leq C_R\tau^\eta d^J_{\bold{X},\bold{X}}\bigl((Y,Y'),(\widetilde{Y},\widetilde{Y}')\bigr).
    $$
    Shrinking $\tau_0$ makes $\Phi$ a contraction in the complete metric described above. Its fixed point satisfies $Y' = F(Y)$ and the required integral equation. This argument includes $\alpha = 1/3$, since $q \leq 3$ permits critical sewing and the regain exponent remains $\eta > 0$.

    The admissible length $\tau_0$ is uniform in the starting point and initial value. We therefore iterate the construction on finitely many overlapping intervals of length at most $\tau_0$. Restriction preserves the bound $\mathcal{S}_J \leq R$, so local uniqueness makes the solutions agree on overlaps. The local path estimates and the one-parameter embedding bound the oscillation on each interval uniformly; hence the resulting path is bounded on $[0,T]$.

    Choose an overlap length $h_* > 0$ such that every increment of length at most $h_*$ lies in one of these intervals. Summing the local weighted integrals gives the global Besov bounds for $Y$, $F(Y)$, and $R^Y$ at these scales. Larger increments are bounded using the supremum bounds and $\int_0^{T-h}w_h(s) \hspace{.2em} ds \leq w([0,T])$. This proves that $(Y,F(Y))$ is globally controlled and gives the asserted a priori estimate. Locality and additivity of the rough integral give the equation on all of $[0,T]$.

    Finally, any two controlled solutions have finite controlled-path sizes. On sufficiently short intervals with the same initial value, the preceding difference estimate, with a constant depending on those sizes, can be absorbed to show that they coincide. Iterating proves uniqueness on $[0,T]$.
\end{proof}

\begin{proposition}[Davie Characterisation]
    Under the conditions of Theorem~\ref{thm: Level-2 Existence and Uniqueness} let $Y \in B^\alpha_{p, q}(w; \R^m)$ with $Y_0 = \xi$. Then $(Y, F(Y))$ is the solution to
    $$
    Y_t = \xi + \int^t_0 F(Y_s) \hspace{.2em} d\bold{X}_s
    $$ if and only if
    $$
    D_{s, t} = \delta Y_{s, t} - F(Y_s) X_{s, t} - (DFF)(Y_s) \mathbb{X}_{s, t}
    $$ satisfies
    $$
    \begin{cases}
        D \in \mathbb{B}^{3\alpha}_{p/3, q/3}(w), & \alpha > 1/3, \\
        D \in \mathbb{B}^{\omega_{q/3}^{(3)}}_{p/3, q/3}(w) \cap \mathbb{B}^{1}_{p/3, \infin; \circ}(w), & \alpha = 1/3,
    \end{cases}
    $$
\end{proposition}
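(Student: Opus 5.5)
The plan is to prove the two implications separately, both by identifying $D$ with an explicit combination of the rough-integral remainder and a Taylor-type remainder, exactly as in the unweighted Davie characterisation of \cite{FrizSeeger2022}.

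\emph{Forward direction.} Let $(Y,F(Y))$ be the solution given by Theorem~\ref{thm: Level-2 Existence and Uniqueness}. By Proposition~\ref{prop: Rough Composition Estimates}, the pair $(F(Y),DF(Y)F(Y))$ is controlled by $\bold{X}$. Theorem~\ref{thm: Weighted Rough Integration} therefore gives
$$
\delta Y_{s,t}=F(Y_s)X_{s,t}+(DFF)(Y_s)\mathbb{X}_{s,t}+R^\Xi_{s,t}.
$$
Hence $D=R^\Xi$. Part (a) of that theorem places $D$ in $\mathbb{B}^{3\alpha}_{p/3,q/3}(w)$ when $\alpha>1/3$. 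Part (b) gives $D\in\mathbb{B}^1_{p/3,\infin;\circ}(w)\cap\mathbb{B}^{\omega^{(3)}_{q/3}}_{p/3,q/3}(w)$ when $\alpha=1/3$, taking $\sigma=q/3$ there.

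\emph{Converse direction.} Suppose $D$ lies in the stated space. The first task is to show that $(Y,F(Y))$ is controlled. Its remainder is $R^Y=(DFF)(Y)\mathbb{X}+D$. The term $(DFF)(Y)\mathbb{X}$ lies in $\mathbb{B}^{2\alpha}_{p/2,q/2}(w)$ because $Y$ is continuous and bounded by Theorem~\ref{thm: Weighted Besov Holder Embedding}. For $D$, I need to trade smoothness $3\alpha$ (or $1$ with a loss) for integrability $p/2$. I would do this by first obtaining a pointwise bound on $D$ and then interpolating with Lemma~\ref{lem: Weighted Interpolation}, as in the last step of the proof of Theorem~\ref{thm: Weighted Rough Integration}.

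The pointwise bound is where I expect the main obstacle. Computing $\delta D_{s,u,t}$ shows that it is built from products of $R^Y_{s,u}$, $\delta F(Y)_{s,u}$, $\delta (DFF)(Y)_{s,u}$ with $X_{u,t}$ and $\mathbb{X}_{u,t}$, plus Taylor remainders of $F$. A priori only $\delta Y$, not $R^Y$, is pointwise controlled. I would break this circularity as follows.
\begin{itemize}
\item Write $R^Y=(DFF)(Y)\mathbb{X}+D$ and note that $\delta R^Y$ is expressed through $\delta F(Y)\otimes X$, which is pointwise controlled by Theorem~\ref{thm: Weighted Besov Holder Embedding}.
\item Apply Theorem~\ref{thm: Quasi Banach Two Parameter Embedding} with modulus $h^{2\alpha}$ to obtain $|R^Y_{s,t}|\lesssim\zeta_{2\alpha,p/2,w}(s,t-s)$.
\item Feed this bound into the mixed-scale defect estimate $\textup{C}_{3\alpha}$ for $\delta D$.
\item Invoke Theorem~\ref{thm: Quasi Banach Two Parameter Embedding} again, using the hypothesis on $D$, to get $|D_{s,t}|\lesssim|t-s|^{3\alpha}w([s,t])^{-3/p}$.
\end{itemize}
At $\alpha=1/3$ I would use the $\mathbb{B}^1_{p/3,\infin}$ power bound instead, together with the regain gap $\alpha-r/p>0$.

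\emph{Identifying the solution.} Once $(Y,F(Y))$ is controlled, let $Z$ be its rough integral with $Z_0=\xi$. The forward computation gives
$$
\delta Z-\delta Y=R^{\Xi}-D.
$$
This difference is additive and lies in $\mathbb{B}^{3\alpha}_{p/3,q/3}(w)$ in the subcritical case, or in $\mathbb{B}^1_{p/3,\infin;\circ}(w)$ at criticality. Since $3\alpha>1\lor 3r/p$ (respectively, by the little-space condition), Proposition~\ref{prop: Vacuity Criterion} shows that $Z-Y$ is constant, and the common initial value makes it zero. So $Y$ solves the equation, and uniqueness in Theorem~\ref{thm: Level-2 Existence and Uniqueness} identifies it with the solution.
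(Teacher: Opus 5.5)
Your forward direction and the closing identification step are correct and match the paper. That step is: form the rough integral $Z$ of the controlled integrand $(F(Y),(DFF)(Y))$, note that $\delta(Z-Y)=R^\Xi-D$, apply the vacuity criterion, and use the common initial value.

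The gap is in the converse, at your second bullet, and it is circular. Theorem~\ref{thm: Quasi Banach Two Parameter Embedding}, applied to $R^Y$ with modulus $h^{2\alpha}$ and target $\zeta_{2\alpha,p/2,w}$, has as a \emph{hypothesis} that $\|R^Y\|_{\mathbb{B}^{2\alpha}_{p/2,q}(w)}<\infin$ for some scale index. Since $(DFF)(Y)\mathbb{X}$ already lies in that space, this hypothesis amounts to a $\mathbb{B}^{2\alpha}_{p/2,\cdot}(w)$ bound on $D$. That bound is exactly the controlledness you are trying to prove.

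You cannot instead run the embedding at the integrability where you do have information. The Besov control on $R^Y$ at smoothness $2\alpha$ is available only at integrability $p/3$ (through $D$). The embedding there needs $2\alpha>3r/p$, which does not follow from $\alpha>r/p$; for example $\alpha=2/5$, $r/p=7/20$ gives $2\alpha=4/5<21/20=3r/p$. So the chain of bullets never starts.

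The missing observation is that no sharp pointwise bound on $D$ is needed. A crude one follows directly from the definition of $D$:
\begin{itemize}
\item Theorem~\ref{thm: Weighted Besov Holder Embedding} and the small-set estimate give $|\delta Y_{s,t}|+|X_{s,t}|\lesssim|t-s|^\eta$ with $\eta=\alpha-r/p>0$.
\item Proposition~\ref{prop:Pointwise Rough Path Embedding} gives $|\mathbb{X}_{s,t}|\lesssim|t-s|^{2\eta}$.
\item $F$ and $DFF$ are bounded.
\end{itemize}
Hence $\Omega_\infin(D,h)\lesssim h^\eta$. Lemma~\ref{lem: Weighted Interpolation}(a) with $\theta=2/3$ then yields
\[
h^{-2\alpha}\Omega^w_{p/2}(D,h)\lesssim h^{\eta/3}\bigl(h^{-3\alpha}\Omega^w_{p/3}(D,h)\bigr)^{2/3}.
\]
The positive exponent $\eta/3$ comes from the $3\alpha$ smoothness of $D$ combined with the crude bound, not from any order-$3\alpha$ pointwise control.

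In the subcritical case, taking the $L^{q/2}(dh/h)$ quasi-norm gives $D\in\mathbb{B}^{2\alpha}_{p/2,q/2}(w)$. At $\alpha=1/3$, the $\mathbb{B}^1_{p/3,\infin}$ bound gives $\Omega^w_{p/3}(D,h)\lesssim h$, hence $h^{-2/3}\Omega^w_{p/2}(D,h)\lesssim h^{\eta/3}$. This is the paper's argument. With it, $R^Y=(DFF)(Y)\mathbb{X}+D\in\mathbb{B}^{2\alpha}_{p/2,q/2}(w)$, so $(Y,F(Y))$ is controlled and the rest of your proof goes through unchanged.
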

\begin{proof}
    We follow \cite[Proposition 5.3]{FrizSeeger2022}. Necessity follows from Proposition~\ref{prop: Rough Composition Estimates} and Theorem~\ref{thm: Weighted Rough Integration}, applied to the controlled integrand $(F(Y), (DFF)(Y))$.

    Conversely, suppose $D$ satisfies the stated condition and set $\eta = \alpha - r/p > 0$. The path embedding and Proposition~\ref{prop:Pointwise Rough Path Embedding} give
    $$
    |\delta Y_{s, t}| + |X_{s, t}| \lesssim |t - s|^\eta, \qquad |\mathbb{X}_{s, t}| \lesssim |t - s|^{2\eta}.
    $$
    Since $F$ and $DF$ are bounded, $\Omega_\infin(D, h) \lesssim h^\eta$. Interpolation against the measure $w_h(s) \hspace{.2em} ds$ therefore gives
    $$
    \Omega^w_{p/2}(D, h) \leq \Omega_\infin(D, h)^{1/3} \Omega^w_{p/3}(D, h)^{2/3} \lesssim h^{\eta/3} \Omega^w_{p/3}(D, h)^{2/3},
    $$
    with the same inequality under the usual conventions when $p = \infin$. If $\alpha > 1/3$, then
    $$
    h^{-2\alpha} \Omega^w_{p/2}(D, h) \lesssim h^{\eta/3} \left( h^{-3\alpha} \Omega^w_{p/3}(D, h) \right)^{2/3},
    $$
    and taking the $L^{q/2}(dh/h)$ quasi-norm yields $D \in \mathbb{B}^{2\alpha}_{p/2, q/2}(w)$. If $\alpha = 1/3$, the little-space assumption gives $\Omega^w_{p/3}(D, h) \lesssim h$, so
    $$
    h^{-2/3} \Omega^w_{p/2}(D, h) \lesssim h^{\eta/3},
    $$
    which gives the same conclusion.

    Lipschitz composition yields $F(Y) \in B^\alpha_{p, q}(w)$, and
    $$
    R^Y_{s, t} = (DFF)(Y_s) \mathbb{X}_{s, t} + D_{s, t} \in \mathbb{B}^{2\alpha}_{p/2, q/2}(w).
    $$
    Thus $(Y, F(Y))$ is controlled by $\bold{X}$. Proposition~\ref{prop: Rough Composition Estimates} and Theorem~\ref{thm: Weighted Rough Integration} now define
    $$
    \widetilde{Y}_t = \xi + \int^t_0 F(Y_s) \hspace{.2em} d\bold{X}_s,
    $$
    whose remainder $\widetilde{D}_{s, t} = \delta\widetilde{Y}_{s, t} - F(Y_s)X_{s, t} - (DFF)(Y_s)\mathbb{X}_{s, t}$ satisfies the stated remainder condition. Consequently,
    $$
    \delta(Y - \widetilde{Y}) = D - \widetilde{D} \in
    \begin{cases}
        \mathbb{B}^{3\alpha}_{p/3, q/3}(w), & \alpha > 1/3, \\
        \mathbb{B}^{1}_{p/3, \infin; \circ}(w), & \alpha = 1/3.
    \end{cases}
    $$
    In the first case $3\alpha > 1 \lor (3r/p)$, while in the second $3r/p < 1$. The vacuity criterion of Proposition~2.13 therefore implies that $Y - \widetilde{Y}$ is constant. Both paths are continuous and start at $\xi$, so $Y = \widetilde{Y}$.
\end{proof}

\begin{theorem}[Local Lipschitz Continuity of Itô-Lyons Map]
\label{thm: Local Lipschitz Continuity of Rough Itô-Lyons Map}
    Let $1 \leq r < \infin$ and $w \in A_r$, and suppose
    $$
    0 < p, q \leq \infin, \qquad \frac{r}{p} < \alpha \leq \frac{1}{2}.
    $$
    Suppose either
    $$
    \textup{(Subcritical)} \hspace{1em} \alpha > \frac{1}{3}, \qquad \textup{ or } \qquad \textup{(Critical)} \hspace{1em} \alpha = \frac{1}{3} \hspace{1em} \textup{ and } \hspace{1em} 0 < q \leq 3.
    $$
    For $i = 1,2$ set
    $$
    \xi^i \in \R^m, \qquad F^i \in C^{2, 1}_{b}(\R^m; \mathcal{L}(\R^d, \R^m)), \qquad \bold{X}^i \in \bold{B}^{\alpha, 2}_{p, q}(w),
    $$
    and let $(Y^i, F^i(Y^i))$ be the corresponding solutions given by Theorem~\ref{thm: Level-2 Existence and Uniqueness}.
    Then for every $M > 0$ there is a $C_M$ such that whenever
    $$
    \max_{i = 1, 2} \left\{ |\xi^i|, \| F^i \|_{C^{2, 1}_{b}}, \vertiii{\bold{X}^i}_{\bold{B}^{\alpha, 2}_{p, q}(w)} \right\} \leq M
    $$
    we have
    $$
    d_{\bold{X}^1, \bold{X}^2}((Y^1, F^1(Y^1)), (Y^2, F^2(Y^2))) \leq C_M \left( |\xi^1 - \xi^2| + \|F^1 - F^2\|_{C^{2}_{b}} + \varrho(\bold{X}^1, \bold{X}^2) \right),
    $$
    where
    $$
    \varrho(\bold{X}^1, \bold{X}^2) = [X^1 - X^2]_{B^\alpha_{p, q}(w)} + \| \mathbb{X}^1 - \mathbb{X}^2 \|_{\mathbb{B}^{2\alpha}_{p/2, q/2}(w)}
    $$
    and
    \begin{align*}
        d_{\bold{X}^1, \bold{X}^2}((Y^1, F^1(Y^1)), (Y^2, F^2(Y^2))) = |\xi^1 - \xi^2| &+ |F^1(\xi^1) - F^2(\xi^2)| \\ &+ [F^1(Y^1) - F^2(Y^2)]_{B^\alpha_{p, q}(w)} + \|R^{Y^1} - R^{Y^2}\|_{\mathbb{B}^{2\alpha}_{p/2, q/2}(w)}.
    \end{align*}
\end{theorem}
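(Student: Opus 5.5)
We follow the Young-case argument of Theorem~\ref{thm: Local Lipschitz Continuity of Itô-Lyons Map} and the structure of \cite[Theorem 5.7]{FrizSeeger2022}: local estimates on short intervals, absorption of the self-referential terms, then patching. First, Theorem~\ref{thm: Level-2 Existence and Uniqueness} gives a uniform bound $R_M$ on the controlled sizes of both solutions $(Y^i,F^i(Y^i))$, hence also on $\|Y^i\|_\infin$, $[Y^i]_{B^\alpha_{p,q}(w)}$ (Lemma~\ref{lem: Controlled-Remainder Estimate}) and on the controlled sizes of the integrands $(F^i(Y^i),DF^iF^i(Y^i))$ by Proposition~\ref{prop: Rough Composition Estimates}(a). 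Write $\mathcal{E}=|\xi^1-\xi^2|+\|F^1-F^2\|_{C^2_b}+\varrho(\bold{X}^1,\bold{X}^2)$.

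On an interval $J=[a,b]$ of length $\tau$, I split the integrand difference as
$$
F^1(Y^1)-F^2(Y^2)=\bigl(F^1-F^2\bigr)(Y^1)+\bigl(F^2(Y^1)-F^2(Y^2)\bigr),
$$
with the matching Gubinelli derivatives. The first pair, composed with $G=F^1-F^2\in C^2_b$, is a path controlled by $\bold{X}^1$ whose controlled size is $\lesssim_M\|G\|_{C^2_b}$: this follows from the linear-in-$F$ estimates in the proof of Proposition~\ref{prop: Rough Composition Estimates}(a) (each term there is bounded by one of $\|G\|_\infin,\|DG\|_\infin,\|D^2G\|_\infin$ times quantities bounded by $R_M$). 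The second pair is handled by Proposition~\ref{prop: Rough Composition Estimates}(c) with $F^2\in C^{2,1}_b$, giving a bound $C_M\bigl(\varrho+d^J_{\bold{X}^1,\bold{X}^2}((Y^1,F^1(Y^1)),(Y^2,F^2(Y^2)))\bigr)$, where the controlled difference with derivative $F^1(Y^1)$ versus $F^2(Y^2)$ is converted using the first piece again. Then Theorem~\ref{thm: Stability of Weighted Rough Integration} with $\beta=2\alpha$ (valid since $\alpha>r/p$ gives $2\alpha>\alpha+r/p$, and $3\alpha\geq1$ with $q\le3$ at criticality) applied on $J$ yields
$$
d^J \lesssim_M |Y^1_a-Y^2_a|+|F^1(Y^1_a)-F^2(Y^2_a)|+|(DF^1F^1)(Y^1_a)-(DF^2F^2)(Y^2_a)|+\mathcal{E}+\tau^{\alpha-r/p}\,C_M\bigl(d^J+\mathcal{E}\bigr),
$$
where the initial terms are all $\lesssim_M |Y^1_a-Y^2_a|+\|F^1-F^2\|_{C^1_b}$. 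Choosing $\tau_0$ with $C_M\tau_0^{\alpha-r/p}\le1/2$ uniformly in the position of $J$ (the small-set bound on $\Lambda_{w,J}$ makes this uniform) absorbs $d^J$: $d^J\lesssim_M |Y^1_a-Y^2_a|+\mathcal{E}$. The one-parameter embedding and Lemma~\ref{lem: Controlled-Remainder Estimate} then give $\|Y^1-Y^2\|_{\infin;J}\le C_M(|Y^1_a-Y^2_a|+\mathcal{E})$.

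Iterating over consecutive mesh intervals of length $\le\tau_0$ (finitely many, number depending only on $M$, $T$, $w$) bounds $\|Y^1-Y^2\|_\infin\lesssim_M\mathcal{E}$, hence every local $d^{J_j}$ on overlapping intervals $J_j=[j\ell,(j+2)\ell]$. Globalisation is as in Theorem~\ref{thm: Level-2 Existence and Uniqueness}: small scales $h\le\ell$ are controlled by summing the local weighted integrals, and large scales by sup-norm bounds of $F^1(Y^1)-F^2(Y^2)$ and $R^{Y^1}-R^{Y^2}$ (the latter pointwise via $\delta Y$, $X$ differences) times $w([0,T])^{1/p}$, with the quasi-triangle inequality throughout. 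The main obstacle I expect is the mixed-driver bookkeeping: $d_{\bold{X}^1,\bold{X}^2}$ is a comparison quantity, not a metric, so one must check that Proposition~\ref{prop: Rough Composition Estimates}(c) and Theorem~\ref{thm: Stability of Weighted Rough Integration} combine with the $F^1-F^2$ piece (controlled only by $\bold{X}^1$) without losing the small factor $\tau^{\alpha-r/p}$ on the self-referential term; I would verify this by tracking that every occurrence of $d^J$ enters only through $D_J$ in the stability estimate.
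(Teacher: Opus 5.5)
Your proposal is correct and follows the paper's proof essentially step for step. The shared steps are: uniform bounds from the existence theorem; splitting the composed-integrand difference into a Proposition~\ref{prop: Rough Composition Estimates}(c) term plus a term linear in $\|F^1-F^2\|_{C^2_b}$; the stability theorem with $\beta=2\alpha$ and absorption of the $\tau^{\alpha-r/p}D_J$ term; iteration over short intervals; and globalisation via local weighted integrals at small scales and supremum bounds at large scales. The only differences are cosmetic: the paper's intermediate is $\mathcal{C}_{F^1}(Y^2,(Y^2)')$, so its $F^1-F^2$ piece lives along $Y^2$ and $\bold{X}^2$, whereas yours is $\mathcal{C}_{F^2}(Y^1,(Y^1)')$, and no ``conversion'' is needed in your cross term, since Proposition~\ref{prop: Rough Composition Estimates}(c) takes $(Y^1,F^1(Y^1))$ and $(Y^2,F^2(Y^2))$ directly as its input pairs.
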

\begin{proof}
    We follow \cite[Theorem 5.7]{FrizSeeger2022}. Set $(Y^i)' = F^i(Y^i)$, $\eta = \alpha - r/p > 0$, and
    $$
    \varepsilon = \|F^1 - F^2\|_{C^2_b} + \varrho(\bold{X}^1, \bold{X}^2), \qquad \mathcal{E} = |\xi^1 - \xi^2| + \varepsilon.
    $$
    Theorem~\ref{thm: Level-2 Existence and Uniqueness} and Lemma~\ref{lem: Controlled-Remainder Estimate} bound the controlled-path sizes and the supremum norms of both solutions by $C_M$. All local estimates below are uniform over subintervals $J$: the small-set condition bounds $(|J|^r/w(J))^{1/p}$ uniformly, with the usual convention when $p = \infin$.

    Define
    $$
    (W^i, (W^i)') = \mathcal{C}_{F^i}(Y^i, (Y^i)'), \qquad (\widetilde{W}, \widetilde{W}') = \mathcal{C}_{F^1}(Y^2, (Y^2)').
    $$
    For $J = [a, b]$, write $\tau = b - a$ and
    $$
    D_J = d^J_{\bold{X}^1, \bold{X}^2}((Y^1, (Y^1)'), (Y^2, (Y^2)')).
    $$
    Proposition~\ref{prop: Rough Composition Estimates}(c) controls the difference between $(W^1, (W^1)')$ and $(\widetilde{W}, \widetilde{W}')$. For the remaining difference, the Taylor estimates in its proof, applied to $F^1 - F^2$ along $(Y^2, (Y^2)')$, give a bound linear in $\|F^1 - F^2\|_{C^2_b}$. Consequently,
    $$
    [(W^1)' - (W^2)']_{B^\alpha_{p, q}(w; J)} + \|R^{W^1} - R^{W^2}\|_{\mathbb{B}^{2\alpha}_{p/2, q/2}(w; J)} \lesssim_M D_J + \varepsilon.
    $$
    Moreover, since $W^i = F^i(Y^i)$ and $(W^i)' = (DF^i F^i)(Y^i)$,
    $$
    |W^1_a - W^2_a| + |(W^1)'_a - (W^2)'_a| \lesssim_M |Y^1_a - Y^2_a| + \varepsilon.
    $$
    Applying Theorem~\ref{thm: Stability of Weighted Rough Integration} with $\beta = 2\alpha$, in either the subcritical or critical case, therefore yields, for $\tau \leq 1$,
    $$
    D_J \leq C_M \left( |Y^1_a - Y^2_a| + \varepsilon + \tau^\eta D_J \right).
    $$
    Choose $\tau_0 > 0$ so that $C_M \tau_0^\eta \leq 1/2$. Absorption, followed by Lemma~\ref{lem: Controlled-Remainder Estimate} and the path embedding, gives
    $$
    D_J + \|Y^1 - Y^2\|_{L^\infin(J)} \lesssim_M |Y^1_a - Y^2_a| + \varepsilon, \qquad |J| \leq \tau_0.
    $$
    Iterating over finitely many such intervals and using $(Y^i)' = F^i(Y^i)$ gives
    $$
    \|Y^1 - Y^2\|_{L^\infin} + \|(Y^1)' - (Y^2)'\|_{L^\infin} \lesssim_M \mathcal{E}.
    $$
    Hence $D_J \lesssim_M \mathcal{E}$ on every interval of length at most $\tau_0$.

    To recover the global seminorms, choose a finite overlapping cover by such intervals and $h_0 > 0$ so that every interval of length at most $h_0$ lies in one member of the cover. Summing the local weighted integral bounds controls the contribution of scales below $h_0$. For the remaining scales, write $\Delta Y = Y^1 - Y^2$, $\Delta Y' = (Y^1)' - (Y^2)'$, and $\Delta X = X^1 - X^2$. The identity
    $$
    R^{Y^1}_{s, t} - R^{Y^2}_{s, t} = \delta\Delta Y_{s, t} - \Delta Y'_s X^1_{s, t} - (Y^2)'_s \Delta X_{s, t}
    $$
    and the pointwise driver estimates give $\|R^{Y^1} - R^{Y^2}\|_{L^\infin} \lesssim_M \mathcal{E}$. Together with $\sup_h \int_0^{T-h} w_h(s) \hspace{.2em} ds \leq w([0, T])$, this controls all scales above $h_0$. Thus
    $$
    [(Y^1)' - (Y^2)']_{B^\alpha_{p, q}(w)} + \|R^{Y^1} - R^{Y^2}\|_{\mathbb{B}^{2\alpha}_{p/2, q/2}(w)} \lesssim_M \mathcal{E}.
    $$
    The initial-value differences are bounded by $C_M \mathcal{E}$ as well, proving the assertion.
\end{proof}

\subsection{Arbitrary-Level Weighted Besov Rough Paths}
\label{subsec: Arbitrary Level Weighted Rough Paths}

We extend the controlled-path construction to an arbitrary finite level. The analytic ingredients are the weighted sewing and embedding theorems already established. Nonlinear composition also requires an algebraic condition: we use rough paths taking values in the free nilpotent group. This is the geometric setting considered in \cite[\S 5.4]{FrizSeeger2022}.

Throughout this subsection, let $N\geq2$, let $V=\mathbb R^d$, let $E$ be a finite-dimensional normed vector space, and suppose
\begin{equation}
    1\leq r<\infin,\qquad w\in A_r,\qquad
    0<p,q\leq\infin,\qquad
    \frac rp<\alpha\leq\frac1N.
    \label{eq: Arbitrary Level Parameters}
\end{equation}
For integration and differential equations we additionally require
\begin{equation}
    \alpha>\frac1{N+1},
    \qquad\textup{or}\qquad
    \alpha=\frac1{N+1}\quad\textup{and}\quad0<q\leq N+1.
    \label{eq: Arbitrary Level Sewing Regime}
\end{equation}
We put
$$
\eta=\alpha-\frac rp>0,
\qquad
\Lambda_{w,J}=\left(\frac{|J|^r}{w(J)}\right)^{1/p},
$$
with $\Lambda_{w,J}=1$ when $p=\infin$. The small-set estimate gives $\Lambda_{w,J}\lesssim\Lambda_{w,[0,T]}$ uniformly over nondegenerate subintervals $J\subset[0,T]$. Constants below may depend on $N,\alpha,p,q,r,T$, the fixed tensor norms, $[w]_{A_r}$, and $\Lambda_{w,[0,T]}$. Any dependence on bounds for the driver, the controlled paths, or the vector field is indicated separately.

For brevity, for $k>0$ and $J\subset[0,T]$ write
$$
\|A\|_{k;J}=\|A\|_{\mathbb B^{k\alpha}_{p/k,q/k}(w;J)},
\qquad
\rho_J(\bold X,\widetilde{\bold X})
=\sum_{k=1}^N\|X^{(k)}-\widetilde X^{(k)}\|_{k;J}.
$$

Let $G^{(N)}(V)$ denote the free step-$N$ nilpotent group inside $T_1^{(N)}(V)$. Equivalently, its elements satisfy the shuffle identities
\begin{equation}
    \langle g,a\rangle\langle g,b\rangle
    =\langle g,a\shuffle b\rangle,
    \qquad |a|+|b|\leq N,
    \label{eq: Arbitrary Level Shuffle Identity}
\end{equation}
where $a,b$ are words in the dual tensor algebra, and $a\shuffle b$ is the sum of all order-preserving interleavings, counted with multiplicity.

\begin{definition}[Weakly Geometric Weighted Rough Path]
\label{def: Arbitrary Level Weakly Geometric}
    A rough path $\bold X\in B^{\alpha,N}_{p,q}(w)$ is weakly geometric if its continuous multiplicative representative satisfies
    $$
    \bold X_{s,t}\in G^{(N)}(V),\qquad 0\leq s\leq t\leq T.
    $$
    We denote this subspace by $B^{\alpha,N}_{p,q;\mathrm{wg}}(w)$.
\end{definition}

We identify $\mathcal L(V^{\otimes0},E)$ with $E$. If $A\in\mathcal L(V^{\otimes(i+j)},E)$ and $x\in V^{\otimes j}$, the contraction $Ax\in\mathcal L(V^{\otimes i},E)$ always means
$$
(Ax)(v)=A(x\otimes v).
$$
Thus the increment occupies the first tensor slots. For integrands, we identify $\mathcal L(V^{\otimes i},\mathcal L(V,E))$ with $\mathcal L(V^{\otimes(i+1)},E)$ by placing the final integration direction last.

\begin{definition}[Controlled Weighted Besov Path of Order $N$]
\label{def: Arbitrary Level Controlled Path}
    Let $\bold X\in B^{\alpha,N}_{p,q}(w)$. A collection of continuous paths
    $$
    \bold Y=(Y^{(0)},\ldots,Y^{(N-1)}),
    \qquad Y^{(i)}:[0,T]\longrightarrow\mathcal L(V^{\otimes i},E),
    $$
    is controlled by $\bold X$ if, for $0\leq i\leq N-1$, the remainders
    \begin{equation}
        R^{\bold Y,i}_{s,t}
        =\delta Y^{(i)}_{s,t}
        -\sum_{j=1}^{N-1-i}Y^{(i+j)}_sX^{(j)}_{s,t}
        \quad\textup{satisfy}\quad
        \|R^{\bold Y,i}\|_{N-i}<\infin.
        \label{eq: Arbitrary Level Controlled Expansion}
    \end{equation}
    Empty sums are zero, so $R^{\bold Y,N-1}=\delta Y^{(N-1)}$. We write $\bold Y\in\mathcal D^{\alpha,N}_{p,q;\bold X}(w;E)$ and set, for $J=[a,b]$,
    $$
    S_J(\bold Y)=\sum_{i=0}^{N-1}\|R^{\bold Y,i}\|_{N-i;J},
    \qquad
    \|\bold Y\|_{\mathcal D;J}
    =\sum_{i=0}^{N-1}|Y^{(i)}_a|+S_J(\bold Y).
    $$
    For paths controlled by possibly different drivers, set
    \begin{align*}
        S_{\Delta,J}(\bold Y,\widetilde{\bold Y})
        &=\sum_{i=0}^{N-1}
        \|R^{\bold Y,i}-R^{\widetilde{\bold Y},i}\|_{N-i;J},\\
        d_J(\bold Y,\widetilde{\bold Y})
        &=\sum_{i=0}^{N-1}|Y^{(i)}_a-\widetilde Y^{(i)}_a|
        +S_{\Delta,J}(\bold Y,\widetilde{\bold Y}).
    \end{align*}
    The drivers in $d_J$ are understood from the arguments.
\end{definition}

For $N=2$, this is the controlled-pair space of \S 6.3, with $Y=Y^{(0)}$, $Y'=Y^{(1)}$, $R^Y=R^{\bold Y,0}$, and $\delta Y'=R^{\bold Y,1}$. The next lemma shows that all coefficient paths in the preceding definition belong to $B^\alpha_{p,q}(w)$.

\begin{lemma}[Higher Controlled-Remainder Estimates]
\label{lem: Arbitrary Level Remainder Estimates}
    Let $\bold Y\in\mathcal D^{\alpha,N}_{p,q;\bold X}(w;E)$ and $J=[a,b]$. Put
    $$
    H_J=\sum_{k=1}^N\|X^{(k)}\|_{k;J}^{1/k},
    \qquad E_{i,J}=\|R^{\bold Y,i}\|_{N-i;J},
    \qquad K_{i,J}=\sum_{\ell=i}^{N-1}H_J^{\ell-i}E_{\ell,J},
    $$
    with $H_J^0=1$. For $s<t$ in $J$,
    \begin{equation}
        |R^{\bold Y,i}_{s,t}|
        \leq C K_{i,J}
        \frac{|t-s|^{(N-i)\alpha}}{w([s,t])^{(N-i)/p}}.
        \label{eq: Arbitrary Level Pointwise Remainder}
    \end{equation}
    If $m=N-i$ and $1\leq k<m$, then
    \begin{equation}
        \|R^{\bold Y,i}\|_{k;J}
        \leq C\Lambda_{w,J}^{m-k}|J|^{(m-k)\eta}K_{i,J}.
        \label{eq: Arbitrary Level Remainder Regain}
    \end{equation}
    On sets where the driver and controlled-path sizes are bounded by $M$,
    $$
    \sum_{i=0}^{N-1}
    \left(\|Y^{(i)}\|_{\infin;J}+[Y^{(i)}]_{B^\alpha_{p,q}(w;J)}\right)
    \leq C_M.
    $$
    For two such pairs, the corresponding difference is bounded by
    $$
    C_M\bigl(d_J(\bold Y,\widetilde{\bold Y})
    +\rho_J(\bold X,\widetilde{\bold X})\bigr).
    $$
    The pointwise and regain estimates also hold for remainder differences, with $K_{i,J}$ replaced by this last comparison quantity and the constant enlarged to $C_M$.
\end{lemma}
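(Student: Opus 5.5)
We prove the pointwise bound \eqref{eq: Arbitrary Level Pointwise Remainder} by downward induction on $i$, from $i=N-1$ to $i=0$, mirroring the proof of Lemma~\ref{lem: Controlled-Remainder Estimate}.

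\emph{Base case.} For $i=N-1$ the remainder is the additive map $R^{\bold Y,N-1}=\delta Y^{(N-1)}$. Here $m=1$ and $\alpha>r/p$, so Theorem~\ref{thm: Weighted Besov Holder Embedding} gives the bound with $K_{N-1,J}=E_{N-1,J}$.

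\emph{Inductive step.} For general $i$, we first derive the defect identity from Chen's relation and the controlled expansions of the higher coefficients:
$$
\delta R^{\bold Y,i}_{s,u,t}=\sum_{j=1}^{N-1-i}R^{\bold Y,i+j}_{s,u}X^{(j)}_{u,t}.
$$
This is obtained by substituting $\delta Y^{(i+j)}_{s,u}=\sum_{k}Y^{(i+j+k)}_sX^{(k)}_{s,u}+R^{\bold Y,i+j}_{s,u}$ and reindexing. The terms not involving $R$ then cancel against $Y^{(i+\ell)}_s X^{(\ell)}_{s,t}-\sum Y^{(i+\ell)}_s (X_{s,u}\otimes X_{u,t})^{(\ell)}$. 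The contraction convention, with the increment in the first slots, is exactly what makes this cancellation work.

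By the inductive hypothesis and Proposition~\ref{prop:Pointwise Rough Path Embedding}, each summand is bounded by
$$
H_J^{j}K_{i+j,J}\,\frac{(u-s)^{(m-j)\alpha}(t-u)^{j\alpha}}{w([s,u])^{(m-j)/p}\,w([u,t])^{j/p}}.
$$
After local deweighting on a subinterval $I$, this becomes
$$
c_I^m\,(u-s)^{(m-j)\eta}(t-u)^{j\eta},
$$
and the sum over $j$ is $\lesssim K_{i,J}$, since $H_J^jK_{i+j,J}\leq K_{i,J}$ after reindexing. Since $(m-j)\eta,\,j\eta>0$ for $1\leq j\leq m-1$, this gives a mixed-scale defect bound of the form $\ell_\vartheta^{m\eta}$ for some small $\vartheta$. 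We then deweight $\|R^{\bold Y,i}\|_{m;I}$ via Lemma~\ref{lem: Weight Controls and Local Deweighting}(b) and apply the unweighted \cite[Proposition 2.7]{FrizSeeger2022}, exactly as in the proof of Theorem~\ref{thm: Quasi Banach Two Parameter Embedding}. This gives
$$
|R^{\bold Y,i}_{s,t}|\lesssim c_I^m\,K_{i,J}\,|t-s|^{m\eta}.
$$
Taking $I=[s,t]$ cancels the powers $|I|^{mr/p}$ and yields \eqref{eq: Arbitrary Level Pointwise Remainder}. This inductive step is the main obstacle: verifying the defect algebra with the contraction ordering, and checking that the mixed exponent condition holds uniformly across all $j$ so that a single $\vartheta$ works.

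\emph{Regain estimate.} The regain bound \eqref{eq: Arbitrary Level Remainder Regain} follows the proof of Lemma~\ref{lem: Trading Regularity for Integrability}. The lower-mass estimate turns the pointwise bound into
$$
\Omega_\infin(R^{\bold Y,i},h)\lesssim\Lambda_{w,J}^m K_{i,J}\,h^{m\eta}.
$$
We interpolate this against the $\|\cdot\|_{m;J}$ bound with $\theta=k/m$ (Lemma~\ref{lem: Weighted Interpolation}(a)), use $E_{i,J}\leq K_{i,J}$, and integrate $h^{(m-k)\eta}$ in the scale variable.

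\emph{Coefficient bounds.} For the bounds on the coefficients we use downward induction once more. Writing
$$
\delta Y^{(i)}=\sum_j Y^{(i+j)}X^{(j)}+R^{\bold Y,i},
$$
we bound each $\|Y^{(i+j)}\|_{\infin;J}$ by $|Y^{(i+j)}_a|$ plus $\Lambda_{w,J}|J|^\eta[Y^{(i+j)}]_{B^\alpha}$, using the one-parameter embedding. We bound $X^{(j)}$ in $\|\cdot\|_{1;J}$ through Lemma~\ref{lem: Trading Regularity for Integrability}, and $R^{\bold Y,i}$ in $\|\cdot\|_{1;J}$ through \eqref{eq: Arbitrary Level Remainder Regain} with $k=1$. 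Summing gives the $C_M$ bound.

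\emph{Differences.} For differences of two controlled pairs we repeat every step on $\Delta R^{\bold Y,i}$. Its defect is
$$
\sum_j\Bigl(\Delta R^{\bold Y,i+j}X^{(j)}+R^{\widetilde{\bold Y},i+j}\Delta X^{(j)}\Bigr),
$$
so the difference estimates of Proposition~\ref{prop:Pointwise Rough Path Embedding} and Lemma~\ref{lem: Trading Regularity for Integrability} supply the factor $d_J+\rho_J$, with constants depending on $M$.
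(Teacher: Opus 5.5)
Your proposal is correct and follows essentially the same route as the paper. Both use the Chen-derived defect identity $\delta R^{\bold Y,i}_{s,u,t}=\sum_j R^{\bold Y,i+j}_{s,u}X^{(j)}_{u,t}$ and a downward induction through local deweighting on $I\subset J$ and the unweighted two-parameter embedding, with splitting parameter $\vartheta=1/m$ (exactly what your bound $a^\eta b^{(m-1)\eta}$ permits) and then $I=[s,t]$; lower-mass interpolation with $\theta=k/m$ for the regain; a second downward induction through the controlled decomposition and Lemma~\ref{lem: Trading Regularity for Integrability} for the coefficients; and the same scheme applied to the split defect for differences.
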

\begin{proof}
    Chen's relation and \eqref{eq: Arbitrary Level Controlled Expansion} give the exact identity
    \begin{equation}
        \delta R^{\bold Y,i}_{s,u,t}
        =\sum_{j=1}^{N-1-i}
        R^{\bold Y,i+j}_{s,u}X^{(j)}_{u,t}.
        \label{eq: Arbitrary Level Remainder Defect}
    \end{equation}
    We argue downwards in $i$. For $i=N-1$, the pointwise assertion is the one-parameter embedding. Suppose it holds at all larger indices, and put $m=N-i\geq2$. On an interval $I$, local deweighting, the induction hypothesis, and Proposition~6.2 give
    $$
    |\delta R^{\bold Y,i}_{s,u,t}|
    \leq Cc_I^m K_{i,J}
    \sum_{j=1}^{m-1}|u-s|^{(m-j)\eta}|t-u|^{j\eta},
    \qquad
    c_I=[w]_{A_r}^{1/p}\frac{|I|^{r/p}}{w(I)^{1/p}},
    $$
    for $s<u<t$ in $I\subset J$. Each product is bounded by $a^\eta b^{(m-1)\eta}$, where $a$ and $b$ are the smaller and larger adjacent gaps. Also,
    $$
    \|R^{\bold Y,i}\|_{\mathbb B^{m\alpha}_{p/(rm),q/m}(I)}
    \leq c_I^m E_{i,J}.
    $$
    The unweighted two-parameter embedding used in \S 3, with splitting parameter $1/m$, therefore applies: its positive regularity gap is $m\eta$. Taking $I=[s,t]$ proves \eqref{eq: Arbitrary Level Pointwise Remainder}. For $p=\infin$, omit the weight factors and use the same argument.

    The lower-mass estimate now yields
    $$
    \Omega_\infin(R^{\bold Y,i},h;J)
    \leq C\Lambda_{w,J}^mK_{i,J}h^{m\eta}.
    $$
    With $\theta=k/m$, interpolation at each scale gives
    $$
    h^{-k\alpha}\Omega^w_{p/k}(R^{\bold Y,i},h;J)
    \leq C\Lambda_{w,J}^{m-k}K_{i,J}^{1-\theta}h^{(m-k)\eta}
    \left(h^{-m\alpha}\Omega^w_{p/m}(R^{\bold Y,i},h;J)\right)^\theta.
    $$
    Take the $L^{q/k}(dh/h)$ quasi-norm and use $\theta q/k=q/m$. This proves \eqref{eq: Arbitrary Level Remainder Regain}, including the supremum conventions. Starting with $Y^{(N-1)}$, the controlled decomposition, Lemma~6.4, and this regain estimate then give the asserted path bounds by downward induction. The one-parameter embedding controls each supremum norm from its initial value.

    For differences, subtract \eqref{eq: Arbitrary Level Remainder Defect}. Each summand becomes
    $$
    (R^{\bold Y,i+j}-R^{\widetilde{\bold Y},i+j})_{s,u}X^{(j)}_{u,t}
    +R^{\widetilde{\bold Y},i+j}_{s,u}
    (X^{(j)}-\widetilde X^{(j)})_{u,t}.
    $$
    Repeat the same induction, using the difference estimate in Proposition~6.2. It is linear in the remainder and driver differences on bounded sets. Interpolation preserves that linear bound because the pointwise and Besov bounds are both linear in the same comparison quantity. Subtracting the controlled decompositions finishes the proof.
\end{proof}

For a fixed driver the controlled-path space is complete. Indeed, its initial values and remainders determine all coefficient paths by the preceding estimates. A Cauchy sequence converges uniformly in every coefficient and in every remainder space, and the controlled identities pass to the limit. For indices below one, an equivalent complete metric is obtained from the individual difference seminorms using a sufficiently small positive power. Thus the contraction argument below applies in the full stated range of $q$.

We now develop rough integration at arbitrary levels. Write
$$
\gamma=(N+1)\alpha,
\qquad P=\frac p{N+1},
\qquad Q=\frac q{N+1},
\qquad \nu=1\lor\frac{(N+1)r}{p}.
$$
At the endpoint in \eqref{eq: Arbitrary Level Sewing Regime}, $\gamma=\nu=1$, $P>r$, and $Q\leq1$. For $0<\sigma<\infin$, let $\ell_\sigma$ be a loss function as in \S 4.2 and put
$$
\omega_\sigma(h)=\sup_{0<u\leq h}u\ell_\sigma(u),
\qquad \omega_\infin(h)=h.
$$
We use $\mathbb B^1_{P,\infin;\circ}(w)$ for the little space defined by
$$
\lim_{h\downarrow0}h^{-1}\Omega_P^w(A,h)=0.
$$

\begin{theorem}[Arbitrary-Level Weighted Rough Integration]
\label{thm: Arbitrary Level Rough Integration}
    Assume \eqref{eq: Arbitrary Level Parameters}--\eqref{eq: Arbitrary Level Sewing Regime}, take $\bold X\in B^{\alpha,N}_{p,q}(w)$, and let
    $$
    \bold Y\in\mathcal D^{\alpha,N}_{p,q;\bold X}(w;\mathcal L(V,E)).
    $$
    Define
    \begin{equation}
        \Xi_{s,t}=\sum_{j=0}^{N-1}Y^{(j)}_sX^{(j+1)}_{s,t}.
        \label{eq: Arbitrary Level Integral Model}
    \end{equation}
    For $z\in E$, there is a unique continuous path $Z^{(0)}$ starting at $z$ whose sewing remainder $\mathcal R=\delta Z^{(0)}-\Xi$ belongs to
    \begin{equation}
        \begin{cases}
        \mathbb B^\gamma_{P,Q}(w),&\alpha>1/(N+1),\\
        \mathbb B^1_{P,\infin;\circ}(w),&\alpha=1/(N+1).
        \end{cases}
        \label{eq: Arbitrary Level Sewing Remainder Class}
    \end{equation}
    We write $Z^{(0)}_t=z+\int_0^t\bold Y_u\,d\bold X_u$.

    Set $K_i=K_{i,[0,T]}$ as in Lemma~\ref{lem: Arbitrary Level Remainder Estimates}, and
    $$
    \mathcal N=\sum_{j=0}^{N-1}K_jH_{[0,T]}^{j+1}.
    $$
    Then
    \begin{align}
        \|\mathcal R\|_{\mathbb B^\gamma_{P,Q}(w)}&\leq C\mathcal N,
        &&\alpha>1/(N+1),
        \label{eq: Arbitrary Level Subcritical Bound}\\
        \|\mathcal R\|_{\mathbb B^1_{P,\infin}(w)}&\leq C\mathcal N,
        &&\alpha=1/(N+1).
        \label{eq: Arbitrary Level Critical Bound}
    \end{align}
    At criticality, $\mathcal R$ also belongs to $\mathbb B^{\omega_\sigma}_{P,\sigma}(w)$ for every finite $\sigma>0$, with the loss-space bounds of Theorem~4.5. In both regimes,
    \begin{equation}
        |\mathcal R_{s,t}|\leq C\mathcal N
        \frac{|t-s|^{(N+1)\alpha}}{w([s,t])^{(N+1)/p}}.
        \label{eq: Arbitrary Level Pointwise Sewing}
    \end{equation}

    The collection
    $$
    \bold Z=(Z^{(0)},Y^{(0)},\ldots,Y^{(N-2)})
    $$
    belongs to $\mathcal D^{\alpha,N}_{p,q;\bold X}(w;E)$. Its remainders are
    \begin{align}
        R^{\bold Z,0}_{s,t}
        &=Y^{(N-1)}_sX^{(N)}_{s,t}+\mathcal R_{s,t},
        \label{eq: Arbitrary Level Integrated Remainder Zero}\\
        R^{\bold Z,i}_{s,t}
        &=Y^{(N-1)}_sX^{(N-i)}_{s,t}
        +R^{\bold Y,i-1}_{s,t},\qquad1\leq i\leq N-1.
        \label{eq: Arbitrary Level Integrated Remainders}
    \end{align}
    For fixed $\bold X$ and $z=0$, integration is a bounded linear map between these controlled-path spaces. It is locally Lipschitz jointly in the driver, the controlled integrand, and the initial value.

    More precisely, on bounded sets and on $J=[a,b]$ with $\tau=|J|\leq1$, integration starting at $z$ satisfies
    \begin{equation}
        S_J(\bold Z)
        \leq C|Y^{(N-1)}_a|\sum_{k=1}^N\|X^{(k)}\|_{k;J}
        +C_M\tau^\eta S_J(\bold Y).
        \label{eq: Arbitrary Level Local Integral Size}
    \end{equation}
    For a second set of bounded data,
    \begin{align}
        d_J(\bold Z,\widetilde{\bold Z})
        \leq C_M\Bigl(&|z-\widetilde z|
        +\sum_{i=0}^{N-1}|Y^{(i)}_a-\widetilde Y^{(i)}_a|
        +\rho_J(\bold X,\widetilde{\bold X})\nonumber\\
        &+\tau^\eta S_{\Delta,J}(\bold Y,\widetilde{\bold Y})\Bigr).
        \label{eq: Arbitrary Level Local Integral Difference}
    \end{align}
    The constants are uniform in the position of $J$.
\end{theorem}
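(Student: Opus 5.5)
The plan mirrors the level-two argument of Theorem~\ref{thm: Weighted Rough Integration}, with Lemma~\ref{lem: Arbitrary Level Remainder Estimates} replacing Lemma~\ref{lem: Controlled-Remainder Estimate}.

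\textbf{Step 1: the defect identity.} Chen's relation and the controlled expansion \eqref{eq: Arbitrary Level Controlled Expansion} give
$$
\delta\Xi_{s,u,t}=-\sum_{j=0}^{N-1}R^{\bold Y,j}_{s,u}X^{(j+1)}_{u,t}.
$$
To check it, expand $X^{(j+1)}_{s,t}=\sum_{a+b=j+1}X^{(a)}_{s,u}\otimes X^{(b)}_{u,t}$ and regroup by the level $b$ of the right factor. The terms with $a=0$ cancel against $\Xi_{u,t}$ once $Y^{(b-1)}_u$ is expanded. The contraction convention, which puts the increment in the first slots, is exactly what makes this regrouping consistent. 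Each summand has total homogeneity $(N-j)+(j+1)=N+1$.

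\textbf{Step 2: the two inputs to sewing.} The product argument of Lemma~\ref{lem: Young Intermediate 2}, in the two-parameter form used in Theorem~\ref{thm: Weighted Lyons Extension}, gives
$$
\|\delta\Xi\|_{\overline{\mathbb B}^\gamma_{P,Q}(w)}\lesssim\sum_j E_jH^{j+1}\leq\mathcal N.
$$
Its hypotheses $k\alpha>k(r-1)/p$ hold automatically. The pointwise bounds \eqref{eq: Arbitrary Level Pointwise Remainder} and Proposition~\ref{prop:Pointwise Rough Path Embedding}, applied after local deweighting on $J=[s,t]$, give
$$
|\delta\Xi_{s,u,t}|\lesssim\mathcal N\, c_J^{N+1}a^\eta b^{N\eta},
$$
hence condition $\mathrm C_\gamma$ with a mixed scale, and the largest-scale bound needed for $D_c(\Xi)$.

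\textbf{Step 3: sewing, pointwise bound, uniqueness, and the controlled structure.} With these inputs, Theorem~\ref{thm:Subcritical Weighted Sewing} (when $\gamma>\nu$) or Theorem~\ref{thm:Critical Weighted Sewing} (at $\gamma=\nu=1$, $Q\leq1$) produces $Z^{(0)}$ and the bounds \eqref{eq: Arbitrary Level Subcritical Bound}--\eqref{eq: Arbitrary Level Critical Bound}. For $p=\infin$ one uses the unweighted results instead.

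The pointwise bound \eqref{eq: Arbitrary Level Pointwise Sewing} follows, as in Theorem~\ref{thm: Weighted Rough Integration}, from local deweighting plus the unweighted two-parameter embedding with splitting parameter $1/(N+1)$, because $\delta\mathcal R=-\delta\Xi$. Uniqueness comes from Proposition~\ref{prop: Vacuity Criterion}.

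The remainder formulas \eqref{eq: Arbitrary Level Integrated Remainder Zero}--\eqref{eq: Arbitrary Level Integrated Remainders} follow algebraically by reindexing the expansion of $Y^{(i-1)}$. For the required norms:
\begin{itemize}
\item $Y^{(N-1)}X^{(N-i)}$ lies in level $N-i$ because $Y^{(N-1)}$ is bounded.
\item $R^{\bold Y,i-1}$ has level $N-i+1$, and is lowered to level $N-i$ by the regain estimate \eqref{eq: Arbitrary Level Remainder Regain}, gaining the factor $\Lambda|J|^\eta$.
\item $\mathcal R$ is moved from level $N+1$ to level $N$ by interpolation between its pointwise bound and its sewing bound, again gaining $|J|^\eta$. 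In the critical case one integrates $h^\eta$ as before.
\end{itemize}

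\textbf{Step 4: the local estimates.} The size bound \eqref{eq: Arbitrary Level Local Integral Size} comes from writing $Y^{(N-1)}_s=Y^{(N-1)}_a+\delta Y^{(N-1)}_{a,s}$. The first piece gives the term $|Y^{(N-1)}_a|\sum\|X^{(k)}\|$. Every other contribution carries a factor $\tau^\eta$ from regain or from the one-parameter embedding, using the uniform bound on $\Lambda_{w,J}$.

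For the differences, subtract the defect identities, which yields terms of the form $\Delta R\cdot X+\widetilde R\cdot\Delta X$. Sewing is linear, and the same pointwise and interpolation steps give \eqref{eq: Arbitrary Level Local Integral Difference}.

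\textbf{Main obstacle.} I expect the hardest part to be getting a genuine factor $\tau^\eta$ in front of $S_{\Delta,J}$ in \eqref{eq: Arbitrary Level Local Integral Difference}. Terms with $\Delta Y^{(N-1)}$ multiplied against $X^{(1)}$ must be split at the initial value $a$, so that only $|\Delta Y^{(i)}_a|$ and the driver differences appear without a small factor. The regain constants also have to stay uniform in the position of $J$, which rests on $\Lambda_{w,J}\lesssim\Lambda_{w,[0,T]}$.
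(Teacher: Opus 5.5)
Your proposal follows the paper's proof essentially step for step: the defect identity $\delta\Xi_{s,u,t}=-\sum_j R^{\bold Y,j}_{s,u}X^{(j+1)}_{u,t}$, the two-parameter product bound and the local pointwise bound $c_I^{N+1}\mathcal N a^\eta b^{N\eta}$ (which also controls $D_c(\Xi)$) feeding subcritical or critical sewing, local deweighting with splitting parameter $1/(N+1)$ for the pointwise remainder bound, interpolation to lower the levels, and the split $Y^{(N-1)}_s=Y^{(N-1)}_a+\delta Y^{(N-1)}_{a,s}$ for the local size and difference estimates. The one input you leave implicit is $\Xi\in\mathbb B^\alpha_{p,q}(w)$, which the sewing theorems require as a hypothesis and which follows from boundedness of the coefficients $Y^{(j)}$ together with Lemma~\ref{lem: Trading Regularity for Integrability}.
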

\begin{proof}
    Lemma~6.4 and boundedness of the coefficient paths give $\Xi\in\mathbb B^\alpha_{p,q}(w)$. The controlled identities and Chen's relation give
    \begin{equation}
        \delta\Xi_{s,u,t}
        =-\sum_{j=0}^{N-1}R^{\bold Y,j}_{s,u}X^{(j+1)}_{u,t}.
        \label{eq: Arbitrary Level Integral Defect}
    \end{equation}
    Each summand has total degree $(N-j)+(j+1)=N+1$. The two-parameter product estimate used in the proof of Theorem~6.5 yields
    $$
    \|\delta\Xi\|_{\overline{\mathbb B}^\gamma_{P,Q}(w)}
    \leq C\sum_{j=0}^{N-1}E_{j,[0,T]}\|X^{(j+1)}\|_{j+1}
    \leq C\mathcal N.
    $$
    Its strict subdivision inequalities follow from $\alpha>r/p$. Lemma~\ref{lem: Arbitrary Level Remainder Estimates} and Proposition~6.2 also give, on every interval $I$,
    $$
    |\delta\Xi_{s,u,t}|
    \leq Cc_I^{N+1}\mathcal N
    \sum_{j=0}^{N-1}|u-s|^{(N-j)\eta}|t-u|^{(j+1)\eta}
    \leq Cc_I^{N+1}\mathcal N a^\eta b^{N\eta}.
    $$
    Here $a$ and $b$ are the smaller and larger adjacent gaps. At criticality this pointwise estimate controls the largest-scale term in $D_c(\Xi)$, exactly as in Theorem~6.9; hence $D_c(\Xi)\leq C\mathcal N$.

    In the strict regime, $\gamma>1\lor r/P$. At criticality, $\gamma=1$, $P>r\geq1$, and $Q\leq1$. Theorems~4.3 and~4.5 therefore apply. For $p=\infin$, apply the unweighted sewing theorem as in Theorem~6.9. These results give the asserted Besov bounds and little-space membership. Locally deweight the remainder bound and use $\delta\mathcal R=-\delta\Xi$ in the two-parameter embedding, with splitting parameter $1/(N+1)$. The positive gap is $(N+1)\eta$, also at criticality. Taking $I=[s,t]$ proves \eqref{eq: Arbitrary Level Pointwise Sewing} and supplies a continuous sewn path. Uniqueness follows from the vacuity criterion, using the little space at the endpoint.

    For $1\leq k\leq N$, interpolate this pointwise bound with \eqref{eq: Arbitrary Level Subcritical Bound}. As in Lemma~\ref{lem: Arbitrary Level Remainder Estimates},
    \begin{equation}
        \|\mathcal R\|_{k;J}
        \leq C\Lambda_{w,J}^{N+1-k}|J|^{(N+1-k)\eta}\mathcal N_J,
        \label{eq: Arbitrary Level Sewing Regain}
    \end{equation}
    where $\mathcal N_J$ is defined from the restricted data. At criticality use the power estimate \eqref{eq: Arbitrary Level Critical Bound}; interpolation gives
    $$
    h^{-k\alpha}\Omega^w_{p/k}(\mathcal R,h;J)
    \leq C\Lambda_{w,J}^{N+1-k}\mathcal N_Jh^{(N+1-k)\eta},
    $$
    which is integrable to every finite scale exponent. Thus \eqref{eq: Arbitrary Level Sewing Regain} holds in both regimes, without a logarithmic factor. The same proof includes $p=\infin$.

    The identities \eqref{eq: Arbitrary Level Integrated Remainder Zero}--\eqref{eq: Arbitrary Level Integrated Remainders} follow directly from the definitions. Apply \eqref{eq: Arbitrary Level Sewing Regain} to the first and \eqref{eq: Arbitrary Level Remainder Regain} to the others. Splitting the common coefficient as
    $$
    Y^{(N-1)}_s=Y^{(N-1)}_a+\delta Y^{(N-1)}_{a,s},
    \qquad
    \|\delta Y^{(N-1)}_{a,\cdot}\|_{\infin;J}
    \leq C\Lambda_{w,J}\tau^\eta E_{N-1,J},
    $$
    gives \eqref{eq: Arbitrary Level Local Integral Size}. The resulting controlled bounds and the preceding lemma put $Z^{(0)}$ in $B^\alpha_{p,q}(w)$.

    For differences, subtract \eqref{eq: Arbitrary Level Integral Defect} and apply the difference part of Lemma~\ref{lem: Arbitrary Level Remainder Estimates}. Sewing is linear in its two-parameter input, so its remainder difference obeys the same estimates, with $\mathcal N_J$ replaced by
    $$
    C_M\bigl(S_{\Delta,J}(\bold Y,\widetilde{\bold Y})
    +\rho_J(\bold X,\widetilde{\bold X})\bigr).
    $$
    Subtracting \eqref{eq: Arbitrary Level Integrated Remainder Zero}--\eqref{eq: Arbitrary Level Integrated Remainders}, and separating the initial values of the highest coefficients, now gives \eqref{eq: Arbitrary Level Local Integral Difference}. Bounded linearity for a fixed driver follows from the construction and the same estimates.
\end{proof}

The compensated sums $I_\pi\Xi$ converge to $\delta Z^{(0)}$ along arbitrary rescaled partitions in the sewing norms: in $\mathbb B^\gamma_{P,Q}(w)$ in the strict regime, and in $\mathbb B^1_{P,\infin}(w)$ and every $\mathbb B^{\omega_\sigma}_{P,\sigma}(w)$ at criticality. These are the norm-convergence assertions of \S 4.

For $i\geq1$, let $\mathcal P(i)$ be the set of partitions of $\{1,\ldots,i\}$ into nonempty, unordered blocks. If $B=\{b_1<\cdots<b_j\}$ and $v=v_1\otimes\cdots\otimes v_i$, write $v_B=v_{b_1}\otimes\cdots\otimes v_{b_j}$. Define
\begin{align}
    (\mathcal C_F\bold Y)^{(0)}&=F(Y^{(0)}),\nonumber\\
    (\mathcal C_F\bold Y)^{(i)}(v)
    &=\sum_{\pi\in\mathcal P(i)}
    D^{|\pi|}F(Y^{(0)})
    \bigl[Y^{(|B|)}(v_B):B\in\pi\bigr],\qquad1\leq i\leq N-1.
    \label{eq: Arbitrary Level Composition Formula}
\end{align}
The order of the blocks does not matter because the derivatives of $F$ are symmetric. No factorial is needed, since the blocks are unordered. In particular,
$$
(\mathcal C_F\bold Y)^{(1)}=DF(Y^{(0)})Y^{(1)},
\qquad
(\mathcal C_F\bold Y)^{(2)}
=DF(Y^{(0)})Y^{(2)}+D^2F(Y^{(0)})[Y^{(1)},Y^{(1)}].
$$
This is the usual shuffle-based composition formula; see also \cite[\S 4]{BoedihardjoGeng2022}.

\begin{proposition}[Composition at Arbitrary Level]
\label{prop: Arbitrary Level Composition}
    Let $\bold X\in B^{\alpha,N}_{p,q;\mathrm{wg}}(w)$ and $\bold Y\in\mathcal D^{\alpha,N}_{p,q;\bold X}(w;E)$. If $F\in C_b^N(E;E_0)$, then
    $$
    \mathcal C_F\bold Y\in\mathcal D^{\alpha,N}_{p,q;\bold X}(w;E_0),
    \qquad
    \|\mathcal C_F\bold Y\|_{\mathcal D}\leq C_M\|F\|_{C_b^N}
    $$
    whenever the driver and controlled-path sizes are bounded by $M$.

    If $F,\widetilde F\in C_b^{N,1}$ and the two vector fields, drivers, and controlled paths have sizes bounded by $M$, then
    \begin{align}
        d_J(\mathcal C_F\bold Y,
        \mathcal C_{\widetilde F}\widetilde{\bold Y})
        \leq C_M\Bigl(&d_J(\bold Y,\widetilde{\bold Y})
        +\rho_J(\bold X,\widetilde{\bold X})
        +\|F-\widetilde F\|_{C_b^N}\Bigr).
        \label{eq: Arbitrary Level Composition Difference}
    \end{align}
    The same conclusions hold uniformly on subintervals.
\end{proposition}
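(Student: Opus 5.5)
The plan is to follow the scheme of Proposition~\ref{prop: Rough Composition Estimates} and of \cite[\S 5.4]{FrizSeeger2022}, treating the composition formula \eqref{eq: Arbitrary Level Composition Formula} as an algebraic identity modulo terms of total degree at least $N-i$. First, Lemma~\ref{lem: Arbitrary Level Remainder Estimates} gives on bounded sets uniform bounds
$$
\|Y^{(i)}\|_{\infin;J}+[Y^{(i)}]_{B^\alpha_{p,q}(w;J)}\leq C_M .
$$
Proposition~\ref{prop:Pointwise Rough Path Embedding} gives $|X^{(k)}_{s,t}|\lesssim\zeta_{k\alpha,p/k,w}(s,t-s)$. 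Hence every coefficient $(\mathcal C_F\bold Y)^{(i)}$ is bounded and continuous. What remains is to control the remainders $R^{\mathcal C_F\bold Y,i}$ in $\|\cdot\|_{N-i;J}$.

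\emph{Algebraic step.} Fix $i$ and Taylor expand each $D^{|\pi|}F(Y^{(0)}_t)$ around $Y^{(0)}_s$ to order $N-1-i-|\pi|$, with an integral remainder. Into this expansion substitute
$$
\delta Y^{(\ell)}_{s,t}=\sum_j Y^{(\ell+j)}_sX^{(j)}_{s,t}+R^{\bold Y,\ell}_{s,t}
$$
for every coefficient appearing, including $Y^{(0)}$ in the Taylor increments. Expanding the multilinear expressions produces three kinds of terms. The first are products of coefficients at $s$ with products of driver increments $X^{(j_1)}_{s,t}\otimes\cdots\otimes X^{(j_m)}_{s,t}$ of total degree below $N-i$. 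The second contain at least one $R^{\bold Y,\ell}$ factor. The third are Taylor remainders. For the first kind, I use the shuffle identity \eqref{eq: Arbitrary Level Shuffle Identity}, valid since $\bold X$ is weakly geometric, to rewrite products of components of $X^{(j)}_{s,t}$ as sums of single components of $X^{(\sum j)}_{s,t}$. One then checks that the resulting coefficient of $X^{(k)}_{s,t}$ is exactly $(\mathcal C_F\bold Y)^{(i+k)}_s$. This is the combinatorial Fa\`a di Bruno identity: the set partitions of $\{1,\dots,i+k\}$ match the partitions of $\{1,\dots,i\}$ together with the shuffles distributing the $k$ new slots. I expect this bookkeeping, including the convention that increments occupy the first tensor slots, to be the main obstacle. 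I would first verify it for $N=3$ and then argue by induction on $N$, citing \cite[\S 4]{BoedihardjoGeng2022} for the purely algebraic identity.

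\emph{Analytic step.} After this cancellation, $R^{\mathcal C_F\bold Y,i}$ is a finite sum of the following terms.
\begin{itemize}
\item Bounded coefficients times products $A^1_{s,t}\otimes\cdots\otimes A^m_{s,t}$. Each factor is either $X^{(j)}$ or $R^{\bold Y,\ell}$, with total homogeneity at least $N-i$.
\item Taylor remainders bounded by $\|D^{n}F\|_\infin|\delta Y^{(0)}_{s,t}|^{n}$ with $n\geq N-i$, or by similar terms.
\end{itemize}
To estimate each product in $\|\cdot\|_{N-i;J}$, I use H\"older's inequality in $w_h(s)\,ds$ and in the scale variable, exactly as in the product estimates of Proposition~\ref{prop: Rough Composition Estimates}. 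Where the homogeneities exceed $N-i$, I first lower them with the regain estimate \eqref{eq: Arbitrary Level Remainder Regain} and Lemma~\ref{lem: Trading Regularity for Integrability}. This gains factors of $\Lambda_{w,J}^{\cdot}|J|^{\cdot\,\eta}$, which are bounded uniformly in $J$ by the small-set estimate. Summing these bounds gives
$$
\|\mathcal C_F\bold Y\|_{\mathcal D;J}\leq C_M\|F\|_{C^N_b}.
$$

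\emph{Differences.} Write every term of $\mathcal C_F\bold Y$ minus the corresponding term of $\mathcal C_{\widetilde F}\widetilde{\bold Y}$ as a telescoping sum, changing one factor at a time. The changed factors are of three types: $F-\widetilde F$, bounded in $C^N_b$; differences $D^nF(Y^{(0)})-D^nF(\widetilde Y^{(0)})$, bounded by $\|\Delta Y^{(0)}\|_\infin$; and coefficient, remainder or driver differences. For $n=N$ the middle type needs the Lipschitz bound on $D^NF$, which is why $C^{N,1}_b$ is assumed. For the Taylor remainders, I compare the integrands along corresponding points of the two segments, as in Proposition~\ref{prop: Rough Composition Estimates}(b) with $\delta=1$. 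By the difference parts of Lemma~\ref{lem: Arbitrary Level Remainder Estimates} and Proposition~\ref{prop:Pointwise Rough Path Embedding}, every changed factor is linearly controlled by
$$
d_J(\bold Y,\widetilde{\bold Y})+\rho_J(\bold X,\widetilde{\bold X})+\|F-\widetilde F\|_{C^N_b},
$$
and the unchanged factors are bounded by $C_M$. Together with the initial-value differences, this gives \eqref{eq: Arbitrary Level Composition Difference}. All constants depend on $J$ only through $\Lambda_{w,J}\lesssim\Lambda_{w,[0,T]}$, which yields uniformity on subintervals.
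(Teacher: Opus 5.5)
Your overall route is the paper's. You expand every coefficient by its controlled expansion and Taylor-expand the derivatives of $F$ about $Y^{(0)}_s$. You then use the shuffle identities to identify the terms of increment degree below $m=N-i$ as $(\mathcal C_F\bold Y)^{(i+j)}_sX^{(j)}_{s,t}$, and place everything else in $\|\cdot\|_{m;J}$ by H\"older against $w_h(s)\,ds$ and $dh/h$, together with pointwise interpolation. The telescoping treatment of differences is also the same.

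However, the Taylor order you prescribe is wrong, and with it the algebraic step fails as written. Expanding $D^{|\pi|}F(Y^{(0)}_t)$ only to order $N-1-i-|\pi|$ leaves an integral remainder of size $|\delta Y^{(0)}_{s,t}|^{N-i-|\pi|}$. Its degree is too low by $|\pi|$, because the block factors $Y^{(|B|)}_t$ carry degree zero. It also discards polynomial terms needed in the Fa\`a di Bruno matching.

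Already for $N=3$, $i=1$, your prescription gives order $0$ for $DF(Y^{(0)}_t)$. But the summand $D^2F(Y^{(0)}_s)[Y^{(1)}_sX^{(1)}_{s,t},Y^{(1)}_s\,\cdot\,]$ of $(\mathcal C_F\bold Y)^{(2)}_sX^{(1)}_{s,t}$ comes only from the first-order Taylor term of $DF$. Without that term, $R^{\mathcal C_F\bold Y,1}$ retains a piece of degree one rather than two. Your prescription is also inconsistent with your later claim that the Taylor remainders have the form $\|D^nF\|_\infin|\delta Y^{(0)}_{s,t}|^n$ with $n\geq N-i$. The error is invisible for $i=0$ and for $N=2$, which is why the level-two template does not expose it.

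The fix is the paper's choice: expand every $D^{|\pi|}F$ to order $m-1=N-1-i$, independently of $|\pi|$. The remainder is then $D^{|\pi|+m}F$, evaluated along the segment, applied to $m$ copies of $\delta Y^{(0)}_{s,t}$ and the bounded block factors, so it has degree $m$. Its derivative order is $|\pi|+m\leq i+m=N$. This is exactly why $F\in C_b^N$ suffices for the first assertion. It is also why the Lipschitz bound on $D^NF$ is needed for the difference estimate: order $N$ is reached whenever $|\pi|=i$, not only at $i=0$ as your labelling suggests. With this correction, your argument coincides with the paper's proof.
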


\begin{proof}
    We give the degree argument underlying the weighted estimates. Assign degree $j$ to $X^{(j)}_{s,t}$, degree $N-j$ to $R^{\bold Y,j}_{s,t}$, and degree one to $\delta Y^{(0)}_{s,t}$. A product of factors of degrees $d_1,\ldots,d_l$ belongs to
    $$
    \mathbb B^{d\alpha}_{p/d,q/d}(w),\qquad d=d_1+\cdots+d_l,
    $$
    whenever the individual factors have their indicated Besov bounds. This follows from H\"older's inequality against the same measure $w_h(s)\,ds$ and then against $dh/h$. It remains valid when the target indices are below one. Bounded coefficient functions can be included without changing the estimate. If $d>k$, pointwise control and interpolation lower the degree to $k$, with the factor $C\Lambda_{w,J}^{d-k}|J|^{(d-k)\eta}$; this is precisely the calculation in \eqref{eq: Arbitrary Level Remainder Regain}.

    Fix $i$ and put $m=N-i$. In \eqref{eq: Arbitrary Level Composition Formula} at time $t$, expand each $Y^{(j)}_t$ by \eqref{eq: Arbitrary Level Controlled Expansion}, and expand each derivative of $F$ about $Y^{(0)}_s$ to order $m-1$. All derivatives needed are of order at most $i+m=N$. Retain terms of total increment degree less than $m$.

    To identify those terms, distribute the $i$ fixed tensor slots among the blocks in \eqref{eq: Arbitrary Level Composition Formula}. Distribute the additional increment slots among the same blocks and the new blocks arising from the Taylor expansion. The shuffle identities express products of increment coordinates as the sum of all such order-preserving distributions. Consequently, the terms of increment degree $j<m$ sum to
    $$
    (\mathcal C_F\bold Y)^{(i+j)}_sX^{(j)}_{s,t},
    $$
    including the constant term $j=0$. This is a finite algebraic identity in each degree. Truncation only affects terms of degree at least $m$.

    The remaining terms contain a controlled remainder or have total increment degree at least $m$. Taylor's integral remainder is bounded by a constant times $|\delta Y^{(0)}_{s,t}|^m$, with bounded coefficient factors. In a term containing $R^{\bold Y,j}$, its degree $N-j$, together with the degrees of its other increment factors, is at least $m$: otherwise the fixed and increment slots in that factor would exceed the truncation used in the expansion. The preceding product and regain estimates therefore put every error term in $\mathbb B^{m\alpha}_{p/m,q/m}(w)$. This proves the controlled remainder bound at index $i$. The initial coefficient bounds follow directly from \eqref{eq: Arbitrary Level Composition Formula}.

    For differences, subtract the finite expansions. A product difference is a sum of products with one factor replaced by its difference. Lemma~\ref{lem: Arbitrary Level Remainder Estimates} supplies both Besov and pointwise difference bounds. For the Taylor remainders, the Lipschitz bound on $D^NF$ gives, on the bounded sets under consideration, a sum of terms of the form
    $$
    C_M|Y^{(0)}_s-\widetilde Y^{(0)}_s|
    |\delta\widetilde Y^{(0)}_{s,t}|^m
    +C_M\bigl(|\delta Y^{(0)}_{s,t}|+|\delta\widetilde Y^{(0)}_{s,t}|\bigr)^{m-1}
    |\delta(Y^{(0)}-\widetilde Y^{(0)})_{s,t}|,
    $$
    with the same types of bounded coefficient factors and their differences. Apply the product estimates again. Finally separate the change from $F$ to $\widetilde F$; the single-map Taylor bound is linear in $\|F-\widetilde F\|_{C_b^N}$. This proves \eqref{eq: Arbitrary Level Composition Difference}.
\end{proof}

For $F:E\to\mathcal L(V,E)$, define the coefficient maps $F^{[k]}:E\to\mathcal L(V^{\otimes k},E)$ recursively by
\begin{align}
    F^{[1]}(y)v_1&=F(y)v_1,\nonumber\\
    F^{[k+1]}(y)(v_1\otimes\cdots\otimes v_{k+1})
    &=D\bigl(F^{[k]}(\cdot)(v_2\otimes\cdots\otimes v_{k+1})\bigr)(y)
    [F(y)v_1].
    \label{eq: Arbitrary Level Vector Field Coefficients}
\end{align}
The order agrees with the contraction convention above. Thus $F^{[2]}=DF\,F$. Write
$$
\bold F_{<N}(y)
=(y,F^{[1]}(y),\ldots,F^{[N-1]}(y)).
$$

\begin{theorem}[Arbitrary-Level Weighted Rough Differential Equations]
\label{thm: Arbitrary Level RDE}
    Assume \eqref{eq: Arbitrary Level Parameters}--\eqref{eq: Arbitrary Level Sewing Regime}. Let
    $$
    \bold X\in B^{\alpha,N}_{p,q;\mathrm{wg}}(w),
    \qquad F\in C_b^{N,1}(E;\mathcal L(V,E)),\qquad \xi\in E.
    $$
    There is a unique controlled solution
    $$
    \bold Y\in\mathcal D^{\alpha,N}_{p,q;\bold X}(w;E),
    \qquad
    Y^{(0)}_t=\xi+\int_0^t\mathcal C_F\bold Y_u\,d\bold X_u,
    $$
    where the equation holds as an equality of controlled collections. Its coefficients satisfy
    $$
    Y^{(i)}=F^{[i]}(Y^{(0)}),\qquad1\leq i\leq N-1.
    $$
    For data bounded by $M$,
    $$
    |\xi|,\ \|F\|_{C_b^{N,1}},\ 
    \sum_{k=1}^N\|X^{(k)}\|_k^{1/k}\leq M
    \quad\Longrightarrow\quad
    \|\bold Y\|_{\mathcal D}\leq C_M.
    $$
    For two such sets of data, the solutions satisfy
    \begin{equation}
        d_{[0,T]}(\bold Y,\widetilde{\bold Y})
        \leq C_M\left(
        |\xi-\widetilde\xi|+\|F-\widetilde F\|_{C_b^N}
        +\rho_{[0,T]}(\bold X,\widetilde{\bold X})\right).
        \label{eq: Arbitrary Level Ito Lyons}
    \end{equation}
    The same bound controls the sum of the supremum and $B^\alpha_{p,q}(w)$ seminorms of all coefficient differences.
\end{theorem}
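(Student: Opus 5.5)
The plan is to mirror the level-two argument of Theorem~\ref{thm: Level-2 Existence and Uniqueness} and Theorem~\ref{thm: Local Lipschitz Continuity of Rough Itô-Lyons Map}, using Theorem~\ref{thm: Arbitrary Level Rough Integration} and Proposition~\ref{prop: Arbitrary Level Composition} as the two building blocks. Fix $J=[a,b]$ with $\tau=|J|\leq1$ and $y\in E$. We work in the affine set of controlled collections on $J$ with prescribed initial coefficients $Y^{(0)}_a=y$ and $Y^{(i)}_a=F^{[i]}(y)$ for $1\leq i\leq N-1$, intersected with the closed ball $S_J(\bold Y)\leq R$. This set is nonempty: it contains the ``Taylor'' collection $Y^{(i)}_t=\sum_{j=0}^{N-1-i}F^{[i+j]}(y)X^{(j)}_{a,t}$, whose remainders are combinations of products of driver levels controlled by Chen's relation. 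It is complete for the powered metric described after Lemma~\ref{lem: Arbitrary Level Remainder Estimates}. On this set we define
$$
\Phi(\bold Y)=\Bigl(y+\int_a^\cdot\mathcal C_F\bold Y_u\,d\bold X_u,\ (\mathcal C_F\bold Y)^{(0)},\ldots,(\mathcal C_F\bold Y)^{(N-2)}\Bigr).
$$

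The first key step is an algebraic check that $\Phi$ preserves the prescribed initial coefficients. We must show that $(\mathcal C_F\bold Y)^{(i-1)}_a=F^{[i]}(y)$ whenever $Y^{(j)}_a=F^{[j]}(y)$ for all $j$. To see this, set $G=F$ in \eqref{eq: Arbitrary Level Composition Formula}; the partition sum evaluated on the coefficients $F^{[j]}$ reproduces the recursion \eqref{eq: Arbitrary Level Vector Field Coefficients}, i.e.\ the Faà di Bruno expansion of iterated Lie derivatives. We would prove this by induction on $i$, differentiating the partition formula along $F(y)v_1$. The same identity later gives $Y^{(i)}=F^{[i]}(Y^{(0)})$ for the fixed point at every time, not only at $a$, because $Y^{(i)}_t=(\mathcal C_F\bold Y)^{(i-1)}_t$ holds pathwise.

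The second step is invariance and contraction.
\begin{itemize}
\item \emph{Invariance.} By \eqref{eq: Arbitrary Level Local Integral Size} applied to the integrand $\mathcal C_F\bold Y$, together with Proposition~\ref{prop: Arbitrary Level Composition} (with $M$ depending on $R$), we get
$$
S_J(\Phi\bold Y)\leq C|F^{[N]}(y)|\sum_k\|X^{(k)}\|_{k;J}+C_R\tau^\eta .
$$
The first term $C_0$ does not depend on $R$. Choosing $R>2C_0$ and then $\tau$ small gives invariance.
\item \emph{Contraction.} For two elements of the set, the initial differences vanish. Combining \eqref{eq: Arbitrary Level Local Integral Difference} with \eqref{eq: Arbitrary Level Composition Difference} gives $d_J(\Phi\bold Y,\Phi\widetilde{\bold Y})\leq C_R\tau^\eta S_{\Delta,J}$, so $\Phi$ is a contraction for small $\tau$.
\end{itemize}
One subtlety is that \eqref{eq: Arbitrary Level Composition Difference} bounds the composed remainders by $d_J$, which includes initial differences, rather than only by $S_\Delta$. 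Since those initial differences are zero here, this is harmless. Since $\tau_0$ is uniform in $a$ and $y$ (translation invariance of the $C_b^{N,1}$ norm and $\Lambda_{w,J}\lesssim\Lambda_{w,[0,T]}$), we patch local solutions on overlapping intervals. Local uniqueness forces agreement on overlaps. Global Besov bounds then follow exactly as in Theorem~\ref{thm: Level-2 Existence and Uniqueness}: small scales come from summing the local integrals, and large scales from supremum bounds via Lemma~\ref{lem: Shift and Scale Bounds for Muckenhoupt Weights}(iii). Uniqueness among all controlled solutions follows by absorption on short intervals, with a constant depending on their sizes.

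For \eqref{eq: Arbitrary Level Ito Lyons}, we run the level-two stability argument. On $J$, we compare the two integrands through Proposition~\ref{prop: Arbitrary Level Composition} and then through \eqref{eq: Arbitrary Level Local Integral Difference}. This yields
$$
d_J\leq C_M\bigl(|Y^{(0)}_a-\widetilde Y^{(0)}_a|+\|F-\widetilde F\|_{C^N_b}+\rho_J+\tau^\eta d_J\bigr),
$$
where the initial coefficient differences are bounded through $Y^{(i)}_a=F^{[i]}(Y^{(0)}_a)$ by the same quantity. We absorb the $\tau^\eta d_J$ term and iterate over a mesh to control $\|Y^{(0)}-\widetilde Y^{(0)}\|_\infty$. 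The local bounds are then globalised as before, with large scales handled by supremum estimates of the remainder differences. Lemma~\ref{lem: Arbitrary Level Remainder Estimates} converts the result into bounds on the coefficients in supremum and $B^\alpha_{p,q}(w)$ norms.

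The main obstacle is the coefficient identity in the first step. A second concern is that composition needs $F\in C_b^{N,1}$ for the integrand $\mathcal C_F\bold Y$ with $N$ levels, and $F^{[N]}$ requires $N-1$ derivatives of $F$. This is consistent with the hypotheses, but the Lipschitz difference bound consumes the extra derivative, and that should be checked carefully.
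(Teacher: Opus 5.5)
Your proposal is correct and follows the paper's proof essentially step for step. The shared steps are: the affine ball with prescribed initial collection $\bold F_{<N}(y)$; the identity $(\mathcal C_F\bold F_{<N}(y))^{(i)}=F^{[i+1]}(y)$, obtained by induction from \eqref{eq: Arbitrary Level Composition Formula} and \eqref{eq: Arbitrary Level Vector Field Coefficients}; invariance via \eqref{eq: Arbitrary Level Local Integral Size}, with the $R$-independent constant coming from $F^{[N]}(y)$; contraction via \eqref{eq: Arbitrary Level Local Integral Difference} and \eqref{eq: Arbitrary Level Composition Difference}; uniformity in $y$ by translation; and patching and absorption for stability and uniqueness. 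One small sharpening: by Chen's relation, the remainders of your Taylor collection vanish identically rather than merely being controlled, so it lies in the ball for every $R\geq0$.
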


\begin{proof}
    Fix $J=[a,b]$ and $y\in E$. Prescribe the initial collection $\bold Y_a=\bold F_{<N}(y)$ and consider the map
    $$
    \Phi(\bold Y)
    =\left(y+\int_a^\cdot\mathcal C_F\bold Y_u\,d\bold X_u,
    (\mathcal C_F\bold Y)^{(0)},\ldots,
    (\mathcal C_F\bold Y)^{(N-2)}\right).
    $$
    The first coordinate denotes the path component of the integral. The algebraic formula \eqref{eq: Arbitrary Level Composition Formula}, or induction in \eqref{eq: Arbitrary Level Vector Field Coefficients}, gives
    $$
    (\mathcal C_F\bold F_{<N}(y))^{(i)}=F^{[i+1]}(y),
    \qquad0\leq i\leq N-1.
    $$
    Hence $\Phi$ preserves the prescribed initial collection. The affine controlled space is nonempty: it contains the collection
    $$
    Y_t^{(i),0}=\sum_{j=0}^{N-1-i}g^{(i+j)}X^{(j)}_{a,t},
    \qquad
    (g^{(0)},\ldots,g^{(N-1)})=\bold F_{<N}(y),
    \qquad X^{(0)}=1,
    $$
    whose remainders vanish by Chen's relation.

    On a closed set $S_J(\bold Y)\leq R$, the composition estimate and \eqref{eq: Arbitrary Level Local Integral Size} yield
    $$
    S_J(\Phi(\bold Y))\leq C_0+C_R|J|^\eta.
    $$
    Here $C_0$ is independent of $R$: the initial highest coefficient of the integrand is the fixed value $F^{[N]}(y)$. The other coefficients and their oscillations are controlled by the prescribed data and the remainder bound. These constants are uniform in $y$, since all derivatives of $F$ used here are bounded; the path value $Y^{(0)}$ may be translated by $y$ when applying the local estimates. Choose $R>2C_0$ and then choose $\tau_0>0$ so that $\Phi$ preserves the closed set whenever $|J|\leq\tau_0$.

    For two collections in this set, their initial integrand coefficients agree. Equations~\eqref{eq: Arbitrary Level Local Integral Difference} and~\eqref{eq: Arbitrary Level Composition Difference} therefore give
    $$
    d_J(\Phi(\bold Y),\Phi(\widetilde{\bold Y}))
    \leq C_R|J|^\eta d_J(\bold Y,\widetilde{\bold Y}).
    $$
    Shrinking $\tau_0$ makes this a contraction in an equivalent complete metric. The fixed point has the stated coefficient identities by induction in the component index. This proves local existence and uniqueness.

    The interval length can be chosen uniformly in its position and initial value. Iterate on finitely many overlapping intervals. Local uniqueness identifies the solutions on overlaps. Each coefficient has a uniform oscillation bound on each interval, and the zeroth component therefore remains bounded on $[0,T]$. For sufficiently small increment lengths, every increment lies in one interval of the cover, and summing the local weighted integrals gives the global Besov estimates. Larger increments are controlled by the coefficient supremum bounds, Chen's relation, and $\int_0^{T-h}w_h(s)\,ds\leq w([0,T])$. This proves the global controlled bound and the equation on $[0,T]$.

    For stability, let
    $$
    \varepsilon=\|F-\widetilde F\|_{C_b^N}
    +\rho_{[0,T]}(\bold X,\widetilde{\bold X}).
    $$
    On a sufficiently short interval $J=[a,b]$, the initial coefficient differences are bounded by $C_M(|Y^{(0)}_a-\widetilde Y^{(0)}_a|+\varepsilon)$. Apply the local integration and composition difference estimates to the two solutions. They give
    $$
    d_J(\bold Y,\widetilde{\bold Y})
    \leq C_M\bigl(|Y^{(0)}_a-\widetilde Y^{(0)}_a|+\varepsilon\bigr)
    +C_M|J|^\eta d_J(\bold Y,\widetilde{\bold Y}).
    $$
    Absorb the last term, use Lemma~\ref{lem: Arbitrary Level Remainder Estimates} to pass to the next initial value, and iterate over the finite cover. The same localisation of increment norms proves \eqref{eq: Arbitrary Level Ito Lyons}; the coefficient estimates follow from the lemma. Applying this argument to any two controlled solutions also proves global uniqueness.

    At $\alpha=1/(N+1)$, critical sewing is used only in constructing the integral. All subsequent regain and contraction estimates retain the positive power $|J|^\eta$, so the argument includes the endpoint without an additional logarithmic loss.
\end{proof}

The preceding construction also admits a characterisation by its order-$N$ increment expansion.

\begin{proposition}[Arbitrary-Level Davie Characterisation]
\label{prop: Arbitrary Level Davie}
    Under the hypotheses of Theorem~\ref{thm: Arbitrary Level RDE}, let $Y\in B^\alpha_{p,q}(w;E)$ be continuous, with $Y_0=\xi$. Define
    \begin{equation}
        D_{s,t}=\delta Y_{s,t}
        -\sum_{k=1}^NF^{[k]}(Y_s)X^{(k)}_{s,t}.
        \label{eq: Arbitrary Level Davie Defect}
    \end{equation}
    Then $Y$ is the path component of the solution if and only if
    $$
    \begin{cases}
    D\in\mathbb B^{(N+1)\alpha}_{p/(N+1),q/(N+1)}(w),
        &\alpha>1/(N+1),\\
    D\in\mathbb B^1_{p/(N+1),\infin;\circ}(w),
        &\alpha=1/(N+1).
    \end{cases}
    $$
    For the solution, on bounded sets of data,
    \begin{equation}
        |D_{s,t}|\leq C_M
        \frac{|t-s|^{(N+1)\alpha}}{w([s,t])^{(N+1)/p}}.
        \label{eq: Arbitrary Level Davie Pointwise}
    \end{equation}
    The corresponding difference bound holds with the right-hand side multiplied by
    $$
    |\xi-\widetilde\xi|+\|F-\widetilde F\|_{C_b^N}
    +\rho_{[0,T]}(\bold X,\widetilde{\bold X}).
    $$
\end{proposition}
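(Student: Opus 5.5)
We follow the level-two Davie characterisation, replacing its ingredients by Theorem~\ref{thm: Arbitrary Level RDE}, Proposition~\ref{prop: Arbitrary Level Composition}, Theorem~\ref{thm: Arbitrary Level Rough Integration} and Lemma~\ref{lem: Arbitrary Level Remainder Estimates}.

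\emph{Necessity and the pointwise bound.} Let $\bold Y$ be the solution, so $Y^{(i)}=F^{[i]}(Y^{(0)})$ and the integrand is $\mathcal C_F\bold Y$. By the composition formula its coefficients are $(\mathcal C_F\bold Y)^{(j)}=F^{[j+1]}(Y^{(0)})$. Hence the integral model \eqref{eq: Arbitrary Level Integral Model} is exactly $\Xi_{s,t}=\sum_{k=1}^NF^{[k]}(Y_s)X^{(k)}_{s,t}$, and $D=\delta Y^{(0)}-\Xi=\mathcal R$ is the sewing remainder. Theorem~\ref{thm: Arbitrary Level Rough Integration} then gives membership in \eqref{eq: Arbitrary Level Sewing Remainder Class} and the pointwise bound \eqref{eq: Arbitrary Level Pointwise Sewing}, with $\mathcal N\le C_M$ by the a priori bound of Theorem~\ref{thm: Arbitrary Level RDE}. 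For differences, sewing is linear: $D-\widetilde D$ is the sewing remainder of $\Xi-\widetilde\Xi$. Its defect bounds are linear in $S_\Delta+\rho$ by the difference parts of Lemma~\ref{lem: Arbitrary Level Remainder Estimates} and Proposition~\ref{prop: Arbitrary Level Composition}. Combined with the stability estimate \eqref{eq: Arbitrary Level Ito Lyons}, this yields the stated multiplied bound through the same local embedding with splitting parameter $1/(N+1)$.

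\emph{Sufficiency.} Given $D$ in the stated class, set $Y^{(i)}:=F^{[i]}(Y)$ and show that $\bold Y=(Y^{(0)},\dots,Y^{(N-1)})$ is controlled. This is the main obstacle. First, $\eta>0$ and the one-parameter and rough path embeddings give $\Omega_\infty(D,h)\lesssim h^{\eta}$. Interpolation as in Lemma~\ref{lem: Trading Regularity for Integrability} then puts $D$ in $\|\cdot\|_{k}$ for every $1\le k\le N$; at criticality use the power bound $\Omega^w_{p/(N+1)}(D,h)\lesssim h$. Next, for $R^{\bold Y,i}$, Taylor-expand $F^{[i]}(Y_t)$ about $Y_s$ to order $N-i-1$ and substitute $\delta Y=\sum_kF^{[k]}(Y_s)X^{(k)}+D$. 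The weakly geometric shuffle identities, applied through the recursion \eqref{eq: Arbitrary Level Vector Field Coefficients}, should reorganise the terms of degree less than $N-i$ into $\sum_jF^{[i+j]}(Y_s)X^{(j)}_{s,t}$. This is the same algebraic identity underlying Proposition~\ref{prop: Arbitrary Level Composition}, and I would invoke it in the form: for a path with exact Davie expansion the coefficients compose consistently. The leftover terms are products of degree at least $N-i$, involving $D$, the Taylor remainder, or high-level increments. The degree-counting product and regain estimates from the proof of Proposition~\ref{prop: Arbitrary Level Composition} place them in $\|\cdot\|_{N-i}$. It is cleanest to do this by downward induction on $i$, starting from $R^{\bold Y,N-1}=\delta F^{[N-1]}(Y)$, which lies in $B^\alpha_{p,q}(w)$ by Lipschitz composition.

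\emph{Conclusion.} With $\bold Y$ controlled, define $\widetilde Y=\xi+\int_0^\cdot\mathcal C_F\bold Y\,d\bold X$. By the necessity computation, its defect $\widetilde D$ lies in the same class, and $\delta(Y-\widetilde Y)=D-\widetilde D$. In the strict regime $(N+1)\alpha>1\lor(N+1)r/p$. At criticality the little space applies with $(N+1)r/p<1$. In either case Proposition~\ref{prop: Vacuity Criterion} makes $Y-\widetilde Y$ constant, hence zero, since both paths are continuous and start at $\xi$. So $\bold Y$ is a fixed point of the solution map, and by the uniqueness in Theorem~\ref{thm: Arbitrary Level RDE} it is the solution.
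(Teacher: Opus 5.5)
Your proposal is correct and follows essentially the same route as the paper: necessity, the pointwise bound and its difference version come from applying the arbitrary-level integration theorem to $\mathcal C_F\bold Y$. Sufficiency combines the crude bound $\Omega_\infty(D,h)\lesssim h^\eta$ with interpolation, the Taylor--shuffle verification of the controlled expansions for $Y^{(i)}=F^{[i]}(Y)$, and the vacuity criterion; your downward induction on $i$ is harmless but unnecessary, since the Taylor expansion of $F^{[i]}(Y_t)$ involves only $\delta Y$, the $X^{(k)}$, and $D$, all of which are already controlled.
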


\begin{proof}
    Necessity, the pointwise bound, and its difference version follow by applying Theorem~\ref{thm: Arbitrary Level Rough Integration} to $\mathcal C_F\bold Y$, whose coefficient of index $k-1$ is $F^{[k]}(Y)$.

    Conversely, suppose $D$ has the stated regularity. The path embedding and Proposition~6.2 first give the crude bound
    $$
    \Omega_\infin(D,h)\leq Ch^\eta.
    $$
    Interpolate with the assumed high-order norm, with $\theta=N/(N+1)$. In the strict regime,
    $$
    h^{-N\alpha}\Omega^w_{p/N}(D,h)
    \leq Ch^{\eta/(N+1)}
    \left(h^{-(N+1)\alpha}\Omega^w_{p/(N+1)}(D,h)\right)^{N/(N+1)}.
    $$
    Taking the $L^{q/N}(dh/h)$ quasi-norm proves $\|D\|_N<\infin$. At criticality, the little-space assumption implies $\Omega^w_{p/(N+1)}(D,h)\leq Ch$, and the same calculation gives the same conclusion. Interpolating once more with the crude pointwise bound also gives $\|D\|_m<\infin$ for every integer $1\leq m\leq N$.

    Set $Y^{(i)}=F^{[i]}(Y)$ for $1\leq i\leq N-1$. We verify their controlled expansions without assuming them in advance. For a fixed $i$, Taylor-expand $F^{[i]}(Y_t)$ about $Y_s$ through degree $N-1-i$ and substitute \eqref{eq: Arbitrary Level Davie Defect}. The shuffle identities and the recursive definition of $F^{[k]}$ identify the terms of degree $j<N-i$ as
    $$
    F^{[i+j]}(Y_s)X^{(j)}_{s,t}.
    $$
    The remaining Taylor terms are products of increments of total degree at least $N-i$, or terms containing $D$. The product estimates in the composition proof and Lemma~6.4 control the former. The latter are controlled by $\|D\|_{N-i}$ and boundedness of the other factors. Thus $\bold Y=(Y,Y^{(1)},\ldots,Y^{(N-1)})$ is controlled. At $i=0$, this follows directly from \eqref{eq: Arbitrary Level Davie Defect} and $\|D\|_N<\infin$.

    Define $\widehat Y_t=\xi+\int_0^t\mathcal C_F\bold Y_u\,d\bold X_u$. The integral model is exactly the sum in \eqref{eq: Arbitrary Level Davie Defect}. Its sewing remainder $\widehat D$ belongs to the same strict or critical uniqueness class as $D$. Therefore
    $$
    \delta(Y-\widehat Y)=D-\widehat D
    $$
    is additive and belongs to a vacuous increment class. The vacuity criterion makes $Y-\widehat Y$ constant, and the initial values agree. Hence $Y=\widehat Y$. The coefficient identities give equality of the full controlled collections, completing the proof.
\end{proof}

\begin{remark}[Endpoint and Level Conventions]
\label{rem: Arbitrary Level Endpoint}
    The sewing condition is $(N+1)\alpha>1\lor((N+1)r/p)$ in the strict regime. Under $\alpha>r/p$, this reduces to $(N+1)\alpha>1$. At the endpoint, $p>(N+1)r$ and $q\leq N+1$ are precisely what permit critical sewing, while $\eta>0$ still permits integrability regain.

    The pointwise estimate \eqref{eq: Arbitrary Level Davie Pointwise} has no logarithmic factor at this endpoint. This does not assert that the sewing remainder belongs to $\mathbb B^1_{p/(N+1),q/(N+1)}(w)$: its guaranteed power bound has scale index $\infin$, and finite-scale-index bounds use the loss spaces. In particular, this argument does not provide an endpoint version of the Lyons extension theorem with the original finite scale index.

    Taking $N=2$ recovers the geometric subcase of \S\S 6.3--6.4. The restriction $F\in C_b^{N,1}$ is a sufficient hypothesis ensuring Lipschitz composition in the full controlled topology, consistent with the choice $C_b^{2,1}$ made there; no optimality in the vector-field regularity is claimed.
\end{remark}

\section{Stochastic Processes in the Weighted Setting}

In this section we study stochastic processes in the weighted Besov spaces of the preceding sections and apply the RDE theory of $\S 6$ to obtain location-sensitive solution estimates. The central example is Theorem~\ref{thm: Singular Diffusion}, concerning a diffusion whose coefficient is singular at the initial time. Its weight compensates for the concentration of fluctuations near zero, while the corresponding interval masses retain the location dependence in the solution estimates. Corollary~\ref{cor: Location Sensitive Estimate for Singular Diffusion} makes this explicit through a stability estimate for RDE solution increments.

We begin with three preliminary results. First, Theorem~\ref{thm: Weighted Besov BDG} gives a weighted Besov Burkholder--Davis--Gundy inequality for continuous local martingales, corresponding to the first-level estimate in \cite[(5.10)]{FrizSeeger2022}. Next, Lemma~\ref{lem: Transfer Lemma} shows that higher unweighted integrability yields weighted regularity without loss of smoothness. We use this transfer to establish endpoint weighted Besov regularity for Brownian and fractional Brownian motions and their lifts. These results connect the stochastic examples with the deterministic spaces developed earlier.

Throughout, time weights are deterministic, and martingales are defined on a filtered probability space $(\Omega, \mathcal{F}, (\mathcal{F}_t)_{0 \leq t \leq T}, \mathbb{P})$ satisfying the usual conditions. For a real-valued continuous local martingale $M$, we denote its quadratic variation by $\langle M \rangle$.

Let $M$ be a continuous local martingale (with $M_0 = 0$). We denote by $S_M(s, t)$ the sqaure-root increments $(\langle M \rangle_t - \langle M \rangle_s)^{1/2}$ of quadratic variation.

\begin{theorem}[Weighted Besov BDG]
\label{thm: Weighted Besov BDG}
    Let $1 < p, q, r < \infin$, $w \in A_r$, and $\frac{r-1}{p} < \alpha < 1$. Let $M$ be a real-valued continuous local martingale with $M_0 = 0$. Then
    \begin{equation}
        \| [M]_{B^\alpha_{p, q}(w)} \|_{L^r(\Omega)} \lesssim \left\| \|S_M\|_{\mathbb{B}^\alpha_{p, q}(w)} \right\|_{L^r(\Omega)}. \label{eq:Weighted BDG}
    \end{equation}
    If $M$ is a true martingale on $[0, T]$, or if the right-hand side above is finte, then
    $$
    \| [M]_{B^\alpha_{p, q}(w)} \|_{L^r(\Omega)} \asymp \left\| \|S_M\|_{\mathbb{B}^\alpha_{p, q}(w)} \right\|_{L^r(\Omega)}. \label{eq:Weighted BDG Comparability}
    $$
\end{theorem}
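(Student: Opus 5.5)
The proof will reduce \eqref{eq:Weighted BDG} to a lattice-valued Burkholder--Davis--Gundy inequality. To make this possible I will first replace the Besov seminorms by their dyadic versions. By Proposition~\ref{prop: A Dyadic Estimate for Weighted Besov Paths}, which is exactly where the hypothesis $\alpha>(r-1)/p$ enters, we have
$$
[M]_{B^\alpha_{p,q}(w)}\asymp\bigl\|\{h_n^{-\alpha}D_nM\}_{n\geq1}\bigr\|_{\ell^q}, \qquad h_n=2^{-n}T.
$$
The constants here are deterministic, so the equivalence passes inside $L^r(\Omega)$.

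I will also prove the same dyadic equivalence for the two-parameter map $S_M$. The proof of Proposition~\ref{prop: A Dyadic Estimate for Weighted Besov Paths} used only the following facts:
\begin{itemize}
\item the quasi-triangle inequality for increments along the binary telescoping of $[s,s+h]$;
\item the parent--child comparison of Lemma~\ref{lem: Shift and Scale Bounds for Muckenhoupt Weights}(i);
\item monotonicity of the modulus.
\end{itemize}
The map $S_M$ is subadditive, $S_M(s,t)\leq S_M(s,u)+S_M(u,t)$, because $S_M^2$ is additive. Hence the same binary-expansion argument applies verbatim. This gives
$$
\|S_M\|_{\mathbb{B}^\alpha_{p,q}(w)}\asymp\bigl\|\{h_n^{-\alpha}\|S_M(\cdot,\cdot+h_n)\|_{L^p(w_{h_n})}\}_{n\geq1}\bigr\|_{\ell^q}.
$$

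Next I will introduce the Banach function space
$$
X=\ell^q\bigl(L^p([0,T-h_n],\,h_n^{-\alpha p}w_{h_n}(s)\,ds)\bigr),
$$
whose elements are families $F=(F_n(s))$. Since $1<p,q<\infin$, the space $X$ is a UMD Banach lattice. Consider the $X$-valued process
$$
\Phi_t=\Bigl(\int_0^t\mathbf 1_{(s,s+h_n]}(u)\,dM_u\Bigr)_{n,s}.
$$
It is a stochastic integral of a deterministic, $X$-valued, predictable integrand against the scalar continuous local martingale $M$. Its terminal value $\Phi_T$ is the family of dyadic increments $(M_{s+h_n}-M_s)_{n,s}$. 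The associated square function is
$$
\Bigl(\int_0^T\mathbf 1_{(s,s+h_n]}(u)\,d\langle M\rangle_u\Bigr)^{1/2}=S_M(s,s+h_n).
$$
I will then invoke the BDG inequality for continuous local martingales with values in UMD Banach function spaces (Veraar--Yaroslavtsev / van Neerven--Veraar--Weis, via the Rubio de Francia extrapolation of Banach-lattice BDG). It gives
$$
\Bigl\|\sup_t\|\Phi_t\|_X\Bigr\|_{L^r(\Omega)}\asymp\bigl\|\,\|S_M\|_X\bigr\|_{L^r(\Omega)}
$$
for every $1<r<\infin$. Here $\|S_M\|_X$ denotes the $X$-norm of the family $(S_M(s,s+h_n))_{n,s}$. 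Bounding $\|\Phi_T\|_X$ by the supremum and combining with the two dyadic equivalences yields \eqref{eq:Weighted BDG}.

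The cited vector-valued BDG inequality is stated for local martingales. I will make this rigorous by localization. Let $\tau_k$ be stopping times reducing $M$ and making $\langle M\rangle$ bounded. Apply the inequality to the stopped martingale $M^{\tau_k}$, then let $k\to\infin$. Fatou's lemma gives the one-sided bound for a general local martingale. For the reverse bound I will use the same localization together with monotone convergence of $S_{M^{\tau_k}}\uparrow S_M$; this is valid once the right-hand side is finite or $M$ is a true martingale. That yields the comparability statement.

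The main obstacle is the justification of the lattice-valued BDG step. The issue is not the lattice property but the joint measurability in $(s,\omega)$ of the increment family. I also need to check that the finite-dimensional truncations of the integrand (in both $n$ and $s$) have $X$-norms converging monotonically. If this step resists, I would fall back on an explicit alternative: fix $n$ and $s$ and apply scalar BDG in $L^r(\Omega)$ after first establishing the case $r=p=q$ by Fubini, and then use Marcinkiewicz--Zygmund lattice extensions for general exponents. This fallback argument is noticeably more fragile.
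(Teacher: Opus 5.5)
Your route matches the paper's. You use the dyadic reduction of Proposition~\ref{prop: A Dyadic Estimate for Weighted Besov Paths}; your subadditivity observation for $S_M$ is the paper's device of viewing $\langle M\rangle$ as a path in $(\R,|x-y|^{1/2})$. You then apply the Veraar--Yaroslavtsev BDG inequality in $\ell^q(L^p)$ and the bound $\|\Phi_T\|_X\leq\sup_t\|\Phi_t\|_X$. So the forward inequality \eqref{eq:Weighted BDG} is fine.

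The gap is in the reverse direction. The vector-valued BDG inequality gives only $\bigl\|\,\|S_M\|_X\bigr\|_{L^r(\Omega)}\lesssim\bigl\|\sup_t\|\Phi_t\|_X\bigr\|_{L^r(\Omega)}$. By contrast, $[M]_{B^\alpha_{p,q}(w)}$ is comparable to the \emph{terminal} norm $\|\Phi_T\|_X$. Your proposed mechanism, localization plus monotone convergence $S_{M^{\tau_k}}\uparrow S_M$, acts only on the square-function side. It never relates the maximal function to the terminal value. The missing ingredient is Doob's maximal inequality for the nonnegative submartingale $\|\Phi_t\|_X$,
$$
\Bigl\|\sup_{0\leq t\leq T}\|\Phi_t\|_X\Bigr\|_{L^r(\Omega)}\leq\frac{r}{r-1}\bigl\|\,\|\Phi_T\|_X\bigr\|_{L^r(\Omega)},
$$
which requires $\Phi_t=\mathbb{E}[\Phi_T\mid\mathcal F_t]$, that is, a true martingale.

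This is exactly where the extra hypotheses enter, and your proposal should say how. If $M$ is a true martingale on $[0,T]$ and $\|\Phi_T\|_X\in L^r(\Omega)$, then $\Phi_t=\mathbb{E}[\Phi_T\mid\mathcal F_t]$; if $\|\Phi_T\|_X\notin L^r(\Omega)$, the reverse estimate is trivial. If instead the right-hand side of \eqref{eq:Weighted BDG} is finite, the upper BDG bound makes $\sup_t\|\Phi_t\|_X$ integrable. The local martingale $\Phi$ is then dominated, hence a true martingale, and Doob applies.

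The strict local martingale in the remark following the theorem shows that the reverse inequality genuinely fails when this step is unavailable. So it cannot be recovered by localization alone.

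By contrast, the measurability issue you single out as the main obstacle is harmless. The paper truncates to levels $n\leq N$ and works with the stopped increments $c_n(s)\bigl(M_{t\wedge(s+h_n)}-M_{t\wedge s}\bigr)$, where $c_n(s)=h_n^{-\alpha}w_{h_n}(s)^{1/p}$. These are continuous in $s$ and adapted. The BDG constants are independent of $N$ because the ambient space is fixed, so the paper lets $N\to\infty$ by monotone convergence. No fallback is needed.
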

\begin{proof}
    Set $h_n = 2^{-n}T$ and let $D_n(f)$ be as in Proposition~2.16. Since $\alpha > (r-1)/p$, that proposition gives
    $$
    [f]_{B^\alpha_{p,q}(w)} \asymp \left\|\bigl(h_n^{-\alpha}D_n(f)\bigr)_{n\geq1}\right\|_{\ell^q}.
    $$
    Put
    $$
    U_n = D_n(M), \qquad V_n = \left(\int_0^{T-h_n} S_M(s,s+h_n)^p w_{h_n}(s) \hspace{.2em} ds\right)^{1/p}.
    $$
    Applying the preceding characterisation to $M$ and to $\langle M\rangle$ with the metric $d(x,y)=|x-y|^{1/2}$ yields
    $$
    [M]_{B^\alpha_{p,q}(w)} \asymp \|(h_n^{-\alpha}U_n)_{n\geq1}\|_{\ell^q}, \qquad \|S_M\|_{\mathbb{B}^\alpha_{p,q}(w)} \asymp \|(h_n^{-\alpha}V_n)_{n\geq1}\|_{\ell^q}.
    $$

    Fix $N \geq 1$ and use the UMD Banach function space $E = \ell^q(\N;L^p(0,T))$. For $1\leq n\leq N$ and $0\leq s\leq T-h_n$, set
    $$
    c_n(s) = h_n^{-\alpha}w_{h_n}(s)^{1/p}, \qquad Z_t^N(n,s) = c_n(s)\bigl(M_{t\land(s+h_n)}-M_{t\land s}\bigr),
    $$
    extending by zero elsewhere. Localising $M$ by bounded stopping times shows that $Z^N$ is a continuous $E$-valued local martingale. Its terminal value and coordinatewise quadratic variation satisfy
    $$
    Z_T^N(n,s) = c_n(s)(M_{s+h_n}-M_s), \qquad \langle Z^N(n,s)\rangle_T^{1/2} = c_n(s)S_M(s,s+h_n).
    $$
    The martingale-field BDG inequality \cite[Theorem 4.1]{VeraarYaroslavtsev2019} therefore gives
    $$
    \|(h_n^{-\alpha}U_n)_{n=1}^N\|_{L^r(\Omega;\ell^q)} = \|Z_T^N\|_{L^r(\Omega;E)} \leq \left\|\sup_{0\leq t\leq T}\|Z_t^N\|_E\right\|_{L^r(\Omega)} \asymp \|(h_n^{-\alpha}V_n)_{n=1}^N\|_{L^r(\Omega;\ell^q)}.
    $$
    The constants are independent of $N$, since the ambient space $E$ is fixed.

    For the reverse estimate, suppose first that $M$ is a true martingale and $\|Z_T^N\|_{L^r(\Omega;E)}<\infin$. Optional sampling gives $Z_t^N=\mathbb{E}[Z_T^N\mid\mathcal{F}_t]$, so Doob's inequality yields
    $$
    \left\|\sup_{0\leq t\leq T}\|Z_t^N\|_E\right\|_{L^r(\Omega)} \leq \frac{r}{r-1}\|Z_T^N\|_{L^r(\Omega;E)}.
    $$
    Combined with BDG, this proves the reverse sequence estimate; when the terminal norm is infinite, that estimate is automatic. Alternatively, if the right-hand side of \eqref{eq:Weighted BDG} is finite, the preceding BDG estimate gives an integrable $r$th power of the supremum of $Z^N$. Thus $Z^N$ is a true martingale, and the same argument applies.

    Letting $N \rarr \infin$, monotone convergence and the two dyadic characterisations prove the assertions.
\end{proof}

\begin{remark}[The Martingale Requirement]
    The reverse inequality need not hold for an arbitrary continuous local martingale. Even with $w \equiv 1$, $T = 2$, $p = q = r = 4/3$, and $\alpha = 1/3$, there is such an $M$ with
    $$
    \mathbb{E}[M]_{B^\alpha_{p,p}}^p < \infin, \qquad \mathbb{E}\|S_M\|_{\mathbb{B}^\alpha_{p,p}}^p = \infin.
    $$
    These parameters satisfy $\alpha > (r-1)/p = 1/4$.

    Let $B$ be standard Brownian motion and set
    $$
    \tau = \inf\{u \geq 0 : B_u = -1\}, \qquad \theta(t) = \frac{t}{1-t}, \qquad
    M_t =
    \begin{cases}
        B_{\theta(t)\land\tau}, & 0 \leq t < 1, \\
        -1, & 1 \leq t \leq 2.
    \end{cases}
    $$
    Use the usual augmentation of the filtration $\mathcal{G}_t = \mathcal{F}^B_{\theta(t)}$ for $t < 1$ and $\mathcal{G}_t = \mathcal{F}^B_\infin$ thereafter. Since $\tau < \infin$ almost surely, each path becomes constant before time $1$. The quadratic variation is $\langle M\rangle_t = \theta(t)\land\tau$ for $t < 1$ and $\langle M\rangle_t = \tau$ thereafter. Stopping its quadratic variation at $n$ gives square-integrable martingales, so $M$ is a continuous local martingale. It is not a true martingale, since $M_0 = 0$ and $M_1 = -1$.

    The reflection principle and classical BDG inequality give
    $$
    \mathbb{P}(\tau > u) \asymp (1+u)^{-1/2}, \qquad \mathbb{E}|B_{u\land\tau}|^p \lesssim (1+u)^{(p-1)/2}.
    $$
    For $0 < h < 1/4$ and $0 \leq s \leq 1-2h$, conditional BDG therefore yields
    $$
    \mathbb{E}|M_{s+h}-M_s|^p \lesssim \mathbb{P}(\tau>\theta(s))\bigl(\theta(s+h)-\theta(s)\bigr)^{p/2} \lesssim h^{p/2}(1-s)^{1/2-p}.
    $$
    Since $1/2-p = -5/6 > -1$, its integral over these basepoints is bounded by $Ch^{p/2}$. On $[1-2h,1]$, the stopped Brownian moment bound gives
    $$
    \int_{1-2h}^1 \mathbb{E}|M_{s+h}-M_s|^p \hspace{.2em} ds \lesssim h^{1-(p-1)/2} = h^{5/6}.
    $$
    Increments starting after time $1$ vanish. Thus
    $$
    \mathbb{E}\int_0^{2-h}|M_{s+h}-M_s|^p \hspace{.2em} ds \lesssim h^{p/2}.
    $$
    The larger dyadic scales have finite moments as well. Taking $h_n = 2^{-n}T$, the dyadic characterisation gives
    $$
    \mathbb{E}[M]_{B^\alpha_{p,p}}^p \lesssim 1 + \sum_{\{n:h_n<1/4\}} h_n^{p(1/2-\alpha)} = 1 + \sum_{\{n:h_n<1/4\}} h_n^{2/9} < \infin.
    $$

    On the other hand, for $s \in (1/2,1)$,
    $$
    S_M(s,s+1/2)^p = (\tau-\theta(s))_+^{p/2}.
    $$
    Its expectation is infinite because $p/2 = 2/3 > 1/2$. Tonelli's theorem and integration over scale parameters above $1/2$ imply $\mathbb{E}\|S_M\|_{\mathbb{B}^\alpha_{p,p}}^p = \infin$.

    This example lies below the unweighted embedding threshold $\alpha > 1/p$. In the weighted embedding regime $\alpha > r/p$, a finite $L^r(\Omega)$ moment of the path seminorm controls the supremum of $M$ and makes it a true martingale, so the two-sided estimate then applies.
\end{remark}

The following lemma converts higher unweighted integrability into weighted regularity without loss of smoothness. Combined with the reverse H\"older inequality for Muckenhoupt weights, it will allow us to transfer endpoint regularity of Brownian and fractional Brownian paths and their lifts to the weighted setting.

\begin{lemma}[Transfer Lemma]
\label{lem: Transfer Lemma}
    Let $1 < a < \infin$, $w \in L^a([0, T])$, and set $a' = \frac{a}{a-1}$. Then for every $\alpha > 0$, $0 < p < \infin$, $0 < q \leq \infin$, and measurable two-parameter map $A$ we have
    $$
    \| A \|_{\mathbb{B}^\alpha_{p, q}(w)} \lesssim \| w \|^{1/p}_{L^a} \|A\|_{\mathbb{B}^\alpha_{pa', q}}.
    $$
\end{lemma}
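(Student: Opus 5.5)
The plan is to work at a fixed increment length $0<h\leq\tau$, apply Hölder's inequality in the basepoint variable with exponents $a'$ and $a$, and then control the averaged weight $w_h$ in $L^a$ by $w$ itself. Concretely, for fixed $h$ we write
$$
\int_0^{T-h}|A_{s,s+h}|^p w_h(s)\,ds \leq \left(\int_0^{T-h}|A_{s,s+h}|^{pa'}\,ds\right)^{1/a'}\left(\int_0^{T-h}w_h(s)^a\,ds\right)^{1/a}.
$$
The first factor is at most $\Omega_{pa'}(A,h)^p$, the unweighted modulus at integrability $pa'$. This step is valid for every $0<p<\infin$, since we apply Hölder's inequality to the nonnegative function $|A_{s,s+h}|^p$.

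For the second factor we use Jensen's inequality on the average defining $w_h$, which gives $w_h(s)^a\leq\frac1h\int_s^{s+h}w(t)^a\,dt$. Integrating in $s$ and applying Tonelli exactly as in Lemma~\ref{lem: Shift and Scale Bounds for Muckenhoupt Weights}(iii), with $w^a$ in place of $w$, yields $\int_0^{T-h}w_h(s)^a\,ds\leq\|w\|_{L^a}^a$ uniformly in $h$. Taking $p$th roots gives
$$
\left(\int_0^{T-h}|A_{s,s+h}|^p w_h(s)\,ds\right)^{1/p}\leq\|w\|_{L^a}^{1/p}\,\Omega_{pa'}(A,h).
$$
Taking the supremum over $0<h\leq\tau$ then gives $\Omega^w_p(A,\tau)\leq\|w\|_{L^a}^{1/p}\Omega_{pa'}(A,\tau)$ for every $\tau$.

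It remains to multiply by $\tau^{-\alpha}$ and take the $L^q((0,T],d\tau/\tau)$ quasi-norm, with the supremum convention when $q=\infin$. Monotonicity of the quasi-norm then gives the claim with implicit constant $1$. The same argument applies verbatim to path seminorms and to the three-parameter moduli, though only the stated two-parameter case is needed here.

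There is no serious obstacle. The one point to check is the uniform-in-$h$ bound on $\|w_h\|_{L^a(ds)}$, and the convexity (Jensen) plus Tonelli step handles it. No Muckenhoupt hypothesis is used here: the condition $w\in A_r$ enters only later, through reverse Hölder, to guarantee $w\in L^a$ for some $a>1$.
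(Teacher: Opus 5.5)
Your proposal is correct and follows the paper's proof essentially step for step. Both use Jensen and Tonelli to get the uniform bound $\|w_h\|_{L^a(0,T-h)}\leq\|w\|_{L^a}$, then H\"older with exponents $a'$ and $a$ at each fixed scale, then the supremum over $h\leq\tau$ and the scale quasi-norm, which gives the estimate with implicit constant one.
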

\begin{proof}
    Jensen's inequality and Tonelli's theorem give, for $0 < h < T$,
    $$
    \int_0^{T-h} w_h(s)^a \hspace{.2em} ds \leq \frac{1}{h}\int_0^{T-h}\int_s^{s+h} w(t)^a \hspace{.2em} dt \hspace{.2em} ds \leq \int_0^T w(t)^a \hspace{.2em} dt.
    $$
    Thus $\|w_h\|_{L^a(0,T-h)} \leq \|w\|_{L^a(0,T)}$ uniformly in $h$. H\"older's inequality with exponents $a'$ and $a$ yields
    $$
    \left(\int_0^{T-h}\|A_{s,s+h}\|_V^p w_h(s) \hspace{.2em} ds\right)^{1/p} \leq \|w\|_{L^a}^{1/p}\left(\int_0^{T-h}\|A_{s,s+h}\|_V^{pa'} \hspace{.2em} ds\right)^{1/(pa')}.
    $$
    Taking the supremum over $0 < h \leq \tau$ gives
    $$
    \Omega_p^w(A,\tau) \leq \|w\|_{L^a}^{1/p}\Omega_{pa'}(A,\tau).
    $$
    Multiply by $\tau^{-\alpha}$ and take the $L^q((0,T],d\tau/\tau)$ quasi-norm, with the usual supremum convention when $q = \infin$, to obtain the assertion.
\end{proof}

\begin{theorem}[Weighted Brownian and fractional Brownian Regularity]
\label{thm:weighted-stochastic-regularity}
    Let $T > 0$, $d \geq 1$, and let $w \in A_\infin([0, T])$.

    \begin{enumerate}[label = (\alph*)]
        \item Let $W$ be a standard $d$-dimensional Brownian motion and let
        $$
        \bold{W}_{s, t} = (1, W_t - W_s, \mathbb{W}_{s, t})
        $$
        be its level-two lift, with $\mathbb{W}$ interpreted either in the It\^o or in the Stratonovich sense. Then, almost surely, for every $0 < p < \infin$,
        $$
        W \in B^{1/2}_{p, \infin}(w), \qquad
        \mathbb{W} \in \mathbb{B}^{1}_{p/2, \infin}(w).
        $$

        \item Let $B^H$ be a $d$-dimensional fractional Brownian motion with independent standard components and fixed Hurst exponent $H \in (0, 1)$. Then, almost surely, for every $0 < p < \infin$,
        $$
        B^H \in B^H_{p, \infin}(w).
        $$

        If $H > 1/4$, let $\bold{B}^H$ denote its canonical geometric lift: the Gaussian rough-path lift when $H \leq 1/2$, and the Young lift when $H > 1/2$. Write $(\bold{B}^H)^{(k)}$ for the level-$k$ component of its signature extension. Then, almost surely, for every $0 < p < \infin$ and every integer $k \geq 1$,
        $$
        (\bold{B}^H)^{(k)} \in \mathbb{B}^{kH}_{p/k, \infin}(w),
        $$
        where
        $$
        (\bold{B}^H)^{(1)}_{s, t} = B^H_t - B^H_s.
        $$
        At $H = 1/2$, this canonical geometric lift is the Stratonovich lift.

        \item For $B^H$ as in part (b), almost surely,
        $$
        B^H \notin B^\beta_{p, \infin}(w)
        \qquad
        \text{for every } 0 < p < \infin
        \text{ and } H < \beta < 1.
        $$
        More generally, almost surely, for every $0 < p < \infin$ and every $\beta > H$,
        $$
        \sup_{0 < h < T} h^{-\beta}
        \left(
            \int_0^{T-h}
            |B^H_{s+h} - B^H_s|^p w_h(s)
            \hspace{.2em} ds
        \right)^{1/p}
        = \infin.
        $$
    \end{enumerate}

    For each fixed weight $w$ and, in parts (b)--(c), each fixed Hurst exponent $H$, the assertions hold on an event of probability one simultaneously over all the stated integrability exponents, smoothness exponents, and integer levels.
\end{theorem}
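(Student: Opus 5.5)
The plan is to reduce everything to unweighted endpoint Besov regularity via the Transfer Lemma (Lemma~\ref{lem: Transfer Lemma}) and the reverse H\"older inequality. Since $w \in A_\infin$, it lies in some $A_r$, so Lemma~\ref{lem: Standard Muckenhoupt Weights Properties}(iii) gives $\gamma>0$ with $w \in L^{1+\gamma}([0,T])$. Taking $a = 1+\gamma$ and $a' = a/(a-1)$, the Transfer Lemma yields
$$
\|A\|_{\mathbb{B}^\beta_{p,\infin}(w)} \lesssim \|w\|_{L^a}^{1/p}\|A\|_{\mathbb{B}^\beta_{pa',\infin}}.
$$
The same bound holds for path seminorms, applied to $A = \delta f$. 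It therefore suffices to show that, almost surely, for every finite $P$, the unweighted quantities $[W]_{B^{1/2}_{P,\infin}}$, $\|\mathbb{W}\|_{\mathbb{B}^1_{P/2,\infin}}$, $[B^H]_{B^H_{P,\infin}}$ and $\|(\bold{B}^H)^{(k)}\|_{\mathbb{B}^{kH}_{P/k,\infin}}$ are finite. These are the known endpoint facts; compare \cite[(5.10) and \S 5]{FrizSeeger2022}, where Brownian and Gaussian rough paths lie in $\mathbb{B}^{k\alpha}_{p/k,\infin}$ at the endpoint $\alpha=H$ for every finite $p$. To get a single null set, I would prove finiteness for $P$ ranging over the integers. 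Monotonicity of the unweighted modulus in $P$ on a finite interval, via $L^P \subset L^{P'}$ for $P' \le P$, then covers all $P$, and one fixed $a$ handles every $p$.

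For the unweighted endpoint estimates themselves, in case the citation is not taken verbatim, I would argue through moments. By Gaussian hypercontractivity and scaling, $\mathbb{E}|X^{(k)}_{s,s+h}|^m \lesssim h^{kHm}$ for every $m$, for each level of the lift. For fixed $h$, Minkowski and Fubini give $\mathbb{E}\,\Omega$-type integrals at scale exactly $h$ bounded by $h^{kH}$. The supremum over $h \le \tau$ inside $\Omega_P$, and the outer supremum over $\tau$, are the main obstacle, because a naive union bound over scales loses a logarithm. I would handle this with the dyadic characterisation, Proposition~\ref{prop: A Dyadic Estimate for Weighted Besov Paths} with $w\equiv1$ (the condition $\alpha>0=(r-1)/p$ holds), for level one. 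For higher levels I would use Chen's relation together with the subdivision estimate Lemma~\ref{lem:Dyadic Bounds for Sewing}(a) to reduce to dyadic scales. Then $\sup_n h_n^{-kH}D_n$ has to be shown finite a.s. Since $D_n$ averages over roughly $2^n$ almost independent blocks, a concentration argument gives $D_n^{P}$ close to its mean with deviations of order $2^{-n/2}$ in relative size, and Borel--Cantelli over $n$ then makes the supremum finite. The extension from level two to all $k$ follows from Theorem~\ref{thm: Weighted Lyons Extension} with $w\equiv1$, or directly from the same moment argument; in the Young case $H>1/2$ the moment scaling is identical. For the Itô lift, note $\mathbb{W}^{\mathrm{Ito}}=\mathbb{W}^{\mathrm{Strat}}-\tfrac12 (t-s)I$, and the deterministic term is trivially in $\mathbb{B}^1_{p/2,\infin}(w)$ by Lemma~\ref{lem: Shift and Scale Bounds for Muckenhoupt Weights}(iii).

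For (c), I would use a lower bound. By Hölder with the reverse direction, deweighting (Theorem~\ref{thm: Deweighting}) gives $\omega_{p/r}(f,h)\le c\,\omega^w_p(f,h)$ for $w\in A_r$. So it suffices that almost surely $\sup_h h^{-\beta}\omega_u(B^H,h)=\infin$ for every $u>0$ and $\beta>H$. Here I would use the ergodic, or law-of-large-numbers, statement that
$$
h^{-Hu}\int_0^{T-h}|B^H_{s+h}-B^H_s|^u\,ds \to (T)\,\mathbb{E}|N|^u
$$
a.s. along dyadic $h$. This follows from the variance decay of these averages, which is summable along a suitable subsequence even for $H>3/4$ after passing to a sparse subsequence, combined with Borel--Cantelli. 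Hence $h^{-\beta}\omega_u \gtrsim h^{H-\beta}\to\infin$. Doing this over rational $u$, and using monotonicity in $u$, gives simultaneity over all exponents.
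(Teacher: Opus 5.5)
Your architecture is the paper's: reverse H\"older plus Lemma~\ref{lem: Transfer Lemma} for the positive assertions, deweighting plus an almost-sure dyadic law of large numbers for (c), and countable intersections for simultaneity. The gap is in the claim for every level $k$ of the fractional lift.

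First, extending ``from level two'' with Theorem~\ref{thm: Weighted Lyons Extension} requires $3H>1$, so it is unavailable for $1/4<H\leq1/3$. You must start from the level-$M$ lift, $M=\lfloor1/H\rfloor$; for $H>1/2$ this is level one, whose endpoint bound the paper proves for all $H$ rather than cites. Second, and more seriously, the uniqueness in that theorem is uniqueness inside the Besov class. That is exactly the class you are trying to show the canonical signature belongs to, so it cannot identify the Besov extension with $(\bold{B}^H)^{(k)}$. The paper closes this by taking $P$ with $H-1/P>1/(M+1)$. The pointwise embedding (Proposition~\ref{prop:Pointwise Rough Path Embedding} with $w\equiv1$) then makes the Besov extension a multiplicative functional of H\"older exponent $H-1/P$. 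Uniqueness of Lyons' extension in that H\"older class identifies it with the signature.

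Your direct moment alternative does not close the gap as written either. Lemma~\ref{lem:Dyadic Bounds for Sewing}(a) is phrased through $\Omega^w_{p_2}(B,\lambda_ih)$, so it already contains the supremum over non-dyadic scales. Its coefficients are also designed for boundedly many comparable children. With $w\equiv1$ and integrability exponent $p\geq1$ at level $k$, they are $\lambda_j^{1-p}\approx2^{(j-n)(p-1)}$. Along the binary expansion of a non-dyadic $h\in(h_{n+1},h_n]$, even granting child bounds $h_j^{kH}$, the sum behaves like $h_n^{kHp}\sum_{j\geq n}2^{(j-n)(p-1-kHp)}$, which diverges once $p(1-kH)\geq1$.

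The reduction should use Minkowski or $p$-subadditivity, as in the proof of Proposition~\ref{prop: A Dyadic Estimate for Weighted Besov Paths}. Most cleanly, apply that proposition, which is stated for metric-space-valued paths, to $g_t=(1,\bold{X}^{(1)}_{0,t},\ldots,\bold{X}^{(N)}_{0,t})$ with the homogeneous metric $d_N$, as in the proof of Theorem~\ref{thm: Singular Diffusion}. Chen's relation then handles all levels and cross terms at once.

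Finally, your fluctuation rate is wrong for $H>3/4$. Take a scalar component $b$, set $J_n=h_n^{-Hu}\int_0^{T-h_n}|b_{s+h_n}-b_s|^u\,ds$, and let $\rho_H$ be the correlation of the normalised increments. Hermite rank two gives covariances $\lesssim\rho_H^2$, hence $\operatorname{Var}(J_n)\lesssim h_n\int_0^{T/h_n}(1+v)^{4H-4}\,dv$. This is of order $h_n^{4-4H}$ rather than $h_n$, but it is still summable over \emph{all} dyadic $n$ for every $H<1$. That is what the upper bound needs; a sparse subsequence would suffice only for the divergence in (c).
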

\begin{proof}
    We use \cite[Theorems 5.1 and 5.2]{FrizSeeger2022} for the unweighted lifts. The weighted assertions then follow from Lemma~\ref{lem: Transfer Lemma} and the deweighting estimate of Theorem~2.10.

    We first establish the unweighted first-level regularity and sharpness for every $H \in (0,1)$. Let $b^H$ be a scalar fractional Brownian motion, fix $u > 0$, and set
    $$
    h_n = 2^{-n}T, \qquad
    J_n = h_n^{-Hu}\int_0^{T-h_n}|b^H_{s+h_n}-b^H_s|^u \hspace{.2em} ds,
    \qquad m_u = \mathbb{E}|Z|^u,
    $$
    where $Z$ is standard normal. Stationarity and scaling give $\mathbb{E}J_n = (T-h_n)m_u$. The normalised increments
    $$
    G_h(s) = h^{-H}(b^H_{s+h}-b^H_s)
    $$
    are standard Gaussian, with correlation
    $$
    \mathbb{E}[G_h(s)G_h(t)] = \rho_H((t-s)/h), \qquad
    \rho_H(v) = \frac12\left(|v+1|^{2H}+|v-1|^{2H}-2|v|^{2H}\right).
    $$
    The second-difference formula gives $|\rho_H(v)| \lesssim_H (1+|v|)^{2H-2}$. Since the centred even function $x \mapsto |x|^u-m_u$ has Hermite rank at least two, its Hermite expansion yields
    $$
    \left|\operatorname{Cov}(|G_h(s)|^u,|G_h(t)|^u)\right|
    \lesssim_u |\rho_H((t-s)/h)|^2.
    $$
    Consequently,
    $$
    \operatorname{Var}(J_n)
    \lesssim_{H,u,T}
    h_n\int_0^{T/h_n}(1+v)^{4H-4} \hspace{.2em} dv.
    $$
    These bounds are summable in $n$ for every $H<1$. Chebyshev's inequality and Borel--Cantelli therefore give
    $$
    J_n \longrightarrow Tm_u > 0
    \qquad \text{almost surely}.
    $$
    Proposition~2.16 with $w \equiv 1$ now gives $b^H \in B^H_{u,\infin}$ almost surely. Moreover, for every $\beta>H$,
    $$
    h_n^{-\beta}
    \left(\int_0^{T-h_n}|b^H_{s+h_n}-b^H_s|^u
    \hspace{.2em} ds\right)^{1/u}
    = h_n^{H-\beta}J_n^{1/u}
    \longrightarrow \infin.
    $$
    The positive assertion extends componentwise to $B^H$, while any one component gives the negative assertion.

    Since $w \in A_\infin$, reverse H\"older gives $w \in L^a([0,T])$ for some $a>1$. Put $a'=a/(a-1)$. Lemma~\ref{lem: Transfer Lemma} gives
    $$
    \|A\|_{\mathbb{B}^{\eta}_{v,\infin}(w)}
    \leq \|w\|_{L^a}^{1/v}
    \|A\|_{\mathbb{B}^{\eta}_{va',\infin}},
    \qquad \eta>0,\quad v>0.
    $$
    Applying this to $\delta B^H$ proves the first-level assertion in part (b). For part (a), \cite[Theorem 5.2]{FrizSeeger2022}, applied at a sufficiently large integrability exponent, and finite-measure inclusion give
    $$
    W \in B^{1/2}_{pa',\infin}, \qquad
    \mathbb{W} \in \mathbb{B}^{1}_{pa'/2,\infin}
    \qquad \text{almost surely}.
    $$
    The transfer estimate at indices $p$ and $p/2$ proves the assertion for both the It\^o and Stratonovich lifts.

    For the lifted assertion in part (b), suppose $H>1/4$ and set $M=\lfloor 1/H\rfloor$. Fix $N \geq M$ and choose $P$ sufficiently large that
    $$
    P>pa', \qquad H-\frac1P>\frac1{M+1}.
    $$
    When $H\leq1/2$, \cite[Theorem 5.2]{FrizSeeger2022} supplies the unweighted endpoint regularity through level $M$. When $H>1/2$, we have $M=1$, and the required starting estimate is the first-level result above. The Besov extension theorem \cite[Theorem 5.1]{FrizSeeger2022} therefore gives an extension through level $N$ with components in $\mathbb{B}^{kH}_{P/k,\infin}$. By uniqueness of rough-path extension in the H\"older class of exponent $H-1/P>1/(M+1)$, this extension agrees with the canonical signature extension. Thus, for $1\leq k\leq N$,
    $$
    \|(\bold{B}^H)^{(k)}\|_{\mathbb{B}^{kH}_{p/k,\infin}(w)}
    \leq
    \|w\|_{L^a}^{k/p}
    T^{k/(pa')-k/P}
    \|(\bold{B}^H)^{(k)}\|_{\mathbb{B}^{kH}_{P/k,\infin}}
    <\infin
    \qquad \text{almost surely}.
    $$
    Since $N$ was arbitrary, this proves the assertion at every finite level. The path-space $L^p(w)$ terms are finite by continuity and integrability of $w$.

    Finally, choose $r>1$ with $w \in A_r$. Theorem~2.10, applied to the two-parameter map $\delta B^H$, gives
    $$
    \|\delta B^H\|_{\mathbb{B}^{\beta}_{p/r,\infin}}
    \lesssim
    \|\delta B^H\|_{\mathbb{B}^{\beta}_{p,\infin}(w)}.
    $$
    For $\beta>H$, the left-hand side is infinite by the unweighted divergence established above. Hence the weighted first-increment seminorm is infinite as well. This proves part (c), including $\beta\geq1$, since the two-parameter spaces are defined for every positive smoothness exponent.

    For each fixed $w$ and $H$, intersect the probability-one events over rational positive integrability exponents and integer levels. Finite-measure inclusion extends the positive assertions from larger rational exponents and the negative assertions from smaller rational exponents, using
    $$
    \sup_{0<h<T}\int_0^{T-h}w_h(s)\hspace{.2em} ds
    \leq w([0,T]).
    $$
    The dyadic divergence holds simultaneously for every $\beta>H$. This gives the stated simultaneous assertions.
\end{proof}

We now consider the process
$$
X_t = \int^t_0 \sigma(r) \hspace{.2em} dW_r, \qquad |\sigma(r)| \asymp r^{-\gamma}, \qquad 0 < \gamma < \frac{1}{2}.
$$
Since $|\sigma(r)| \asymp r^{-\gamma}$ this process has larger fluctuations on short intervals near the initial time (as compared to, say, intervals away from $t = 0$).

\begin{theorem}[Singular Diffusion]
\label{thm: Singular Diffusion}
    Let $T > 0$, let $W$ be a standard $d$-dimensional Brownian motion, and let $0 < \gamma < 1/2$. Suppose that $\sigma : (0,T] \to \mathbb{R}$ is deterministic and measurable, and that there are constants $0 < c \leq C < \infin$ such that
    $$
    c t^{-\gamma} \leq |\sigma(t)| \leq C t^{-\gamma}
    \qquad \text{for almost every } t \in (0,T].
    $$
    Set
    $$
    X_t = \int_0^t \sigma(u) \hspace{.2em} dW_u,
    \qquad
    w_p(t) = t^{\gamma p},
    \qquad 0 < p < \infin.
    $$
    The process $X$ has a continuous version on $[0,T]$, which we use throughout. The following assertions hold.

    \begin{enumerate}[label = (\alph*)]
        \item For every $0 < p < \infin$,
        $$
        X \in B^{1/2}_{p,\infin}(w_p)
        \qquad \text{almost surely}.
        $$

        \item Let $\bold{X}^{(k)}$ denote the level-$k$ iterated-integral increment of $X$, with all levels interpreted consistently either in the It\^o or in the Stratonovich sense. Then, for every $0 < p < \infin$ and every integer $k \geq 1$,
        $$
        \bold{X}^{(k)} \in
        \mathbb{B}^{k/2}_{p/k,\infin}(w_p)
        \qquad \text{almost surely},
        $$
        where $\bold{X}^{(1)}_{s,t}=X_t-X_s$. The same weight $w_p$ is used at every level. In particular, writing $\mathbb{X}=\bold{X}^{(2)}$,
        $$
        X \in B^{1/2}_{p,\infin}(w_p),
        \qquad
        \mathbb{X} \in \mathbb{B}^{1}_{p/2,\infin}(w_p)
        \qquad \text{almost surely}.
        $$

        \item Fix $0 < p < \infin$ such that
        $$
        \gamma < \frac12-\frac1p.
        $$
        Choose $r$ with $1+\gamma p<r<p/2$. Then $w_p \in A_r$ and $r/p<1/2$. Almost surely, for every $0 \leq s<t \leq T$,
        $$
        |X_t-X_s|
        \lesssim
        [X]_{B^{1/2}_{p,\infin}(w_p)}
        |t-s|^{1/2-1/p}
        \max\{s,|t-s|\}^{-\gamma}.
        $$
        The implicit constant is deterministic and depends only on $p,\gamma,r,T$.

        \item For every $0 < p < \infin$ and $0<h\leq T/2$,
        $$
        \mathbb{E}\int_0^{T-h}|X_{s+h}-X_s|^p
        \hspace{.2em} ds
        \asymp
        \begin{cases}
            h^{p/2}, & \gamma p<1, \\
            h^{p/2}\log(eT/h), & \gamma p=1, \\
            h^{p(1/2-\gamma)+1}, & \gamma p>1.
        \end{cases}
        $$
        The comparison constants may depend on $p,\gamma,c,C,d,T$, but not on $h$. Set
        $$
        s_*(p)=\min\left\{\frac12,\frac12-\gamma+\frac1p\right\}.
        $$
        Almost surely,
        $$
        X \in B^s_{p,\infin}
        \qquad \text{for every } 0<s<s_*(p),
        $$
        whereas
        $$
        X \notin B^s_{p,\infin}
        \qquad \text{for every } s_*(p)<s<1.
        $$
        No assertion is made here about unweighted membership at $s=s_*(p)$.
    \end{enumerate}

    For each fixed coefficient $\sigma$ and exponent $\gamma$, the almost-sure assertions hold on a common event of probability one simultaneously over all the stated integrability exponents, smoothness exponents, and integer levels.
\end{theorem}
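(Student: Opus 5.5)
The plan is a time change that turns $X$ into Brownian motion, followed by moment computations that exhibit the weight $t^{\gamma p}$ as exactly compensating the quadratic-variation density. Write $\phi(t)=\int_0^t\sigma(u)^2\,du\asymp t^{1-2\gamma}$, so each component of $X$ is a continuous Gaussian martingale with $\langle X^i\rangle_t=\phi(t)$. A continuous version follows from Kolmogorov's criterion, since $\mathbb{E}|X_t-X_s|^p\asymp(\phi(t)-\phi(s))^{p/2}$ and $\phi$ is H\"older. The basic estimate behind everything is
$$
\phi(s+h)-\phi(s)\asymp h\,(s+h)^{-2\gamma},
$$
which with $w_p(u)=u^{\gamma p}$ and $w_{p,h}(s)\asymp(s+h)^{\gamma p}$ (by Example~\ref{ex: Power Weight Embedding}) gives
$$
\mathbb{E}|X_{s+h}-X_s|^p\,w_{p,h}(s)\asymp h^{p/2}.
$$

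\emph{Parts (a) and (b).} I would not use Theorem~\ref{thm:weighted-stochastic-regularity} directly, since $X$ is not Brownian. Instead I would prove dyadic almost-sure bounds and then use Proposition~\ref{prop: A Dyadic Estimate for Weighted Besov Paths}, or the monotone comparison $\omega(h)\le\omega(2h_n)$ in the $q=\infty$ case, which needs no condition on $\alpha$.

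For level $k$, the increment $\bold{X}^{(k)}_{s,s+h}$ lies in the $k$-th Wiener chaos, whether It\^o or Stratonovich (the latter adds lower-chaos corrections of the same size). Iterated BDG, or a direct isometry, gives
$$
\|\bold{X}^{(k)}_{s,s+h}\|_{L^m(\Omega)}\lesssim_m(\phi(s+h)-\phi(s))^{k/2}.
$$
For large $m\ge p/k$, Minkowski and Jensen then give
$$
\mathbb{E}\left(\int_0^{T-h}|\bold{X}^{(k)}_{s,s+h}|^{p/k}w_{p,h}(s)\,ds\right)^{mk/p}\lesssim h^{m k/2}.
$$
The key point is that the $k$-th power of $(s+h)^{-2\gamma\cdot\frac12}$ raised to $p/k$ exactly matches the weight $(s+h)^{\gamma p}$, which is why the same $w_p$ works at every level. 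Chebyshev and Borel--Cantelli over dyadic $h_n$, at a slightly smaller exponent $k/2-\epsilon$, only give a log loss, so to reach the endpoint $k/2$ I would argue as in the proof of Theorem~\ref{thm:weighted-stochastic-regularity}. That is, I would show the normalized dyadic integrals $J_n=h_n^{-p/2}\int|\cdots|^{p/k}w\,ds$ have bounded means and summable variances; decorrelation of increments over disjoint intervals comes from independence of Brownian increments. Then $\sup_n J_n<\infty$ almost surely. Simultaneity over $p$ would follow from rational $p$ together with the finite-measure inclusion.

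\emph{Part (c).} I would check $w_p\in A_r$ via the power-weight characterisation, which requires $\gamma p<r-1$. Then $\alpha=1/2>r/p$, so Theorem~\ref{thm: Weighted Besov Holder Embedding} and the interval-mass computation of Example~\ref{ex: Power Weight Embedding} with $\beta=\gamma p$ give exactly the displayed bound, with factor $\max\{s,h\}^{-\gamma p/p}$.

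\emph{Part (d).} The moment asymptotic follows by integrating $h^{p/2}(s+h)^{-\gamma p}$ over $s\in[0,T-h]$; the three regimes correspond to the integrability of $(s+h)^{-\gamma p}$ near $0$. Membership for $s<s_*$ follows from higher moments plus Borel--Cantelli, with an $\epsilon$ margin absorbing logs. For non-membership when $s>s_*$, I would run the same variance argument for the normalized $J_n$ to show $J_n$ is bounded below almost surely. I expect this lower-bound concentration, particularly in the $\gamma p>1$ regime where mass concentrates on $[0,O(h)]$ and only few effectively independent increments contribute, to be the main obstacle. There I would instead use independence across the dyadic scales: the events that $\int_0^{h_n}|X_{s+h_n}-X_s|^p\,ds$ exceeds $c\,h_n^{p(1/2-\gamma)+1}$ have probability bounded below uniformly in $n$ by scaling, so infinitely many of them occur by a Borel--Cantelli-type (or Kolmogorov 0-1) argument. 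That suffices for divergence of $h^{-s}\omega_p(X,h)$.
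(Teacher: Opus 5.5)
\textbf{Gap in (b) for levels $k\geq2$.} Parts (a), (c), the moment asymptotics in (d) and the membership half of (d) follow the paper closely. The common steps are the cancellation $V(s,h)^{p/2}(w_p)_h(s)\asymp h^{p/2}$, bounded means and $O(h)$ variances from independence of increments over $[s,s+h]$, Chebyshev and Borel--Cantelli along $h_n=2^{-n}T$, and then Proposition~\ref{prop: A Dyadic Estimate for Weighted Besov Paths}. For that last step you should check its hypothesis: choose $1+\gamma p<r<1+p/2$, which is possible since $\gamma<1/2$, so that $w_p\in A_r$ and $(r-1)/p<1/2$.

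The gap is the passage from dyadic lengths to all lengths at levels $k\geq2$. Your fallback, $\omega(h)\le\omega(2h_n)$ ``with no condition on $\alpha$'', is not valid. The quantity $\omega(2h_n)$ is itself a supremum over all, mostly non-dyadic, lengths $h\le 2h_n$, and a Borel--Cantelli bound at the lengths $h_n$ says nothing about those.

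The dyadic-to-continuous passage is the binary-expansion telescoping inside Proposition~\ref{prop: A Dyadic Estimate for Weighted Besov Paths}. That proposition is stated for metric-space-valued paths, because it uses $d(f_s,f_{s+h})\le\sum_i d(f_{t_i},f_{t_{i+1}})$. For $k\geq2$ the increments $\bold{X}^{(k)}$ are not additive, since Chen's relation produces cross terms. So you cannot apply the proposition to each $\bold{X}^{(k)}$ separately, as you propose.

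\textbf{The missing idea.} This is what the paper does. Treat the truncated signature $g_t=(1,\bold{X}^{(1)}_{0,t},\ldots,\bold{X}^{(N)}_{0,t})$ as a path in the tensor group, with a homogeneous metric $d_N$ satisfying $d_N(g_s,g_t)\asymp_N\max_{k\le N}|\bold{X}^{(k)}_{s,t}|^{1/k}$; this holds for either interpretation of the iterated integrals.

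\begin{itemize}
\item Your levelwise dyadic bounds sum to a dyadic bound for $\int d_N(g_s,g_{s+h_n})^p(w_p)_{h_n}(s)\,ds$. Alternatively, run the variance argument directly on $d_N^p$, as the paper does.
\item Proposition~\ref{prop: A Dyadic Estimate for Weighted Besov Paths} then applies to the metric-valued path $g$.
\item The metric comparison transfers $[g]_{B^{1/2}_{p,\infin}(w_p)}<\infin$ back to every level $k\le N$.
\end{itemize}

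A Chen-based induction on $k$, controlling the cross terms by lower levels, could also work, but it would have to be carried out.

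Separately, the weight $w_p$ changes with $p$. Simultaneity over $p$ therefore needs the comparison $(w_p)_h(u)^{1/p}\asymp(u+h)^\gamma$, applied before the finite-measure inclusion; inclusion alone does not suffice.

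\textbf{Non-membership when $\gamma p>1$.} Your route differs from the paper's and can be made to work, with two repairs.
\begin{itemize}
\item The events $E_n$ are not independent, and $X$ is only approximately self-similar. So obtain $\mathbb{P}(E_n)\geq\delta$ from Paley--Zygmund, using that $h_n^{-p(1/2-\gamma)-1}\int_0^{h_n}|X_{s+h_n}-X_s|^p\,ds$ has bounded second moment, rather than from scaling.
\item Then $\mathbb{P}(\limsup_n E_n)\geq\delta$ by reverse Fatou. Since $\limsup_n E_n$ lies in the germ $\sigma$-field of $W$ at $0$, Blumenthal's $0$--$1$ law gives probability one.
\end{itemize}

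The paper instead uses a pointwise obstruction at time zero. Gaussian small-ball bounds and Borel--Cantelli give $|X^1_{h_n}|>h_n^{1/2-\gamma+\varepsilon}$ eventually. Membership in $B^s_{p,\infin}$ with $s>s_*(p)$ would, by Theorem~\ref{thm: Weighted Besov Holder Embedding} with $w\equiv1$, force $|X_h-X_0|\lesssim h^{s-1/p}$ with $s-1/p>1/2-\gamma$, a contradiction. The paper's argument is shorter; yours gives a direct lower bound on the modulus without using the embedding.
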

\begin{proof}
    Put
    $$
    A(t)=\int_0^t \sigma(u)^2 \hspace{.2em} du,
    \qquad
    V(s,h)=A(s+h)-A(s).
    $$
    Since $2\gamma<1$, we have $A(T)<\infin$, so $X$ is a continuous square-integrable martingale. The coefficient bounds and elementary integration give
    $$
    V(s,h)\asymp h(s+h)^{-2\gamma},
    \qquad
    (w_p)_h(s)\asymp (s+h)^{\gamma p}.
    $$
    In particular,
    $$
    h^{-p/2}V(s,h)^{p/2}(w_p)_h(s)\asymp1,
    $$
    uniformly over the admissible $s,h$.

    For parts (a)--(b), we adapt the argument of \cite[Theorem 5.2]{FrizSeeger2022}. Fix $p>0$ and an integer $N\geq1$, and regard
    $$
    g_t=(1,\bold{X}^{(1)}_{0,t},\ldots,\bold{X}^{(N)}_{0,t})
    $$
    as a continuous path in the truncated tensor group. For the homogeneous metric $d_N$ of \cite[Remark 5.3]{FrizSeeger2022}, Chen's relation gives
    $$
    d_N(g_s,g_t)\asymp_N
    \max_{1\leq k\leq N}|\bold{X}^{(k)}_{s,t}|^{1/k}.
    $$
    This applies to either consistent interpretation of the iterated integrals.

    The deterministic clock $A$ identifies $X$ and its iterated integrals with time-changed Brownian motion and its corresponding iterated integrals. Brownian scaling and homogeneity therefore give
    $$
    \mathbb{E}d_N(g_s,g_{s+h})^m
    =c_{N,m,d}V(s,h)^{m/2},
    \qquad m>0,
    $$
    with finite constants. Finiteness follows from iterated BDG estimates and, in the Stratonovich case, Itô--Stratonovich conversion.

    Set
    $$
    F_h(s)=h^{-p/2}d_N(g_s,g_{s+h})^p(w_p)_h(s),
    \qquad
    Y_h=\int_0^{T-h}F_h(s)\hspace{.2em} ds.
    $$
    The preceding cancellation gives
    $$
    \sup_{h,s}\mathbb{E}F_h(s)\lesssim1,
    \qquad
    \sup_{h,s}\mathbb{E}F_h(s)^2\lesssim1.
    $$
    Moreover, $F_h(s)$ depends only on the Brownian increments over $[s,s+h]$. Thus $F_h(s)$ and $F_h(t)$ are independent when $|s-t|\geq h$, and
    $$
    \operatorname{Var}(Y_h)
    \lesssim
    \int_0^{T-h}\int_0^{T-h}
    \mathbf{1}_{\{|s-t|<h\}}
    \hspace{.2em} ds\hspace{.2em} dt
    \lesssim Th.
    $$
    Taking $h_n=2^{-n}T$, Chebyshev's inequality and Borel--Cantelli give
    $$
    Y_{h_n}-\mathbb{E}Y_{h_n}\longrightarrow0
    \qquad\text{almost surely}.
    $$
    Since the expectations are uniformly bounded,
    $$
    \sup_{n\geq1}h_n^{-1/2}
    \left(
        \int_0^{T-h_n}
        d_N(g_s,g_{s+h_n})^p(w_p)_{h_n}(s)
        \hspace{.2em} ds
    \right)^{1/p}
    <\infin
    \qquad\text{almost surely}.
    $$

    Choose
    $$
    1+\gamma p<r<1+\frac p2.
    $$
    This is possible because $\gamma<1/2$, and gives $w_p\in A_r$ and $(r-1)/p<1/2$. Proposition~2.16 therefore yields
    $$
    [g]_{B^{1/2}_{p,\infin}(w_p)}<\infin
    \qquad\text{almost surely}.
    $$
    The metric comparison implies, for $1\leq k\leq N$,
    $$
    \|\bold{X}^{(k)}\|_{\mathbb{B}^{k/2}_{p/k,\infin}(w_p)}^{1/k}
    \lesssim_N
    [g]_{B^{1/2}_{p,\infin}(w_p)}.
    $$
    Since $N$ was arbitrary, this proves the increment estimates in (a)--(b). Continuity of $X$ and integrability of $w_p$ supply the path-space $L^p(w_p)$ term.

    For part (c), choose $1+\gamma p<r<p/2$. Theorem~3.3 gives, with $h=t-s$,
    $$
    |X_t-X_s|
    \lesssim
    [X]_{B^{1/2}_{p,\infin}(w_p)}
    \frac{h^{1/2}}{w_p([s,t])^{1/p}}.
    $$
    Since
    $$
    w_p([s,s+h])\asymp h(s+h)^{\gamma p}
    \asymp h\max\{s,h\}^{\gamma p},
    $$
    substitution proves the asserted estimate.

    For part (d), $X_{s+h}-X_s$ is a centred Gaussian vector with covariance $V(s,h)I_d$. Hence
    $$
    \mathbb{E}|X_{s+h}-X_s|^p
    \asymp h^{p/2}(s+h)^{-\gamma p}.
    $$
    Tonelli's theorem gives
    $$
    \mathbb{E}\int_0^{T-h}|X_{s+h}-X_s|^p
    \hspace{.2em} ds
    \asymp
    h^{p/2}\int_h^T u^{-\gamma p}\hspace{.2em} du.
    $$
    Evaluating this integral proves the three moment estimates, including the logarithm when $\gamma p=1$.

    Fix $0<s<s_*(p)$. Those estimates imply
    $$
    \mathbb{E}\sum_{n\geq1}h_n^{-sp}
    \int_0^{T-h_n}|X_{u+h_n}-X_u|^p
    \hspace{.2em} du
    <\infin.
    $$
    Indeed, the summands decay geometrically, with an additional factor of order $1+n$ when $\gamma p=1$. Proposition~2.16 with $w\equiv1$ therefore gives $X\in B^s_{p,\infin}$ almost surely.

    For non-membership above $1/2$, fix an interval $[a,b]\subset(0,T)$ and consider the first coordinate $X^1$. Set
    $$
    Q_h=h^{-p/2}\int_a^{b-h}
    |X^1_{u+h}-X^1_u|^p\hspace{.2em} du.
    $$
    On this interval $V(u,h)\asymp h$, so $\mathbb{E}Q_h\gtrsim1$ for small $h$. The same independence argument as above gives $\operatorname{Var}(Q_h)\lesssim h$. Consequently,
    $$
    \liminf_{n\to\infin}Q_{h_n}>0
    \qquad\text{almost surely}.
    $$
    For every $s>1/2$,
    $$
    h_n^{-s}
    \left(
        \int_a^{b-h_n}|X^1_{u+h_n}-X^1_u|^p
        \hspace{.2em} du
    \right)^{1/p}
    =h_n^{1/2-s}Q_{h_n}^{1/p}
    \longrightarrow\infin.
    $$
    Thus $X\notin B^s_{p,\infin}$ for $1/2<s<1$.

    It remains to consider $\gamma p>1$. Put $H_0=1/2-\gamma$, so that $s_*(p)=H_0+1/p$. Since $A(h)\asymp h^{2H_0}$, the Gaussian density bound gives, for every $\varepsilon>0$,
    $$
    \mathbb{P}\left(
        |X^1_{h_n}|\leq h_n^{H_0+\varepsilon}
    \right)
    \lesssim h_n^\varepsilon.
    $$
    Borel--Cantelli implies that $|X^1_{h_n}|>h_n^{H_0+\varepsilon}$ eventually. Taking a countable sequence $\varepsilon\downarrow0$ excludes every H\"older exponent greater than $H_0$ at zero. If $X\in B^s_{p,\infin}$ with $s>H_0+1/p$, Theorem~3.3 with $w\equiv1$ and $r=1$ would give
    $$
    |X_h-X_0|\lesssim [X]_{B^s_{p,\infin}}h^{s-1/p},
    $$
    contradicting this obstruction. This proves the remaining non-membership assertion.

    Finally, the assertions may be made simultaneous in the stated parameters. For weighted membership, first intersect over integer exponents $P$ and integer levels $N$. For $0<p<P$, the comparison
    $$
    (w_p)_h(u)^{1/p}\asymp(u+h)^\gamma
    \asymp(w_P)_h(u)^{1/P}
    $$
    and finite-measure inclusion give
    $$
    \|\bold{X}^{(k)}\|_{\mathbb{B}^{k/2}_{p/k,\infin}(w_p)}^{1/k}
    \lesssim
    T^{1/p-1/P}
    \|\bold{X}^{(k)}\|_{\mathbb{B}^{k/2}_{P/k,\infin}(w_P)}^{1/k}.
    $$
    For the unweighted assertions, intersect over rational integrability and smoothness exponents, then use finite-measure inclusions and continuity of $s_*(p)$. Part (c) follows deterministically from weighted membership. Intersecting also over the two interpretations of the iterated integrals completes the proof.
\end{proof}

\begin{corollary}[Location-Sensitive Estimate]
\label{cor: Location Sensitive Estimate for Singular Diffusion}
    Let $0<\gamma<1/2$, $0<p<\infin$, $0<q\leq\infin$, and set $w_p(t)=t^{\gamma p}$. Choose $r>1+\gamma p$ and suppose that
    $$
    \frac rp<\alpha\leq\frac12,
    $$
    with either
    $$
    \alpha>\frac13,
    \qquad\text{or}\qquad
    \alpha=\frac13 \quad\text{and}\quad 0<q\leq3.
    $$
    Let
    $$
    \bold{X}=(1,X,\mathbb{X}),
    \qquad
    \widetilde{\bold{X}}=(1,\widetilde{X},\widetilde{\mathbb{X}})
    \in \bold{B}^{\alpha,2}_{p,q}(w_p),
    $$
    and let
    $$
    F\in C_b^{2,1}
    \bigl(\mathbb{R}^m;\mathcal{L}(\mathbb{R}^d,\mathbb{R}^m)\bigr).
    $$
    For $\xi\in\mathbb{R}^m$, denote by $Y,\widetilde{Y}$ the unique controlled solutions of
    $$
    dY=F(Y)\hspace{.2em}d\bold{X},
    \qquad
    d\widetilde{Y}=F(\widetilde{Y})\hspace{.2em}d\widetilde{\bold{X}},
    \qquad
    Y_0=\widetilde{Y}_0=\xi.
    $$

    Set
    $$
    E=
    [X-\widetilde{X}]_{B^\alpha_{p,q}(w_p)}
    +
    \|\mathbb{X}-\widetilde{\mathbb{X}}\|_
    {\mathbb{B}^{2\alpha}_{p/2,q/2}(w_p)},
    $$
    and define the controlled remainders by
    $$
    R^Y_{s,t}=\delta Y_{s,t}-F(Y_s)X_{s,t},
    \qquad
    R^{\widetilde{Y}}_{s,t}
    =\delta\widetilde{Y}_{s,t}
    -F(\widetilde{Y}_s)\widetilde{X}_{s,t}.
    $$
    Write
    $$
    K(\bold{X})=
    [X]_{B^\alpha_{p,q}(w_p)}
    +
    \|\mathbb{X}\|_
    {\mathbb{B}^{2\alpha}_{p/2,q/2}(w_p)}^{1/2},
    $$
    and define $K(\widetilde{\bold{X}})$ analogously.

    For every $M>0$, there is a constant $C_M$, depending only on $M,\alpha,p,q,r,\gamma,T$ and the dimensions, such that whenever
    $$
    |\xi|+\|F\|_{C_b^{2,1}}
    +K(\bold{X})+K(\widetilde{\bold{X}})
    \leq M,
    $$
    we have
    $$
    [Y-\widetilde{Y}]_{B^\alpha_{p,q}(w_p)}
    +
    [F(Y)-F(\widetilde{Y})]_{B^\alpha_{p,q}(w_p)}
    +
    \|R^Y-R^{\widetilde{Y}}\|_
    {\mathbb{B}^{2\alpha}_{p/2,q/2}(w_p)}
    \leq C_M E.
    $$
    In particular, for every $0\leq s<t\leq T$,
    $$
    |\delta(Y-\widetilde{Y})_{s,t}|
    \leq
    C_M E\,|t-s|^{\alpha-1/p}
    \max\{s,|t-s|\}^{-\gamma}.
    $$

    These estimates are deterministic and apply pathwise to stochastic drivers satisfying the hypotheses. The lift in Theorem~\ref{thm: Singular Diffusion} satisfies the driver hypothesis almost surely whenever $\alpha<1/2$, for every $q$ allowed above, and also when $\alpha=1/2$ and $q=\infin$. The estimates then apply with $M$ bounding the realised data.
\end{corollary}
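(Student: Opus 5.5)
The corollary is essentially Theorem~\ref{thm: Local Lipschitz Continuity of Rough Itô-Lyons Map} specialised to the power weight $w_p$, followed by the pointwise embedding of Theorem~\ref{thm: Weighted Besov Holder Embedding}. The plan has four steps: verify the weight hypotheses, invoke the stability theorem, recover the path-difference seminorm, and then pass to the pointwise bound.

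\textbf{Step 1: the weight.} By Example~\ref{ex: Power Weight Embedding}, $w_p=t^{\gamma p}\in A_r$ precisely when $\gamma p<r-1$, which is our choice $r>1+\gamma p$. The remaining parameter conditions ($r/p<\alpha\le 1/2$, and either $\alpha>1/3$ or $\alpha=1/3$ with $q\le3$) are assumed verbatim. Since $K(\bold X)\le M$ bounds $\vertiii{\bold X}_{\bold B^{\alpha,2}_{p,q}(w_p)}$, Theorem~\ref{thm: Level-2 Existence and Uniqueness} gives the unique solutions $Y,\widetilde Y$ and the a priori bounds $C_M$.

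\textbf{Step 2: stability.} Apply Theorem~\ref{thm: Local Lipschitz Continuity of Rough Itô-Lyons Map} with $F^1=F^2=F$ and $\xi^1=\xi^2=\xi$. Then $\varrho(\bold X,\widetilde{\bold X})=E$, and we obtain
$$
[F(Y)-F(\widetilde Y)]_{B^\alpha_{p,q}(w_p)}+\|R^Y-R^{\widetilde Y}\|_{\mathbb B^{2\alpha}_{p/2,q/2}(w_p)}\le C_ME.
$$

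\textbf{Step 3: the path difference.} To bound $[Y-\widetilde Y]$, use the difference part of Lemma~\ref{lem: Controlled-Remainder Estimate} with $\beta=2\alpha$, $Y'=F(Y)$ and $\widetilde Y'=F(\widetilde Y)$. The initial terms vanish or are harmless: $|Y'_0-\widetilde Y'_0|=0$, and $|\widetilde Y'_0|\le M$ multiplies $[X-\widetilde X]\le E$. The quantities $Q_\Delta$ and $E_{\beta,\Delta}$ are bounded by $C_ME$ by Step~2 together with the a priori bounds. The constants depend on $\Lambda_{w_p,T}$ and $[w_p]_{A_r}$, hence only on $p,\gamma,r,T$.

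\textbf{Step 4: pointwise bound and stochastic driver.} Apply Theorem~\ref{thm: Weighted Besov Holder Embedding} to the path $Y-\widetilde Y$, which is admissible because $\alpha>r/p$. This gives
$$
|\delta(Y-\widetilde Y)_{s,t}|\lesssim \frac{|t-s|^\alpha}{w_p([s,t])^{1/p}}\,[Y-\widetilde Y]_{B^\alpha_{p,q}(w_p)}.
$$
Substitute $w_p([s,s+h])\asymp h\max\{s,h\}^{\gamma p}$ from Example~\ref{ex: Power Weight Embedding} to reach the stated form.

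For the final sentence of the corollary, Theorem~\ref{thm: Singular Diffusion}(b) gives $X\in B^{1/2}_{p,\infin}(w_p)$ and $\mathbb X\in\mathbb B^1_{p/2,\infin}(w_p)$. This already covers the case $\alpha=1/2$, $q=\infin$. When $\alpha<1/2$ and $q$ is finite, use monotonicity in smoothness: $\tau^{-\alpha}\omega\le \tau^{1/2-\alpha}\sup_u u^{-1/2}\omega(u)$, and $\tau^{1/2-\alpha}$ is $L^q(d\tau/\tau)$-integrable. The same applies at level two with exponent $1-2\alpha$ and scale index $q/2$.

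I expect the main obstacle to be Step 3. Theorem~\ref{thm: Local Lipschitz Continuity of Rough Itô-Lyons Map} controls the derivative $F(Y)-F(\widetilde Y)$ and the remainders, but not $Y-\widetilde Y$ itself. Recovering it requires the controlled decomposition with mixed drivers, and the constants must be checked to remain uniform in the weight data.
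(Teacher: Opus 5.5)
Your proposal is correct and follows the paper's proof step for step. The paper checks $w_p\in A_r$, applies the rough It\^o--Lyons stability theorem with equal vector field and initial value to bound $[F(Y)-F(\widetilde Y)]$ and the remainder difference, recovers $[Y-\widetilde Y]$ from the difference part of the controlled-remainder lemma with $\beta=2\alpha$ (using $Y'_0=\widetilde Y'_0=F(\xi)$), and concludes via the weighted Besov--H\"older embedding with $w_p([s,s+h])\asymp h\max\{s,h\}^{\gamma p}$, checking the stochastic driver by the same integration of $\tau^{q(1/2-\alpha)}$ against $d\tau/\tau$.
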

\begin{proof}
    Since $r>1+\gamma p$, the power weight $w_p$ belongs to $A_r$. The remaining assumptions are precisely those required by Theorems~6.12 and~6.14, in either the subcritical or critical case. The data bound and Theorem~6.12 give uniform bounds for both controlled solution pairs.

    Write
    $$
    Y'=F(Y), \qquad
    \widetilde{Y}'=F(\widetilde{Y}), \qquad
    \Delta R=R^Y-R^{\widetilde{Y}}.
    $$
    Since the vector field and initial condition agree, Theorem~6.14 gives
    $$
    [Y'-\widetilde{Y}']_{B^\alpha_{p,q}(w_p)}
    +
    \|\Delta R\|_{\mathbb{B}^{2\alpha}_{p/2,q/2}(w_p)}
    \leq C_M E.
    $$
    There are no initial-value contributions, since
    $$
    Y_0=\widetilde{Y}_0=\xi,
    \qquad
    Y'_0=\widetilde{Y}'_0=F(\xi).
    $$

    To recover the solution-path seminorm, apply the difference estimate in Lemma~6.8 with $\beta=2\alpha$. This is admissible because $\alpha>r/p$. The uniform controlled-path bounds and equality of the initial derivatives yield
    $$
    [Y-\widetilde{Y}]_{B^\alpha_{p,q}(w_p)}
    \leq C_M\left(
        [X-\widetilde{X}]_{B^\alpha_{p,q}(w_p)}
        +
        [Y'-\widetilde{Y}']_{B^\alpha_{p,q}(w_p)}
        +
        \|\Delta R\|_{\mathbb{B}^{2\alpha}_{p/2,q/2}(w_p)}
    \right)
    \leq C_M E.
    $$
    Combining the preceding estimates proves the first assertion.

    Set $Z=Y-\widetilde{Y}$. Theorem~3.3 gives, for $0\leq s<t\leq T$,
    $$
    |\delta Z_{s,t}|
    \lesssim
    [Z]_{B^\alpha_{p,q}(w_p)}
    \frac{|t-s|^\alpha}{w_p([s,t])^{1/p}}.
    $$
    Writing $h=t-s$, elementary integration gives
    $$
    w_p([s,s+h])
    =\int_s^{s+h}u^{\gamma p}\hspace{.2em}du
    \asymp h(s+h)^{\gamma p}
    \asymp h\max\{s,h\}^{\gamma p}.
    $$
    Consequently,
    $$
    |\delta(Y-\widetilde{Y})_{s,t}|
    \leq
    C_M E\,|t-s|^{\alpha-1/p}
    \max\{s,|t-s|\}^{-\gamma},
    $$
    as claimed.

    Finally, Theorem~\ref{thm: Singular Diffusion} gives, almost surely, for either consistent choice of iterated integrals,
    $$
    \Omega^{w_p}_{p/k}(\bold{X}^{(k)},\tau)
    \lesssim_\omega \tau^{k/2},
    \qquad k=1,2.
    $$
    If $\alpha<1/2$ and $q<\infin$, then
    $$
    \int_0^T
    \left(
        \tau^{-k\alpha}
        \Omega^{w_p}_{p/k}(\bold{X}^{(k)},\tau)
    \right)^{q/k}
    \frac{d\tau}{\tau}
    \lesssim_\omega
    \int_0^T \tau^{q(1/2-\alpha)}
    \frac{d\tau}{\tau}
    <\infin.
    $$
    For $q=\infin$, the corresponding supremum is finite whenever $\alpha\leq1/2$. Thus the stochastic lift satisfies the asserted driver hypotheses. The deterministic estimates apply to each such realisation and any comparison driver satisfying the hypotheses, with $M$ bounding the realised data.
\end{proof}

\appendix
\section{Choice of Increment Weight}
\label{app: Choice of Increment Weight}

This appendix compares the increment weights mentioned in Remarks~2.4 and~2.12. We explain their normalisation, verify the promised deweighting statements, and identify the interval estimates used in the direct sewing argument. We also record how these comparisons specialise to the stochastic example of \S 7.

For $0<h<T$ and $0\leq s\leq T-h$, consider
$$
\rho_h^L(s)=w(s),\qquad
\rho_h^R(s)=w(s+h),\qquad
\rho_h^M(s)=\frac{w(s)+w(s+h)}2,
$$
and
$$
\rho_h^{\mathrm{av}}(s)=w_h(s)=\frac{w([s,s+h])}{h},
\qquad
\rho_h^{\mathrm{mass}}(s)=w([s,s+h]).
$$
Here $M$ denotes the mean of the two endpoint values. For a measurable two-parameter map $A$ taking values in a finite-dimensional normed space, set
$$
D_p^\rho(A,h)=\left(\int_0^{T-h}|A_{s,s+h}|^p\rho_h(s)\hspace{.2em}ds\right)^{1/p},
\qquad
\Omega_p^\rho(A,\tau)=\sup_{0<h\leq\tau}D_p^\rho(A,h),
$$
and, for $\alpha>0$ and $0<q\leq\infin$,
$$
\|A\|_{\mathbb B^\alpha_{p,q}(\rho)}
=\|\tau^{-\alpha}\Omega_p^\rho(A,\tau)\|_{L^q((0,T],d\tau/\tau)}.
$$
We use the usual supremum convention when $q=\infin$, and interpret the integral at $h=T$ as zero. Path seminorms use $A=\delta f$, or the corresponding metric increment. For three-parameter maps, the weight is $\rho_h(s)$ on the full interval $[s,s+h]$, independently of the splitting point. Restrictions to a subinterval are defined as in \S 2. Endpoint values may be assigned arbitrarily on the exceptional null set of the weight: for each fixed $h$, this changes the integrands only on a null set. Unless stated otherwise, the comparisons below concern $0<p<\infin$; at $p=\infin$, all choices use the unweighted essential-supremum convention.

\subsection{Normalisation and Elementary Properties}

The four choices $\rho^L,\rho^R,\rho^M,\rho^{\mathrm{av}}$ all recover the unweighted increment norms when $w\equiv1$. The unnormalised mass instead satisfies
$$
D_p^{\mathrm{mass}}(A,h)=h^{1/p}D_p^{\mathrm{av}}(A,h).
$$
Thus retaining the same smoothness index with the mass convention changes the scale being measured.

More precisely, if $\alpha>1/p$ and $0<q\leq\infin$, then
\begin{equation}
\|A\|_{\mathbb B^\alpha_{p,q}(\rho^{\mathrm{mass}})}
\asymp
\|A\|_{\mathbb B^{\alpha-1/p}_{p,q}(w)}.
\label{eq: Appendix Mass Shift}
\end{equation}
Indeed, $\Omega_p^{\mathrm{mass}}(A,\tau)\leq\tau^{1/p}\Omega_p^{\mathrm{av}}(A,\tau)$ gives one direction. For the other, put $b=\alpha-1/p>0$ and $H(\tau)=\tau^{-\alpha}\Omega_p^{\mathrm{mass}}(A,\tau)$. Splitting $0<h\leq\tau$ into dyadic shells gives
$$
\tau^{-b}\Omega_p^{\mathrm{av}}(A,\tau)
\leq 2^{1/p}\sum_{j\geq0}2^{-jb}H(2^{-j}\tau).
$$
The geometric coefficients are summable, so taking the scale quasi-norm proves \eqref{eq: Appendix Mass Shift}, using subadditivity when $q<1$. This comparison is stated for the two-parameter seminorms; path spaces retain the smoothness range prescribed in \S 2.

Interval averaging retains the original weight at small scales: Lebesgue differentiation gives
$$
w_h(s)\longrightarrow w(s)
\qquad\text{for almost every }s\text{ as }h\downarrow0.
$$
It is also compatible with time reversal. If $\check w(u)=w(T-u)$, then
$$
(\check w)_h(s)=w_h(T-s-h).
$$
Consequently, reversing both the path and the weight preserves the interval-average increment seminorms. Time reversal exchanges the left and right conventions, and preserves the endpoint-mean and mass conventions.

For each $\rho\in\{\rho^L,\rho^R,\rho^M,\rho^{\mathrm{av}}\}$,
\begin{equation}
\sup_{0<h<T}\int_0^{T-h}\rho_h(s)\hspace{.2em}ds\leq w([0,T]).
\label{eq: Appendix Uniform Mass}
\end{equation}
For the endpoint choices this follows by restriction and translation. For the interval average, Tonelli's theorem gives
$$
\int_0^{T-h}w_h(s)\hspace{.2em}ds
=\frac1h\int_0^T w(u)\,|[u-h,u]\cap[0,T-h]|\hspace{.2em}du
\leq w([0,T]).
$$

\subsection{Deweighting for the Endpoint Choices}

Let $w\in A_r$, $1\leq r<\infin$, and fix $J=[a,b]\subset[0,T]$. Write $J_h=[a,b-h]$. If $r>1$, put $\sigma=w^{-1/(r-1)}$. For each of the four normalised choices,
\begin{equation}
\int_{J_h}\rho_h(s)^{-1/(r-1)}\hspace{.2em}ds\leq\sigma(J).
\label{eq: Appendix Inverse Weight}
\end{equation}
For $\rho^L$ and $\rho^R$, this is immediate from restriction and translation. Convexity gives
$$
\left(\frac{w(s)+w(s+h)}2\right)^{-1/(r-1)}
\leq\frac{\sigma(s)+\sigma(s+h)}2,
\qquad
w_h(s)^{-1/(r-1)}\leq\sigma_h(s),
$$
which proves the remaining cases after integration.

Set $u=p/r$. H\"older's inequality and \eqref{eq: Appendix Inverse Weight} imply, for every measurable $g$ on $J_h$,
$$
\|g\|_{L^u(J_h)}
\leq\sigma(J)^{(r-1)/p}
\left(\int_{J_h}|g(s)|^p\rho_h(s)\hspace{.2em}ds\right)^{1/p}.
$$
This remains valid when $p<1$ or $u<1$, since the H\"older exponents used in its derivation are $r$ and $r/(r-1)$. For $r=1$, each $\rho_h$ is bounded below almost everywhere on $J_h$ by $\operatorname*{ess\,inf}_Jw$, giving the same conclusion with $u=p$ and constant $(\operatorname*{ess\,inf}_Jw)^{-1/p}$.

Thus the conclusions of Theorem~2.10 hold for all four choices, with the same local constant
$$
c_J=
\begin{cases}
\sigma(J)^{(r-1)/p},&r>1,\\
(\operatorname*{ess\,inf}_Jw)^{-1/p},&r=1,
\end{cases}
\qquad
c_J\leq[w]_{A_r}^{1/p}\frac{|J|^{r/p}}{w(J)^{1/p}}.
$$
In particular,
$$
\|A\|_{\mathbb B^\alpha_{p/r,q}(J)}
\leq c_J\|A\|_{\mathbb B^\alpha_{p,q}(\rho;J)}.
$$
Taking the additional supremum over the splitting point proves the three-parameter version. The same argument applies to path moduli, to the general scale functions used in \S 2, and at each tensor level after replacing $p,q$ by $p/k,q/k$. The path-space $L^p(w)$ term is unchanged by the increment convention. At $p=\infin$, the deweighting constant is one.

The transfer estimate of Lemma~7.3 is also shared by these four choices. Indeed, if $w\in L^a(J)$ for $a>1$, then restriction, translation, and Jensen's inequality give
$$
\|\rho_h\|_{L^a(J_h)}\leq\|w\|_{L^a(J)}.
$$
Writing $a'=a/(a-1)$ and applying H\"older yields
$$
\|A\|_{\mathbb B^\alpha_{p,q}(\rho;J)}
\leq\|w\|_{L^a(J)}^{1/p}
\|A\|_{\mathbb B^\alpha_{pa',q}(J)}.
$$
For the unnormalised mass, the fixed-scale deweighting estimate instead acquires the factor $h^{-1/p}$, by $\rho_h^{\mathrm{mass}}=h\rho_h^{\mathrm{av}}$. This agrees with the shift in \eqref{eq: Appendix Mass Shift}.

\subsection{Subdivision and the Sewing Threshold}

The interval-average convention supplies the geometric comparisons in Lemma~2.3. For subintervals $I_i\subset I$, writing $\lambda_i=|I_i|/|I|$, the small-set estimate and monotonicity of weighted mass give
$$
\langle w\rangle_I\leq[w]_{A_r}\lambda_i^{-(r-1)}\langle w\rangle_{I_i},
\qquad
\langle w\rangle_{I_i}\leq\lambda_i^{-1}\langle w\rangle_I.
$$
For a partition $I=\bigcup_{i=1}^N I_i$, additivity of the integral also gives the exact identity
$$
\langle w\rangle_I=\sum_{i=1}^N\lambda_i\langle w\rangle_{I_i}.
$$
For equal-length intervals with basepoints satisfying $|s-t|\leq Lh$, comparison inside their interval hull yields
$$
w_h(s)\leq[w]_{A_r}(1+L)^r w_h(t),
$$
and the reverse bound follows by interchanging $s,t$. All intervals are understood to lie in $[0,T]$.

The improved sewing threshold uses the joint summation estimate of Lemma~2.3(v). Put $P=p\lor r$. For every partition as above and vectors $z_1,\ldots,z_N$,
\begin{equation}
\langle w\rangle_I\left|\sum_{i=1}^N z_i\right|^p
\leq[w]_{A_r}\sum_{i=1}^N
\lambda_i^{1-P}\langle w\rangle_{I_i}|z_i|^p.
\label{eq: Appendix Joint Subdivision}
\end{equation}
To recall why the exponent is $P$, the inclusion $A_r\subseteq A_P$ and weighted H\"older give
$$
\langle w\rangle_I\langle f\rangle_I^P
\leq[w]_{A_r}\langle f^Pw\rangle_I
$$
for nonnegative $f$; when $P=1$, use the $A_1$ lower bound. Apply this with
$$
f=\frac{|z_i|^{p/P}}{\lambda_i}\quad\text{on }I_i.
$$
Since $p/P\leq1$, subadditivity and the triangle inequality give
$$
\left|\sum_i z_i\right|^p
\leq\left(\sum_i|z_i|^{p/P}\right)^P
=\langle f\rangle_I^P,
$$
while $\langle f^Pw\rangle_I$ is the sum on the right of \eqref{eq: Appendix Joint Subdivision}.

For a partition $\pi=\{0=\tau_0<\cdots<\tau_N=1\}$, set $\lambda_i=\tau_i-\tau_{i-1}$ and
$$
(I_\pi B)_{s,s+u}=\sum_{i=1}^N
B_{s+\tau_{i-1}u,s+\tau_i u}.
$$
Apply \eqref{eq: Appendix Joint Subdivision} on $[s,s+u]$, integrate in $s$, and translate the $i$th integral by $\tau_{i-1}u$. Enlarging the resulting integration domain to $[0,T-\lambda_i u]$ and taking the supremum over $0<u\leq h$ gives
$$
\Omega_p^w(I_\pi B,h)^p
\leq[w]_{A_r}\sum_{i=1}^N
\lambda_i^{1-p\nu}\Omega_p^w(B,\lambda_i h)^p,
\qquad
\nu=1\lor\frac rp.
$$
For $N$ equal children, this yields the coefficient $N^\nu$ after taking $p$th roots. Applied to the defects in successive dyadic Riemann sums, it gives the estimate in Lemma~\ref{lem:Dyadic Bounds for Sewing}:
$$
\Omega_p^w(R_{\pi_{\ell+1}}\Xi-R_{\pi_\ell}\Xi,h)
\lesssim2^{\ell\nu}\overline{\Omega}_p^w(\delta\Xi,2^{-\ell}h).
$$
Thus defect regularity of order $\beta$ produces the geometric factor $2^{-\ell(\beta-\nu)}$. Subcritical sewing uses $\beta>\nu$; at equality, the critical scale-summability assumptions of \S 4 are needed. For $p=\infin$, ordinary summation gives $\nu=1$.

Comparing the parent and child weights separately before summing would instead give the potentially larger exponent
$$
\left(1\lor\frac1p\right)+\frac{r-1}{p}.
$$
The improvement to $1\lor r/p$ comes from the joint estimate \eqref{eq: Appendix Joint Subdivision}. In particular, the threshold is $1$ when $p\geq r$. The index $r$ may be replaced by any smaller admissible Muckenhoupt index, as in the openness arguments in the main text.

The three-point norm uses the weight of the full increment interval so that these comparisons apply uniformly to either child interval. Together with the shift estimates and \eqref{eq: Appendix Uniform Mass}, this is the interval structure used by the direct construction.

\subsection{Failure of Uniform Endpoint Comparisons}

The $A_r$ condition does not supply the preceding parent--child and shift bounds for the endpoint choices. Fix $a\in(0,T)$ and $0<b<1$, and take
$$
w(u)=|u-a|^{-b},
$$
assigning a positive finite value at $u=a$. This is an $A_1$ weight: its average over an interval is bounded by a constant depending only on $b$ times its essential infimum, as follows by considering separately intervals whose distance from $a$ exceeds their length and the remaining intervals.

Fix a sufficiently small $h>0$ and set $s=a+\varepsilon$, where $0<\varepsilon<h/4$. Then
$$
w_h(s)\asymp h^{-b},
\qquad
\rho_h^L(s)=\varepsilon^{-b},
\qquad
\rho_h^M(s)\geq\tfrac12\varepsilon^{-b},
$$
whereas on the right child,
$$
\rho_{h/2}^L(s+h/2)\asymp h^{-b},
\qquad
\rho_{h/2}^M(s+h/2)\asymp h^{-b}.
$$
The parent-to-child ratios therefore tend to infinity as $\varepsilon\downarrow0$, even though the length ratio is fixed at two and the weight is fixed. The same example, comparing basepoints $s$ and $s+h/2$ at length $h$, disproves a uniform shift bound. Time reversal gives the corresponding counterexamples for the right-endpoint choice.

These examples concern the uniform weight comparisons required by the present proof. Deweighting remains valid, as established above; a sewing theory based on endpoint weights would require additional estimates or assumptions.

\subsection{Power Weights and the Stochastic Example}

For $w(u)=u^\beta$ with $\beta>-1$, elementary integration gives
$$
w_h(s)=\frac{(s+h)^{\beta+1}-s^{\beta+1}}{(\beta+1)h}
\asymp_\beta(s+h)^\beta.
$$
One may verify this by separating $h\leq s$, where the weight is comparable throughout the interval, from $s<h$, where rescaling by $h$ gives uniform bounds. Hence
$$
\rho_h^R(s)\asymp_\beta w_h(s).
$$
For $\beta\geq0$, also
$$
\tfrac12(s+h)^\beta\leq\rho_h^M(s)\leq(s+h)^\beta,
$$
so the interval-average, right-endpoint, and endpoint-mean conventions give equivalent increment seminorms. By contrast, for $s>0$,
$$
\frac{\rho_h^L(s)}{w_h(s)}
\asymp_\beta\left(\frac{s}{s+h}\right)^\beta.
$$
For $\beta\ne0$, this precludes uniform two-sided pointwise comparability as $s/h\downarrow0$. When $-1<\beta<0$, the endpoint mean is likewise not uniformly comparable to the interval average. The equivalences just established for the other choices are special to these power weights and do not provide the general interval comparisons discussed above.

For the diffusion of Theorem~\ref{thm: Singular Diffusion},
$$
\mathbb E|X_{s+h}-X_s|^p\asymp h^{p/2}(s+h)^{-\gamma p}.
$$
The choice $w_p(u)=u^{\gamma p}$ gives
$$
\mathbb E|X_{s+h}-X_s|^p(w_p)_h(s)\asymp h^{p/2}.
$$
The same cancellation holds at tensor level $k$, with integrability $p/k$ and the same weight $w_p$:
$$
\mathbb E|\bold X^{(k)}_{s,s+h}|^{p/k}(w_p)_h(s)
\asymp h^{p/2}.
$$
These moment comparisons explain the choice of weight; the almost-sure endpoint statements follow from the additional dyadic argument in Theorem~\ref{thm: Singular Diffusion}.

The corresponding interval mass gives
$$
\frac{h^\alpha}{w_p([s,s+h])^{1/p}}
\asymp h^{\alpha-1/p}(s+h)^{-\gamma}
\asymp h^{\alpha-1/p}\max\{s,h\}^{-\gamma}.
$$
This is the location dependence in Corollary~\ref{cor: Location Sensitive Estimate for Singular Diffusion}. Away from zero, when $h\leq s$, it is comparable to $h^{\alpha-1/p}s^{-\gamma}$; when $s\leq h$, it is comparable to $h^{\alpha-\gamma-1/p}$.

The RDE application requires an index $r$ satisfying
$$
1+\gamma p<r<\alpha p,
\qquad
\alpha\leq\frac12,
$$
together with $\alpha>1/3$, or $\alpha=1/3$ and $q\leq3$. Such an $r$ exists exactly when $\gamma+1/p<\alpha$. Theorem~\ref{thm: Singular Diffusion} supplies the required driver regularity for $\alpha<1/2$ and every admissible $q$, and at $\alpha=1/2$ for $q=\infin$. Under these conditions, $\alpha-\gamma-1/p>0$, so the location-sensitive bound still decays on increments meeting the initial time.

\bibliographystyle{alpha}
\bibliography{references}
\end{document}